\newtheorem{thm}{Theorem}[section]
\newtheorem{lem}[thm]{Lemma}
\newtheorem{cor}[thm]{Corollary}
\newtheorem{prop}[thm]{Proposition}
\theoremstyle{definition}
\newtheorem{dfn}[thm]{Definition}
\theoremstyle{remark}
\newtheorem{rem}[thm]{Remark}
\numberwithin{equation}{section}
\newcommand{\gl@align}[2]{\lower.6ex\vbox{\baselineskip\z@skip\lineskip\z@
\ialign{$\m@th#1\hfil##\hfil$\crcr#2\crcr=\crcr}}}
\newcommand{\Leqq}{\mathrel{\mathpalette\gl@align<}}
\newcommand{\Geqq}{\mathrel{\mathpalette\gl@align>}}
\newcommand{\zcenp}[1]{#1 concentric strategy}
\newcommand{\zcenc}[1]{#1 concentric strategy}
\newcommand{\zcencr}[1]{reversed #1 concentric strategy}
\newcommand{\sente}{Paul\xspace}
\newcommand{\gote}{Carol\xspace}
\providecommand{\keywords}[1]
{
  \small	
  \textbf{\textit{Keywords---}} #1
}
\providecommand{\MSC}[1]
{
  \small	
  \textbf{\textit{Mathematics Subject Classification 2020---}} #1
}
\begin{document}
\title{A game-theoretic approach to the asymptotic behavior of solutions to an obstacle problem for the mean curvature flow equation}
\author{Kuniyasu Misu}
\date{\empty}
\maketitle
\begin{center}
email: \href{mailto:kuniyasu.misu@math.sci.hokudai.ac.jp}{\nolinkurl{kuniyasu.misu@math.sci.hokudai.ac.jp}}
\end{center}
\begin{abstract}
We consider the asymptotic behavior of solutions to an obstacle problem for the mean curvature flow equation by using a game-theoretic approximation, to which we extend that of Kohn and Serfaty \cite{KS.06}. The paper \cite{KS.06} gives a deterministic two-person zero-sum game whose value functions approximate the solution to the level set mean curvature flow equation without obstacle functions. We prove that moving curves governed by the mean curvature flow converge in time to the boundary of the convex hull of obstacles under some assumptions on the initial curves and obstacles. Convexity of the initial set, as well as smoothness of the initial curves and obstacles, are not needed. In these proofs, we utilize properties of the game trajectories given by very elementary game strategies and consider reachability of each player. Also, when the equation has a driving force term, we present several examples of the asymptotic behavior, including a problem dealt in \cite{GMT.16}.
\end{abstract}

\

\keywords{Viscosity solutions, Mean curvature flow equation, Asymptotic shape, Large time behavior,  Deterministic game}

\MSC{35B40, 35D40, 91A50}
\section{Introduction}\label{sec1}


\label{sec:Intro}
\paragraph{Obstacle problem for the mean curvature flow equation.}
We consider the following obstacle problem for the mean curvature flow equation:
\begin{equation}
\label{eq:main}
\begin{cases}
V=-\kappa \ \mbox{on $\partial D_t$}, \\
O_{-}\subset D_t, 
\end{cases}
\end{equation}
where $\{D_t\}_{t>0}$ is the unknown family of open sets in $\mathbb{R}^d$, $V$ is the velocity of a point in $\partial D_t$ in the direction of its outward normal vector, $\kappa$ is the mean curvature of $\partial D_t$ at the point and $O_{-}$ is a fixed open set in $\mathbb{R}^d$. Our main goal is to investigate the asymptotic behavior of solutions to \eqref{eq:main} by the level set equation and its game-theoretic approximation. We mainly deal with the case $d=2$ in this manuscript. 

The mean curvature flow equation has been attracting much attention. In the early stages, the smoothness of the initial surface was naturally assumed and the surface evolution was considered as long as singularities do not occur. In paticular when $d=2$, the mean curvature flow equation is often called {\it the curve shortening problem} and the curve evolution was analysed in e.g. \cite{gage,Grayson87}. 

The level set method for surface evolution equations was first rigorously analyzed in \cite{CGG.91,ES.91}. The basic idea of this method is to represent moving surfaces as level sets of auxiliary functions and to rewrite surface evolution equations by level set equations, whose unknown functions are the auxiliary functions. A great advantage is the point that viscosity solutions of level set equations follow the long time behavior of the moving surfaces even after topological change of surfaces. The level set method is applied to various surface evolution equations including the mean curvature flow equation. See also \cite{G.06} in detail.

Recently obstacle problems for the mean curvature flow equation have been considered in \cite{gigachan,koike,Mercier_Novoga}. Obstacle problems are problems that have regions called obstacles which the solutions cannot exceed.

According to the unpublished paper \cite{Mercier} by Mercier, the level set method is still valid for \eqref{eq:main}. The corresponding level set equation to \eqref{eq:main} is the following:
\begin{equation}
\label{eq:application}
\begin{cases}
u_t(x,t)+F(Du(x), D^2 u(x))=0
& \quad \mbox{in} \ \mathbb{R}^d \times (0,\infty), \\
u(x,0)=u_0(x)
& \quad \mbox{in} \ \mathbb{R}^d,\\
\Psi_{-}(x)\le u(x,t)
& \quad \mbox{in} \ \mathbb{R}^d \times (0,\infty),
\end{cases}
\end{equation}
where $\Psi_{-}\in Lip(\mathbb{R}^d)$ is a given obstacle function that satisfies $O_{-}=\{x\in\mathbb{R}^d \mid \Psi_{-}(x)>0\}$. The function $u_0 \in C(\mathbb{R}^d)$ is an initial datum and $F$ is given by 
\begin{equation}
F(Du,D^2u)=-\lvert Du\rvert \mathrm{div} \left( \frac{Du}{\lvert Du\rvert} \right).
\nonumber
\end{equation}
Namely $F$ is the level-set mean curvature flow operator
defined as
\begin{equation*}
F(p,X)=-\mathrm{Tr}\left(\left(I-\frac{p\otimes p}{\lvert p\rvert^2}\right)X\right), \quad p\in \mathbb{R}^d\setminus \{0\}, \ X\in\mathbb{S}^d,
\end{equation*}
where $p\otimes p=(p_i p_j)_{i,j=1}^{d}$ 
for a vector $p=(p_1,\dots,p_d) \in \mathbb{R}^d$
and $\mathbb{S}^d$ is the set of $d\times d$ real symmetric matrices. For a comparison principle to \eqref{eq:application}, see also \cite{koike}.

Throughout this paper we follow the 0 level set of the solution $u$. Together with it, we assume on the initial data $u_0$ as follows:
\begin{equation}
\label{assum:ini}
\mbox{For some $a<0$ and $R>0$, $u_0=a$ in $B^c(0,R)$.}
\end{equation}

\paragraph{Intuitive observation.}

\begin{figure}[h]
\begin{center}
{\unitlength 0.1in%
\begin{picture}(46.9100,34.4200)(-0.8000,-40.4200)%
%
\special{sh 0.300}%
\special{ia 677 2596 164 164 0.0000000 6.2831853}%
\special{pn 8}%
\special{ar 677 2596 164 164 0.0000000 6.2831853}%
%
\special{sh 0.300}%
\special{ia 1989 2588 164 164 0.0000000 6.2831853}%
\special{pn 8}%
\special{ar 1989 2588 164 164 0.0000000 6.2831853}%
%
\special{sh 0.300}%
\special{ia 3136 2596 164 164 0.0000000 6.2831853}%
\special{pn 8}%
\special{ar 3136 2596 164 164 0.0000000 6.2831853}%
%
\special{sh 0.300}%
\special{ia 4447 2588 164 164 0.0000000 6.2831853}%
\special{pn 8}%
\special{ar 4447 2588 164 164 0.0000000 6.2831853}%
%
\special{sh 0.300}%
\special{ia 668 3714 164 164 0.0000000 6.2831853}%
\special{pn 8}%
\special{ar 668 3714 164 164 0.0000000 6.2831853}%
%
\special{sh 0.300}%
\special{ia 1979 3706 164 164 0.0000000 6.2831853}%
\special{pn 8}%
\special{ar 1979 3706 164 164 0.0000000 6.2831853}%
%
\special{sh 0.300}%
\special{ia 3127 3714 164 164 0.0000000 6.2831853}%
\special{pn 8}%
\special{ar 3127 3714 164 164 0.0000000 6.2831853}%
%
\special{sh 0.300}%
\special{ia 4438 3706 164 164 0.0000000 6.2831853}%
\special{pn 8}%
\special{ar 4438 3706 164 164 0.0000000 6.2831853}%
%
\special{pn 8}%
\special{pa 349 2596}%
\special{pa 345 2564}%
\special{pa 341 2531}%
\special{pa 340 2499}%
\special{pa 342 2467}%
\special{pa 348 2436}%
\special{pa 359 2405}%
\special{pa 375 2376}%
\special{pa 395 2348}%
\special{pa 418 2323}%
\special{pa 444 2301}%
\special{pa 473 2284}%
\special{pa 502 2272}%
\special{pa 533 2265}%
\special{pa 564 2263}%
\special{pa 628 2267}%
\special{pa 661 2269}%
\special{pa 695 2268}%
\special{pa 728 2266}%
\special{pa 762 2263}%
\special{pa 794 2262}%
\special{pa 824 2265}%
\special{pa 852 2273}%
\special{pa 876 2288}%
\special{pa 899 2308}%
\special{pa 921 2332}%
\special{pa 942 2358}%
\special{pa 963 2385}%
\special{pa 985 2411}%
\special{pa 1009 2436}%
\special{pa 1036 2458}%
\special{pa 1064 2477}%
\special{pa 1094 2492}%
\special{pa 1125 2505}%
\special{pa 1157 2515}%
\special{pa 1190 2521}%
\special{pa 1223 2525}%
\special{pa 1256 2525}%
\special{pa 1289 2523}%
\special{pa 1321 2517}%
\special{pa 1352 2508}%
\special{pa 1381 2497}%
\special{pa 1410 2483}%
\special{pa 1438 2468}%
\special{pa 1494 2434}%
\special{pa 1548 2400}%
\special{pa 1576 2384}%
\special{pa 1603 2368}%
\special{pa 1631 2352}%
\special{pa 1659 2338}%
\special{pa 1688 2323}%
\special{pa 1746 2297}%
\special{pa 1777 2285}%
\special{pa 1807 2275}%
\special{pa 1839 2265}%
\special{pa 1871 2257}%
\special{pa 1903 2250}%
\special{pa 1936 2244}%
\special{pa 1969 2241}%
\special{pa 2002 2240}%
\special{pa 2035 2241}%
\special{pa 2067 2244}%
\special{pa 2098 2251}%
\special{pa 2129 2260}%
\special{pa 2158 2272}%
\special{pa 2186 2287}%
\special{pa 2213 2306}%
\special{pa 2238 2327}%
\special{pa 2261 2350}%
\special{pa 2282 2376}%
\special{pa 2300 2403}%
\special{pa 2316 2432}%
\special{pa 2329 2462}%
\special{pa 2339 2493}%
\special{pa 2346 2525}%
\special{pa 2351 2557}%
\special{pa 2352 2590}%
\special{pa 2351 2623}%
\special{pa 2347 2656}%
\special{pa 2341 2688}%
\special{pa 2332 2720}%
\special{pa 2320 2752}%
\special{pa 2305 2782}%
\special{pa 2288 2812}%
\special{pa 2268 2839}%
\special{pa 2246 2864}%
\special{pa 2222 2886}%
\special{pa 2195 2904}%
\special{pa 2167 2918}%
\special{pa 2137 2928}%
\special{pa 2106 2934}%
\special{pa 2073 2935}%
\special{pa 2041 2933}%
\special{pa 2008 2928}%
\special{pa 1977 2921}%
\special{pa 1946 2912}%
\special{pa 1916 2901}%
\special{pa 1888 2888}%
\special{pa 1859 2875}%
\special{pa 1831 2860}%
\special{pa 1804 2844}%
\special{pa 1776 2828}%
\special{pa 1748 2811}%
\special{pa 1720 2795}%
\special{pa 1692 2778}%
\special{pa 1663 2761}%
\special{pa 1603 2729}%
\special{pa 1572 2715}%
\special{pa 1540 2702}%
\special{pa 1509 2691}%
\special{pa 1477 2682}%
\special{pa 1445 2676}%
\special{pa 1414 2672}%
\special{pa 1383 2671}%
\special{pa 1352 2674}%
\special{pa 1322 2681}%
\special{pa 1293 2691}%
\special{pa 1264 2705}%
\special{pa 1235 2721}%
\special{pa 1207 2738}%
\special{pa 1123 2792}%
\special{pa 1095 2808}%
\special{pa 1067 2822}%
\special{pa 1037 2834}%
\special{pa 1008 2841}%
\special{pa 977 2845}%
\special{pa 946 2846}%
\special{pa 914 2845}%
\special{pa 882 2843}%
\special{pa 848 2842}%
\special{pa 814 2843}%
\special{pa 780 2845}%
\special{pa 745 2849}%
\special{pa 677 2857}%
\special{pa 644 2860}%
\special{pa 612 2861}%
\special{pa 581 2859}%
\special{pa 552 2855}%
\special{pa 525 2847}%
\special{pa 500 2835}%
\special{pa 477 2818}%
\special{pa 456 2797}%
\special{pa 437 2773}%
\special{pa 420 2746}%
\special{pa 403 2717}%
\special{pa 388 2686}%
\special{pa 373 2653}%
\special{pa 359 2620}%
\special{pa 349 2596}%
\special{fp}%
%
\special{pn 8}%
\special{ar 668 3714 328 328 0.0000000 6.2831853}%
%
\special{pn 8}%
\special{ar 1979 3714 328 328 0.0000000 6.2831853}%
%
\special{pn 8}%
\special{pa 3136 2432}%
\special{pa 4447 2432}%
\special{fp}%
\special{pa 4447 2760}%
\special{pa 3136 2760}%
\special{fp}%
\put(26.3500,-25.5900){\makebox(0,0){$\Longrightarrow$}}%
\put(26.3500,-37.0600){\makebox(0,0){$\Longrightarrow$}}%
%
\special{pn 0}%
\special{sh 0.300}%
\special{pa 996 780}%
\special{pa 996 1764}%
\special{pa 1979 1764}%
\special{pa 1979 780}%
\special{pa 1816 780}%
\special{pa 1816 1600}%
\special{pa 1160 1600}%
\special{pa 1160 780}%
\special{pa 1160 780}%
\special{pa 996 780}%
\special{ip}%
\special{pn 8}%
\special{pa 996 780}%
\special{pa 996 1764}%
\special{pa 1979 1764}%
\special{pa 1979 780}%
\special{pa 1816 780}%
\special{pa 1816 1600}%
\special{pa 1160 1600}%
\special{pa 1160 780}%
\special{pa 996 780}%
\special{pa 996 1764}%
\special{fp}%
%
\special{pn 0}%
\special{sh 0.300}%
\special{pa 3127 764}%
\special{pa 3127 1747}%
\special{pa 4110 1747}%
\special{pa 4110 764}%
\special{pa 3946 764}%
\special{pa 3946 1583}%
\special{pa 3291 1583}%
\special{pa 3291 764}%
\special{pa 3291 764}%
\special{pa 3127 764}%
\special{ip}%
\special{pn 8}%
\special{pa 3127 764}%
\special{pa 3127 1747}%
\special{pa 4110 1747}%
\special{pa 4110 764}%
\special{pa 3946 764}%
\special{pa 3946 1583}%
\special{pa 3291 1583}%
\special{pa 3291 764}%
\special{pa 3127 764}%
\special{pa 3127 1747}%
\special{fp}%
%
\special{pn 8}%
\special{pa 832 600}%
\special{pa 832 1911}%
\special{fp}%
\special{pa 832 1911}%
\special{pa 2143 1911}%
\special{fp}%
\special{pa 2143 600}%
\special{pa 2143 1911}%
\special{fp}%
\special{pa 2143 600}%
\special{pa 1652 600}%
\special{fp}%
\special{pa 1652 600}%
\special{pa 1652 1420}%
\special{fp}%
\special{pa 1652 1420}%
\special{pa 1324 1420}%
\special{fp}%
\special{pa 1324 1420}%
\special{pa 1324 600}%
\special{fp}%
\special{pa 1324 600}%
\special{pa 832 600}%
\special{fp}%
%
\special{pn 8}%
\special{pa 3291 764}%
\special{pa 3946 764}%
\special{fp}%
\put(26.4300,-12.4700){\makebox(0,0){$\Longrightarrow$}}%
\put(26.4300,-10.8400){\makebox(0,0){$t\to\infty$}}%
\put(26.4300,-23.9500){\makebox(0,0){$t\to\infty$}}%
\put(26.4300,-35.4200){\makebox(0,0){$t\to\infty$}}%
%
\special{pn 8}%
\special{pa 1078 1665}%
\special{pa 750 1829}%
\special{fp}%
\put(7.5000,-18.1300){\makebox(0,0)[rt]{obstacle $O_{-}$}}%
%
\special{pn 8}%
\special{pa 2143 1747}%
\special{pa 2307 1911}%
\special{fp}%
\put(23.0700,-19.1100){\makebox(0,0)[lb]{initial curve $\partial D_0$}}%
\end{picture}}%
\end{center}
\caption{Conjecutures on the asymptotic shapes}
\label{fig:conje}
\end{figure}
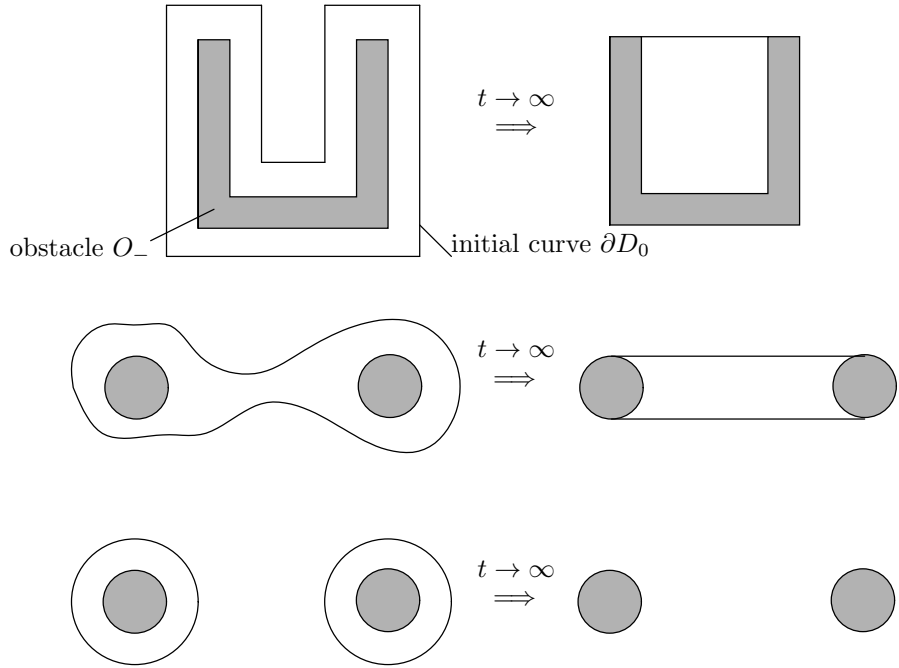

\label{subsec:IO}
For the solution to \eqref{eq:main} with $d=2$ and without obstacles, it is known that $D_t$ becomes convex at some time, the moving curve $\partial D_t$ converges to a single point and then vanishes, provided $\partial D_0$ is a smooth closed curve (\cite{gage,Grayson87}). On the other hand, for our problem \eqref{eq:main}, it is obvious that the solution does not converge to any single point. Also it is not clear whether $D_t$ becomes convex at some time. However it is natural to expect that in many cases $D_t$ converges to the convex hull of $O_{-}$, which we hereafter denote by $Co(O_{-})$, as $t\to\infty$ because of the curve shortening property of the solution and the smoothing effect of the curvature flow as we draw some examples in Figure \ref{fig:conje}. As shown in Figure \ref{fig:conje}, even for the same obstacles, different initial curves may converge to different limits. Thus we shall assume at least that one connected component of $D_0$ contains the whole $O_{-}$ and expect that the asymptotic shape is $Co(O_{-})$ under this assumption. Our main theorem (Theorem \ref{thm:conv_general}) is intended to justify this expectation as much as possible.

\paragraph{Game interpretation.}
\label{subsec:gi}
Our first result is the extension of \cite{KS.06} to problems including \eqref{eq:application}.
First, let us briefly explain the game rule for \eqref{eq:application} with $d=2$ and without obstacle function by following \cite[Section 1.6]{KS.06}.
The game is a deterministic two-person zero-sum game. For convenience, we name the first player \sente and the second player \gote. Let $\epsilon>0$.
Also, let $x_0=x \in \mathbb{R}^2$ be the initial position of this game
and $t>0$ be the terminal time. 
At the $i$-th round of this game,
\sente chooses directions $v_i\in \mathbb{R}^2$ with $\lvert v_i\rvert=1$
and \gote chooses a number $b_i=\pm 1$ after \sente's choice.
Then the game position that we henceforth regard as \sente's position conveniently moves from $x_{i-1}$ to the next place $x_i$
depending on their choice as follows:
\begin{equation}
\label{trajectory2}
x_{i}=x_{i-1}+\sqrt{2} \epsilon b_i v_i
\end{equation} 
After the $N$-th round, where $N \sim t\epsilon^{-2}$,
the game ends and \gote pays the terminal cost $u_0(x_N)$ to \sente.
\sente's goal is maximizing the cost while \gote's goal is minimizing it. 
The value function $u^{\epsilon}(x,t)$ is defined as 
the cost optimized by both the players, that is,
\begin{equation}
u^{\epsilon}(x,t)
=\max_{v_1} \min_{b_1} \dots \max_{v_N} \min_{b_N} u_0(x_N). 
\nonumber
\end{equation}
This value function approximates 
the viscosity solution $u$ of \eqref{eq:application} with $d=2$ and without obstacle function.
In fact the convergence $u^{\epsilon} \to u$ is shown in \cite{KS.06}.

In order to handle \eqref{eq:application} that has the obstacle function $\Psi_{-}$,
we modify the game rule as follows. At each $i$-th round, we suppose that  
\sente has the right to quit the game. If \sente quits the game, the game cost is given by $\Psi_{-}(x_i)$. By doing this modification, the value function $u^{\epsilon}$ is restricted to $\Psi_{-}\le u^{\epsilon}$. Such an interpretation of parameters of PDEs is well understood for first order equations;
see \cite{bardi}. The cost $\Psi_{-}(x_i)$ is called {\it stopping cost} and an optimal control problem with stopping cost is called {\it optimal stopping time problem}. For second order equations, see e.g. \cite{manfredi,CLM17}, which deal with the optimal stopping time problem corresponding to the obstacle problem for the infinity Laplacian equation and p-Laplacian equation respectively.

The value function $u^{\epsilon}(x,t)$ satisfies the following {\it Dynamic Programming Principle}:
\begin{equation*}
u^{\epsilon}(x,t)=\max\{\Psi_{-}(x), \max_{\lvert v\rvert=1}\min_{b=\pm1}u^{\epsilon}(x+\sqrt{2}\epsilon b v, t-\epsilon^2)\}
\end{equation*}
for $t>0$. This is a key equation in the proof of the convergence result.

The paper \cite{KS.06} also mention the game interpretation for higher dimensional case. Based on this, we can generalize our game to the case $d\ge 3$. In the game, \sente chooses $d-1$ orthogonal unit vectors $v_i^{j} (j=1,2,\cdots, d-1)$, \gote chooses $d-1$ values $b_i^j\in \{\pm1\} (j=1,2,\cdots, d-1)$, and the state equation is $x_{i}=x_{i-1}+\sqrt{2} \epsilon \sum_{j=1}^{d-1}b_i^j v_i^j$
instead of \eqref{trajectory2}.

The precise statement of the convergence of the value functions is described by the half relaxed limits of the value functions, which are defined as follows:
\[\overline{u}(x,t):=\varlimsup_{\substack{(y,s) \to (x,t) \\ \epsilon \searrow 0}} u^{\epsilon}(y,s), \ \underline{u}(x,t):=\varliminf_{\substack{(y,s) \to (x,t) \\ \epsilon \searrow 0}} u^{\epsilon}(y,s).\]

As a consequence of Proposition \ref{prop:gconv_gene}, we present the convergence result for \eqref{eq:application} at the moment. We describe a game interpretation and the same type of convergence result for more general PDEs than \eqref{eq:application} in Appendix \ref{app_ginp}.
\begin{prop}
\label{prop:gconv_cor}
The functions $\overline{u}$ and $\underline{u}$ are respectively viscosity sub- and supersolution of \eqref{eq:application}. Moreover $\overline{u}(x,0)=\underline{u}(x,0)=u_0(x)$ for $x\in\mathbb{R}^d$. 
\end{prop}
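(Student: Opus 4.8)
The plan is to follow the now-standard half-relaxed limits method for convergence of game-based (or monotone, stable, consistent) approximation schemes to viscosity solutions, as in Barles--Souganidis and as carried out in \cite{KS.06} for the case without obstacle, adding the extra ingredient coming from the stopping cost $\Psi_{-}$. First I would record the basic structural properties of $u^{\epsilon}$ that make the half-relaxed limits well defined: a uniform (in $\epsilon$) bound $\Psi_{-}(x)\le u^{\epsilon}(x,t)\le \max\{\sup\Psi_{-},\ \sup u_0\}$, coming from assumption \eqref{assum:ini}, the fact that \sente can always quit immediately, and the fact that \gote can force the trajectory to behave no worse than the no-obstacle game; also the finite speed of propagation $|x_N-x_0|\le \sqrt{2}\,\epsilon N\le C\sqrt{t}$ inherent in \eqref{trajectory2}. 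These bounds guarantee that $\overline{u}$ and $\underline{u}$ are finite, and that $\overline u$ is upper semicontinuous and $\underline u$ lower semicontinuous.

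Next, the heart of the argument. To show $\overline u$ is a viscosity subsolution, take $\phi\in C^2$ and a point $(x_0,t_0)$ with $t_0>0$ at which $\overline u-\phi$ has a strict global max. By a standard argument there exist $\epsilon_k\searrow 0$ and points $(x_k,t_k)\to (x_0,t_0)$ at which $u^{\epsilon_k}-\phi$ has a near-max, with $u^{\epsilon_k}(x_k,t_k)\to \overline u(x_0,t_0)$. Apply the Dynamic Programming Principle
\[
u^{\epsilon}(x,t)=\max\Bigl\{\Psi_{-}(x),\ \max_{|v|=1}\min_{b=\pm1} u^{\epsilon}(x+\sqrt2\,\epsilon b v,\ t-\epsilon^2)\Bigr\}.
\]
If, along the subsequence, the max is attained by the $\Psi_{-}$ branch infinitely often, then $\overline u(x_0,t_0)=\Psi_{-}(x_0)$, which is consistent with the subsolution definition of \eqref{eq:application} (the obstacle constraint is an inequality $\ge$, so being equal to the obstacle never obstructs the subsolution property — one still needs that it does not force $\min(\phi_t+F,\ \overline u-\Psi_-)\le 0$ to fail, but when $\overline u=\Psi_-$ that second term is $0$). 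Otherwise the max is the game branch; then substituting $\phi$ for $u^{\epsilon}$ (using that $u^{\epsilon}-\phi$ is maximized at $(x_k,t_k)$) gives
\[
\phi(x_k,t_k)\ \le\ \max_{|v|=1}\min_{b=\pm1}\phi\bigl(x_k+\sqrt2\,\epsilon_k b v,\ t_k-\epsilon_k^2\bigr)+o(\epsilon_k^2),
\]
and a Taylor expansion to second order — exactly the computation in \cite[Section 1.6]{KS.06}, where the $b=\pm1$ minimization kills the first-order term $\sqrt2\,\epsilon b\,\langle D\phi,v\rangle$ and the $v$-maximization over unit vectors orthogonal structure produces $\mathrm{Tr}((I-\hat p\otimes\hat p)D^2\phi)$ with $\hat p=D\phi/|D\phi|$ — yields $\phi_t(x_0,t_0)+F(D\phi(x_0,t_0),D^2\phi(x_0,t_0))\le 0$ at points where $D\phi(x_0,t_0)\ne 0$. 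The degenerate case $D\phi(x_0,t_0)=0$ is handled, as usual for the level-set mean curvature operator, by the standard observation that for such test functions the inequality $\phi_t\le 0$ suffices and follows from the scheme, using that the lower-order terms can be absorbed; this uses the fact that $F$ is consistent with its lower/upper semicontinuous envelopes at $p=0$. The supersolution property of $\underline u$ is entirely symmetric, with one asymmetry: the obstacle constraint now genuinely enters, because a supersolution of \eqref{eq:application} must satisfy $\underline u\ge \Psi_-$ everywhere (which we already have from the lower bound) \emph{and} $\min(\underline u_t+F(D\underline u,D^2\underline u),\ \underline u-\Psi_-)\ge 0$ in the viscosity sense; at a point where $\underline u(x_0,t_0)>\Psi_-(x_0)$ the DPP max must, for small $\epsilon$ and near $(x_0,t_0)$, be realized by the game branch (by continuity of $\Psi_-$ and semicontinuity), so the same Taylor expansion goes through, while at points where $\underline u(x_0,t_0)=\Psi_-(x_0)$ there is nothing to prove.

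Finally the initial condition $\overline u(x,0)=\underline u(x,0)=u_0(x)$: here I would construct explicit smooth super- and sub-barriers at $t=0$. Because $u_0\in C(\mathbb{R}^d)$ and $u_0=a$ outside $B(0,R)$, and $\Psi_-$ is Lipschitz with $\Psi_-\le u_0$ necessarily required for consistency (or, if not assumed, one replaces $u_0$ by $\max(u_0,\Psi_-)$ and notes \sente can quit at round $0$), one can sandwich $u_0$ between paraboloid-type functions $u_0(x_0)\pm\omega(|x-x_0|)\pm C|x-x_0|^2$ and check, using the DPP and the finite-speed bound $|x_N-x_0|\le C\sqrt{t}$, that $u^{\epsilon}(y,s)\to u_0(x_0)$ as $(y,s)\to(x_0,0)$ and $\epsilon\to0$. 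The presence of the stopping option only helps here, since quitting at round $0$ gives cost exactly $\Psi_-(y)\to\Psi_-(x_0)\le u_0(x_0)$, so it cannot push the value above the upper barrier.

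I expect the main obstacle to be the degenerate case $D\phi=0$ in the consistency step — this is the classical difficulty for the level-set mean curvature flow operator, and in the game-theoretic setting it requires a careful argument (as in \cite{KS.06}) showing that when the gradient of the test function vanishes the scheme still forces $\phi_t\le 0$, exploiting that \sente's choice of direction is then essentially free and \gote's $\pm1$ choice symmetrizes the second-order contribution. The bookkeeping to make the $o(\epsilon^2)$ error terms uniform — so that they survive passage to the limit — is routine but must be done with the $C^2$ modulus of $\phi$ on a fixed compact neighborhood. The obstacle modification itself contributes essentially no new analytic difficulty beyond the case-split on which branch of the DPP max is active, precisely because the obstacle enters the PDE as a one-sided constraint compatible with the maximum operation in the DPP.
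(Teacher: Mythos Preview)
Your sub/supersolution argument follows the Barles--Souganidis scheme-consistency route (substitute $\phi$ into the DPP at a single near-extremum and Taylor expand), which is a legitimate alternative and one the paper itself acknowledges, citing \cite{HL20,KS.10}. The paper instead follows \cite{KS.06} directly: it constructs an explicit game trajectory $\{X_k\}$ with \sente (for the supersolution) or \gote (for the subsolution) playing according to $D\phi$, tracks $u^\epsilon(X_k)-\phi(X_k)$ until the trajectory exits a neighborhood of $(\hat x,\hat t)$, and derives a contradiction. This costs extra bookkeeping --- one must verify inductively that the obstacle branch of the DPP stays inactive along the whole trajectory, which is why the paper ties $\delta$ to the gap $\overline u(\hat x,\hat t)-\Psi_-(\hat x)$ and the Lipschitz constant of $\Psi_-$ --- whereas your single-step approach needs this only at the one point $(x_k,t_k)$. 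Both routes ultimately rely on the same \cite{KS.06} estimate (the paper's Lemma~\ref{lem:kohn}) for the delicate subsolution direction.

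There is, however, a genuine error in your initial-condition argument: the ``finite speed'' bound $|x_N-x_0|\le\sqrt{2}\,\epsilon N\le C\sqrt t$ is false. With $N=\lceil t\epsilon^{-2}\rceil$ one has $\sqrt{2}\,\epsilon N\sim\sqrt{2}\,t/\epsilon\to\infty$; an arbitrary game trajectory can travel arbitrarily far in fixed time as $\epsilon\to 0$. What is true is that under \emph{specific} strategies the displacement is controlled: if \gote plays a reversed $x$-concentric strategy (choose $b$ with $\langle bv,x_n-x\rangle\le 0$) then $|x_{n+1}-x|^2\le|x_n-x|^2+2\epsilon^2$, giving $|x_N-x|\le\sqrt{|y-x|^2+2s+2\epsilon^2}\to 0$; symmetrically \sente's $x$-concentric strategy controls the lower bound. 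This is exactly how the paper proves Proposition~\ref{lem:inigame}. Your barrier idea could also be salvaged without any speed bound by using, for each $\delta>0$, a global quadratic barrier $u_0(x_0)+\delta+C_\delta|x-x_0|^2+K_\delta t$ with $K_\delta>2(d-1)C_\delta$, but as written your argument rests on a bound that does not hold.
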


\paragraph{Asymptotic behavior.}
\begin{figure}[htbp]
 \begin{minipage}{0.5\hsize}
  \begin{center}
{\unitlength 0.1in%
\begin{picture}(22.7400,18.2100)(3.1500,-21.4800)%
%
\special{sh 0.300}%
\special{ia 1331 660 267 267 0.0000000 6.2831853}%
\special{pn 8}%
\special{ar 1331 660 267 267 0.0000000 6.2831853}%
%
\special{pn 0}%
\special{sh 0.300}%
\special{pa 435 1614}%
\special{pa 761 1614}%
\special{pa 761 1939}%
\special{pa 435 1939}%
\special{pa 435 1614}%
\special{ip}%
\special{pn 8}%
\special{pa 435 1614}%
\special{pa 761 1614}%
\special{pa 761 1939}%
\special{pa 435 1939}%
\special{pa 435 1614}%
\special{pa 761 1614}%
\special{fp}%
%
\special{pn 0}%
\special{sh 0.300}%
\special{pa 2182 1614}%
\special{pa 1934 1987}%
\special{pa 2431 1987}%
\special{pa 2431 1987}%
\special{pa 2182 1614}%
\special{ip}%
\special{pn 8}%
\special{pa 2182 1614}%
\special{pa 1934 1987}%
\special{pa 2431 1987}%
\special{pa 2182 1614}%
\special{pa 1934 1987}%
\special{fp}%
%
\special{pn 8}%
\special{pa 360 2072}%
\special{pa 395 2080}%
\special{pa 429 2087}%
\special{pa 464 2094}%
\special{pa 498 2101}%
\special{pa 532 2106}%
\special{pa 565 2110}%
\special{pa 598 2112}%
\special{pa 630 2112}%
\special{pa 662 2110}%
\special{pa 692 2106}%
\special{pa 722 2099}%
\special{pa 750 2090}%
\special{pa 778 2077}%
\special{pa 804 2060}%
\special{pa 829 2041}%
\special{pa 852 2018}%
\special{pa 873 1993}%
\special{pa 893 1967}%
\special{pa 910 1938}%
\special{pa 925 1908}%
\special{pa 938 1876}%
\special{pa 950 1844}%
\special{pa 960 1810}%
\special{pa 969 1775}%
\special{pa 976 1740}%
\special{pa 982 1703}%
\special{pa 988 1667}%
\special{pa 992 1630}%
\special{pa 996 1592}%
\special{pa 1000 1555}%
\special{pa 1003 1517}%
\special{pa 1009 1443}%
\special{pa 1012 1407}%
\special{pa 1016 1371}%
\special{pa 1020 1336}%
\special{pa 1025 1302}%
\special{pa 1031 1269}%
\special{pa 1038 1237}%
\special{pa 1046 1206}%
\special{pa 1055 1177}%
\special{pa 1066 1150}%
\special{pa 1078 1124}%
\special{pa 1092 1100}%
\special{pa 1108 1078}%
\special{pa 1127 1058}%
\special{pa 1147 1040}%
\special{pa 1170 1025}%
\special{pa 1196 1013}%
\special{pa 1224 1003}%
\special{pa 1255 996}%
\special{pa 1288 992}%
\special{pa 1321 990}%
\special{pa 1355 992}%
\special{pa 1387 996}%
\special{pa 1418 1003}%
\special{pa 1446 1013}%
\special{pa 1472 1026}%
\special{pa 1495 1041}%
\special{pa 1515 1059}%
\special{pa 1534 1079}%
\special{pa 1550 1102}%
\special{pa 1564 1126}%
\special{pa 1576 1152}%
\special{pa 1587 1180}%
\special{pa 1596 1210}%
\special{pa 1604 1241}%
\special{pa 1611 1273}%
\special{pa 1617 1307}%
\special{pa 1622 1341}%
\special{pa 1630 1413}%
\special{pa 1633 1449}%
\special{pa 1636 1486}%
\special{pa 1639 1524}%
\special{pa 1643 1561}%
\special{pa 1646 1599}%
\special{pa 1650 1636}%
\special{pa 1655 1673}%
\special{pa 1660 1709}%
\special{pa 1666 1745}%
\special{pa 1673 1781}%
\special{pa 1682 1815}%
\special{pa 1692 1848}%
\special{pa 1703 1880}%
\special{pa 1716 1911}%
\special{pa 1731 1940}%
\special{pa 1748 1968}%
\special{pa 1767 1994}%
\special{pa 1789 2018}%
\special{pa 1812 2040}%
\special{pa 1837 2060}%
\special{pa 1864 2078}%
\special{pa 1893 2094}%
\special{pa 1923 2109}%
\special{pa 1954 2120}%
\special{pa 1987 2130}%
\special{pa 2020 2138}%
\special{pa 2054 2143}%
\special{pa 2088 2147}%
\special{pa 2122 2148}%
\special{pa 2157 2147}%
\special{pa 2191 2144}%
\special{pa 2225 2138}%
\special{pa 2258 2131}%
\special{pa 2290 2121}%
\special{pa 2321 2109}%
\special{pa 2351 2094}%
\special{pa 2379 2077}%
\special{pa 2406 2058}%
\special{pa 2430 2037}%
\special{pa 2453 2014}%
\special{pa 2474 1990}%
\special{pa 2493 1965}%
\special{pa 2511 1940}%
\special{pa 2526 1914}%
\special{pa 2540 1888}%
\special{pa 2552 1861}%
\special{pa 2562 1834}%
\special{pa 2571 1807}%
\special{pa 2583 1751}%
\special{pa 2586 1722}%
\special{pa 2588 1693}%
\special{pa 2589 1664}%
\special{pa 2588 1635}%
\special{pa 2585 1606}%
\special{pa 2582 1576}%
\special{pa 2570 1516}%
\special{pa 2562 1486}%
\special{pa 2542 1426}%
\special{pa 2530 1395}%
\special{pa 2517 1365}%
\special{pa 2503 1334}%
\special{pa 2487 1304}%
\special{pa 2471 1273}%
\special{pa 2454 1243}%
\special{pa 2435 1212}%
\special{pa 2416 1182}%
\special{pa 2374 1122}%
\special{pa 2328 1062}%
\special{pa 2304 1032}%
\special{pa 2279 1003}%
\special{pa 2254 973}%
\special{pa 2227 944}%
\special{pa 2200 916}%
\special{pa 2173 887}%
\special{pa 2115 831}%
\special{pa 2086 804}%
\special{pa 2056 777}%
\special{pa 1994 723}%
\special{pa 1963 698}%
\special{pa 1931 672}%
\special{pa 1899 647}%
\special{pa 1800 575}%
\special{pa 1766 552}%
\special{pa 1733 530}%
\special{pa 1699 508}%
\special{pa 1665 487}%
\special{pa 1597 447}%
\special{pa 1563 428}%
\special{pa 1529 410}%
\special{pa 1495 394}%
\special{pa 1461 379}%
\special{pa 1428 365}%
\special{pa 1395 353}%
\special{pa 1362 343}%
\special{pa 1330 335}%
\special{pa 1299 330}%
\special{pa 1268 327}%
\special{pa 1238 327}%
\special{pa 1208 329}%
\special{pa 1180 335}%
\special{pa 1152 343}%
\special{pa 1126 355}%
\special{pa 1100 371}%
\special{pa 1076 390}%
\special{pa 1052 411}%
\special{pa 1030 435}%
\special{pa 1009 461}%
\special{pa 989 487}%
\special{pa 970 514}%
\special{pa 952 541}%
\special{pa 935 569}%
\special{pa 919 598}%
\special{pa 904 626}%
\special{pa 889 656}%
\special{pa 875 685}%
\special{pa 862 715}%
\special{pa 838 775}%
\special{pa 826 806}%
\special{pa 815 836}%
\special{pa 793 898}%
\special{pa 783 929}%
\special{pa 772 960}%
\special{pa 762 991}%
\special{pa 751 1021}%
\special{pa 741 1052}%
\special{pa 730 1083}%
\special{pa 708 1143}%
\special{pa 684 1203}%
\special{pa 672 1232}%
\special{pa 659 1261}%
\special{pa 645 1290}%
\special{pa 631 1318}%
\special{pa 616 1346}%
\special{pa 600 1373}%
\special{pa 583 1399}%
\special{pa 565 1425}%
\special{pa 546 1451}%
\special{pa 527 1476}%
\special{pa 506 1500}%
\special{pa 460 1546}%
\special{pa 435 1568}%
\special{pa 411 1590}%
\special{pa 388 1612}%
\special{pa 367 1635}%
\special{pa 350 1660}%
\special{pa 337 1686}%
\special{pa 327 1714}%
\special{pa 320 1743}%
\special{pa 316 1773}%
\special{pa 315 1805}%
\special{pa 316 1837}%
\special{pa 319 1870}%
\special{pa 324 1904}%
\special{pa 330 1939}%
\special{pa 337 1974}%
\special{pa 345 2010}%
\special{pa 353 2045}%
\special{pa 360 2072}%
\special{fp}%
%
\special{pn 8}%
\special{pa 1535 1539}%
\special{pa 1640 1433}%
\special{fp}%
\put(15.3500,-15.3900){\makebox(0,0)[rt]{$\partial D_0$}}%
%
\special{pn 8}%
\special{pa 1535 792}%
\special{pa 1640 899}%
\special{fp}%
\put(16.4000,-8.9900){\makebox(0,0)[lt]{$O_{-}$}}%
\end{picture}}%
  \end{center}
  \caption{Example of $D_0$ and $O_{-}$}
  \label{fig:base}
 \end{minipage}
 \begin{minipage}{0.5\hsize}
  \begin{center}
{\unitlength 0.1in%
\begin{picture}(22.5400,18.0600)(3.1600,-21.3300)%
%
\special{sh 0.300}%
\special{ia 1323 657 265 265 0.0000000 6.2831853}%
\special{pn 8}%
\special{ar 1323 657 265 265 0.0000000 6.2831853}%
%
\special{pn 0}%
\special{sh 0.300}%
\special{pa 434 1604}%
\special{pa 757 1604}%
\special{pa 757 1925}%
\special{pa 434 1925}%
\special{pa 434 1604}%
\special{ip}%
\special{pn 8}%
\special{pa 434 1604}%
\special{pa 757 1604}%
\special{pa 757 1925}%
\special{pa 434 1925}%
\special{pa 434 1604}%
\special{pa 757 1604}%
\special{fp}%
%
\special{pn 0}%
\special{sh 0.300}%
\special{pa 2167 1604}%
\special{pa 1921 1973}%
\special{pa 2414 1973}%
\special{pa 2414 1973}%
\special{pa 2167 1604}%
\special{ip}%
\special{pn 8}%
\special{pa 2167 1604}%
\special{pa 1921 1973}%
\special{pa 2414 1973}%
\special{pa 2167 1604}%
\special{pa 1921 1973}%
\special{fp}%
%
\special{pn 8}%
\special{pa 360 2057}%
\special{pa 395 2065}%
\special{pa 430 2072}%
\special{pa 464 2080}%
\special{pa 498 2086}%
\special{pa 532 2091}%
\special{pa 566 2095}%
\special{pa 598 2097}%
\special{pa 630 2097}%
\special{pa 662 2095}%
\special{pa 692 2091}%
\special{pa 722 2084}%
\special{pa 750 2074}%
\special{pa 777 2061}%
\special{pa 803 2044}%
\special{pa 828 2024}%
\special{pa 851 2001}%
\special{pa 872 1976}%
\special{pa 891 1949}%
\special{pa 908 1920}%
\special{pa 923 1890}%
\special{pa 936 1858}%
\special{pa 947 1825}%
\special{pa 957 1791}%
\special{pa 965 1756}%
\special{pa 972 1720}%
\special{pa 978 1684}%
\special{pa 983 1647}%
\special{pa 991 1573}%
\special{pa 995 1535}%
\special{pa 1004 1424}%
\special{pa 1007 1388}%
\special{pa 1011 1352}%
\special{pa 1016 1317}%
\special{pa 1021 1283}%
\special{pa 1027 1251}%
\special{pa 1034 1219}%
\special{pa 1042 1189}%
\special{pa 1052 1160}%
\special{pa 1063 1133}%
\special{pa 1076 1108}%
\special{pa 1091 1085}%
\special{pa 1108 1064}%
\special{pa 1127 1045}%
\special{pa 1148 1028}%
\special{pa 1172 1014}%
\special{pa 1199 1002}%
\special{pa 1228 994}%
\special{pa 1260 988}%
\special{pa 1293 985}%
\special{pa 1327 985}%
\special{pa 1360 987}%
\special{pa 1392 993}%
\special{pa 1422 1002}%
\special{pa 1449 1013}%
\special{pa 1473 1027}%
\special{pa 1495 1043}%
\special{pa 1514 1063}%
\special{pa 1531 1084}%
\special{pa 1546 1107}%
\special{pa 1560 1133}%
\special{pa 1571 1160}%
\special{pa 1581 1189}%
\special{pa 1590 1219}%
\special{pa 1597 1251}%
\special{pa 1603 1284}%
\special{pa 1608 1318}%
\special{pa 1613 1353}%
\special{pa 1617 1389}%
\special{pa 1620 1425}%
\special{pa 1624 1462}%
\special{pa 1627 1499}%
\special{pa 1630 1537}%
\special{pa 1633 1574}%
\special{pa 1637 1611}%
\special{pa 1641 1649}%
\special{pa 1647 1685}%
\special{pa 1652 1721}%
\special{pa 1659 1757}%
\special{pa 1667 1791}%
\special{pa 1677 1825}%
\special{pa 1688 1857}%
\special{pa 1700 1889}%
\special{pa 1715 1918}%
\special{pa 1731 1946}%
\special{pa 1750 1973}%
\special{pa 1771 1997}%
\special{pa 1793 2020}%
\special{pa 1818 2041}%
\special{pa 1845 2059}%
\special{pa 1873 2076}%
\special{pa 1903 2090}%
\special{pa 1934 2103}%
\special{pa 1966 2113}%
\special{pa 1999 2121}%
\special{pa 2033 2127}%
\special{pa 2067 2131}%
\special{pa 2102 2132}%
\special{pa 2136 2132}%
\special{pa 2170 2129}%
\special{pa 2204 2124}%
\special{pa 2237 2117}%
\special{pa 2269 2107}%
\special{pa 2301 2095}%
\special{pa 2331 2081}%
\special{pa 2359 2064}%
\special{pa 2386 2045}%
\special{pa 2411 2024}%
\special{pa 2434 2002}%
\special{pa 2455 1978}%
\special{pa 2474 1953}%
\special{pa 2492 1928}%
\special{pa 2507 1902}%
\special{pa 2521 1875}%
\special{pa 2533 1849}%
\special{pa 2543 1822}%
\special{pa 2552 1794}%
\special{pa 2559 1766}%
\special{pa 2564 1738}%
\special{pa 2568 1710}%
\special{pa 2570 1681}%
\special{pa 2570 1652}%
\special{pa 2569 1623}%
\special{pa 2567 1594}%
\special{pa 2563 1564}%
\special{pa 2557 1534}%
\special{pa 2550 1504}%
\special{pa 2542 1474}%
\special{pa 2533 1444}%
\special{pa 2522 1413}%
\special{pa 2510 1383}%
\special{pa 2497 1353}%
\special{pa 2483 1322}%
\special{pa 2468 1292}%
\special{pa 2451 1261}%
\special{pa 2433 1231}%
\special{pa 2415 1200}%
\special{pa 2395 1170}%
\special{pa 2353 1110}%
\special{pa 2307 1050}%
\special{pa 2283 1020}%
\special{pa 2258 991}%
\special{pa 2232 962}%
\special{pa 2178 904}%
\special{pa 2122 848}%
\special{pa 2093 820}%
\special{pa 2063 792}%
\special{pa 2033 765}%
\special{pa 1971 713}%
\special{pa 1907 661}%
\special{pa 1875 637}%
\special{pa 1842 612}%
\special{pa 1809 588}%
\special{pa 1776 565}%
\special{pa 1742 542}%
\special{pa 1708 520}%
\special{pa 1675 499}%
\special{pa 1641 478}%
\special{pa 1606 458}%
\special{pa 1572 438}%
\special{pa 1504 402}%
\special{pa 1470 386}%
\special{pa 1437 372}%
\special{pa 1404 359}%
\special{pa 1371 348}%
\special{pa 1338 339}%
\special{pa 1307 332}%
\special{pa 1275 328}%
\special{pa 1245 326}%
\special{pa 1215 327}%
\special{pa 1186 331}%
\special{pa 1158 339}%
\special{pa 1131 349}%
\special{pa 1105 363}%
\special{pa 1080 380}%
\special{pa 1056 401}%
\special{pa 1034 424}%
\special{pa 1012 449}%
\special{pa 992 475}%
\special{pa 972 502}%
\special{pa 954 529}%
\special{pa 937 557}%
\special{pa 921 585}%
\special{pa 905 614}%
\special{pa 890 643}%
\special{pa 876 672}%
\special{pa 850 732}%
\special{pa 826 792}%
\special{pa 814 823}%
\special{pa 803 854}%
\special{pa 792 884}%
\special{pa 782 915}%
\special{pa 771 946}%
\special{pa 761 977}%
\special{pa 750 1007}%
\special{pa 739 1038}%
\special{pa 729 1069}%
\special{pa 707 1129}%
\special{pa 683 1189}%
\special{pa 671 1218}%
\special{pa 658 1247}%
\special{pa 644 1276}%
\special{pa 630 1304}%
\special{pa 615 1332}%
\special{pa 599 1359}%
\special{pa 582 1386}%
\special{pa 565 1412}%
\special{pa 527 1462}%
\special{pa 506 1487}%
\special{pa 484 1510}%
\special{pa 461 1533}%
\special{pa 436 1555}%
\special{pa 412 1578}%
\special{pa 389 1600}%
\special{pa 369 1623}%
\special{pa 351 1648}%
\special{pa 338 1674}%
\special{pa 328 1702}%
\special{pa 321 1731}%
\special{pa 317 1762}%
\special{pa 316 1793}%
\special{pa 317 1826}%
\special{pa 320 1859}%
\special{pa 325 1893}%
\special{pa 331 1927}%
\special{pa 338 1962}%
\special{pa 346 1998}%
\special{pa 354 2033}%
\special{pa 360 2057}%
\special{fp}%
%
\special{pn 8}%
\special{pn 8}%
\special{pa 676 1741}%
\special{pa 676 1749}%
\special{fp}%
\special{pa 668 1781}%
\special{pa 665 1787}%
\special{fp}%
\special{pa 648 1813}%
\special{pa 642 1818}%
\special{fp}%
\special{pa 613 1837}%
\special{pa 607 1839}%
\special{fp}%
\special{pa 573 1845}%
\special{pa 565 1845}%
\special{fp}%
\special{pa 532 1837}%
\special{pa 525 1834}%
\special{fp}%
\special{pa 498 1814}%
\special{pa 493 1808}%
\special{fp}%
\special{pa 476 1781}%
\special{pa 474 1774}%
\special{fp}%
\special{pa 468 1740}%
\special{pa 468 1731}%
\special{fp}%
\special{pa 476 1700}%
\special{pa 479 1694}%
\special{fp}%
\special{pa 498 1668}%
\special{pa 503 1663}%
\special{fp}%
\special{pa 531 1646}%
\special{pa 536 1643}%
\special{fp}%
\special{pa 571 1637}%
\special{pa 579 1637}%
\special{fp}%
\special{pa 612 1645}%
\special{pa 619 1648}%
\special{fp}%
\special{pa 646 1668}%
\special{pa 651 1673}%
\special{fp}%
\special{pa 668 1700}%
\special{pa 670 1707}%
\special{fp}%
\special{pa 676 1741}%
\special{pa 676 1741}%
\special{fp}%
%
\special{pn 8}%
\special{pn 8}%
\special{pa 1312 682}%
\special{pa 1312 690}%
\special{fp}%
\special{pa 1303 724}%
\special{pa 1300 730}%
\special{fp}%
\special{pa 1281 757}%
\special{pa 1275 762}%
\special{fp}%
\special{pa 1247 780}%
\special{pa 1241 782}%
\special{fp}%
\special{pa 1205 788}%
\special{pa 1197 788}%
\special{fp}%
\special{pa 1164 779}%
\special{pa 1157 776}%
\special{fp}%
\special{pa 1131 756}%
\special{pa 1126 752}%
\special{fp}%
\special{pa 1109 724}%
\special{pa 1106 717}%
\special{fp}%
\special{pa 1100 684}%
\special{pa 1100 675}%
\special{fp}%
\special{pa 1108 642}%
\special{pa 1111 635}%
\special{fp}%
\special{pa 1129 609}%
\special{pa 1134 604}%
\special{fp}%
\special{pa 1161 586}%
\special{pa 1168 583}%
\special{fp}%
\special{pa 1204 576}%
\special{pa 1212 576}%
\special{fp}%
\special{pa 1247 584}%
\special{pa 1253 587}%
\special{fp}%
\special{pa 1280 606}%
\special{pa 1285 611}%
\special{fp}%
\special{pa 1303 640}%
\special{pa 1306 646}%
\special{fp}%
\special{pa 1312 682}%
\special{pa 1312 682}%
\special{fp}%
\put(8.8900,-12.1200){\makebox(0,0)[rb]{$x$}}%
%
\special{pn 8}%
\special{pa 572 1741}%
\special{pa 1206 682}%
\special{dt 0.045}%
%
\special{pn 8}%
\special{pa 889 1317}%
\special{pa 995 1142}%
\special{fp}%
\special{sh 1}%
\special{pa 995 1142}%
\special{pa 943 1189}%
\special{pa 967 1188}%
\special{pa 978 1209}%
\special{pa 995 1142}%
\special{fp}%
\put(9.4100,-12.4400){\makebox(0,0)[lt]{$v$}}%
%
\special{pn 4}%
\special{sh 1}%
\special{ar 889 1212 8 8 0 6.2831853}%
\special{sh 1}%
\special{ar 889 1212 8 8 0 6.2831853}%
\end{picture}}%
  \end{center}
  \caption{Strategy}
  \label{fig:0525}
 \end{minipage}
\end{figure}
We study the asymptotic behavior of solutions to \eqref{eq:main}. To explain an outline of the proof of the main theorem (Theorem \ref{thm:conv_general}), at the moment, we identify $u^{\epsilon}$ with $u$ and consider a specific figure (Figure \ref{fig:base}). Since we consider 0 level set of solutions to \eqref{eq:application}, our concern is whether the game cost is positive or negative. Thus, from \sente's point of view, the victory condition is that the game cost becomes positive. Namely, from \gote's point of view, the victory condition is that the game cost becomes negative. There is no need to give optimal strategies. Hereafter, even if a strategy taken by the players is not optimal, we often use present tense such as "\sente takes some strategy when he is in some domain". To show that the asymptotic shape is $Co(O_{-})$, we have to prove that \sente wins if he starts from $Co(O_{-})$ and \gote wins if \sente starts from $\overline{Co(O_{-})}^c$. (We avoid the argument on the boundary of $Co(O_{-})$.) Furthermore, by the rule of the game explained above, we see that the victory condition of \sente is whether he reaches $O_{-}$ at some round or $D_0$ at the final round. 

The easiest situation for \sente is that the initial game position $x\in O_{-}$. In this case, it suffices for \sente to quit the game at the first round and gain the stopping cost $\Psi_{-}(x)>0$. If we take $x$ as shown in Figure \ref{fig:0525}, a strategy for \sente to win is the following: He keeps taking $v$ parallel to the dotted line segment as in Figure \ref{fig:0525} until he reaches the domain inside the dotted circle. Once he gets there, he quits the game and gains the positive stopping cost. Even if he does not get there, he can gain the positive terminal cost at the final round of the game because the dotted line segment in Figure \ref{fig:0525} is contained in $D_0$.

\begin{figure}[h]
\begin{center}
\input{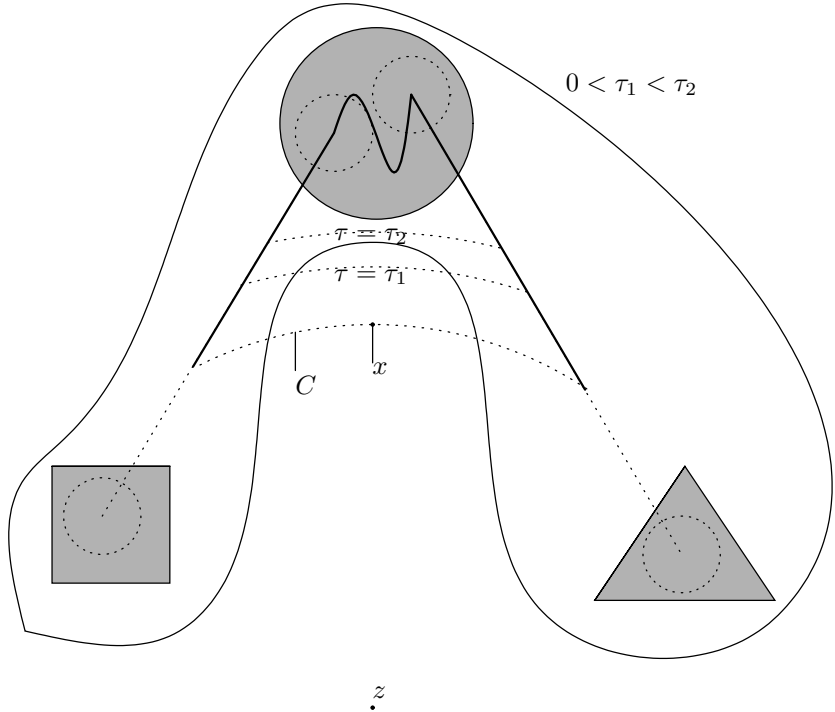}
\end{center}
\caption{Strategy for $x\in Co(O_{-})$}
\label{fig:st0611}
\end{figure}

For the other $x\in Co(O_{-})$, we consider a strategy for \sente to reach the domain from where we above overview that he could win if he started. To construct it, we prepare a type of strategies of the game called {\it \zcenp{}}, which is also introduced in \cite{KS.06} and \cite[Lemma 2.5 2.6]{liu}. See Definition \ref{dfn:const0}. If \sente takes a\zcenp{}, he can choose his favorable point $z\in\mathbb{R}^2$ and can control the distance from $z$ to game positions regardless of \gote's choices as follows:
\begin{equation}
\nonumber
\lvert x_n-z\rvert=\sqrt{\lvert x_0-z\rvert^2+2n\epsilon^2}.
\end{equation} 
In paticular $\lvert x_n-z\rvert$ is monotonically increasing with respect to $n$ and, denoting the game time $n\epsilon^2$ by $\tau$, it goes to infinity as $\tau\to\infty$. Figure \ref{fig:st0611} shows an example of $x\in Co(O_{-})$ and an appropriate\zcenp{}. In Figure \ref{fig:st0611} the center of the arc $C$ is $z$. \sente's strategy is to choose this $z$ and keep taking the concentric strategy until he reaches a neighborhood of the bold curve in $D_0$. Since the domain enclosed by the arc $C$ and the bold curve is bounded, he indeed reaches a neighborhood of the bold curve. Therefore he wins if he starts at this initial position $x$. 

In the main theorem we state a condition on $D_0$ and $O_{-}$ that we are able to apply above technique. To indicate above bounded domain, we construct an appropriate Jordan closed curve in the proof and Appendix \ref{sec:curve} and then use the Jordan curve theorem.

One also define a\zcenc{} of \gote (Definition \ref{dfn:const0}) that has similar effect to that of \sente.  Namely if \gote chooses a point $z$ and takes the\zcenc{}, she can force the distance $\lvert x_n-z\rvert$ to be monotonically increasing with respect to $n$ and go to infinity as $\tau\to\infty$. For $x\in\overline{Co(O_{-})}^c$ we can take an open ball $B$ such that $\overline{Co(O_{-})}\subset B$ and $x\in B^c$ by the hyperplane separation theorem and the boundedness of $\overline{Co(O_{-})}$. If \gote chooses $z$ and takes the\zcenc{}, she wins for sufficiently large $\tau$ owe to the boundedness of $D_0$. If we assume a kind of strict convexity on the obstacle $O_{-}$, we can take above open ball $B$ for $x\in\overline{Co(O_{-})}^c$ whose radius does not depend on $x$. This means that the moving surface sticks to the obstacle in finite time (Theorem \ref{thm:stick}).

\paragraph{Literature.}
We give some other related works on the asymptotic behavior of solutions to obstacle problems for the mean curvature flow equation. Spadaro considers \eqref{eq:main} to characterize the mean-convex hull set in his unpublished paper \cite{Spadaro}. He considers \eqref{eq:main} by a variational discrete scheme, which is different from our approach, but is guaranteed to approximate the viscosity solution to \eqref{eq:application} by \cite{Mercier}. According to \cite{Spadaro}, the part of the limit hypersurface that does not touch the obstacle is a minimal surface. (This result enhances the plausibility of our expectation.) Compared to our result, his result works in higher dimensional case $d\le 7$, while it needs to assume at least that the initial set $D_0$ is convex when $d=2$. For $d\ge 8$, \cite[Proposition 4.2]{Spadaro20} implies that the limit hypersurface may have non empty boundary.
\cite{Mercier_Novoga} proves the convergence of moving surfaces in a situation that
both initial surface and obstacles are given as periodic graphs. For problems with driving force, \cite{gigachan} proves the solution $u(x,t)$ to the problem \eqref{eq:obstacle} with $f=0$ (We will introduce it later.) converges as $t\to \infty$ to the stationary solution. They also give the result concerning to the shape of the stationary solution. However it is limited to the case where the initial data and the obstacle function are radially symmetric.
 
Concerning to an approach other than the level set method, Takasao \cite{Takasao21} considers an obstacle problem for the mean curvature flow equation in the sense of Brakke's mean curvature flow (\cite{Brakke}). He proves the global existence of the weak solution by using the Allen-Cahn equation with forcing term.

While \cite{koike,Takasao21} and this paper consider given obstacle problems, they arise from many different situations. In \cite{GMT.16} an obstacle problem naturally appears in the motion of the top and the bottom of the solution of birth and spread type equations though the equations have no obstacle functions. \cite{LiuYamada} shows that large exponent limit of power mean curvature flow equation is formulated by an obstacle problem involving 1-Laplacian.

\paragraph{Organization.}
This paper is organized as follows. Section \ref{sec:pr2} contains definitions, notations and lemmas that are needed to prove our results. In Section \ref{sec:main} we prove the theorems on the asymptotic shape of the solution to \eqref{eq:main}. Section \ref{sec:dri} is devoted to compute several examples of asymptotic shapes of solutions to problems with driving force. The convergence of the value functions of the game and some arguments to complement the proof of the main theorem are presented in appendices.

\section{Preliminary result}
\label{sec:pr2}
\subsection{Definitions and notations}
In this subsection we introduce the notion of viscosity solution to the following obstacle problem, which is the most general form in this manuscript. Moreover we remark some known results.
\begin{equation}
\label{eq:obstacle}
\begin{cases}
u_t(x,t)-\nu\lvert Du(x,t)\rvert+F(Du(x,t), D^2 u(x,t))=f(x)
& \quad \mbox{in} \ \mathbb{R}^d \times (0,\infty), \\
u(x,0)=u_0(x)
& \quad \mbox{in} \ \mathbb{R}^d,\\
\Psi_{-}(x)\le u(x,t)\le \Psi_{+}(x)
& \quad \mbox{in} \ \mathbb{R}^d \times (0,\infty),
\end{cases}
\end{equation}
where $\Psi_{+}, \Psi_{-}\in Lip(\mathbb{R}^d)$ are obstacle functions which satisfy $\Psi_{-}\le \Psi_{+}$ in $\mathbb{R}^d$. A real number $\nu$ is a constant and $f$ is a locally bounded function. This equation without obstacles is a birth and spread type equation introduced in \cite{GMT.16}. Though the source term $f$ is not considered in the proof of the main theorem, we take it into consideration in Appendix \ref{app_ginp} to prepare for the forthcoming paper ''Asymptotic shape of solutions to the mean curvature flow equation with discontinuous source terms'' by Hamamuki and the author. We do so all the more because it is natural extension in light of optimal control theory. 

We denote the upper and lower semicontinous envelope of $u$ by $u^{\ast}$ and $u_{\ast}$ respectively.
\begin{dfn}[Viscosity solution]
\label{def:vis}
\begin{enumerate}
\item A function $u$ is a viscosity subsolution of \eqref{eq:obstacle} if it satisfies the following conditions.
\begin{enumerate}
\item $\Psi_{-}(x)\le u^{\ast}(x,t)\le \Psi_{+}(x)$ for all $(x,t)\in\mathbb{R}^d\times(0,\infty)$.
\item $u^{\ast}(x,0)\le u_0(x)$ for all $x\in\mathbb{R}^d$.
\item Whenever $\phi(x,t)$ is smooth, $u^{\ast}-\phi$ has a local maximum at $(x_0,t_0)\in\mathbb{R}^d\times(0,\infty)$ and $u^{\ast}(x_0,t_0)-\Psi_{-}(x_0)>0$, we have
\begin{equation}
\phi_t(x_0,t_0)-\nu\lvert D\phi(x_0,t_0)\rvert+F_{\ast}(D\phi(x_0,t_0),D^2 \phi(x_0,t_0))\le f^{\ast}(x_0). \nonumber
\end{equation}
\end{enumerate}
\item A function $u$ is a viscosity supersolution of \eqref{eq:obstacle} if it satisfies the following conditions.
\begin{enumerate}
\item $\Psi_{-}(x)\le u_{\ast}(x,t)\le \Psi_{+}(x)$ for all $(x,t)\in\mathbb{R}^d\times(0,\infty)$.\label{def:vissol_ob_1}
\item $u_{\ast}(x,0)\ge u_0(x)$ for all $x\in\mathbb{R}^d$.
\item Whenever $\phi(x,t)$ is smooth, $u_{\ast}-\phi$ has a local minimum at $(x_0,t_0)\in\mathbb{R}^d\times(0,\infty)$ and $u_{\ast}(x_0,t_0)<\Psi_{+}(x_0)$, we have
\begin{equation}
\phi_t(x_0,t_0)-\nu\lvert D\phi(x_0,t_0)\rvert+F^{\ast}(D\phi(x_0,t_0),D^2 \phi(x_0,t_0))\ge f_{\ast}(x_0). \nonumber
\end{equation}
\end{enumerate}
\item A function $u$ is a viscosity solution of \eqref{eq:obstacle} if it is a viscosity subsolution and a viscosity supersolution of \eqref{eq:obstacle}.
\end{enumerate}
\end{dfn}

We now give the definition of solutions of the following surface evolution equation:
\begin{equation}
\label{eq:general_ev}
\begin{cases}
V=-\kappa+\nu, \ \mbox{on $\partial D_t$}, \\
O_{-}\subset D_t\subset O_{+}, 
\end{cases}
\end{equation}
where $\{D_t\}_{t>0}$ is the unknown family of open sets in $\mathbb{R}^d$. Furthermore $O_{-}$ and $O_{+}$ are fixed open sets in $\mathbb{R}^d$.
We also introduce the closed version of \eqref{eq:general_ev}:
\begin{equation}
\label{eq:closed}
\begin{cases}
V=-\kappa+\nu, \ \mbox{on $\partial E_t$} \\
C_{-}\subset E_t\subset C_{+},  
\end{cases}
\end{equation}
where $\{E_t\}_{t>0}$ is the unknown family of closed sets in $\mathbb{R}^d$. $C_{-}$ and $C_{+}$ are fixed closed sets in $\mathbb{R}^d$. The PDE \eqref{eq:obstacle} with $f=0$ is the level set equation for these surface evolution equations. Since we only consider bounded initial surfaces in this manuscript, we employ the following class of solutions:
\begin{align*}
&K_{a}(\mathbb{R}^d\times [0,\infty)):= \\
&\{u\in C(\mathbb{R}^d\times [0,\infty)) \mid \forall T>0 \ \exists R>0 \ \mbox{s.t. $u=a$ in} \ B_R^c(0)\times [0,T]\}.
\end{align*}
\begin{dfn}
\label{def:evolution}
\begin{enumerate}
\item Let $D_0$, $O_{-}$ and $O_{+}$ be open sets in $\mathbb{R}^d$. A family of open sets $\{D_t\}_{t\ge 0}$ is called an {\it open evolution} of \eqref{eq:general_ev} with $D_0$, $O_{-}$ and $O_{+}$ if there exist $\Psi_{-},\Psi_{+}\in Lip(\mathbb{R}^d)$, $u_0\in C(\mathbb{R}^d)$ and a solution $u\in K_{a}(\mathbb{R}^d\times [0,\infty))$ of \eqref{eq:obstacle} with $\Psi_{-}$, $\Psi_{+}$, $u_0$ and $f=0$ such that $O_{-}=\{x\in\mathbb{R}^d \mid \Psi_{-}(x)>0\}$, $O_{+}=\{x\in\mathbb{R}^d \mid \Psi_{+}(x)>0\}$ and $D_t=\{x\in \mathbb{R}^d \mid u(x,t)>0\}$ for $t\ge 0$.

\item Let $E_0$, $C_{-}$ and $C_{+}$ be closed sets in $\mathbb{R}^d$. A family of closed sets $\{D_t\}_{t\ge 0}$ is called an {\it closed evolution} of \eqref{eq:closed} with $E_0$, $C_{-}$ and $C_{+}$ if there exist $\Psi_{-},\Psi_{+}\in Lip(\mathbb{R}^d)$, $u_0\in C(\mathbb{R}^d)$ and a solution $u\in K_{a}(\mathbb{R}^d\times [0,\infty))$ of \eqref{eq:obstacle} with $\Psi_{-}$, $\Psi_{+}$, $u_0$ and $f=0$ such that $C_{-}=\{x\in\mathbb{R}^d \mid \Psi_{-}(x)\ge 0\}$, $C_{+}=\{x\in\mathbb{R}^d \mid \Psi_{+}(x)\ge 0\}$ and $E_t=\{x\in \mathbb{R}^d \mid u(x,t)\ge 0\}$ for $t\ge 0$. 
\end{enumerate}
\end{dfn}
\begin{rem}
The open evolutions and the closed evolutions uniquely exist (\cite{Mercier}).
\end{rem}
\begin{rem}
Our main equation in this manuscript is \eqref{eq:main}, which has an obstacle on one side. We interpret problems that have only $O_{-}$ as \eqref{eq:general_ev} with $O_{+}=\mathbb{R}^d$ and problems that have only $O_{+}$ as \eqref{eq:general_ev} with $O_{-}=\emptyset$.
\end{rem}
Whenever we consider \eqref{eq:general_ev} in this manuscript, we simultaniouly consider the solution $\{E_t\}_{t\ge 0}$ to \eqref{eq:closed} with $E_0=\overline{D_0}$, $C_{-}=\overline{O_{-}}$ and $C_{+}=\overline{O_{+}}$. Throughout the manuscript, we denote an open evolution by $\{D_t\}_{t\ge 0}$ and a closed evolution by $\{E_t\}_{t\ge 0}$. As explained above, we assume that $D_0$ is bounded.

\paragraph{Notations.}
For a point $z\in\mathbb{R}^d$, we denote the set $\{x\in\mathbb{R}^d \mid \lvert x-z\rvert<r\}$ by $B_r(z)$ or sometimes $B(z,r)$. For a set $S\subset \mathbb{R}^d$, we denote the set $\{x\in\mathbb{R}^d \mid dist(x,S)<r\}$ by $B_r(S)$. We denote by $S^{d-1}$ the set of unit vectors in $\mathbb{R}^d$. The line segment with end points $x$ and $y$ will be denoted by $l_{x,y}$. When two lines $l_1$ and $l_2$ are parallel, we will write $l_1 \parallel l_2$. For a set $A$, we denote the convex hull of $A$ by $Co(A)$. For a family of sets $\{D_t\}_{t\ge 0}$, we define
\[\varlimsup_{t\to\infty}D_t:=\bigcap_{\tau>0}\bigcup_{t>\tau}D_t, \ \varliminf_{t\to\infty}D_t:=\bigcup_{\tau>0}\bigcap_{t>\tau}D_t.\]
If $\varlimsup_{t\to\infty}D_t=\varliminf_{t\to\infty}D_t$, we will write
\[\lim_{t\to\infty}D_t:=\varlimsup_{t\to\infty}D_t=\varliminf_{t\to\infty}D_t.\]
\subsection{Basic strategy of the game}
\label{subsec:bsotg}
We prepare special strategies of both players that we explained in Section \ref{sec:Intro}. 
\begin{dfn}[Concentric strategy]
\label{dfn:const0}
Let $\epsilon>0$ and $z\in\mathbb{R}^d$. Let $x\in\mathbb{R}^d$ be the current position of the game. 
\begin{enumerate}
\item A set of $d-1$ orthogonal unit vectors $v^{j}\in S^{d-1} (j=1,2,\cdots d-1)$ chosen by \sente is called a {\it \zcenp{$z$}} (by \sente) if 
$\langle v^{j}, x-z\rangle=0$ for all $j$, where $\langle \cdot, \cdot\rangle$ stands for the Euclidean inner product.
\item Let $\{v^1, v^2, \cdots, v^{d-1}\}$ be a choice by \sente in the same round. A choice $(b^1,b^2,\cdots,b^{d-1})\in \{\pm1\}^{d-1}$ by \gote is called a {\it \zcenc{$z$}} (by \gote) if $\langle b^j v^j, x-z\rangle \ge0$ for all $j$.
\end{enumerate}
\end{dfn}
One can easily understand the behaviors of trajectories given when one player takes above strategies. Let $d_n=\lvert x_n-z\rvert$ for fixed $z\in\mathbb{R}^d$. If \sente takes a \zcenp{$z$} through the game, then we see that the sequence $\{d_n\}$ satisfies 
\begin{equation}
R_{n+1}=\sqrt{R_n^2+2(d-1)\epsilon^2}
\label{seq:12}
\end{equation}
by the Pythagorean theorem regardless of \gote's choices. The solution $\{R_n\}$ of \eqref{seq:12} is explicitly obtained by
\begin{equation}
\label{game_traj}
R_n=\sqrt{R_0^2+2(d-1)n\epsilon^2}.
\end{equation} 

On the other hand, if \gote takes a \zcenc{$z$} through the game, we have
\begin{align*}
\left\lvert x+\sum_{j}(\sqrt{2}\epsilon b^j v^j)-z\right\rvert^2&=\lvert x-z\rvert^2+2(d-1)\epsilon^2+\sum_{j}\sqrt{2}\epsilon \langle b^j v^j, x-z\rangle \\
&\ge \lvert x-z\rvert^2+2(d-1)\epsilon^2,
\end{align*}
which implies $d_{n+1}\ge\sqrt{d_n^2+2(d-1)\epsilon^2}$. Therefore we obtain $d_n\ge \sqrt{d_0^2+2(d-1)n\epsilon^2}$.

\begin{rem}
When \gote takes a \zcenc{$z$}, she can control the distance $|x_n-z|$. However we notice that she can not control the moves in tangential direction. For instance, let $d=2$, $z=(0,0)$ and the current game position $x_n=(0,1)$. If \sente chooses $v=(1,0)$ at this point, both $b=1$ and $b=-1$ are $(0,0)$ concentric strategies by Carol. One may think that if \gote makes the further decision that for \sente's choice $v$ tangential to the circle centered at the origin passing through $x_n$, she chooses $b$ so that $bv$ becomes clockwise, then she could control the trajectory of the game to be clockwise. However this is not true. \gote's greedy attempt to move as she pleases in the tangential direction will be thwarted by Paul. Indeed when \gote makes above decision, \sente can move counterclockwise while the distance $|x_n-z|$ meets almost the condition \eqref{game_traj} by slightly leaning the vector $v$ from tangential one. (e.g. $v=(\cos{-\epsilon^2},\sin{-\epsilon^2})$ at $x_n=(0,1)$)
\end{rem}

\section{Asymptotic behavior of solutions}
\label{sec:main}Throughout this section we consider the main equation \eqref{eq:main}.

In the following lemma, we estimate the asymptotic shape from above by considering \gote's strategies.
\begin{lem}
\label{lem:elgeom}
\[\varlimsup_{t\to \infty}E_t\subset Co(C_{-}).\]
\end{lem}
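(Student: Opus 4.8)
The plan is to argue pointwise via the game: for each $x_0\notin Co(C_-)$ I will show that the value functions obey $u^\epsilon(x_0,t)<0$ for all large $t$, uniformly in small $\epsilon$, and then pass to the half-relaxed limits. Since the obstacle $O_-$, hence $C_-=\overline{O_-}$, is bounded, $C_-$ is compact and $K:=Co(C_-)$ is compact and convex (Carath\'eodory), so this can be carried out for every $x_0\notin K$. Once $u^\epsilon(x_0,t)<0$ is established with a bound independent of $\epsilon$ on a neighborhood of $x_0$, Proposition~\ref{prop:gconv_cor} together with the comparison principle for \eqref{eq:application} (which forces $\overline u=\underline u=:u$, the solution defining $E_t$) gives $u(x_0,t)<0$, i.e. $x_0\notin E_t=\{u(\cdot,t)\ge 0\}$, for all large $t$; hence $x_0\notin\varlimsup_{t\to\infty}E_t$.

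\emph{Separation.} Fix $x_0\notin K$ and put $\delta=\tfrac12\,dist(x_0,K)>0$. The closed $\delta$-neighborhood $K^\delta$ is compact and convex and $x_0\notin K^\delta$, so by the hyperplane separation theorem there are $z\in\mathbb{R}^d$ and $r>0$ with $K^\delta\subset B(z,r)$ and $|x_0-z|=:r_0>r$ (separate $x_0$ from $K^\delta$, push the center $z$ off to infinity along the separating normal on the $K$-side, and take the corresponding radius; compactness of $K^\delta$ makes the resulting large ball contain it). In particular $C_-\subset B(z,r)$, so $\Psi_-<0$ on $\overline{B(z,r)}^c$; since $u\in K_a$ and $u\ge\Psi_-$ force $\Psi_-\le a<0$ outside a large ball, one deduces $a':=\sup_{\mathbb{R}^d\setminus\overline{B(z,r)}}\Psi_-<0$.

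\emph{Carol's play and the payoff bound.} Let \gote play the \zcencr{$z$} (Definition~\ref{dfn:const0}) in every round. Then, whatever \sente chooses, every game position satisfies $|x_n-z|\ge\sqrt{r_0^2+2(d-1)n\epsilon^2}\ge r_0>r$, so $x_n\in\overline{B(z,r)}^c$ for all $n$. Consequently: if \sente quits at some round $k$, the stopping cost is $\Psi_-(x_k)\le a'<0$; if \sente never quits, the terminal cost is $u_0(x_N)$ with $N\sim t\epsilon^{-2}$, and since $|x_N|\ge\sqrt{r_0^2+2(d-1)N\epsilon^2}-|z|$, there is $\tau_0=\tau_0(z,R,d)$ such that for $t>\tau_0$ and all small $\epsilon$ we have $|x_N|>R$, whence $u_0(x_N)=a<0$ by \eqref{assum:ini}. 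Taking \sente's best response to this fixed strategy of \gote yields $u^\epsilon(x_0,t)\le\max\{a',a\}<0$ for $t>\tau_0$ and small $\epsilon$. The same $z,r$ work for every $y$ with $|y-z|>r$ and every $s>\tau_0$, so $\overline u(x_0,t)\le\max\{a',a\}<0$ for $t>\tau_0$, as claimed.

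\emph{Main obstacle.} The delicate point is that a reversed concentric strategy lets \gote control only the radial coordinate $|x_n-z|$, not the tangential motion (cf. the remark following Definition~\ref{dfn:const0}): \sente may steer the trajectory to any point of the unbounded region $\overline{B(z,r)}^c$, so the payoff bound must be uniform over this whole region rather than over a compact set. It nonetheless holds because the stopping region $\{\Psi_-\ge 0\}=C_-$ is trapped strictly inside $B(z,r)$ and $\Psi_-$ is eventually $\le a$, which yields the uniform constant $a'<0$; and once $t$ is large the radial lower bound pushes $x_N$ beyond $B_R(0)$, pinning the terminal cost to the fixed negative value $a$. Combining these two bounds and letting $\epsilon\to0$ completes the proof.
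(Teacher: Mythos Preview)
Your proof is correct and follows essentially the same route as the paper: separate $x_0$ from $Co(C_-)$ by a ball centered at some $z$, have \gote\ play her $z$ concentric strategy so that the game trajectory stays outside that ball, and bound both the stopping cost and (for large $t$) the terminal cost by negative constants independent of $\epsilon$. One terminological slip: what you call the ``reversed $z$ concentric strategy'' is in fact \gote's ordinary $z$ concentric strategy from Definition~\ref{dfn:const0}; the \emph{reversed} strategy is introduced only in Appendix~\ref{app_ginp} for the driving-force game and has the opposite effect (it keeps $|x_n-z|$ small rather than large), so your inequality $|x_n-z|\ge\sqrt{r_0^2+2(d-1)n\epsilon^2}$ is the property of the non-reversed strategy.
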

\begin{proof}
Let $u$ be the unique solution to \eqref{eq:application} with $u_0$ and $\Psi_{-}$ that are as in Definition \ref{def:evolution}. We notice that the conclusion holds if and only if for $x\in Co(C_{-})^c$, there exists $\tau>0$ such that $u(x,t)<0$ for $t>\tau$. To prove $u^{\epsilon}<0$, it is sufficient to give a \gote's strategy that makes the game cost negative but is not necessarily optimal one. For $x\in Co(C_{-})^{c}$ we can take an open ball $B$ such that $C_{-}\subset B$ and $x\in\partial B$ by the hyperplane separation theorem and the boundedness of $Co(C_{-})$. Let $z$ be the center of $B$ and $r=\lvert z-x\rvert$. If \gote takes a \zcenc{$z$}, then we see that regardless of \sente's choice, the game trajectory $\{x_n\}$ satisfies $\lvert x_n-z\rvert\ge \sqrt{r^2+2n(d-1)\epsilon^2}$, where $\sqrt{r^2+2n(d-1)\epsilon^2}$ is the solution of \eqref{seq:12} with $R_0=r$. Letting $\tau=2R(R+r)/(d-1)$, we have 
\begin{equation}
\label{eq:0709}
\lvert x_N-z\rvert\ge r+2R
\end{equation} 
for the last position $x_N$ of the game, where $R>0$ is a constant taken in \eqref{assum:ini}. The inequality \eqref{eq:0709} and $Co(C_{-})\subset B$ imply $dist(x_N, Co(C_{-}))\ge dist(x_N, B)\ge 2R$. Also $C_{-}\subset E_0\subset B_R(0)$ implies $Co(C_{-})\subset B_R(0)$. Hence we have $x_N\notin B_R(0)$, which means $u_0(x_N)=a<0$ by \eqref{assum:ini}. If \sente quits the game on the way, the stopping cost is at most
\[\sup_{b\in B^c}\Psi_{-}(b)<0.\]
The comparison principle (\cite{koike}) for \eqref{eq:application} and the convergence results in Appendix \ref{app_ginp} imply that $u^{\epsilon}(x,t)$ converges to $u(x,t)$ locally uniformly in $(x,t)$. See also \cite[Chapter V Lemma 1.9]{bardi} if necessary. We also notice that the uniform boundedness of $u^{\epsilon}$ is satisfied owe to the rule of the game and the boundedness of $u_0$ and $\Psi_{-}$. Since both upper bound of the terminal cost $a$ and that of the stopping cost $\sup_{b\in B^c}\Psi_{-}(b)$ do not depend on $\epsilon$, we conclude that $u(x,t)<0$ for $t>\tau$ and $x\in Co(C_{-})^{c}$.
\end{proof}

\begin{figure}[h]
\begin{center}
\input{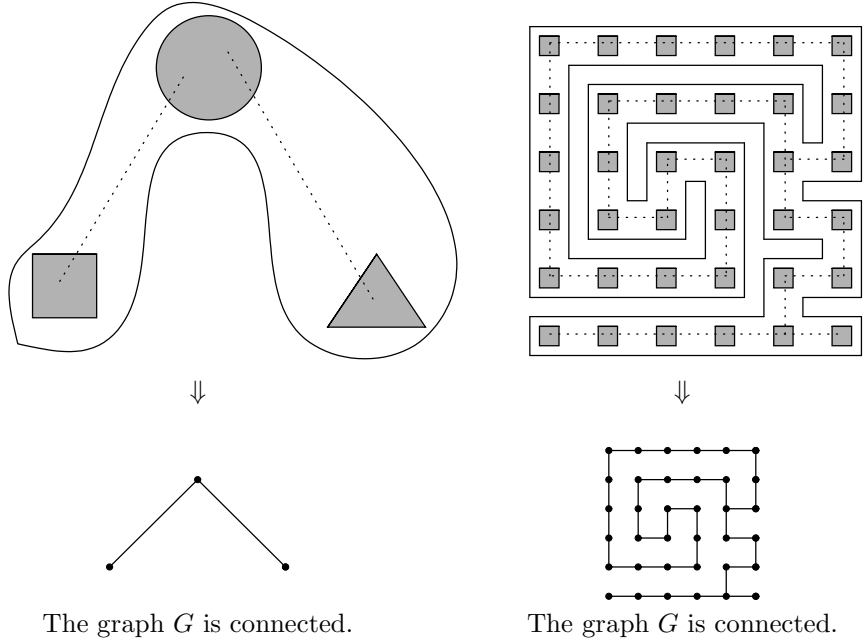}
\end{center}
\hspace{10truemm}
\caption{$D_0$ and $O_{-}$ that satisfy the assumption of Theorem \ref{thm:conv_general} (In this figure the loops of $G$ are ignored.)}
\label{fig:complex_obstacle}
\end{figure}

\begin{figure}[h]
\begin{center}
\input{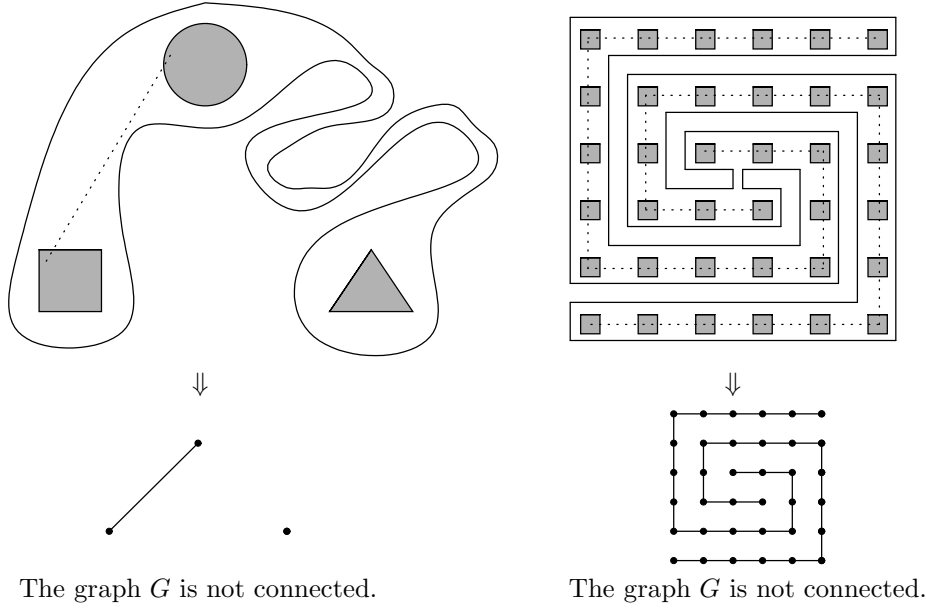}
\end{center}
\hspace{10truemm}
\caption{$D_0$ and $O_{-}$ that do not satisfy the assumption of Theorem \ref{thm:conv_general}}
\label{fig:complex_obstacle_gngn}
\end{figure}

\begin{figure}[h]
\begin{center}
\input{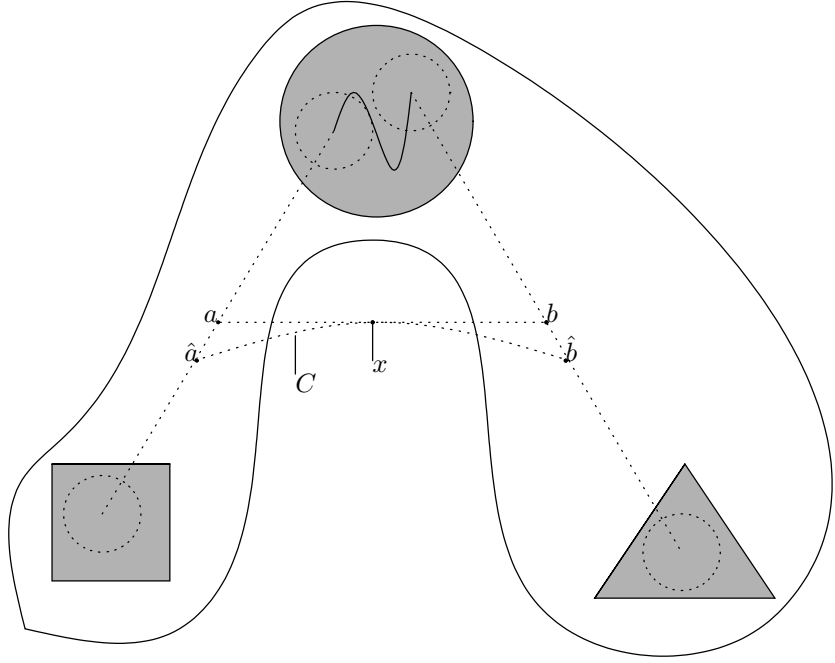}
\end{center}
\caption{strategy in the case 3)}
\label{fig:concentric_strategy}
\end{figure}

For an obstacle $O_{-}$ and an initial set $D_0$, we define the graph $G=(V,E)$ as follows: 
\[V:=\{O\subset \mathbb{R}^2 \mid O \ \mbox{ is a connected component of} \ O_{-}\},\]
\[E:=\{\langle O,P\rangle \mid O,P\in V \ \mbox{and $l_{x,y}\subset D_0$ for some $x\in O$ and $y\in P$}\}.\] 
See Appendix \ref{sec:graphtheory} for definitions of terms in graph theory.

\begin{thm}
\label{thm:conv_general}
Assume that $d=2$ and the graph $G$ is connected. Then 
\[Co(O_{-})\subset \varliminf_{t\to\infty}D_t \subset \varlimsup_{t\to\infty}D_t\subset \overline{Co(O_{-})}\]
and
\[Co(O_{-})\subset \varliminf_{t\to\infty}E_t \subset \varlimsup_{t\to\infty}E_t\subset \overline{Co(O_{-})}.\]
\end{thm}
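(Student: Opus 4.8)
\emph{Reduction to a lower bound for the open evolution.}
The two upper inclusions are essentially Lemma~\ref{lem:elgeom}. As $O_-$ is bounded, $\overline{O_-}$ is compact, so $Co(\overline{O_-})$ is compact and convex; it contains $Co(O_-)$ and lies in $\overline{Co(O_-)}$, hence $Co(\overline{O_-})=\overline{Co(O_-)}$. Applying Lemma~\ref{lem:elgeom} with $C_-=\overline{O_-}$ gives $\varlimsup_{t\to\infty}E_t\subset\overline{Co(O_-)}$. Since the open and closed evolutions are unique (\cite{Mercier}), comparison gives $D_t\subset E_t$ for every $t$, so $\varlimsup_{t\to\infty}D_t\subset\overline{Co(O_-)}$ as well; and $\varliminf\subset\varlimsup$ trivially. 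It therefore suffices to prove $Co(O_-)\subset\varliminf_{t\to\infty}D_t$, because $D_t\subset E_t$ then also yields $Co(O_-)\subset\varliminf_{t\to\infty}E_t$. So fix $x\in Co(O_-)$; the goal is to find $\tau>0$ with $u(x,t)>0$ for all $t>\tau$, where $u$ solves \eqref{eq:application} with the data of Definition~\ref{def:evolution}. As in the proof of Lemma~\ref{lem:elgeom} we have $u=\underline u$ (Proposition~\ref{prop:gconv_cor}, the comparison principle of \cite{koike}, and the convergence of Appendix~\ref{app_ginp}), so it is enough to produce $c>0$ and $\tau>0$ such that, for every sufficiently small $\epsilon$ and every $t>\tau$, \sente has a strategy (not necessarily optimal) forcing the game cost to be at least $c$.

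\emph{An elementary segment strategy.}
Since $O_-\subset D_0$ (from $O_-\subset D_t$ in \eqref{eq:main}), $D_0\cup O_-=D_0$. Call $y\in\mathbb R^2$ \emph{admissible} if $y\in l_{a,b}$ for some $a,b\in O_-$ with $l_{a,b}\subset D_0$, and let $\mathcal G$ be the set of admissible points; $\mathcal G$ is open, because a small translate of a witnessing segment still has endpoints in the open set $O_-$ and, by compactness of the segment, still lies in $D_0$. Suppose the game starts at an admissible $y$, with witnesses $a,b$, and fix $\rho>0$ with $B_\rho(a)\cup B_\rho(b)\subset O_-$. \sente plays the fixed direction $v=(b-a)/|b-a|$ at every round (cf.\ Figure~\ref{fig:0525}) and quits the moment the position falls in $B_\rho(a)\cup B_\rho(b)$. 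All positions lie on the line through $l_{a,b}$, so either the position enters $B_\rho(a)\cup B_\rho(b)\subset O_-$ and the stopping cost is at least $\min_{\overline{B_\rho(a)}\cup\overline{B_\rho(b)}}\Psi_->0$, or it stays on $l_{a,b}\subset D_0$ for all $N$ rounds and the terminal cost is $u_0(x_N)\ge\min_{l_{a,b}}u_0>0$. Either way the cost is at least a constant $c(y)>0$ independent of $\epsilon$, for every $t>0$. In particular $u(x,t)\ge c>0$ for all $t>0$ whenever $x\in O_-$ (quit at the first round) or $x\in\mathcal G$.

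\emph{Reaching $\mathcal G$ by a concentric strategy.}
It remains to treat $x\in Co(O_-)\setminus(O_-\cup\mathcal G)$, and this is where connectedness of $G$ enters. Choose a spanning tree of $G$ and, for each of its edges, a segment in $D_0$ joining the two components it links; let $\Sigma$ be the union of these segments and of the closures of the connected components of $O_-$. Then $\Sigma$ is compact and connected and $Co(O_-)\subset Co(\Sigma)$, so the geometric construction of Appendix~\ref{sec:curve} applies to $x$: there are a point $z$, with $r:=|x-z|$, an arc $C\subset\partial B_r(z)$, and a Jordan arc $\gamma\subset\mathcal G\cup O_-$ joining the endpoints of $C$, such that $J:=C\cup\gamma$ is a Jordan curve, $x$ lies in the bounded component $U$ of $\mathbb R^2\setminus J$, and $U$ has diameter at most some $M$ (see Figure~\ref{fig:st0611}). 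Now \sente plays the concentric strategy with centre $z$ (Definition~\ref{dfn:const0}). Each move being orthogonal to $x_n-z$, every edge of the polygonal trajectory stays at distance $\ge|x_n-z|\ge r$ from $z$, equal to $r$ only at $x_0=x\notin C$, so the trajectory never meets $C$. By \eqref{game_traj} with $d=2$, $|x_n-z|=\sqrt{r^2+2n\epsilon^2}\to\infty$, and since $x\in U$ is bounded the trajectory must leave $U$; as it cannot cross $C$, it crosses $\gamma$, at a round $n_0$ with $n_0\epsilon^2\le\tau_1:=M(M+2r)/2$ (from $\sqrt{r^2+2n\epsilon^2}\le r+M$), at a position within $\sqrt2\,\epsilon$ of $\gamma$, hence in the open set $\mathcal G\cup O_-$ for $\epsilon$ small. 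From round $n_0$ on, \sente switches: if the position lies in $O_-$ he quits, if it lies in $\mathcal G$ he uses the segment strategy. Taking $\tau:=\tau_1$, for $t>\tau$ and $\epsilon$ small we have $n_0<N$, so there are rounds left, and the resulting cost is at least $c>0$ uniformly in $\epsilon$. Hence $u(x,t)\ge c>0$ for $t>\tau$, i.e.\ $x\in\varliminf_{t\to\infty}D_t$, as required.

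\emph{The main difficulty.}
The one non-routine ingredient is the geometric lemma invoked above: for an arbitrary $x\in Co(O_-)\setminus(O_-\cup\mathcal G)$ and $\Sigma$ as constructed, one must produce the center $z$ and the Jordan curve $J=C\cup\gamma$ with $x$ inside $J$ and $\gamma$ running inside $\mathcal G\cup O_-$. This is exactly where connectedness of $G$ is indispensable: it makes $\Sigma$ connected, so that $x\in Co(\Sigma)$ can be fenced off from the far side of the separating arc $C$ by a curve $\gamma$ following $\Sigma$, along which the segment strategy is available. It is precisely the construction deferred to Appendix~\ref{sec:curve}, where the Jordan curve theorem is then applied. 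Everything else — the two elementary strategies, the accounting of game times, and the passage from the value functions $u^\epsilon$ to $u$ via the already-established convergence and comparison — is routine.
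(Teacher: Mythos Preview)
Your architecture is exactly the paper's: reduce to $Co(O_-)\subset\varliminf D_t$ via Lemma~\ref{lem:elgeom}, split into the three cases $x\in O_-$, $x$ on an admissible segment (your $\mathcal G$ is the paper's set $L$), and $x\in Co(O_-)\setminus L$, handling the last by a $z$-concentric strategy trapped inside a Jordan region whose non-arc boundary lies in $L$. The first two cases and the passage $u^\epsilon\to u$ are fine.

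The gap is that Case~3 is not carried out, and your attribution of it to Appendix~\ref{sec:curve} is a misreading: that appendix performs only the final bookkeeping step of extracting a Jordan subcurve from a closed curve already assembled out of finitely many arcs and segments. It does not manufacture the centre $z$, the arc $C$, or the path $\gamma$. In the paper those are produced explicitly in the body of the proof: Lemma~\ref{lem:convCara} (applicable because $L$ is open and, thanks to the connectedness of $G$, connected) gives $a,b\in L$ with $x\in l_{a,b}$; Proposition~\ref{prop:graph} threads the two $G$-edges witnessing $a$ and $b$ onto a single graph path, along which a concrete polygonal $\Gamma\subset L$ is laid; the arc $C$ is then the one through the shifted endpoints $\hat a,\hat b$ and through $x$ itself, so $x\in C$ rather than $x\in U$; and one must still check---possibly by flipping the shift to the other side of $l_{a,b}$ as in Figure~\ref{fig:switch}---that the bounded Jordan region lies on the side of $C$ away from $z$, so that the increasing distance $|x_n-z|$ drives the trajectory from $C$ into that region and not out of it. Your variant with $r=|x-z|$, $C\subset\partial B_r(z)$ and $x\in U$ forces $x$ onto the complementary arc $\partial B_r(z)\setminus C$ and requires that arc to sit inside $U$, a configuration you neither construct nor show to be attainable for general $x$. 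A spanning-tree set $\Sigma$ could in principle replace the explicit graph path, but the choice of $z$, the arc through $x$, and the orientation argument still have to be supplied; as written you have stated what is needed without establishing it.
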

\begin{rem}
Figure \ref{fig:complex_obstacle} (resp. Figure \ref{fig:complex_obstacle_gngn}) shows examples of $D_0$ and $O_{-}$ that satisfy (resp. do not satisfy) the assumption of Theorem \ref{thm:conv_general}. 
\end{rem}
\begin{proof}
For $D_0$ and $E_0$, we take $u_0$ as in Definition \ref{def:evolution}. Indeed it suffices to let
\begin{equation}
\label{good_ini}
u_0(x)=\begin{cases}
dist(x, \partial D_0), \ x\in D_0 \\
\max\{a, -dist(x, \partial D_0)\}, \ x\in D_0^c.
\end{cases}
\end{equation}
Similarly it suffices to let
\begin{equation}
\label{good_obs}
\Psi_{-}(x)=\begin{cases}
dist(x, \partial O_{-}), \ x\in O_{-} \\
\max\{a, -dist(x, \partial O_{-})\}, \ x\in O_{-}^c.
\end{cases}
\end{equation}
By Lemma \ref{lem:elgeom} and $D_t\subset E_t$, it suffices to prove $Co(O_{-})\subset \varliminf_{t\to\infty}D_t$. Namely our goal is to prove that for $x\in Co(O_{-})$, there exists $\tau>0$ such that $u(x,t)>0$ for $t>\tau$.
To prove $u^{\epsilon}>0$, it is sufficient to give a \sente's strategy, which makes the game cost positive and is not necessarily optimal one. Let $x\in Co(O_{-})$. It would be convenient to introduce the set
\[L:=\{z\in l_{x,y} \mid x,y\in O_{-}, l_{x,y}\subset D_0\}\]
in doing case analysis for $x\in Co(O_{-})$.

\bm{$1)~x\in O_{-}.$} In this case, it suffices for \sente to quit the game at the first round and gain the stopping cost $\Psi_{-}(x)>0$. Recall that $u^{\epsilon}(x,t)$ converges to $u(x,t)$ locally uniformly in $(x,t)$. Thus we obtain $u(x,t)>0$ for any $t>0$. 

\bm{$2)~x\in L\setminus O_{-}.$} Let $z,w\in O_{-}$ satisfy $x\in l_{z,w}$ and $l_{z,w}\subset D_0$. We take $\delta>0$ to satisfy $\overline{B_{\delta}(z)}\subset O_{-}$ and $\overline{B_{\delta}(w)}\subset O_{-}$. For the initial position $x$, \sente's strategy is to keep taking $v=\frac{z-x}{\lvert z-x\rvert}$ until he reaches $B_{\delta}(z)\cup B_{\delta}(w)$. If he reaches $B_{\delta}(z)\cup B_{\delta}(w)$, then he quits the game. By doing this, \sente gains positive game cost in either case he quits the game or not. See Figure \ref{fig:0525}. More precisely, \sente gains at least
\[\min\left\{\min_{y\in \overline{B_{\delta}(z)}\cup \overline{B_{\delta}(w)}}\Psi_{-}(y), \min_{y\in l_{z,w}}u_0(y)\right\}>0\] regardless of $\epsilon\in(0,\delta/\sqrt{2})$, where $\epsilon$ is taken small enough for \sente not to stride over $B_{\delta}(z)$ or $B_{\delta}(w)$. Hence, as in the case 1), we obtain $u(x,t)>0$ for any $t>0$.

\bm{$3)~x\in Co(O_{-})\setminus L.$} Henceforth we give a strategy by \sente that includes a \zcenp{$z$} and makes the game cost positive. To do so, we are going to construct a closed curve that consists of an arc $C$ with its center at $z$ and a path $\hat{\Gamma}$ in $L$. Since $O_{-}\subset L\subset Co(O_{-})$, we have $Co(O_{-})=Co(L)$. By Lemma \ref{lem:convCara} in Appendix \ref{app:set}, we can take $a,b\in L$ such that $x\in l_{a,b}$. We only show the case $a,b\notin O_{-}$, since otherwise we would prove it in a simpler manner. 

We first explain how to construct a path $\Gamma\subset L$ that contains $a$ and $b$. We take a specific path rather than just a path. By doing so, we are able to indicate a region that includes final positions of the games to guarantee that $u^{\epsilon}$ is uniformly positive. Since $a\in L$, there is a line segment that is in $D_0$, contains $a$, and has endpoints in some connected components $A$ and $B$ of $O_{-}$ respectively. Similarly there is a line segment that is in $D_0$, contains $b$, and has endpoints in some connected components $C$ and $D$ of $O_{-}$ respectively. Notice that $\langle A,B\rangle,\langle C,D\rangle\in E$, recalling $E$ is the set of unordered pairs of the graph $G$ defined above. From Proposition \ref{prop:graph} there is a path $P=(V^{\prime},E^{\prime})$ of the graph $G$ such that $A,B,C,D\in V^{\prime}$ and $\langle A,B\rangle,\langle C,D\rangle\in E^{\prime}$. Writing $V^{\prime}=\{O_0,O_1,\cdots,O_n\}$ and $E^{\prime}=\{\langle O_i,O_{i+1} \rangle \mid i=0,1,\cdots,n-1\}$, we see that for some points $y_i,\tilde{y}_{i}\in O_{i}$ $(i=0,1,\cdots,n-1)$, there are line segments $l_{\tilde{y}_i, y_{i+1}}$ $(i=0,1,\cdots,n-1)$ such that $a\in l_{\tilde{y}_0, y_{1}}$, $b\in l_{\tilde{y}_{n-1}, y_{n}}$ and $l_{\tilde{y}_i, y_{i+1}}\subset D_0$ $(i=0,1,\cdots,n-1)$. Let $\Gamma_i$ be a polygonal line in $O_i$ with endpoints $y_i$ and $\tilde{y}_i$. Now we define
\[\Gamma:=l_{\tilde{y}_0,y_1}\cup\Gamma_1\cup l_{\tilde{y}_1,y_2}\cup \Gamma_2\cup l_{\tilde{y}_2,y_3}\cup \cdots \cup l_{\tilde{y}_{n-2}, y_{n-1}}\cup\Gamma_{n-1}\cup l_{\tilde{y}_{n-1}, y_n}.\]

We may assume $l_{a,b}\nparallel l_{\tilde{y}_0,y_1}$ and $l_{a,b}\nparallel l_{\tilde{y}_{n-1}, y_n}$, since otherwise we would retake either $a$ or $b$ as an element of $O_{-}$. Without loss of generality we can also assume that $\Gamma$ and $l_{a,b}$ do not cross each other except at $a$ and $b$. By Lemma \ref{lem:topolo} in Appendix \ref{app:set} we take $\delta>0$ small enough to satisfy $B_{3\delta}(\Gamma)\subset L$, noticing that $L$ is an open set and $\Gamma$ is a compact set. Let $w_0,w_1,w_2$ be unit vectors in $\mathbb{R}^2$ such that $w_0\parallel l_{a,b}$, $w_1\parallel l_{\tilde{y}_0,y_1}$, $w_2\parallel l_{\tilde{y}_{n-1}, y_n}$ and $(w_0\cdot w_1)(w_0\cdot w_2)\ge 0$. Let $\hat{a}=a+\delta w_1$ and $\hat{b}=b+\delta w_2$. We temporarily define $C$ as the arc passing through $\hat{a}$, $\hat{b}$ and $x$. Combining $\Gamma$ and $C$, we make a closed curve $C\cup \hat{\Gamma}$, where
\[\hat{\Gamma}:=l_{\hat{a},y_1}\cup\Gamma_1\cup l_{\tilde{y}_1,y_2}\cup \Gamma_2\cup l_{\tilde{y}_2,y_3}\cup \cdots \cup l_{\tilde{y}_{n-2}, y_{n-1}}\cup\Gamma_{n-1}\cup l_{\tilde{y}_{n-1}, \hat{b}}.\]

\begin{figure}[h]
\begin{center}
\input{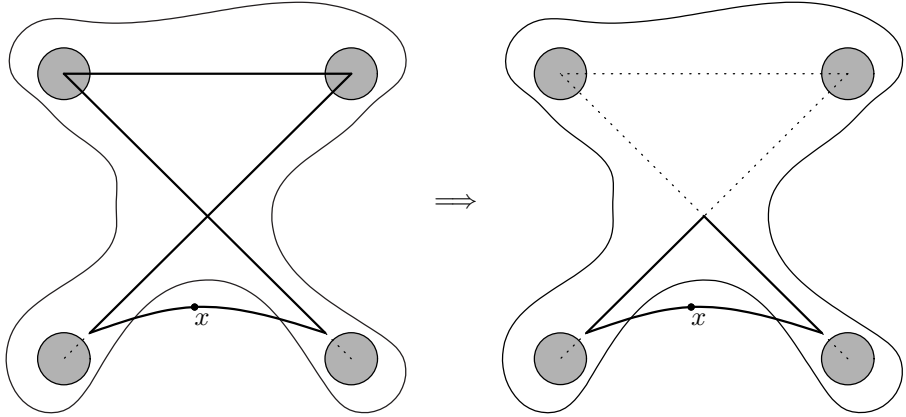}
\end{center}
\caption{make a Jordan closed curve}
\label{fig:jordan}
\end{figure}

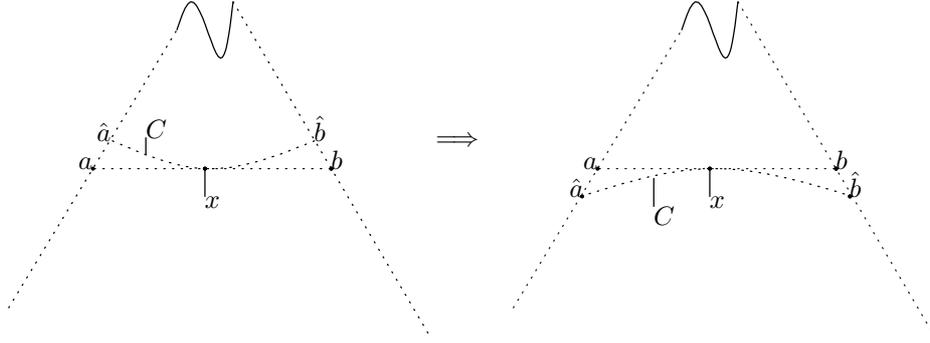
\begin{figure}[h]
\begin{center}
{\unitlength 0.1in%
\begin{picture}(47.9200,17.4300)(6.0000,-25.4300)%
%
\special{pn 8}%
\special{pa 3213 2397}%
\special{pa 4085 946}%
\special{dt 0.045}%
\special{pa 4376 800}%
\special{pa 5392 2543}%
\special{dt 0.045}%
%
\special{pn 8}%
\special{pa 3657 1671}%
\special{pa 4884 1671}%
\special{dt 0.045}%
%
\special{pn 4}%
\special{sh 1}%
\special{ar 4230 1671 8 8 0 6.2831853}%
\special{sh 1}%
\special{ar 4230 1671 8 8 0 6.2831853}%
%
\special{pn 8}%
\special{pa 3940 1722}%
\special{pa 3940 1868}%
\special{fp}%
\put(39.4000,-18.6800){\makebox(0,0)[lt]{$C$}}%
%
\special{pn 8}%
\special{pa 4230 1671}%
\special{pa 4230 1816}%
\special{fp}%
\put(42.3000,-18.1600){\makebox(0,0)[lt]{$x$}}%
%
\special{pn 8}%
\special{pn 8}%
\special{pa 3562 1816}%
\special{pa 3570 1814}%
\special{fp}%
\special{pa 3605 1802}%
\special{pa 3612 1800}%
\special{fp}%
\special{pa 3648 1789}%
\special{pa 3655 1787}%
\special{fp}%
\special{pa 3690 1776}%
\special{pa 3698 1774}%
\special{fp}%
\special{pa 3733 1763}%
\special{pa 3741 1761}%
\special{fp}%
\special{pa 3776 1750}%
\special{pa 3784 1748}%
\special{fp}%
\special{pa 3820 1739}%
\special{pa 3827 1737}%
\special{fp}%
\special{pa 3863 1727}%
\special{pa 3871 1725}%
\special{fp}%
\special{pa 3907 1716}%
\special{pa 3914 1714}%
\special{fp}%
\special{pa 3950 1706}%
\special{pa 3958 1705}%
\special{fp}%
\special{pa 3994 1697}%
\special{pa 4002 1696}%
\special{fp}%
\special{pa 4038 1690}%
\special{pa 4046 1689}%
\special{fp}%
\special{pa 4083 1683}%
\special{pa 4091 1681}%
\special{fp}%
\special{pa 4127 1678}%
\special{pa 4135 1677}%
\special{fp}%
\special{pa 4172 1674}%
\special{pa 4180 1673}%
\special{fp}%
\special{pa 4217 1671}%
\special{pa 4225 1671}%
\special{fp}%
\special{pa 4262 1671}%
\special{pa 4270 1671}%
\special{fp}%
\special{pa 4307 1672}%
\special{pa 4314 1672}%
\special{fp}%
\special{pa 4351 1675}%
\special{pa 4359 1675}%
\special{fp}%
\special{pa 4396 1679}%
\special{pa 4404 1680}%
\special{fp}%
\special{pa 4440 1684}%
\special{pa 4448 1686}%
\special{fp}%
\special{pa 4485 1691}%
\special{pa 4493 1693}%
\special{fp}%
\special{pa 4529 1699}%
\special{pa 4537 1701}%
\special{fp}%
\special{pa 4573 1708}%
\special{pa 4581 1710}%
\special{fp}%
\special{pa 4617 1718}%
\special{pa 4624 1719}%
\special{fp}%
\special{pa 4660 1728}%
\special{pa 4668 1730}%
\special{fp}%
\special{pa 4704 1739}%
\special{pa 4712 1741}%
\special{fp}%
\special{pa 4747 1751}%
\special{pa 4755 1753}%
\special{fp}%
\special{pa 4790 1763}%
\special{pa 4798 1765}%
\special{fp}%
\special{pa 4833 1775}%
\special{pa 4841 1777}%
\special{fp}%
\special{pa 4877 1788}%
\special{pa 4884 1790}%
\special{fp}%
\special{pa 4919 1801}%
\special{pa 4927 1803}%
\special{fp}%
\special{pa 4962 1814}%
\special{pa 4970 1816}%
\special{fp}%
%
\special{pn 4}%
\special{sh 1}%
\special{ar 3649 1671 8 8 0 6.2831853}%
\special{sh 1}%
\special{ar 4884 1671 8 8 0 6.2831853}%
\special{sh 1}%
\special{ar 4884 1671 8 8 0 6.2831853}%
\put(36.4900,-16.7100){\makebox(0,0)[rb]{$a$}}%
\put(48.8400,-16.7100){\makebox(0,0)[lb]{$b$}}%
%
\special{pn 4}%
\special{sh 1}%
\special{ar 3569 1816 8 8 0 6.2831853}%
\special{sh 1}%
\special{ar 4956 1816 8 8 0 6.2831853}%
\special{sh 1}%
\special{ar 4956 1816 8 8 0 6.2831853}%
\put(35.7700,-18.1600){\makebox(0,0)[rb]{$\hat{a}$}}%
\put(49.5600,-18.1600){\makebox(0,0)[lb]{$\hat{b}$}}%
%
\special{pn 8}%
\special{pa 4085 946}%
\special{pa 4099 905}%
\special{pa 4113 866}%
\special{pa 4127 834}%
\special{pa 4142 811}%
\special{pa 4156 800}%
\special{pa 4172 804}%
\special{pa 4187 821}%
\special{pa 4202 849}%
\special{pa 4218 883}%
\special{pa 4233 923}%
\special{pa 4247 964}%
\special{pa 4262 1003}%
\special{pa 4275 1039}%
\special{pa 4288 1068}%
\special{pa 4300 1087}%
\special{pa 4311 1094}%
\special{pa 4321 1089}%
\special{pa 4330 1073}%
\special{pa 4339 1047}%
\special{pa 4346 1014}%
\special{pa 4353 974}%
\special{pa 4360 929}%
\special{pa 4366 881}%
\special{pa 4372 830}%
\special{pa 4376 800}%
\special{fp}%
%
\special{pn 8}%
\special{pa 600 2397}%
\special{pa 1471 946}%
\special{dt 0.045}%
\special{pa 1762 800}%
\special{pa 2779 2543}%
\special{dt 0.045}%
%
\special{pn 8}%
\special{pa 1043 1671}%
\special{pa 2270 1671}%
\special{dt 0.045}%
%
\special{pn 4}%
\special{sh 1}%
\special{ar 1616 1671 8 8 0 6.2831853}%
\special{sh 1}%
\special{ar 1616 1671 8 8 0 6.2831853}%
%
\special{pn 8}%
\special{pa 1616 1671}%
\special{pa 1616 1816}%
\special{fp}%
\put(16.1600,-18.1600){\makebox(0,0)[lt]{$x$}}%
%
\special{pn 4}%
\special{sh 1}%
\special{ar 1036 1671 8 8 0 6.2831853}%
\special{sh 1}%
\special{ar 2270 1671 8 8 0 6.2831853}%
\special{sh 1}%
\special{ar 2270 1671 8 8 0 6.2831853}%
\put(10.3600,-16.7100){\makebox(0,0)[rb]{$a$}}%
\put(22.7000,-16.7100){\makebox(0,0)[lb]{$b$}}%
%
\special{pn 8}%
\special{pa 1471 946}%
\special{pa 1485 905}%
\special{pa 1499 866}%
\special{pa 1513 834}%
\special{pa 1528 811}%
\special{pa 1542 800}%
\special{pa 1558 804}%
\special{pa 1573 821}%
\special{pa 1588 849}%
\special{pa 1604 883}%
\special{pa 1619 923}%
\special{pa 1633 964}%
\special{pa 1648 1003}%
\special{pa 1661 1039}%
\special{pa 1674 1068}%
\special{pa 1686 1087}%
\special{pa 1697 1094}%
\special{pa 1707 1089}%
\special{pa 1716 1073}%
\special{pa 1725 1047}%
\special{pa 1732 1014}%
\special{pa 1739 974}%
\special{pa 1746 929}%
\special{pa 1752 881}%
\special{pa 1758 830}%
\special{pa 1762 800}%
\special{fp}%
%
\special{pn 8}%
\special{pn 8}%
\special{pa 1137 1526}%
\special{pa 1144 1529}%
\special{fp}%
\special{pa 1179 1544}%
\special{pa 1186 1547}%
\special{fp}%
\special{pa 1220 1561}%
\special{pa 1228 1564}%
\special{fp}%
\special{pa 1262 1579}%
\special{pa 1269 1582}%
\special{fp}%
\special{pa 1304 1596}%
\special{pa 1311 1599}%
\special{fp}%
\special{pa 1346 1611}%
\special{pa 1354 1614}%
\special{fp}%
\special{pa 1389 1626}%
\special{pa 1397 1628}%
\special{fp}%
\special{pa 1432 1639}%
\special{pa 1440 1641}%
\special{fp}%
\special{pa 1476 1651}%
\special{pa 1484 1652}%
\special{fp}%
\special{pa 1520 1660}%
\special{pa 1528 1661}%
\special{fp}%
\special{pa 1565 1667}%
\special{pa 1573 1668}%
\special{fp}%
\special{pa 1610 1670}%
\special{pa 1618 1671}%
\special{fp}%
\special{pa 1655 1671}%
\special{pa 1663 1671}%
\special{fp}%
\special{pa 1700 1669}%
\special{pa 1708 1668}%
\special{fp}%
\special{pa 1745 1665}%
\special{pa 1753 1664}%
\special{fp}%
\special{pa 1790 1656}%
\special{pa 1798 1655}%
\special{fp}%
\special{pa 1834 1647}%
\special{pa 1842 1645}%
\special{fp}%
\special{pa 1878 1636}%
\special{pa 1886 1634}%
\special{fp}%
\special{pa 1921 1624}%
\special{pa 1929 1621}%
\special{fp}%
\special{pa 1964 1609}%
\special{pa 1972 1607}%
\special{fp}%
\special{pa 2007 1595}%
\special{pa 2015 1592}%
\special{fp}%
\special{pa 2049 1579}%
\special{pa 2057 1576}%
\special{fp}%
\special{pa 2091 1562}%
\special{pa 2099 1559}%
\special{fp}%
\special{pa 2134 1546}%
\special{pa 2141 1543}%
\special{fp}%
\special{pa 2176 1529}%
\special{pa 2183 1526}%
\special{fp}%
\put(11.3000,-15.2600){\makebox(0,0)[rb]{$\hat{a}$}}%
\put(21.8300,-15.2600){\makebox(0,0)[lb]{$\hat{b}$}}%
\put(29.2300,-15.2600){\makebox(0,0){$\Longrightarrow$}}%
%
\special{pn 8}%
\special{pa 1310 1599}%
\special{pa 1310 1510}%
\special{fp}%
\put(13.1000,-15.1000){\makebox(0,0)[lb]{$C$}}%
\end{picture}}%
\end{center}
\caption{retake $C$ and $\Omega$}
\label{fig:switch}
\end{figure}

For the closed curve $C\cup \hat{\Gamma}$ and $x$ in it, there is a Jordan closed curve $\hat{C}$ such that $x\in\hat{C}$ and $\hat{C}\subset C\cup \hat{\Gamma}$ (Figure \ref{fig:jordan}). See Appendix \ref{sec:curve} in detail. Thus, based on the Jordan curve theorem, we let $\Omega$ be the bounded domain that satifies $\partial \Omega=\hat{C}$. If $\Omega$ touches the arc $C$ from inside, then we retake the other pair of $(\hat{a}, \hat{b})$ and, together with it, retake $C$ and $\Omega$ so that $\Omega$ touches the arc $C$ from outside (Figure \ref{fig:switch}). We notice that the domain enclosed by $l_{\tilde{y}_0,y_1}$, $l_{\tilde{y}_{n-1}, y_n}$ and the two arcs shown in Figure \ref{fig:switch} is bounded, and hence, so is the new $\Omega$. We further notice that we can take $C$ so that $C$ and $\hat{\Gamma}$ intersect only at $\hat{a}$ and $\hat{b}$ by taking $\delta$ smaller if necessary. 

We now give a strategy by \sente for the initial position $x$. \sente first takes a \zcenp{$z$}, where $z$ is the center of the arc $C$. If \sente enters $B_{\delta}(\Gamma_i)$ for some $i$, then he quits the game at this point. Once he enters $B_{\delta}(l_{\tilde{y}_i,y_{i+1}})$ for some $i$, he takes a similar strategy to that in the case 2). To see that it attains positive game cost, we notice two propeties of game trajectories $\{x_n\}$ given when \sente keeps taking a \zcenp{$z$} by some round. One is that $x_n\in B\left(z, \sqrt{\lvert x_0-z\rvert^2+2n\epsilon^2}\right)^c$. The other is that $x_n\in (C\cup \Omega)\setminus N_{\delta}$ implies $x_{n+1}\in\Omega$ for $\sqrt{2}\epsilon<\delta$, where we denote $(\bigcup_{i}B_{\delta}(\Gamma_i))\cup (\bigcup_{i}B_{\delta}(l_{\tilde{y}_i,y_{i+1}}))$ by $N_{\delta}$. Thus it takes at most finite time $\tau$ (satisfying $\Omega\subset B(z,\sqrt{\lvert x_0-z\rvert^2+2\tau})$) for \sente to reach $N_{\delta}$. We see that the game ends at some point in $N_{2\delta}$ and \sente gains at least 
\[\min\left\{\inf_{y\in \bigcup_{i} B_{2\delta}(\Gamma_i)}\Psi_{-}(y), \inf_{y\in \bigcup_{i}B_{\delta}(l_{\tilde{y}_i,y_{i+1}})}u_0(y)\right\}>0,\]
regardless of $\epsilon\in(0,\delta/\sqrt{2})$. Therefore we conclude that $u(x,t)>0$ for any $t>\tau$, noticing that $\tau$ may depend on $x$, but does not depend on $\epsilon$.
\end{proof}

\begin{rem}
In general, fattening of the level set may occur under the assumption of Theorem \ref{thm:conv_general}. i.e., $\overline{D_t}=E_t$ may fail at some $t>0$. Theorem \ref{thm:conv_general} states that even if the curve evolutions are not unique, they have the same limit.
\end{rem}
We give some sufficient conditions to the assumption of Theorem \ref{thm:conv_general}.
\begin{cor}
\label{thm:conv}
Assume that $Co(O_{-})\subset D_0$. Then the same conclusion as that of Theorem \ref{thm:conv_general} holds.
\end{cor}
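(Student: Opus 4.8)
The plan is to derive Corollary \ref{thm:conv} as an immediate consequence of Theorem \ref{thm:conv_general} by showing that the hypothesis $Co(O_{-})\subset D_0$ forces the graph $G=(V,E)$ to be connected. Once that combinatorial fact is established, the conclusion is literally the statement of Theorem \ref{thm:conv_general}, so no further analytic work is needed.

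First I would recall the setup: $V$ is the set of connected components of $O_{-}$, and an edge $\langle O,P\rangle$ lies in $E$ precisely when there exist $x\in O$, $y\in P$ with $l_{x,y}\subset D_0$. The key observation is that $Co(O_{-})$ is a convex set containing $O_{-}$, hence containing every segment joining two points of $O_{-}$; therefore the hypothesis $Co(O_{-})\subset D_0$ means that for \emph{any} $x,y\in O_{-}$ the whole segment $l_{x,y}$ lies in $D_0$. In particular, given two arbitrary components $O,P\in V$, picking any $x\in O$ and $y\in P$ yields $l_{x,y}\subset Co(O_{-})\subset D_0$, so $\langle O,P\rangle\in E$. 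Thus $G$ is in fact a complete graph on $V$, and a complete graph is connected (this is trivial if $V$ has only one vertex, and otherwise any two vertices are joined by an edge directly).

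Having shown $G$ is connected, I would simply invoke Theorem \ref{thm:conv_general}: under $d=2$ (which is the standing assumption of this section, as stated at the beginning of Section \ref{sec:main}) and connectedness of $G$, we get
\[
Co(O_{-})\subset \varliminf_{t\to\infty}D_t \subset \varlimsup_{t\to\infty}D_t\subset \overline{Co(O_{-})}
\]
together with the analogous chain of inclusions for $\{E_t\}$, which is exactly the conclusion of Theorem \ref{thm:conv_general}. Hence the same conclusion holds under the hypothesis of the corollary.

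I do not anticipate a genuine obstacle here; the only point requiring a sentence of care is making explicit that convexity of $Co(O_{-})$ is what promotes ``some segment in $D_0$ between $O$ and $P$'' to ``every segment'', and noting the degenerate case where $O_{-}$ is connected (so $V$ is a singleton and $G$ is trivially connected). Everything else is a direct citation of the already-proved Theorem \ref{thm:conv_general}.
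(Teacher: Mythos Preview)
Your proposal is correct and matches the paper's intended approach: the paper presents Corollary \ref{thm:conv} as a sufficient condition for the hypothesis of Theorem \ref{thm:conv_general}, and your argument that $Co(O_{-})\subset D_0$ makes the graph $G$ complete (hence connected) is exactly the verification needed.
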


\begin{figure}[htbp]
 \begin{minipage}{0.5\hsize}
  \begin{center}
{\unitlength 0.1in%
\begin{picture}(16.0000,16.0000)(10.0000,-28.0000)%
%
\special{pn 0}%
\special{sh 0.300}%
\special{pa 1200 1400}%
\special{pa 1200 2600}%
\special{pa 2400 2600}%
\special{pa 2400 1400}%
\special{pa 2200 1400}%
\special{pa 2200 2400}%
\special{pa 1400 2400}%
\special{pa 1400 1400}%
\special{pa 1400 1400}%
\special{pa 1200 1400}%
\special{ip}%
\special{pn 8}%
\special{pa 1200 1400}%
\special{pa 1200 2600}%
\special{pa 2400 2600}%
\special{pa 2400 1400}%
\special{pa 2200 1400}%
\special{pa 2200 2400}%
\special{pa 1400 2400}%
\special{pa 1400 1400}%
\special{pa 1200 1400}%
\special{pa 1200 2600}%
\special{fp}%
%
\special{pn 8}%
\special{pa 1000 1200}%
\special{pa 1000 2800}%
\special{pa 2600 2800}%
\special{pa 2600 1200}%
\special{pa 2000 1200}%
\special{pa 2000 2200}%
\special{pa 1600 2200}%
\special{pa 1600 1200}%
\special{pa 1000 1200}%
\special{pa 1000 2800}%
\special{fp}%
\end{picture}}%
  \end{center}
  \caption{$O_{-}$ is connected}
  \label{fig:cnt_obstacle_3}
 \end{minipage}
 \begin{minipage}{0.5\hsize}
  \begin{center}
{\unitlength 0.1in%
\begin{picture}(16.0000,18.0000)(10.0000,-28.0000)%
%
\special{pn 0}%
\special{sh 0.300}%
\special{pa 1200 1400}%
\special{pa 1200 2600}%
\special{pa 2400 2600}%
\special{pa 2400 1400}%
\special{pa 2200 1400}%
\special{pa 2200 2400}%
\special{pa 1400 2400}%
\special{pa 1400 1400}%
\special{pa 1400 1400}%
\special{pa 1200 1400}%
\special{ip}%
\special{pn 8}%
\special{pa 1200 1400}%
\special{pa 1200 2600}%
\special{pa 2400 2600}%
\special{pa 2400 1400}%
\special{pa 2200 1400}%
\special{pa 2200 2400}%
\special{pa 1400 2400}%
\special{pa 1400 1400}%
\special{pa 1200 1400}%
\special{pa 1200 2600}%
\special{fp}%
%
\special{pn 8}%
\special{pa 1000 1200}%
\special{pa 1000 2800}%
\special{pa 2600 2800}%
\special{pa 2600 1200}%
\special{pa 2000 1200}%
\special{pa 2000 2200}%
\special{pa 1600 2200}%
\special{pa 1600 1200}%
\special{pa 1000 1200}%
\special{pa 1000 2800}%
\special{fp}%
%
\special{pn 4}%
\special{sh 1}%
\special{ar 1800 2000 10 10 0 6.2831853}%
\special{sh 1}%
\special{ar 1800 2000 10 10 0 6.2831853}%
\put(17.7000,-20.3000){\makebox(0,0)[rt]{$x$}}%
%
\special{pn 8}%
\special{ar 1800 1500 500 500 0.6435011 2.4980915}%
%
\special{pn 8}%
\special{pn 8}%
\special{pa 2300 1500}%
\special{pa 2300 1508}%
\special{fp}%
\special{pa 2298 1545}%
\special{pa 2297 1553}%
\special{fp}%
\special{pa 2292 1589}%
\special{pa 2291 1597}%
\special{fp}%
\special{pa 2282 1633}%
\special{pa 2280 1641}%
\special{fp}%
\special{pa 2268 1676}%
\special{pa 2265 1683}%
\special{fp}%
\special{pa 2250 1717}%
\special{pa 2247 1724}%
\special{fp}%
\special{pa 2229 1757}%
\special{pa 2225 1763}%
\special{fp}%
\special{pa 2205 1794}%
\special{pa 2200 1800}%
\special{fp}%
\special{pa 2177 1829}%
\special{pa 2171 1835}%
\special{fp}%
\special{pa 2145 1862}%
\special{pa 2139 1867}%
\special{fp}%
\special{pa 2112 1891}%
\special{pa 2105 1896}%
\special{fp}%
\special{pa 2075 1918}%
\special{pa 2069 1922}%
\special{fp}%
\special{pa 2037 1940}%
\special{pa 2030 1944}%
\special{fp}%
\special{pa 1997 1960}%
\special{pa 1989 1963}%
\special{fp}%
\special{pa 1954 1975}%
\special{pa 1947 1978}%
\special{fp}%
\special{pa 1911 1987}%
\special{pa 1904 1989}%
\special{fp}%
\special{pa 1867 1996}%
\special{pa 1859 1996}%
\special{fp}%
\special{pa 1823 2000}%
\special{pa 1815 2000}%
\special{fp}%
\special{pa 1778 1999}%
\special{pa 1770 1999}%
\special{fp}%
\special{pa 1733 1995}%
\special{pa 1725 1994}%
\special{fp}%
\special{pa 1689 1988}%
\special{pa 1681 1986}%
\special{fp}%
\special{pa 1646 1976}%
\special{pa 1638 1973}%
\special{fp}%
\special{pa 1604 1960}%
\special{pa 1596 1957}%
\special{fp}%
\special{pa 1563 1940}%
\special{pa 1557 1937}%
\special{fp}%
\special{pa 1525 1917}%
\special{pa 1518 1913}%
\special{fp}%
\special{pa 1488 1891}%
\special{pa 1482 1886}%
\special{fp}%
\special{pa 1455 1862}%
\special{pa 1449 1856}%
\special{fp}%
\special{pa 1424 1829}%
\special{pa 1418 1823}%
\special{fp}%
\special{pa 1395 1794}%
\special{pa 1391 1788}%
\special{fp}%
\special{pa 1371 1756}%
\special{pa 1367 1750}%
\special{fp}%
\special{pa 1349 1717}%
\special{pa 1346 1710}%
\special{fp}%
\special{pa 1332 1676}%
\special{pa 1329 1668}%
\special{fp}%
\special{pa 1318 1633}%
\special{pa 1316 1625}%
\special{fp}%
\special{pa 1308 1589}%
\special{pa 1307 1581}%
\special{fp}%
\special{pa 1302 1545}%
\special{pa 1301 1537}%
\special{fp}%
\special{pa 1300 1500}%
\special{pa 1300 1492}%
\special{fp}%
\special{pa 1302 1455}%
\special{pa 1303 1447}%
\special{fp}%
\special{pa 1308 1411}%
\special{pa 1310 1403}%
\special{fp}%
\special{pa 1318 1367}%
\special{pa 1320 1359}%
\special{fp}%
\special{pa 1332 1324}%
\special{pa 1335 1317}%
\special{fp}%
\special{pa 1349 1283}%
\special{pa 1353 1276}%
\special{fp}%
\special{pa 1371 1243}%
\special{pa 1375 1236}%
\special{fp}%
\special{pa 1396 1206}%
\special{pa 1400 1199}%
\special{fp}%
\special{pa 1423 1171}%
\special{pa 1429 1165}%
\special{fp}%
\special{pa 1455 1138}%
\special{pa 1461 1133}%
\special{fp}%
\special{pa 1489 1108}%
\special{pa 1495 1104}%
\special{fp}%
\special{pa 1525 1082}%
\special{pa 1531 1078}%
\special{fp}%
\special{pa 1564 1059}%
\special{pa 1571 1056}%
\special{fp}%
\special{pa 1604 1040}%
\special{pa 1612 1037}%
\special{fp}%
\special{pa 1646 1024}%
\special{pa 1654 1022}%
\special{fp}%
\special{pa 1689 1012}%
\special{pa 1697 1011}%
\special{fp}%
\special{pa 1733 1005}%
\special{pa 1741 1003}%
\special{fp}%
\special{pa 1778 1001}%
\special{pa 1786 1000}%
\special{fp}%
\special{pa 1823 1000}%
\special{pa 1831 1001}%
\special{fp}%
\special{pa 1867 1004}%
\special{pa 1875 1006}%
\special{fp}%
\special{pa 1911 1013}%
\special{pa 1919 1014}%
\special{fp}%
\special{pa 1954 1025}%
\special{pa 1962 1027}%
\special{fp}%
\special{pa 1997 1040}%
\special{pa 2004 1044}%
\special{fp}%
\special{pa 2037 1060}%
\special{pa 2044 1064}%
\special{fp}%
\special{pa 2075 1082}%
\special{pa 2082 1087}%
\special{fp}%
\special{pa 2112 1109}%
\special{pa 2118 1114}%
\special{fp}%
\special{pa 2145 1139}%
\special{pa 2151 1144}%
\special{fp}%
\special{pa 2176 1171}%
\special{pa 2182 1177}%
\special{fp}%
\special{pa 2205 1206}%
\special{pa 2210 1213}%
\special{fp}%
\special{pa 2229 1244}%
\special{pa 2233 1251}%
\special{fp}%
\special{pa 2250 1283}%
\special{pa 2254 1290}%
\special{fp}%
\special{pa 2268 1324}%
\special{pa 2271 1332}%
\special{fp}%
\special{pa 2282 1367}%
\special{pa 2284 1375}%
\special{fp}%
\special{pa 2292 1411}%
\special{pa 2293 1418}%
\special{fp}%
\special{pa 2298 1455}%
\special{pa 2299 1463}%
\special{fp}%
\special{pa 2300 1500}%
\special{pa 2300 1500}%
\special{fp}%
\put(17.7000,-15.3000){\makebox(0,0)[rt]{$z$}}%
%
\special{pn 4}%
\special{sh 1}%
\special{ar 1800 1500 10 10 0 6.2831853}%
\special{sh 1}%
\special{ar 1800 1500 10 10 0 6.2831853}%
\end{picture}}%
  \end{center}
  \caption{\sente's strategy to achieve positive game cost for the initial position $x$}
  \label{fig:cnt_obstacle_st}
 \end{minipage}
\end{figure}

\begin{cor}
Assume that $O_{-}$ is connected (Figure \ref{fig:cnt_obstacle_3} and \ref{fig:cnt_obstacle_st}).
Then the same conclusion as that of Theorem \ref{thm:conv_general} holds.
\end{cor}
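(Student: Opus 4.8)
The plan is to reduce the statement directly to Theorem~\ref{thm:conv_general} by verifying that its only substantive hypothesis---connectedness of the graph $G$---holds automatically when $O_{-}$ is connected. First I would observe that a nonempty connected set has exactly one connected component, namely itself, so the vertex set of $G$ is the singleton $V=\{O_{-}\}$; a graph on one vertex is connected, hence $G$ is connected. Theorem~\ref{thm:conv_general} then yields
\[Co(O_{-})\subset \varliminf_{t\to\infty}D_t \subset \varlimsup_{t\to\infty}D_t\subset \overline{Co(O_{-})}\]
together with the same chain of inclusions for $\{E_t\}$, which is exactly the asserted conclusion. (If one wishes to include $O_{-}=\emptyset$, the problem reduces to plain curve shortening of a bounded set, so $D_t$ shrinks to a point and the inclusions hold with $Co(\emptyset)=\emptyset$; otherwise one simply assumes $O_{-}\neq\emptyset$.)

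To make the reduction transparent I would also indicate how the proof of Theorem~\ref{thm:conv_general} specializes in this situation: in case 3 the path in $G$ provided by Proposition~\ref{prop:graph} collapses to the single vertex $O_{-}$, so the polygonal path $\Gamma\subset L$ built there is just the two segments of $D_0$ through the points $a,b$ furnished by Lemma~\ref{lem:convCara}, joined by a polygonal line inside $O_{-}$---possible since an open connected subset of $\mathbb{R}^2$ is polygonally path-connected. The remaining ingredients (cases 1 and 2, and the concentric-strategy/Jordan-curve argument of case 3, illustrated for this situation in Figures~\ref{fig:cnt_obstacle_3} and~\ref{fig:cnt_obstacle_st}) then carry over unchanged.

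I do not expect a genuine obstacle: the corollary is a direct consequence of the theorem. The only items deserving a comment are the graph-theoretic conventions (a one-vertex graph is connected, and when $|V|=1$ no edge is needed for connectedness), and, if one prefers not to lean on the running hypothesis $O_{-}\subset D_0$, the remark that $O_{-}$ connected with $O_{-}\cap D_0\neq\emptyset$ gives $L\neq\emptyset$ and hence $Co(L)=Co(O_{-})$, which is what the invocation of Lemma~\ref{lem:convCara} inside the proof of Theorem~\ref{thm:conv_general} requires.
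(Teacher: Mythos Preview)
Your proposal is correct and matches the paper's intent: the corollary is stated without proof as an immediate sufficient condition for the hypothesis of Theorem~\ref{thm:conv_general}, and your observation that a connected $O_{-}$ makes $V=\{O_{-}\}$ a singleton (hence $G$ trivially connected) is exactly the reduction implied. Your additional remarks on how case~3 simplifies are accurate but go beyond what the paper records.
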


Under the following assumption, the moving surface sticks to the obstacle in finite time:
\begin{equation}
\exists r>0 \ \forall w\in \partial O_{-} \ \exists z \in B_{r}(w), \  \mbox{s.t.} \ O_{-}\subset B_{\lvert z-w\rvert}(z).
\label{cond:inball}
\end{equation}
In the following theorem, there is no need to asuume $d=2$.
\begin{thm}
\label{thm:stick}
Assume \eqref{cond:inball}. Then 
\[\lim_{t\to\infty}D_t=O_{-}\]
and
\[\lim_{t\to\infty}E_t=C_{-}.\]
Moreover there exists $\tau>0$ such that
$D_t=D_{\tau}=O_{-}$ and $E_t=E_{\tau}=C_{-}$ for $t\ge \tau$.
\end{thm}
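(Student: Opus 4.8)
The plan is to combine the already-proved upper bound (Lemma \ref{lem:elgeom}) with a matching lower-bound argument driven by Paul's strategies, exactly as in the proof of Theorem \ref{thm:conv_general}, but now exploiting the strict-convexity-type hypothesis \eqref{cond:inball} to get uniform control. First I would fix initial data $u_0$ and an obstacle function $\Psi_{-}$ as in \eqref{good_ini} and \eqref{good_obs}, so that $D_t=\{u>0\}$, $E_t=\{u\ge 0\}$, and note that $C_{-}=\overline{O_{-}}$ and $Co(C_{-})=\overline{Co(O_{-})}$. By Lemma \ref{lem:elgeom} we already have $\varlimsup_{t\to\infty}E_t\subset Co(C_{-})$; the point of the theorem is that under \eqref{cond:inball} one can replace $Co(C_{-})$ by $C_{-}$ itself, and moreover reach the limit in finite time.

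The key step is an upper estimate with a radius independent of the base point. Let $r>0$ be the constant from \eqref{cond:inball}. For $x\in O_{-}^{c}$ with $\mathrm{dist}(x,\partial O_{-})=:\rho$, pick the nearest point $w\in\partial O_{-}$ and then $z\in B_r(w)$ with $O_{-}\subset B_{|z-w|}(z)\subset B_r(z)$. Then $x\in B_r(z)^{c}$, because $|x-z|\ge |x-w|+\text{(something)}$... more precisely, since $w$ is the closest boundary point to $x$, the segment $l_{x,w}$ meets $O_{-}$ only possibly at $w$, and a short geometric argument gives $|x-z|\ge |z-w|\,$ when $x$ lies on the far side — here one must be a little careful, so I would instead argue: the ball $B_{|z-w|}(z)$ contains $O_{-}$ but not $x$ (as $x\notin O_{-}$ and $x\in\partial O_{-}$ only in the degenerate case), hence $|x-z|\ge |z-w|$, i.e.\ $|x-z|^2 - |z-w|^2\ge 0$; combined with $|x-w|=\rho$ and $|z-w|\le r$ one extracts $|x-z| - |z-w|\ge c\,\rho^2/r$ for a universal $c>0$ by the same computation as in Lemma \ref{lem:elgeom} (law of cosines at $w$). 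Now, if Carol takes a $z$-concentric strategy, the trajectory satisfies $|x_n-z|\ge\sqrt{|x-z|^2+2n(d-1)\epsilon^2}$, so after game time $\tau$ with $2\tau(d-1)\ge (|z-w|+2R)^2-|x-z|^2$ — a quantity bounded by a function of $r$, $R$ and $\rho$ only — we get $|x_N-z|\ge |z-w|+2R$, hence $\mathrm{dist}(x_N,O_{-})\ge 2R$ and $x_N\notin B_R(0)$, so $u_0(x_N)=a<0$; and the stopping cost is at most $\sup_{b\in B_{|z-w|}(z)^c}\Psi_{-}(b)<0$, which by \eqref{good_obs} is $\le -\rho$ (uniformly). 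Passing $\epsilon\searrow 0$ via the convergence $u^{\epsilon}\to u$ (comparison principle plus Appendix \ref{app_ginp}), this shows $u(x,t)<0$ for all $t>\tau(\rho,r,R)$, hence $x\notin E_t$. Feeding this back with the lower bound from Theorem \ref{thm:conv_general}'s case analysis (cases 1)--2), which only use $x\in O_{-}$, giving $u(x,t)>0$ there) yields $\varlimsup E_t\subset C_{-}\subset\varliminf E_t$, i.e.\ $\lim_{t\to\infty}E_t=C_{-}$ and likewise $\lim_{t\to\infty}D_t=O_{-}$.

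For the finite-time statement I would argue as follows. Since $D_0$ is bounded, $E_0\subset\overline{B_R(0)}$, so $\rho_0:=\sup\{\mathrm{dist}(x,\partial O_{-}) : x\in E_0\setminus O_{-}\}\le 2R<\infty$. The bound $\tau(\rho,r,R)$ above is monotone in $\rho$, so setting $\tau:=\tau(\rho_0,r,R)$ we get $E_t\subset \overline{O_{-}}=C_{-}$ for all $t\ge\tau$ simultaneously (every point outside $C_{-}$ that could ever be reached has $\rho\le\rho_0$, using that $\{E_t\}$ cannot fatten beyond $C_{-}$ once it is inside — more simply, $E_t\subset\overline{Co(C_{-})}\subset\overline{B_R(0)}$ for all $t$ by Lemma \ref{lem:elgeom} applied at each time, so the same uniform $\rho_0$ works for every $t$). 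Combined with $C_{-}\subset E_t$ (from cases 1)--2) of Theorem \ref{thm:conv_general}, valid for all $t>0$), we conclude $E_t=C_{-}$ for $t\ge\tau$, and analogously $D_t=O_{-}$ for $t\ge\tau$.

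The main obstacle I anticipate is the uniform geometric estimate $|x-z|-|z-w|\ge c\,\rho^2/r$ and the verification that the required game time $\tau$ depends only on $\rho$, $r$, $R$ and not on the particular $x$ or $w$: one has to handle the interaction between the chosen enclosing ball $B_{|z-w|}(z)$ (whose radius $|z-w|$ can be as large as $r$ but could also be small) and the distance $\rho=\mathrm{dist}(x,\partial O_{-})$ carefully, making sure the estimate degrades gracefully as $\rho\to 0$ (it must, since points near $\partial O_{-}$ should only be expelled at large times) and that it is genuinely uniform in the boundary point $w$. The second, more delicate point is justifying that the finite time $\tau$ can be chosen once and for all: this rests on the global bound $E_t\subset\overline{B_R(0)}$ for every $t\ge 0$, which follows from \eqref{assum:ini} and the comparison principle, so that $\rho$ is a priori bounded by $2R$ uniformly in time; I would spell this out explicitly rather than leaving it implicit.
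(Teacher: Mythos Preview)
Your overall strategy (Carol's concentric strategy about the centre $z$ coming from \eqref{cond:inball}) is exactly the paper's, but you miss the key simplification and introduce one genuine error. First, \eqref{cond:inball} forces $\overline{O_-}$ to be strictly convex (Proposition~\ref{prop:0707}), so $Co(C_-)=C_-$ and the lower bound $O_-\subset D_t$ is immediate from case~1) alone (Paul quits at once). More importantly, the paper's point is that the \emph{radius} $|z-w|$ is bounded by the fixed $r$ from \eqref{cond:inball}, so the escape time $\tau=2R(R+r)/(d-1)$ from the computation in Lemma~\ref{lem:elgeom} is already uniform in $x$. Your $\rho$-dependence, the estimate $|x-z|-|z-w|\ge c\rho^2/r$, and the monotonicity argument for $\tau(\rho,r,R)$ are all unnecessary detours.

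The concrete error is your stopping-cost bound. You write $\sup_{b\in B_{|z-w|}(z)^c}\Psi_-(b)<0$, but this supremum equals $0$: the point $w$ lies on $\partial B_{|z-w|}(z)\cap\partial O_-$, so there are points of $B_{|z-w|}(z)^c$ arbitrarily close to $\partial O_-$. The correct observation is that the trajectory satisfies $|x_i-z|\ge |x_0-z|=|x-z|>|z-w|$, so it stays outside the \emph{larger} ball $B_{|x-z|}(z)$; since $\overline{O_-}\subset \overline{B_{|z-w|}(z)}$, every such $x_i$ has $\mathrm{dist}(x_i,\overline{O_-})\ge |x-z|-|z-w|>0$, whence the stopping cost is at most $-(|x-z|-|z-w|)$. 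This bound depends on $x$ but not on $\epsilon$, which is all you need: for each fixed $x\in(\overline{O_-})^c$ one gets $u(x,t)<0$ for $t>\tau$, with the \emph{same} $\tau$ for all $x$. The finite-time conclusion then follows immediately without any appeal to $\rho_0$ or to the time-evolution of $E_t$.
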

\begin{proof}
We notice that the condition \eqref{cond:inball} implies that $O_{-}$ is convex (Proposition \ref{prop:0707}). It is now clear that $O_{-}\subset D_t$ for any $t>0$ and hence $O_{-}\subset\bigcap_{t>0}D_t\subset \varliminf_{t\to\infty}D_t$. We prove that there exists $\tau>0$ such that $\bigcup_{t\ge\tau} E_t \subset \overline{O_{-}}$. Namely we show that there exists $\tau>0$ such that $u(x,t)<0$ for $t\ge\tau$ and $x\in (\overline{O_{-}})^c$. The difference from Lemma \ref{lem:elgeom} is that we now have to take $\tau$ independent of $x$. Indeed, for $x\in (\overline{O_{-}})^{c}$, we can take a ball $B_{\lvert z-w\rvert}(z)$ in \eqref{cond:inball} such that $x\notin B_{\lvert z-w\rvert}(z)$ and it suffices for \gote to keep taking a \zcenc{$z$} until the game ends. The value $r$ in \eqref{eq:0709} is now taken independent of $x$ and then so is $\tau$. Therefore we obtain
\[O_{-}\subset \varliminf_{t\to\infty}D_t \subset \varlimsup_{t\to\infty}D_t\subset \bigcup_{t\ge\tau}D_t \subset \bigcup_{t\ge\tau}E_t\subset\overline{O_{-}}.\]
Since $\bigcup_{t\ge\tau}D_t$ is open, we have $O_{-}=\bigcup_{t\ge\tau}D_t$, which means $\varliminf_{t\to\infty}D_t=\varlimsup_{t\to\infty}D_t$ and moreover $D_t=D_{\tau}=O_{-}$ for $t\ge \tau$.

We also have 
\[O_{-}\subset \bigcap_{t>0}D_t\subset \bigcap_{t>0}E_t \subset \varliminf_{t\to\infty}E_t \subset \varlimsup_{t\to\infty}E_t\subset \overline{O_{-}}.\]
Since $\bigcap_{t>0}E_t$ is closed, we similarly have $\overline{O_{-}}=\bigcap_{t>0}E_t$, which means $\varliminf_{t\to\infty}E_t=\varlimsup_{t\to\infty}E_t$ and moreover $E_t=E_0=\overline{O_{-}}$ for $t\ge 0$.
\end{proof}
\begin{rem}
The hair-clip solution, which can be regarded as an explicit solution to the curve shortening problem with the Dirichlet condition (See e.g. \cite{McDonald22}), implies that the solution of our obstacle problem can be apart from the asymptotic shape at any time. i.e., there are $D_0$ and $O_{-}$ such that $Co(O_{-})\subsetneq D_t$ for any $t>0$.
\end{rem}

\section{With driving force}
\label{sec:dri}
In this section we consider the following surface evolution equations in the plane that have obstacles on one side:
\begin{equation}
\label{eq:bobstacle}
\begin{cases}
V=-\kappa+\nu, \ \mbox{on $\partial D_t$}, \\
O_{-}\subset D_t, 
\end{cases}
\end{equation}
or
\begin{equation}
\label{eq:tobstacle}
\begin{cases}
V=-\kappa+\nu, \ \mbox{on $\partial D_t$}, \\
D_t\subset O_{+}, 
\end{cases}
\end{equation}
where $D_t$, $O_{-}$ and $O_{+}$ are open sets in $\mathbb{R}^2$ and $\nu>0$ is a constant. These equations are specific cases of \eqref{eq:general_ev}.

While Theorem \ref{thm:conv_general} is invarient with respect to similarity transformation, the behaviors of moving curves governed by \eqref{eq:bobstacle} or \eqref{eq:tobstacle} depend not only on shape of the obstacles and the initial curve but also on size of them. It is easily understood by considering the case $\partial D_0$ is a circle. For $D_0=B_R((0,0))$, the circle $\partial D_0$ shrinks if $R<\nu^{-1}$, and it spreads if $R>\nu^{-1}$ as time goes. So it is difficult to present a concise result as Theorem \ref{thm:conv_general}. We give several examples of computations of the asymptotic shapes here.
\subsection{Basic strategy of the game}
Concerning to the game interpretation for \eqref{eq:bobstacle}, the difference from the game in Section \ref{subsec:gi} is that \sente has the right to choose $w_i\in S^1$ at each round $i$ and the control system is
\begin{equation}
x_{i}=x_{i-1}+\sqrt{2} \epsilon b_i v_i+\nu \epsilon^2 w_i , \nonumber
\end{equation}
instead of \eqref{trajectory2}. In the game for \eqref{eq:tobstacle}, not \sente but \gote has the right to quit the game. If \gote quits the game at round $i$, then the cost is given by $\Psi_{+}(x_i)$. See Appendix \ref{app_ginp} in detail. As explained later in the proof of Lemma \ref{lem:elgeom}, the value functions $u^{\epsilon}$ locally uniformly converge to the solution $u$ of the corresponding level set equation. 

We now prepare several types of game strategies and give the properties of the game trajectories when they are used. The first one is similar to the one in Definition \ref{dfn:const0}.

\begin{dfn}[Concentric strategy]
Let $\nu>0$, $\epsilon>0$ and $z\in\mathbb{R}^2$. Let $x\in\mathbb{R}^2$ be the current position of the game. 
\begin{enumerate}
\item A choice $(v,w)\in S^1\times S^1$ by \sente is called a {\it \zcenp{$z$}} (by \sente) if 
\[w=\frac{z-x}{\lvert z-x\rvert}~\mbox{and}~v\perp w.\]
When $x=z$, any $(v,w)\in S^1\times S^1$ satisfying $v\perp w$ is called a {\it \zcenp{$z$}}.
\item Let $(v,w)\in S^1\times S^1$ be a choice by \sente in the same round. A choice $b\in \{\pm1\}$ by \gote is called a {\it \zcenc{$z$}} (by \gote) if
\[\langle b v, x+\nu\epsilon^2 w-z\rangle \ge0.\]
\end{enumerate}
\end{dfn}
As in Section \ref{subsec:bsotg}, let $d_n=\lvert x_n-z\rvert$ for fixed $z\in\mathbb{R}^2$. If \sente takes a \zcenp{$z$} through the game, then the sequence $\{d_n\}$ satisfies
\begin{equation}
R_{n+1}=\sqrt{(R_n-\nu \epsilon^2)^2+2\epsilon^2}.
\label{seq:2}
\end{equation}
If \gote takes a \zcenc{$z$} through the game, we have $d_n\ge R_n$, where $\{R_n\}$ is the solution to \eqref{seq:2} with $R_0=d_0$. In the following lemmas, we give basic propeties of the behaviors of the solutions to \eqref{seq:2}.
\begin{lem}
\label{lem:series2}
Fix $\nu>0$. Let $\epsilon>0$ and $\{R_n\}$ be a sequence satisfying the condition \eqref{seq:2}.
Then the following properties hold.
\begin{enumerate}
\item If $R_0=\nu^{-1}+\frac{\nu}{2}\epsilon^2$, then $R_n=\nu^{-1}+\frac{\nu}{2}\epsilon^2$ for all $n$. If $R_0> \nu^{-1}+\frac{\nu}{2}\epsilon^2$, then $R_n>\nu^{-1}+\frac{\nu}{2}\epsilon^2$ for all $n$ and $\{R_n\}$ is decreasing for $\epsilon\le \sqrt{2}\nu^{-1}$. If $\nu\epsilon^2\le R_0< \nu^{-1}+\frac{\nu}{2}\epsilon^2$, then $R_n<\nu^{-1}+\frac{\nu}{2}\epsilon^2$ for all $n$ and $\{R_n\}$ is increasing. 
\item \[\lim_{n \to \infty} R_n=\nu^{-1}+\frac{\nu}{2}\epsilon^2.\]
\item \begin{equation}
\label{ineq:shortcut}
R_{n+1}-R_n\le \frac{\epsilon^2}{R_n}-\nu\epsilon^2+\frac{\nu^2\epsilon^4}{2R_n}.
\end{equation}
If $R_{n+1}\ge R_n$, then
\begin{equation}
\label{ineq:shortcut2}
\frac{\epsilon^2}{R_{n+1}}-\nu\epsilon^2+\frac{\nu^2\epsilon^4}{2R_{n+1}}\le R_{n+1}-R_n.
\end{equation}
\end{enumerate}
\end{lem}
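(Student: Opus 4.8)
The plan is to analyze the recursion \eqref{seq:2} by first understanding its fixed point, then monotonicity, then the limit, and finally the two one-step estimates; each part is an elementary computation once the fixed point is identified.

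First I would locate the fixed point of the map $\phi(R):=\sqrt{(R-\nu\epsilon^2)^2+2\epsilon^2}$. Squaring $R=\phi(R)$ gives $R^2=(R-\nu\epsilon^2)^2+2\epsilon^2$, i.e. $2\nu\epsilon^2 R=\nu^2\epsilon^4+2\epsilon^2$, so $R=\nu^{-1}+\tfrac{\nu}{2}\epsilon^2=:R_\ast$. This immediately proves the first sentence of item (1). For the monotonicity statements, I would work with the squared quantity to avoid the square root: define $g(R):=\phi(R)^2-R^2=-2\nu\epsilon^2 R+\nu^2\epsilon^4+2\epsilon^2$, which is affine and strictly decreasing in $R$, vanishing exactly at $R_\ast$. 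Hence $R>R_\ast\Rightarrow g(R)<0\Rightarrow R_{n+1}<R_n$ (once we also know $R_{n+1}\ge 0$, which is automatic), and $R<R_\ast\Rightarrow g(R)>0\Rightarrow R_{n+1}>R_n$. One also needs that the sign of $R_n-R_\ast$ is preserved; this follows because $\phi$ maps $[R_\ast,\infty)$ into itself and $[\nu\epsilon^2,R_\ast]$ into itself — indeed $\phi$ is increasing on $[\nu\epsilon^2,\infty)$ with $\phi(\nu\epsilon^2)=\sqrt{2}\epsilon\ge \nu\epsilon^2$ when $\epsilon\le\sqrt2\,\nu^{-1}$, and $\phi(R_\ast)=R_\ast$. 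The condition $\epsilon\le\sqrt2\,\nu^{-1}$ in the decreasing case is exactly what guarantees $\nu\epsilon^2\le R_\ast$ so that the lower interval is nonempty and invariant; I would state this explicitly. Thus $\{R_n\}$ is monotone and bounded, hence convergent, and its limit is a fixed point, which must be $R_\ast$: this gives item (2). (One should note the degenerate possibility that in the increasing case the sequence could a priori overshoot; invariance of $[\nu\epsilon^2,R_\ast]$ under $\phi$ rules this out.)

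For item (3), the inequality \eqref{ineq:shortcut} follows from $R_{n+1}^2-R_n^2=g(R_n)=-2\nu\epsilon^2 R_n+\nu^2\epsilon^4+2\epsilon^2$ together with $R_{n+1}^2-R_n^2=(R_{n+1}-R_n)(R_{n+1}+R_n)\ge (R_{n+1}-R_n)\cdot R_n$ when $R_{n+1}-R_n\le 0$, and $\ge (R_{n+1}-R_n)\cdot R_n$ trivially fails when $R_{n+1}\ge R_n$ — so I would instead simply divide: $R_{n+1}-R_n=\dfrac{g(R_n)}{R_{n+1}+R_n}$, and since $g(R_n)\le \tfrac{2\epsilon^2+\nu^2\epsilon^4}{1}-2\nu\epsilon^2 R_n$ one checks $\dfrac{g(R_n)}{R_{n+1}+R_n}\le \dfrac{\epsilon^2}{R_n}-\nu\epsilon^2+\dfrac{\nu^2\epsilon^4}{2R_n}$ by comparing $R_{n+1}+R_n$ with $2R_n$: the right-hand side of \eqref{ineq:shortcut} is exactly $\dfrac{g(R_n)}{2R_n}$, so \eqref{ineq:shortcut} is equivalent to $\dfrac{g(R_n)}{R_{n+1}+R_n}\le \dfrac{g(R_n)}{2R_n}$, i.e. to $g(R_n)(R_{n+1}-R_n)\le 0$, which holds because $g(R_n)$ and $R_{n+1}-R_n$ have opposite signs (both zero at $R_\ast$). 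For \eqref{ineq:shortcut2}, when $R_{n+1}\ge R_n$ we have $R_{n+1}+R_n\le 2R_{n+1}$, so $R_{n+1}-R_n=\dfrac{g(R_n)}{R_{n+1}+R_n}\ge \dfrac{g(R_n)}{2R_{n+1}}$, and since in this regime $R_n\le R_{n+1}\le R_\ast$ we have $g(R_n)\ge g(R_{n+1})$ (as $g$ is decreasing), giving $R_{n+1}-R_n\ge \dfrac{g(R_{n+1})}{2R_{n+1}}=\dfrac{\epsilon^2}{R_{n+1}}-\nu\epsilon^2+\dfrac{\nu^2\epsilon^4}{2R_{n+1}}$, which is \eqref{ineq:shortcut2}.

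The main obstacle, such as it is, is purely bookkeeping: keeping straight the sign conventions and the invariant intervals so that "the sign of $R_n-R_\ast$ is preserved" is genuinely justified rather than assumed, and handling the boundary case $\epsilon=\sqrt2\,\nu^{-1}$ where $\nu\epsilon^2=R_\ast$ so the "increasing" interval degenerates to a point. I would therefore organize the write-up around the two clean facts — $\phi$ increasing on $[\nu\epsilon^2,\infty)$, and $g$ affine decreasing with unique zero $R_\ast$ — and derive every claim from those, rather than manipulating the square root directly.
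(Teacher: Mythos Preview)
Your approach is essentially the same as the paper's: identify the fixed point $R_\ast=\nu^{-1}+\tfrac{\nu}{2}\epsilon^2$, use monotonicity to trap the sequence on one side of $R_\ast$, invoke monotone convergence for (2), and handle (3) via $R_{n+1}-R_n=\dfrac{R_{n+1}^2-R_n^2}{R_{n+1}+R_n}$ compared against $\dfrac{R_{n+1}^2-R_n^2}{2R_n}$ or $\dfrac{R_{n+1}^2-R_n^2}{2R_{n+1}}$. The paper argues the invariance of the intervals by the explicit bound $(R_k-\nu\epsilon^2)^2>(\nu^{-1}-\tfrac{\nu}{2}\epsilon^2)^2$, while you phrase it via monotonicity of $\phi$ on $[\nu\epsilon^2,\infty)$; these are equivalent.

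There is, however, a sign slip in your treatment of \eqref{ineq:shortcut}. Since $g(R_n)=R_{n+1}^2-R_n^2=(R_{n+1}-R_n)(R_{n+1}+R_n)$, the quantities $g(R_n)$ and $R_{n+1}-R_n$ have the \emph{same} sign, not opposite signs. Correspondingly, the inequality $\dfrac{g(R_n)}{R_{n+1}+R_n}\le \dfrac{g(R_n)}{2R_n}$ is equivalent to $g(R_n)(R_{n+1}-R_n)\ge 0$, not $\le 0$. Your two errors cancel, so the conclusion survives, but you should correct both statements. (The paper sidesteps this by simply noting that replacing the denominator $R_{n+1}+R_n$ by $2R_n$ moves the fraction in the right direction regardless of the sign of the numerator.)
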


\begin{proof}
\begin{enumerate}
\item We first notice that $R_{n+1}\le R_n$ is equivalent to $R_n\ge \frac{\nu}{2}\epsilon^2+\nu^{-1}$. Also $R_{n+1}\ge R_n$ is equivalent to $R_n\le \frac{\nu}{2}\epsilon^2+\nu^{-1}$. These facts imply the first assertion. 

Assume that $R_k> \frac{\nu}{2}\epsilon^2+\nu^{-1}$ for some $k$. Since $(R_k-\nu\epsilon^2)^2> \left(\nu^{-1}-\frac{\nu}{2}\epsilon^2\right)^2$ for $\epsilon\le \sqrt{2}\nu^{-1}$, we have
\begin{align}
R_{k+1}&=\sqrt{(R_k-\nu\epsilon^2)^2+2\epsilon^2}>\sqrt{\left(\nu^{-1}-\frac{\nu}{2}\epsilon^2\right)^2+2\epsilon^2}=\frac{\nu}{2}\epsilon^2+\nu^{-1}. \nonumber
\end{align}
Hence, if $R_0> \frac{\nu}{2}\epsilon^2+\nu^{-1}$, we have  by induction that $R_n> \frac{\nu}{2}\epsilon^2+\nu^{-1}$ for all $n$. The second assertion follows from this. 

Assume that $\nu\epsilon^2\le R_k< \frac{\nu}{2}\epsilon^2+\nu^{-1}$. Since $(R_k-\nu\epsilon^2)^2< \left(\nu^{-1}-\frac{\nu}{2}\epsilon^2\right)^2$ for $\epsilon\in\left(0,\sqrt{2}\nu^{-1}\right)$, we get
$R_{k+1}< \frac{\nu}{2}\epsilon^2+\nu^{-1}.$
In the same way, $\nu\epsilon^2 \le R_0< \frac{\nu}{2}\epsilon^2+\nu^{-1}$ gives $R_n< \frac{\nu}{2}\epsilon^2+\nu^{-1}$. Therefore the third assertion is obtained.
\item Since $\{R_n\}$ is monotone and bounded, it is convergent. Its limit value is given by taking limit for both sides of \eqref{seq:2} and solving the limit equation. 
\item The inequality \eqref{ineq:shortcut} is given by the following computation.\[R_{n+1}-R_n=\frac{R_{n+1}^2-R_n^2}{R_{n+1}+R_n}\le \frac{R_{n+1}^2-R_n^2}{2R_n}= \frac{\epsilon^2}{R_n}-\nu\epsilon^2+\frac{\nu^2\epsilon^4}{2R_n}.\]Similarly \eqref{ineq:shortcut2} is obtained by
\end{enumerate}
\[R_{n+1}-R_n=\frac{R_{n+1}^2-R_n^2}{R_{n+1}+R_n}\ge \frac{R_{n+1}^2-R_n^2}{2R_{n+1}}\ge \frac{\epsilon^2}{R_{n+1}}-\nu\epsilon^2+\frac{\nu^2\epsilon^4}{2R_{n+1}}.\] 
\end{proof}
It is sometimes convenient to describe the behavior of the trajectory by an operator. For fixed $\nu>0$, we define the operator $T_{h}:\mathbb{R}\to\mathbb{R}$ as
\[T_{h}(R):=\sqrt{(R-\nu h)^2+2h}.\]
We denote $n$ times composition of $T_{h}$ by $T_{h}^n$. The solution $\{R_n\}$ to \eqref{seq:2} with $R_0=R$ is described by $R_n=T_{\epsilon^2}^n(R)$.

\begin{lem}
\label{lem:monotonicity}
\begin{enumerate}
\item $T_{h}(R)< T_{h}(R^{\prime})$, provided $\nu h\le R< R^{\prime}$.
\item $T_{h}(R)>T_{h/2}^2(R)$.
\end{enumerate}
\end{lem}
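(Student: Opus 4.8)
The plan is to reduce both statements to elementary comparisons of squares, exploiting that $T_h$ is nonnegative, so that an inequality $T_h(R)<T_h(R')$ is equivalent to $T_h(R)^2<T_h(R')^2$.

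For part (1), I would simply note that $T_h(R)^2=(R-\nu h)^2+2h$ and that $R\mapsto(R-\nu h)^2$ is strictly increasing on $[\nu h,\infty)$; hence $R\mapsto T_h(R)^2$ is strictly increasing there, and the hypothesis $\nu h\le R<R'$ gives $T_h(R)^2<T_h(R')^2$, whence $T_h(R)<T_h(R')$. The role of the assumption $\nu h\le R$ is precisely to stay on the branch of the parabola where $T_h$ is monotone.

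For part (2), I would write $S:=T_{h/2}(R)=\sqrt{(R-\nu h/2)^2+h}$, so that $T_{h/2}^2(R)=T_{h/2}(S)$. Expanding, $T_h(R)^2=R^2-2\nu hR+\nu^2h^2+2h$, and, using $S^2=(R-\nu h/2)^2+h$, one gets $T_{h/2}(S)^2=(S-\nu h/2)^2+h=R^2-\nu hR+\tfrac12\nu^2h^2+2h-\nu hS$. Subtracting yields the key identity $T_h(R)^2-T_{h/2}(S)^2=\nu h\bigl(S-R+\tfrac{\nu h}{2}\bigr)$. It then suffices to check $S>R-\nu h/2$: if $R-\nu h/2\ge0$ this holds because $S=\sqrt{(R-\nu h/2)^2+h}>R-\nu h/2$ thanks to $h>0$, and if $R-\nu h/2<0$ it is immediate since $S\ge0$. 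As $\nu,h>0$, the right-hand side of the identity is strictly positive, and nonnegativity of $T_h$ gives $T_h(R)>T_{h/2}^2(R)$.

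I do not expect a genuine obstacle here. The only points needing a little care are restricting to $R\ge\nu h$ in part (1) and splitting on the sign of $R-\nu h/2$ in part (2); the computation in part (2) is routine once it is packaged as a difference of squares, which is the step I would present explicitly.
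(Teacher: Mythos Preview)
Your proof is correct and follows the same approach as the paper, which simply states that the proofs are done by direct computation and omits the details; your write-up supplies exactly those computations.
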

\begin{proof}
The proofs are done by direct computation, so are omitted.
\end{proof}
We also prepare a notation that represents the time to pass through the set $B_b(z)\setminus B_a(z)$ when \sente takes a \zcenp{$z$}. For $a,b\ge0$ and $\epsilon>0$ satisfying $a\le b\le \nu^{-1}+\frac{\nu}{2}\epsilon^2$, we define
\begin{equation}
\label{eqdef:exittime}
t_{\epsilon}(a,b):=\epsilon^2\lvert\{n\in \mathbb{N} \mid T_{\epsilon^2}^n(a)<b\}\rvert,
\end{equation}
where we denote the set $\{0,1,2, \cdots\}$ by $\mathbb{N}$.
\begin{lem}
\label{lem:0815}
Let $t>0$ and $0<R<\nu^{-1}$. Define $N=\lceil t\epsilon^{-2}\rceil$. Then $T_{\epsilon^2}^N(R)<\nu^{-1}$ for sufficiently small $\epsilon>0$.
\end{lem}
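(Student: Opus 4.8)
\textbf{Proof proposal for Lemma \ref{lem:0815}.}

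The plan is to exploit the monotone convergence of the iterates $T_{\epsilon^2}^n(R)$ established in Lemma \ref{lem:series2}, together with a uniform (in $\epsilon$) control of the one-step increment coming from \eqref{ineq:shortcut}, in order to show that after $N=\lceil t\epsilon^{-2}\rceil$ steps the value $T_{\epsilon^2}^N(R)$ has not yet crossed the threshold $\nu^{-1}$. The heuristic is that the discrete dynamics approximates the ODE $\dot R = R^{-1}-\nu$, whose solution starting below $\nu^{-1}$ reaches $\nu^{-1}$ only in infinite time; so for a fixed finite horizon $t$ and $\epsilon$ small, the discrete trajectory is still strictly below $\nu^{-1}$. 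Since $R<\nu^{-1}$ is fixed, we may fix a level $R'$ with $R<R'<\nu^{-1}$ and it suffices to prove $T_{\epsilon^2}^N(R)<R'$ for small $\epsilon$.

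First I would record the elementary facts: by Lemma \ref{lem:series2}(1), as long as $\nu\epsilon^2\le R_k<\nu^{-1}+\tfrac{\nu}{2}\epsilon^2$ (which holds for all $k$ once $\epsilon$ is small and $R_0<\nu^{-1}$, since the sequence is then increasing and bounded by $\nu^{-1}+\tfrac{\nu}{2}\epsilon^2$), the sequence $\{R_k\}=\{T_{\epsilon^2}^k(R)\}$ is nondecreasing. In particular $R\le R_k\le \nu^{-1}+\tfrac{\nu}{2}\epsilon^2$ for all $k$, so $R_k\ge R$ and hence $1/R_k\le 1/R$. Next, apply the increment bound \eqref{ineq:shortcut}:
\[
R_{k+1}-R_k\le \frac{\epsilon^2}{R_k}-\nu\epsilon^2+\frac{\nu^2\epsilon^4}{2R_k}\le \frac{\epsilon^2}{R}+\frac{\nu^2\epsilon^4}{2R}=:C\epsilon^2,
\]
where $C=C(\nu,R)$ does not depend on $k$ or $\epsilon$ (for $\epsilon\le 1$, say). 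Summing from $k=0$ to $N-1$ gives
\[
R_N\le R + N C\epsilon^2 = R + \lceil t\epsilon^{-2}\rceil C\epsilon^2 \le R + (t\epsilon^{-2}+1)C\epsilon^2 = R + Ct + C\epsilon^2.
\]
This crude estimate is not yet enough, because $R+Ct$ may exceed $\nu^{-1}$; the bound $\epsilon^2/R$ on the increment is too lossy once $R_k$ is close to $\nu^{-1}$. So the refinement is to use that the increment is actually governed by $1/R_k-\nu$, which is small near the fixed point, rather than by $1/R$.

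The cleaner route, which I would adopt, is a contradiction/continuity argument. Suppose for contradiction that along some sequence $\epsilon_j\searrow 0$ we had $T_{\epsilon_j^2}^{N_j}(R)\ge \nu^{-1}$ with $N_j=\lceil t\epsilon_j^{-2}\rceil$. Let $m_j\le N_j$ be the first index at which $R_{m_j}\ge R'$, where $R<R'<\nu^{-1}$ is fixed. For $k<m_j$ we have $R_k< R'$, hence $1/R_k-\nu > 1/R' - \nu =: 2\eta > 0$, so by \eqref{ineq:shortcut2} (applicable since the sequence is increasing) the increments are bounded \emph{below}: $R_{k+1}-R_k\ge \bigl(\tfrac{\epsilon_j^2}{R_{k+1}}-\nu\epsilon_j^2+\tfrac{\nu^2\epsilon_j^4}{2R_{k+1}}\bigr)$; combined with $R_{k+1}\le \nu^{-1}+\tfrac{\nu}{2}\epsilon_j^2$ this is at least $\eta\epsilon_j^2$ for $j$ large. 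Therefore $R' > R_{m_j-1}\ge R + (m_j-1)\eta\epsilon_j^2$, which forces $m_j-1\le (R'-R)\eta^{-1}\epsilon_j^{-2}$, i.e. $m_j$ is comparable to $\epsilon_j^{-2}$. On the other hand, the upper increment bound \eqref{ineq:shortcut} with $R_k\ge R$ gives $R_{m_j}\le R + m_j C\epsilon_j^2$ on the one hand, and more importantly, once we are in $[R',\nu^{-1})$ for the remaining $N_j-m_j$ steps the increment is again bounded above by $\bigl(\tfrac{1}{R'}-\nu\bigr)\epsilon_j^2 + O(\epsilon_j^4) = -2\eta\epsilon_j^2 + O(\epsilon_j^4) < 0$ for $j$ large — wait, that is negative, contradicting that the sequence is increasing. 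This contradiction shows no index can enter $[R',\nu^{-1})$ at all, and since $R_N\le \nu^{-1}+\tfrac{\nu}{2}\epsilon^2$, being $<R'$ for all $k\le N$ is precisely $T_{\epsilon^2}^N(R)<R'<\nu^{-1}$.

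Let me restate the argument in its final, clean form, since the above exploration identifies the key mechanism: \emph{the sign of the increment $R_{k+1}-R_k$ is the same as the sign of $\nu^{-1}+\tfrac{\nu}{2}\epsilon^2 - R_k$} (Lemma \ref{lem:series2}(1)), so once $R_0<\nu^{-1}<\nu^{-1}+\tfrac{\nu}{2}\epsilon^2$ the sequence increases monotonically toward its limit $\nu^{-1}+\tfrac{\nu}{2}\epsilon^2$ and in particular stays $<\nu^{-1}$ until, if ever, it crosses; but the \emph{amount} it can increase over $N=\lceil t\epsilon^{-2}\rceil$ steps is controlled. Fix $R'\in(R,\nu^{-1})$. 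For any $k$ with $R_k\le R'$, inequality \eqref{ineq:shortcut} and $R_k\ge R$ yield $R_{k+1}-R_k\le \tfrac{\epsilon^2}{R} + \tfrac{\nu^2\epsilon^4}{2R}\le C_0\epsilon^2$ for $\epsilon\le 1$ with $C_0=C_0(\nu,R)$; hence $R_{k+1}\le R' + C_0\epsilon^2$, so for $\epsilon$ small enough that $C_0\epsilon^2 < \nu^{-1}-R'$ we get $R_{k+1}<\nu^{-1}$. Thus the trajectory cannot leap over the band $(R',\nu^{-1})$ in a single step, and inside that band the increments stay positive but small; chaining these observations, $R_k<\nu^{-1}$ for all $k$ — in particular $T_{\epsilon^2}^N(R)<\nu^{-1}$ — once $\epsilon$ is small (depending on $\nu$, $R$, but not on $t$ in fact, though $t$-independence is not needed). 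The main obstacle in writing this carefully is precisely the borderline bookkeeping near $\nu^{-1}$: one must make sure the one-step increment estimate \eqref{ineq:shortcut}, which degrades to $O(\epsilon^2)$ uniformly, is combined with the monotonicity so that no step jumps past $\nu^{-1}$; everything else is routine.
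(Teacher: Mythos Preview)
Your final ``clean form'' contains a real error: you conclude that $R_k<\nu^{-1}$ for \emph{all} $k$, with a threshold on $\epsilon$ that you claim is independent of $t$. This is false. By Lemma~\ref{lem:series2} the sequence $\{R_k\}$ is strictly increasing with limit $\nu^{-1}+\tfrac{\nu}{2}\epsilon^2>\nu^{-1}$, so for every fixed $\epsilon>0$ it \emph{does} cross $\nu^{-1}$ at some finite index; the smallness of $\epsilon$ must therefore depend on $t$. Your observation that one step cannot leap over the band $(R',\nu^{-1})$ only shows that the trajectory enters this band before crossing; it says nothing about how many steps the traversal takes. The earlier ``contradiction'' attempt fails for the same underlying reason, surfacing as a sign slip: you set $2\eta=\tfrac{1}{R'}-\nu>0$ and then wrote $(\tfrac{1}{R'}-\nu)\epsilon_j^2=-2\eta\epsilon_j^2$, but the correct value is $+2\eta\epsilon_j^2>0$, so there is no contradiction with monotonicity.

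What is missing is a \emph{quantitative} lower bound on the crossing time. Near $\nu^{-1}$ the bound \eqref{ineq:shortcut} gives $R_{k+1}-R_k\lesssim \nu^2(\nu^{-1}-R_k)\epsilon^2+O(\epsilon^4)$, so any estimate that freezes $R_k$ at a fixed level $R'<\nu^{-1}$ is too crude; one must track how the increment shrinks as $R_k\uparrow\nu^{-1}$. The paper does this by a dyadic decomposition of $[\nu^{-1}-\epsilon,\nu^{-1})$ into shells $[\nu^{-1}-2^{-m}\epsilon,\,\nu^{-1}-2^{-m-1}\epsilon)$ and shows, using \eqref{ineq:shortcut}, that the time $t_\epsilon$ spent in each shell is bounded below by a positive constant independent of $m$ whenever $2^m\le\epsilon^{-1}$; summing over the $\sim\log_2\epsilon^{-1}$ shells yields $t_\epsilon(\nu^{-1}-\epsilon,\nu^{-1})\ge C\log_2\epsilon^{-1}$. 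Since $R<\nu^{-1}-\epsilon$ for small $\epsilon$ and $C\log_2\epsilon^{-1}>t$ eventually, this forces $T_{\epsilon^2}^N(R)<\nu^{-1}$. Your ODE heuristic is right in spirit---the continuous flow never reaches $\nu^{-1}$---but the discrete iteration overshoots the ODE by $O(\epsilon^4)$ per step, and the whole content of the lemma is that this overshoot needs time $\gtrsim\log\epsilon^{-1}$ to carry the trajectory past $\nu^{-1}$.
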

\begin{proof}
We prove that 
\[t_{\epsilon}\left(\nu^{-1}-\epsilon,\nu^{-1}\right)\ge C \log_2 \epsilon^{-1}\]
for some constant $C>0$.

First let us explain the property of $t_{\epsilon}(a,b)$. Let $0<a\le b\le c\le \nu^{-1}+\frac{\nu}{2}\epsilon^2$. By the definition of $t_{\epsilon}$, we see
\begin{equation}
t_{\epsilon}(a,b)\le t_{\epsilon}(a,c). \nonumber
\end{equation}
By Lemma \ref{lem:monotonicity} 1, we have
\begin{equation}
t_{\epsilon}(b,c)\le t_{\epsilon}(a,c). \nonumber
\end{equation}
Let $n^{\ast}=t_{\epsilon}(a,b)\epsilon^{-2}$ and $d=T_{\epsilon^2}^{n^{\ast}-1}(a)$. Then we have
\[t_{\epsilon}(a,c)=t_{\epsilon}(a,b)+t_{\epsilon}(d,c)-\epsilon^2.\]
Since $d<b$, we deduce that
\begin{equation}
t_{\epsilon}(a,c)\ge t_{\epsilon}(a,b)+t_{\epsilon}(b,c)-\epsilon^2.
\label{masuzu}
\end{equation}

We next estimate $t_{\epsilon}\left(\nu^{-1}-\epsilon,\nu^{-1}\right)$. When $\nu^{-1}-2^{-m}\epsilon \le R_n\le \nu^{-1}-2^{-m-1}\epsilon$ for $m,n\in\mathbb{N}$, the inequality \eqref{ineq:shortcut} implies
\begin{align}
R_{n+1}-R_n\le \frac{\epsilon^2}{\nu^{-1}-2^{-m}\epsilon}-\nu\epsilon^2+\frac{\nu^2\epsilon^4}{2\nu^{-1}-2^{-m+1}\epsilon}=\frac{2^{-m}\nu \epsilon^3+\nu^2\epsilon^4}{2\nu^{-1}-2^{-m+1}\epsilon}, \nonumber
\end{align}
and in particular, if $\epsilon\le \nu^{-1}$,
\begin{equation}
R_{n+1}-R_n\le\frac{2^{-m}\nu\epsilon^3+\nu^2\epsilon^4}{2\nu^{-1}-2^{-m+1}\epsilon} \le \frac{2^{-m}\nu\epsilon^3+\nu^2\epsilon^4}{\nu^{-1}}=\nu^2\epsilon^3\left(2^{-m}+\nu\epsilon\right). \nonumber
\end{equation}
Thus $t_{\epsilon}\left(\nu^{-1}-2^{-m}\epsilon,\nu^{-1}-2^{-m-1}\epsilon\right)$ can be compared to the exit time for sequences with the constant speed of $\nu^2\epsilon^3\left(2^{-m}+\nu\epsilon\right)$ per round. 
Then we have
\begin{align*}
&t_{\epsilon}\left(\nu^{-1}-2^{-m}\epsilon,\nu^{-1}-2^{-m-1}\epsilon\right) \\
&\ge\left(\left(\nu^{-1}-2^{-m-1}\epsilon\right)-\left(\nu^{-1}-2^{-m}\epsilon\right)\right)\frac{\epsilon^2}{\nu^2\epsilon^3\left(2^{-m}+\nu\epsilon\right)}=\frac{1}{2\nu^2+2^{m+1} \nu^3\epsilon}. 
\end{align*}
In particular, if $2^m\le \epsilon^{-1}$, we get
\begin{equation}
\frac{1}{2\nu^2+2^{m+1} \nu^3\epsilon}\ge \frac{1}{2\nu^2+2\nu^3}=\frac{1}{2\nu^2(\nu+1)}.
\nonumber
\end{equation}
Let $m^{\ast}$ be the minimal integer $m$ satisfying $2^m>\epsilon^{-1}$. Using \eqref{masuzu}, we obtain
\begin{align}
t_{\epsilon}\left(\nu^{-1}-\epsilon,\nu^{-1}\right)&\ge \sum_{k=0}^{m^{\ast}-1}\left(t_{\epsilon}\left(\nu^{-1}-2^{-k}\epsilon,\nu^{-1}-2^{-k-1}\epsilon\right)-\epsilon^2\right) \nonumber \\
&\ge \sum_{k=0}^{m^{\ast}-1}\frac{1}{3\nu^2(\nu+1)}=\frac{\log_2 \epsilon^{-1}}{3\nu^2(\nu+1)}, \nonumber
\end{align}
which is the conclusion.
\end{proof}

\begin{lem}
\label{lem:1109}
For $\nu>0$ and $\nu^{-1}\ge\delta>0$, there exist $\epsilon_0>0$ and $M>0$ such that
\[t_{\epsilon}(a,\nu^{-1}-\delta)\le M\]
for all $0<\epsilon<\epsilon_0$ and $0\le a\le \nu^{-1}-\delta$.
\end{lem}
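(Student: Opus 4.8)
The plan is to reduce the statement to a uniform bound on the number of rounds the trajectory spends inside $B_{\nu^{-1}-\delta}(z)$. Writing $R_n=T_{\epsilon^2}^n(a)$ and $S=\{n\in\mathbb N : R_n<\nu^{-1}-\delta\}$, we have by definition $t_\epsilon(a,\nu^{-1}-\delta)=\epsilon^2|S|$, and this target is in the admissible range since $a\le\nu^{-1}-\delta\le\nu^{-1}+\tfrac\nu2\epsilon^2$. The heart of the matter is that while $R_n\le\nu^{-1}-\delta$ the \emph{squared} radius $R_n^2$ grows by at least a fixed multiple of $\epsilon^2$ each round; telescoping then forces $|S|=O(\epsilon^{-2})$ uniformly in $a$, and hence $\epsilon^2|S|=O(1)$.

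First I would record, straight from the recurrence \eqref{seq:2}, the identity
\[
R_{n+1}^2-R_n^2=(R_n-\nu\epsilon^2)^2+2\epsilon^2-R_n^2=2\epsilon^2(1-\nu R_n)+\nu^2\epsilon^4,
\]
so that whenever $R_n\le\nu^{-1}-\delta$ one gets $R_{n+1}^2-R_n^2\ge 2\nu\delta\,\epsilon^2>0$. By Lemma \ref{lem:series2}(1) the sequence $\{R_n\}$ is increasing once it enters $[\nu\epsilon^2,\nu^{-1}+\tfrac\nu2\epsilon^2)$, which happens after at most one round for small $\epsilon$ (since $T_{\epsilon^2}(a)\ge\sqrt2\,\epsilon>\nu\epsilon^2$, the only subtlety being that $T_{\epsilon^2}$ is not monotone on $[0,\nu\epsilon^2)$). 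Consequently $S$ is an initial segment $\{0,1,\dots,N-1\}$ and, in particular, finite.

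Then I would telescope over $n=0,\dots,N-1$, using $R_n<\nu^{-1}-\delta$ on that range, to get $R_N^2-R_0^2\ge 2\nu\delta\,\epsilon^2 N$. For the upper bound on $R_N^2$, note that $R_{N-1}-\nu\epsilon^2\in[-\nu\epsilon^2,\nu^{-1}-\delta-\nu\epsilon^2]$, so $|R_{N-1}-\nu\epsilon^2|\le\max\{\nu\epsilon^2,\nu^{-1}-\delta\}\le\nu^{-1}$ once $\epsilon<\nu^{-1}$, whence $R_N^2=|R_{N-1}-\nu\epsilon^2|^2+2\epsilon^2\le\nu^{-2}+2\epsilon^2$. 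Together with $R_0^2\ge0$ this yields $N\le(\nu^{-2}+2\epsilon^2)/(2\nu\delta\,\epsilon^2)$, and therefore
\[
t_\epsilon(a,\nu^{-1}-\delta)=\epsilon^2 N\le\frac{1}{2\nu^3\delta}+\frac{\epsilon^2}{\nu\delta}.
\]
Taking, say, $\epsilon_0=\nu^{-1}$ and $M=\frac{1}{2\nu^3\delta}+\frac{1}{\nu^3\delta}$ completes the argument.

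I do not expect a genuine obstacle here: the only point needing care is the book-keeping for the first round when $a<\nu\epsilon^2$, where $T_{\epsilon^2}$ is decreasing rather than increasing, but this is absorbed into the choice of $\epsilon_0$ and contributes nothing to the bound. It is worth contrasting with Lemma \ref{lem:0815}: there the target $\nu^{-1}$ is exactly the equilibrium, where the lower bound $2\epsilon^2(1-\nu R_n)$ degenerates to zero, which is precisely the source of the logarithmic divergence of the exit time; keeping a fixed gap $\delta$ away from $\nu^{-1}$ keeps the per-round speed bounded below by $2\nu\delta\,\epsilon^2$, hence the exit time bounded.
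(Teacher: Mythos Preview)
Your proof is correct. The overall strategy matches the paper's---show a uniform per-round lower bound on the growth, telescope, and divide---but the execution differs in a small, clean way: you telescope the \emph{squared} radius using the exact identity $R_{n+1}^2-R_n^2=2\epsilon^2(1-\nu R_n)+\nu^2\epsilon^4$, whereas the paper telescopes $R_n$ itself, invoking the derived inequality \eqref{ineq:shortcut2} together with the monotonicity of $R\mapsto T_{\epsilon^2}(R)-R$ to obtain $R_{n+1}-R_n\ge\frac{\delta\nu^2}{2-\delta\nu}\epsilon^2$. Your route is a little more self-contained, since the squared-increment identity is immediate from \eqref{seq:2} and already delivers monotonicity on $[0,\nu^{-1}-\delta]$ without any appeal to Lemma~\ref{lem:series2}; in fact your aside about the first round when $a<\nu\epsilon^2$ is unnecessary for exactly this reason. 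The paper's linear telescoping yields a slightly sharper constant, $M=(\nu^{-1}-\delta)\frac{2-\delta\nu}{\delta\nu^2}$, but both bounds are of the same order $O((\nu^3\delta)^{-1})$, which is all the statement requires.
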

\begin{proof}
Let $n_{\epsilon}=t_{\epsilon}(a,\nu^{-1}-\delta)\epsilon^{-2}$. Namely $T_{\epsilon^2}^{n_{\epsilon}}(a)<\nu^{-1}-\delta$ and $T_{\epsilon^2}^{n_{\epsilon}+1}(a)\ge\nu^{-1}-\delta$ are satisfied. Since $T_{\epsilon^2}(R)-R$ is monotonically decreasing with respect to $R$ and $T_{\epsilon^2}^{n_{\epsilon}+1}(a)\le \nu^{-1}-\delta/2$ for sufficiently small $\epsilon>0$, we have by \eqref{ineq:shortcut2} in Lemma \ref{lem:series2}
\begin{align*}
T_{\epsilon^2}^{n+1}(a)-T_{\epsilon^2}^{n}(a)\ge T_{\epsilon^2}^{n_{\epsilon}+1}(a)-T_{\epsilon^2}^{n_{\epsilon}}(a)\ge \frac{\epsilon^2}{T_{\epsilon^2}^{n_{\epsilon}+1}(a)}-\nu\epsilon^2+\frac{\nu^2\epsilon^4}{2T_{\epsilon^2}^{n_{\epsilon}+1}(a)}\ge\frac{\delta\nu^2}{2-\delta\nu}\epsilon^2 
\end{align*}
for $n=0,1,\cdots,n_{\epsilon}$. This means
\[n_{\epsilon}\le (\nu^{-1}-\delta-a)\frac{2-\delta\nu}{\delta\nu^2\epsilon^2}\]
and hence
\[t_{\epsilon}(a,\nu^{-1}-\delta)\le (\nu^{-1}-\delta-a)\frac{2-\delta\nu}{\delta\nu^2\epsilon^2}\epsilon^2\le(\nu^{-1}-\delta)\frac{2-\delta\nu}{\delta\nu^2},\]
which is the conclusion.
\end{proof}
To study the asymptotic behavior of solutions, the following types of strategies of the game are also useful.
\begin{dfn}[Push by moving circle]
For a sequence $\{z_n\}\subset\mathbb{R}^2$ such that $\lvert z_{n+1}-z_n\rvert\le C$ for some constant $C>0$, a strategy for \sente (resp. for \gote) to keep taking a \zcenp{$z_n$} at each round $n$ is called a {\it push by moving circle strategy} by \sente (resp. by \gote). 
\end{dfn}
\begin{lem}[Properties of push by moving circle strategies]
\label{lem:pmds}
Let $\delta>0$. Let $\epsilon>0$ be sufficiently small. i.e., we assume that $0<\epsilon<\epsilon_0(\nu,\delta)$ for some $\epsilon_0(\nu,\delta)>0$. 
\begin{enumerate}
\item If either \sente or \gote takes a push by moving circle strategy with $C=\frac{\delta}{2}\nu^2\epsilon^2$ and $x_0\in \overline{B(z_0, \nu^{-1}-\delta)}^c$, then $x_n\in \overline{B(z_n, \nu^{-1}-\delta)}^c$ for all round $n$.
\item If \sente takes a push by moving circle strategy with $C=\frac{\nu^2\delta}{2(1+\nu\delta)}\epsilon^2$ and $x_0\in \overline{B(z_0, \nu^{-1}+\delta)}$. then $x_n\in \overline{B(z_n, \nu^{-1}+\delta)}$ for all round $n$.
\end{enumerate}
\end{lem}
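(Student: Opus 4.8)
The plan is to track the distance $d_n = |x_n - z_n|$ and show it stays on the correct side of the moving target radius $\nu^{-1}\mp\delta$ by combining the one-step contraction/expansion estimates for the operator $T_{\epsilon^2}$ from Lemma~\ref{lem:series2} with the bounded drift $|z_{n+1}-z_n|\le C$. The key observation is that taking a \zcenp{$z_n$} at round $n$ forces $|x_{n+1}-z_n|\ge T_{\epsilon^2}(|x_n-z_n|)$ when the player is \gote (and equality, hence $=T_{\epsilon^2}(|x_n-z_n|)$, for the favorable \sente-distance bound), exactly as derived right after Definition~\ref{dfn:const0} and in the paragraph introducing \eqref{seq:2}. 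Then the triangle inequality absorbs the center shift: $|x_{n+1}-z_{n+1}|$ differs from $|x_{n+1}-z_n|$ by at most $C$.

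For part (1), I would argue by induction. Suppose $x_n\in\overline{B(z_n,\nu^{-1}-\delta)}^c$, i.e. $d_n := |x_n-z_n|>\nu^{-1}-\delta$. Since whoever moves takes a \zcenp{$z_n$}, we get $|x_{n+1}-z_n|\ge T_{\epsilon^2}(d_n)$ (for \sente this needs the remark that $T_{\epsilon^2}$ is increasing on $[\nu h,\infty)$, Lemma~\ref{lem:monotonicity}~1, together with $d_n\ge\nu\epsilon^2$ for small $\epsilon$; for \gote it is the direct inequality). The main quantitative point is a lower bound of the form $T_{\epsilon^2}(R)\ge R - \frac{\delta}{2}\nu^2\epsilon^2$ whenever $R\ge \nu^{-1}-\delta$ and $\epsilon$ is small: indeed $T_{\epsilon^2}(R)-R \ge \frac{T_{\epsilon^2}(R)^2-R^2}{2T_{\epsilon^2}(R)} = \frac{2\epsilon^2-2\nu\epsilon^2 R+\nu^2\epsilon^4}{2T_{\epsilon^2}(R)}$, and on $R$ slightly below $\nu^{-1}$ this numerator is $\ge -2\nu\epsilon^2\delta + O(\epsilon^4)$, so the whole one-step change is $\ge -\frac{\delta}{2}\nu^2\epsilon^2$ once $\epsilon<\epsilon_0(\nu,\delta)$ (this is where the precise constant $C=\frac{\delta}{2}\nu^2\epsilon^2$ and the need for $\epsilon$ small both enter). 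Combining,
\[
d_{n+1}=|x_{n+1}-z_{n+1}|\ge |x_{n+1}-z_n|-C\ge T_{\epsilon^2}(d_n)-C\ge d_n-\tfrac{\delta}{2}\nu^2\epsilon^2-C.
\]
With $C=\frac{\delta}{2}\nu^2\epsilon^2$ this only gives $d_{n+1}\ge d_n-\delta\nu^2\epsilon^2$, which is not yet the claim; so the argument must instead be run as a genuine threshold induction: if $d_n\ge\nu^{-1}-\delta$ then either $d_n$ is already well above the threshold (say $d_n\ge\nu^{-1}-\delta/2$), in which case one step decreases it by at most $O(\epsilon^2)$ and it stays above $\nu^{-1}-\delta$, or $\nu^{-1}-\delta\le d_n<\nu^{-1}-\delta/2$, in which case the sharper sign of the numerator above shows $T_{\epsilon^2}(d_n)\ge d_n$ up to a term dominated by $C$, pushing $d_{n+1}$ back up. Making these two regimes quantitatively airtight—choosing $\epsilon_0(\nu,\delta)$ so the error terms are beaten by the favorable drift in the danger zone—is the crux.

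Part (2) is the mirror image for \sente staying \emph{inside} $\overline{B(z_n,\nu^{-1}+\delta)}$, using the upper bound \eqref{ineq:shortcut}: if $d_n\le\nu^{-1}+\delta$ then $T_{\epsilon^2}(d_n)\le d_n+\frac{\epsilon^2}{d_n}-\nu\epsilon^2+\frac{\nu^2\epsilon^4}{2d_n}$, and on $d_n$ near $\nu^{-1}+\delta$ from below the bracket is $\le \frac{\nu^2\delta}{1+\nu\delta}\epsilon^2+O(\epsilon^4)$, so one step increases the \sente-controlled distance by at most $\frac{\nu^2\delta}{1+\nu\delta}\epsilon^2$ plus lower order; adding the center drift $C=\frac{\nu^2\delta}{2(1+\nu\delta)}\epsilon^2$ keeps $d_{n+1}\le\nu^{-1}+\delta$ for small $\epsilon$, again with the same two-regime bookkeeping (when $d_n$ is comfortably below $\nu^{-1}+\delta$ any $O(\epsilon^2)$ move is harmless; when $d_n$ is in a thin band just below $\nu^{-1}+\delta$, the concentric strategy's own contraction toward $\nu^{-1}+\frac{\nu}{2}\epsilon^2$ from Lemma~\ref{lem:series2}~1 dominates). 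I expect the only real obstacle to be organizing the induction so that the favorable drift of $T_{\epsilon^2}$ in the critical band near the threshold provably outweighs both the center-shift term $C$ and the $O(\epsilon^4)$ remainders uniformly in $n$; once $\epsilon_0(\nu,\delta)$ is fixed from those inequalities, the induction closes and both assertions follow. I would also remark that the constants $C$ in the two parts are exactly half the respective worst-case one-step displacements, which is why the $C=\frac12(\cdot)$ normalization appears.
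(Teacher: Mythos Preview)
Your overall plan---induct on $d_n=|x_n-z_n|$, use the one-step bound $|x_{n+1}-z_n|\ge T_{\epsilon^2}(d_n)$ (equality for \sente, inequality for \gote), then absorb the center drift with the triangle inequality---is exactly right and matches the paper. But the execution has two concrete errors and misses the simplification that makes the argument clean.

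First, your claimed uniform bound $T_{\epsilon^2}(R)\ge R-\frac{\delta}{2}\nu^2\epsilon^2$ for all $R\ge\nu^{-1}-\delta$ is false: as $R\to\infty$ one has $T_{\epsilon^2}(R)-R\to -\nu\epsilon^2$, and $-\nu\epsilon^2<-\frac{\delta}{2}\nu^2\epsilon^2$ unless $\delta\ge 2\nu^{-1}$. You correctly abandon this, but your fallback two-regime argument is then left vague (``pushing $d_{n+1}$ back up'' is not an inequality). Second, in part~2 you have a sign error: at $d_n=\nu^{-1}+\delta$ the bracket $\frac{\epsilon^2}{d_n}-\nu\epsilon^2+\frac{\nu^2\epsilon^4}{2d_n}$ equals $-\frac{\nu^2\delta}{1+\nu\delta}\epsilon^2+O(\epsilon^4)$, \emph{negative}, not $\le\frac{\nu^2\delta}{1+\nu\delta}\epsilon^2$. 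That sign is the entire point---the contraction toward the fixed point is what beats $C$.

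The simplification you are missing is to use the monotonicity of $T_{\epsilon^2}$ (Lemma~\ref{lem:monotonicity}~1) to reduce to the \emph{threshold value} rather than bounding $T_{\epsilon^2}(d_n)-d_n$. In part~1, once $\nu^{-1}-\delta<d_n<\nu^{-1}$ (the case $d_n\ge\nu^{-1}$ is trivial since then $T_{\epsilon^2}(d_n)\ge\nu^{-1}$ by Lemma~\ref{lem:series2}), monotonicity gives $T_{\epsilon^2}(d_n)>T_{\epsilon^2}(\nu^{-1}-\delta)$, and a single application of \eqref{ineq:shortcut2} at $R_n=\nu^{-1}-\delta$, using $R_{n+1}=T_{\epsilon^2}(\nu^{-1}-\delta)\le\nu^{-1}-\delta/2$ for small $\epsilon$, yields $T_{\epsilon^2}(\nu^{-1}-\delta)\ge(\nu^{-1}-\delta)+\frac{\delta}{2}\nu^2\epsilon^2=(\nu^{-1}-\delta)+C$; subtracting $C$ closes the induction in one line. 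Part~2 is the mirror image: for $d_n\ge\nu\epsilon^2$ (the case $d_n<\nu\epsilon^2$ is trivial), monotonicity gives $T_{\epsilon^2}(d_n)\le T_{\epsilon^2}(\nu^{-1}+\delta)$, and \eqref{ineq:shortcut} at $R_n=\nu^{-1}+\delta$ gives $T_{\epsilon^2}(\nu^{-1}+\delta)\le(\nu^{-1}+\delta)-\frac{\nu^2\delta}{1+\nu\delta}\epsilon^2+o(\epsilon^2)=(\nu^{-1}+\delta)-2C+o(\epsilon^2)$; adding $C$ keeps $d_{n+1}\le\nu^{-1}+\delta$. No two-regime bookkeeping is needed once you evaluate at the threshold rather than track the running gap.
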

\begin{proof}
\begin{enumerate}
\item We notice that $\lvert x_{n+1}-z_n\rvert\ge T_{\epsilon^2}(\lvert x_n-z_n\rvert)$ whichever player takes the push by moving circle strategy. We prove that $x_n\in \overline{B(z_n, \nu^{-1}-\delta)}^c$ implies $x_{n+1}\in \overline{B(z_{n+1}, \nu^{-1}-\delta)}^c$. If $\lvert x_n-z_n\rvert\ge \nu^{-1}$, we see from Lemma \ref{lem:series2} that $\lvert x_{n+1}-z_n\rvert\ge \nu^{-1}$ and hence $\lvert x_{n+1}-z_{n+1}\rvert\ge \nu^{-1}-\frac{\delta}{2}\nu^2\epsilon^2\ge \nu^{-1}-\delta$ for sufficiently small $\epsilon$. Thus we asuume $\lvert x_n-z_n\rvert< \nu^{-1}$ hereafter.

It suffices to show $\vert x_{n+1}-z_n\vert> \nu^{-1}-\delta+\frac{\delta}{2}\nu^2\epsilon^2$. Indeed this inequality is obtained by 
\begin{equation}
\nonumber
\lvert x_{n+1}-z_n\rvert\ge T_{\epsilon^2}(\lvert x_n-z_n\rvert)> T_{\epsilon^2}(\nu^{-1}-\delta)\ge \nu^{-1}-\delta+\frac{\delta}{2}\nu^2\epsilon^2.
\end{equation}
The second inequality is derived from the monotonicity property of $T_{\epsilon^2}$ stated in Lemma \ref{lem:monotonicity}. The third inequality is computed by \eqref{ineq:shortcut2} in Lemma \ref{lem:series2}. We notice that $T_{\epsilon^2}(\nu^{-1}-\delta)\le \nu^{-1}-\delta/2$ for sufficiently small $\epsilon$. Letting $R_n=\nu^{-1}-\delta$ and hence $R_{n+1}=T_{\epsilon^2}(\nu^{-1}-\delta)$, we indeed have
\begin{align*}
R_{n+1}-R_n\ge \frac{\epsilon^2}{R_{n+1}}-\nu\epsilon^2+\frac{\nu^2\epsilon^4}{2R_{n+1}}\ge \frac{\epsilon^2}{\nu^{-1}-\delta/2}-\nu\epsilon^2+\frac{\nu^2\epsilon^4}{2\nu^{-1}-\delta}\ge \frac{\delta}{2}\nu^2\epsilon^2.
\end{align*}
Thus the proof is complete.
\item We prove that $x_n\in \overline{B(z_n, \nu^{-1}+\delta)}$ implies $x_{n+1}\in \overline{B(z_{n+1}, \nu^{-1}+\delta)}$. If $\lvert x_n-z_n\rvert<\nu\epsilon^2$, it is clear that $\lvert x_{n+1}-z_{n+1}\rvert\le \nu^{-1}+\delta$ for sufficiently small $\epsilon$. If not, we have
\begin{equation}
\nonumber
\lvert x_{n+1}-z_{n+1}\rvert\le T_{\epsilon^2}(\nu^{-1}+\delta)+\frac{\nu^2\delta}{2(1+\nu\delta)}\epsilon^2.
\end{equation}
Letting $R_n=\nu^{-1}+\delta$ and hence $R_{n+1}=T_{\epsilon^2}(\nu^{-1}+\delta)$, we have from \eqref{ineq:shortcut} in Lemma \ref{lem:series2}
\[R_n-R_{n+1}\ge -\frac{\epsilon^2}{R_{n}}+\nu\epsilon^2-\frac{\nu^2\epsilon^4}{2R_{n}}=\frac{\nu^2\delta}{1+\nu\delta}\epsilon^2+o(\epsilon^2).\]
Therefore we obtain
\[\lvert x_{n+1}-z_{n+1}\rvert\le \nu^{-1}+\delta\]
for sufficiently small $\epsilon$.
\end{enumerate}
\end{proof}

We set
\[\mathcal{L}_{\nu}:=\{\gamma([0,1]) \mid \gamma\in C^2([0,1];\mathbb{R}^2), \ \mbox{$\gamma$ is regular and} \ \lvert\kappa_{\gamma(t)}\rvert\le\nu \ \mbox{for all} \ t\in(0,1)\},\]
where we denote the curvature of $\gamma([0,1])$ at $\gamma(t)$ by $\kappa_{\gamma(t)}$.

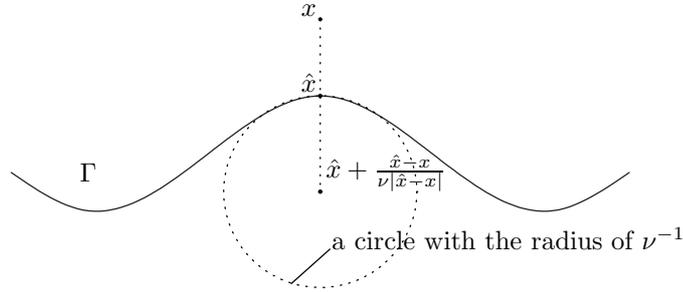
\begin{figure}[h]
\begin{center}
{\unitlength 0.1in%
\begin{picture}(32.0000,15.5000)(2.0000,-22.0000)%
%
\special{pn 8}%
\special{pa 200 1600}%
\special{pa 229 1620}%
\special{pa 257 1639}%
\special{pa 286 1659}%
\special{pa 314 1677}%
\special{pa 343 1696}%
\special{pa 372 1713}%
\special{pa 400 1729}%
\special{pa 429 1744}%
\special{pa 458 1758}%
\special{pa 486 1770}%
\special{pa 515 1780}%
\special{pa 543 1789}%
\special{pa 572 1796}%
\special{pa 601 1800}%
\special{pa 629 1802}%
\special{pa 658 1802}%
\special{pa 687 1800}%
\special{pa 715 1795}%
\special{pa 744 1789}%
\special{pa 772 1781}%
\special{pa 801 1771}%
\special{pa 830 1760}%
\special{pa 858 1747}%
\special{pa 887 1733}%
\special{pa 916 1717}%
\special{pa 944 1700}%
\special{pa 973 1683}%
\special{pa 1001 1664}%
\special{pa 1059 1624}%
\special{pa 1087 1603}%
\special{pa 1116 1582}%
\special{pa 1145 1560}%
\special{pa 1173 1538}%
\special{pa 1202 1516}%
\special{pa 1230 1494}%
\special{pa 1259 1471}%
\special{pa 1288 1449}%
\special{pa 1316 1427}%
\special{pa 1345 1406}%
\special{pa 1373 1385}%
\special{pa 1402 1364}%
\special{pa 1431 1345}%
\special{pa 1459 1326}%
\special{pa 1517 1290}%
\special{pa 1545 1275}%
\special{pa 1574 1260}%
\special{pa 1602 1247}%
\special{pa 1631 1235}%
\special{pa 1660 1224}%
\special{pa 1688 1216}%
\special{pa 1717 1209}%
\special{pa 1746 1204}%
\special{pa 1774 1201}%
\special{pa 1803 1200}%
\special{pa 1831 1201}%
\special{pa 1860 1205}%
\special{pa 1889 1210}%
\special{pa 1917 1217}%
\special{pa 1946 1226}%
\special{pa 1975 1237}%
\special{pa 2003 1249}%
\special{pa 2032 1263}%
\special{pa 2060 1278}%
\special{pa 2089 1294}%
\special{pa 2118 1311}%
\special{pa 2146 1329}%
\special{pa 2175 1348}%
\special{pa 2204 1368}%
\special{pa 2232 1389}%
\special{pa 2261 1410}%
\special{pa 2289 1432}%
\special{pa 2347 1476}%
\special{pa 2375 1498}%
\special{pa 2404 1520}%
\special{pa 2432 1543}%
\special{pa 2461 1565}%
\special{pa 2490 1586}%
\special{pa 2518 1608}%
\special{pa 2576 1648}%
\special{pa 2604 1668}%
\special{pa 2633 1686}%
\special{pa 2661 1704}%
\special{pa 2719 1736}%
\special{pa 2747 1750}%
\special{pa 2776 1762}%
\special{pa 2805 1773}%
\special{pa 2833 1783}%
\special{pa 2862 1790}%
\special{pa 2890 1796}%
\special{pa 2919 1800}%
\special{pa 2948 1802}%
\special{pa 2976 1802}%
\special{pa 3005 1799}%
\special{pa 3034 1795}%
\special{pa 3062 1788}%
\special{pa 3091 1779}%
\special{pa 3119 1768}%
\special{pa 3148 1755}%
\special{pa 3177 1741}%
\special{pa 3205 1726}%
\special{pa 3263 1692}%
\special{pa 3291 1674}%
\special{pa 3320 1655}%
\special{pa 3348 1636}%
\special{pa 3377 1616}%
\special{pa 3400 1600}%
\special{fp}%
%
\special{pn 4}%
\special{sh 1}%
\special{ar 1800 800 8 8 0 6.2831853}%
\special{sh 1}%
\special{ar 1800 1200 8 8 0 6.2831853}%
\special{sh 1}%
\special{ar 1800 1200 8 8 0 6.2831853}%
\put(17.8000,-7.8000){\makebox(0,0)[rb]{$x$}}%
\put(17.8000,-11.8000){\makebox(0,0)[rb]{$\hat{x}$}}%
\put(6.0000,-16.0000){\makebox(0,0){$\Gamma$}}%
%
\special{pn 8}%
\special{pn 8}%
\special{pa 2300 1700}%
\special{pa 2300 1708}%
\special{fp}%
\special{pa 2298 1745}%
\special{pa 2297 1753}%
\special{fp}%
\special{pa 2292 1789}%
\special{pa 2291 1797}%
\special{fp}%
\special{pa 2282 1833}%
\special{pa 2280 1841}%
\special{fp}%
\special{pa 2268 1876}%
\special{pa 2265 1883}%
\special{fp}%
\special{pa 2250 1917}%
\special{pa 2247 1924}%
\special{fp}%
\special{pa 2229 1957}%
\special{pa 2225 1963}%
\special{fp}%
\special{pa 2205 1994}%
\special{pa 2200 2000}%
\special{fp}%
\special{pa 2177 2029}%
\special{pa 2171 2035}%
\special{fp}%
\special{pa 2145 2062}%
\special{pa 2139 2067}%
\special{fp}%
\special{pa 2112 2091}%
\special{pa 2105 2096}%
\special{fp}%
\special{pa 2075 2118}%
\special{pa 2069 2122}%
\special{fp}%
\special{pa 2037 2140}%
\special{pa 2030 2144}%
\special{fp}%
\special{pa 1997 2160}%
\special{pa 1989 2163}%
\special{fp}%
\special{pa 1954 2175}%
\special{pa 1947 2178}%
\special{fp}%
\special{pa 1911 2187}%
\special{pa 1904 2189}%
\special{fp}%
\special{pa 1867 2196}%
\special{pa 1859 2196}%
\special{fp}%
\special{pa 1823 2200}%
\special{pa 1815 2200}%
\special{fp}%
\special{pa 1778 2199}%
\special{pa 1770 2199}%
\special{fp}%
\special{pa 1733 2195}%
\special{pa 1725 2194}%
\special{fp}%
\special{pa 1689 2188}%
\special{pa 1681 2186}%
\special{fp}%
\special{pa 1646 2176}%
\special{pa 1638 2173}%
\special{fp}%
\special{pa 1604 2160}%
\special{pa 1596 2157}%
\special{fp}%
\special{pa 1563 2140}%
\special{pa 1557 2137}%
\special{fp}%
\special{pa 1525 2117}%
\special{pa 1518 2113}%
\special{fp}%
\special{pa 1488 2091}%
\special{pa 1482 2086}%
\special{fp}%
\special{pa 1455 2062}%
\special{pa 1449 2056}%
\special{fp}%
\special{pa 1424 2029}%
\special{pa 1418 2023}%
\special{fp}%
\special{pa 1395 1994}%
\special{pa 1391 1988}%
\special{fp}%
\special{pa 1371 1956}%
\special{pa 1367 1950}%
\special{fp}%
\special{pa 1349 1917}%
\special{pa 1346 1910}%
\special{fp}%
\special{pa 1332 1876}%
\special{pa 1329 1868}%
\special{fp}%
\special{pa 1318 1833}%
\special{pa 1316 1825}%
\special{fp}%
\special{pa 1308 1789}%
\special{pa 1307 1781}%
\special{fp}%
\special{pa 1302 1745}%
\special{pa 1301 1737}%
\special{fp}%
\special{pa 1300 1700}%
\special{pa 1300 1692}%
\special{fp}%
\special{pa 1302 1655}%
\special{pa 1303 1647}%
\special{fp}%
\special{pa 1308 1611}%
\special{pa 1310 1603}%
\special{fp}%
\special{pa 1318 1567}%
\special{pa 1320 1559}%
\special{fp}%
\special{pa 1332 1524}%
\special{pa 1335 1517}%
\special{fp}%
\special{pa 1349 1483}%
\special{pa 1353 1476}%
\special{fp}%
\special{pa 1371 1443}%
\special{pa 1375 1436}%
\special{fp}%
\special{pa 1396 1406}%
\special{pa 1400 1399}%
\special{fp}%
\special{pa 1423 1371}%
\special{pa 1429 1365}%
\special{fp}%
\special{pa 1455 1338}%
\special{pa 1461 1333}%
\special{fp}%
\special{pa 1489 1308}%
\special{pa 1495 1304}%
\special{fp}%
\special{pa 1525 1282}%
\special{pa 1531 1278}%
\special{fp}%
\special{pa 1564 1259}%
\special{pa 1571 1256}%
\special{fp}%
\special{pa 1604 1240}%
\special{pa 1612 1237}%
\special{fp}%
\special{pa 1646 1224}%
\special{pa 1654 1222}%
\special{fp}%
\special{pa 1689 1212}%
\special{pa 1697 1211}%
\special{fp}%
\special{pa 1733 1205}%
\special{pa 1741 1203}%
\special{fp}%
\special{pa 1778 1201}%
\special{pa 1786 1200}%
\special{fp}%
\special{pa 1823 1200}%
\special{pa 1831 1201}%
\special{fp}%
\special{pa 1867 1204}%
\special{pa 1875 1206}%
\special{fp}%
\special{pa 1911 1213}%
\special{pa 1919 1214}%
\special{fp}%
\special{pa 1954 1225}%
\special{pa 1962 1227}%
\special{fp}%
\special{pa 1997 1240}%
\special{pa 2004 1244}%
\special{fp}%
\special{pa 2037 1260}%
\special{pa 2044 1264}%
\special{fp}%
\special{pa 2075 1282}%
\special{pa 2082 1287}%
\special{fp}%
\special{pa 2112 1309}%
\special{pa 2118 1314}%
\special{fp}%
\special{pa 2145 1339}%
\special{pa 2151 1344}%
\special{fp}%
\special{pa 2176 1371}%
\special{pa 2182 1377}%
\special{fp}%
\special{pa 2205 1406}%
\special{pa 2210 1413}%
\special{fp}%
\special{pa 2229 1444}%
\special{pa 2233 1451}%
\special{fp}%
\special{pa 2250 1483}%
\special{pa 2254 1490}%
\special{fp}%
\special{pa 2268 1524}%
\special{pa 2271 1532}%
\special{fp}%
\special{pa 2282 1567}%
\special{pa 2284 1575}%
\special{fp}%
\special{pa 2292 1611}%
\special{pa 2293 1618}%
\special{fp}%
\special{pa 2298 1655}%
\special{pa 2299 1663}%
\special{fp}%
\special{pa 2300 1700}%
\special{pa 2300 1700}%
\special{fp}%
%
\special{pn 4}%
\special{sh 1}%
\special{ar 1800 1700 8 8 0 6.2831853}%
\special{sh 1}%
\special{ar 1800 1700 8 8 0 6.2831853}%
\put(18.6000,-20.0000){\makebox(0,0)[lb]{a circle with the radius of $\nu^{-1}$}}%
%
\special{pn 8}%
\special{pa 1800 800}%
\special{pa 1800 1700}%
\special{dt 0.045}%
\put(18.3000,-17.0000){\makebox(0,0)[lb]{$\hat{x}+\frac{\hat{x}-x}{\nu\lvert \hat{x}-x\rvert}$}}%
%
\special{pn 8}%
\special{pa 1850 2000}%
\special{pa 1650 2180}%
\special{fp}%
\end{picture}}%
\end{center}
\caption{$\Gamma$ tube strategy}
\label{fig:hosest}
\end{figure}

\begin{dfn}[$\Gamma$ tube strategy]
\label{def:ghose}
Let $\nu>0$ and $\Gamma\in\mathcal{L}_{\nu}$. Let $x\in\mathbb{R}^2$ be the current position of the game. 
Let $\hat{x}$ satisfy $\min_{y\in\Gamma} \lvert x-y\rvert=\lvert x-\hat{x}\rvert$. A \zcenp{$\hat{x}+\frac{\hat{x}-x}{\nu\lvert\hat{x}-x\rvert}$} is called a {\it $\Gamma$ tube strategy} (Figure \ref{fig:hosest}).
\end{dfn}

\begin{lem}[Property of $\Gamma$ tube strategies]
\label{lem:ghose}
Let $\Gamma=\gamma([0,1])\in\mathcal{L}_{\nu}$. Let $\delta>0$ and $\epsilon\ll\delta$. If \sente takes a $\Gamma$ tube strategy at round $n$ and $x_n\in B_{\delta}(\Gamma)\setminus (B_{\delta}(\gamma(0))\cup B_{\delta}(\gamma(1)))$, then $x_{n+1}\in B_{\delta}(\Gamma)$.
\end{lem}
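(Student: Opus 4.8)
The plan is to unwind the definition of the $\Gamma$ tube strategy into explicit coordinates at a foot point and then estimate $\mathrm{dist}(x_{n+1},\Gamma)$ by comparison with a nearby point of $\Gamma$, using the curvature bound $|\kappa_\gamma|\le\nu$ built into $\mathcal{L}_\nu$.

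First I would fix $\hat{x}=\hat{x}_n=\gamma(s_0)\in\Gamma$ with $d:=\mathrm{dist}(x_n,\Gamma)=|x_n-\hat{x}|\in[0,\delta)$. The hypothesis $x_n\notin B_\delta(\gamma(0))\cup B_\delta(\gamma(1))$ gives, by the triangle inequality, $|\gamma(s_0)-\gamma(0)|\ge|x_n-\gamma(0)|-|x_n-\hat x|\ge\delta-d>0$ and likewise at the other end, so $s_0$ is an interior parameter and $\gamma(s_0\pm\tau)$ is available for small $|\tau|$; this is exactly what makes a two-sided Frenet frame $(T_0,N_0)$ at $\hat{x}$ legitimate, where I set $N_0=(\hat{x}-x_n)/|\hat{x}-x_n|$ when $d>0$ (when $d=0$ the claim is trivial: $|x_{n+1}-x_n|\le\sqrt2\epsilon+\nu\epsilon^2<\delta$). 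Since $\hat{x}$ is a foot point, $T_0\perp(x_n-\hat{x})$. Now, by Definition~\ref{def:ghose} the center is $z=\hat{x}+\nu^{-1}N_0$, whence $w=(z-x_n)/|z-x_n|=N_0$ (here $|z-x_n|=d+\nu^{-1}$) and every admissible $v$ satisfies $v\perp N_0$, i.e. $v=\pm T_0$; hence for any choice of \gote,
\[
x_{n+1}=x_n\pm\sqrt2\epsilon\,T_0+\nu\epsilon^2 N_0=\hat{x}\pm\sqrt2\epsilon\,T_0-(d-\nu\epsilon^2)N_0 .
\]
In words: move $\sqrt2\epsilon$ parallel to $\Gamma$ at $\hat{x}$, then push $\nu\epsilon^2$ straight toward $\Gamma$. (Equivalently $|x_{n+1}-z|=T_{\epsilon^2}(d+\nu^{-1})$ as in \eqref{seq:2}, which by Lemma~\ref{lem:series2} is $<d+\nu^{-1}$ once $d>\tfrac{\nu}{2}\epsilon^2$; this is the reason $z$ is placed a distance $\nu^{-1}$ beyond the foot point.)

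Next I would compare $x_{n+1}$ with $\gamma(s_0\pm\sqrt2\epsilon)$, the sign matching the tangential move. Writing $\gamma(s_0+\tau)-\hat{x}=P(\tau)T_0+g(\tau)N_0$ and integrating the Frenet equations, the bound $|\kappa_\gamma|\le\nu$ yields $|\langle\gamma'(s_0+r),N_0\rangle|\le\nu r$, hence $|g(\tau)|\le\tfrac{\nu}{2}\tau^2$, and $\langle\gamma'(s_0+r),T_0\rangle\ge\sqrt{1-\nu^2r^2}$, hence $|P(\tau)-\tau|\le\tfrac{\nu^2}{6}|\tau|^3$; moreover the inequality $|g(\tau)|\le\tfrac{\nu}{2}\tau^2$ is strict for $\tau\ne0$, since equality would force $|\langle\gamma'(s_0+r),N_0\rangle|=\nu r$ for all $r$, i.e. $|\kappa_\gamma|\equiv\nu$ with non‑rotating normal, which is impossible. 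Taking $\tau=\pm\sqrt2\epsilon$: the $T_0$‑components of $x_{n+1}$ and $\gamma(s_0\pm\sqrt2\epsilon)$ differ by $O(\nu^2\epsilon^3)$, while the $N_0$‑components differ by $|d-\nu\epsilon^2+g(\pm\sqrt2\epsilon)|$, and because $|g(\pm\sqrt2\epsilon)|\le\tfrac{\nu}{2}(\sqrt2\epsilon)^2=\nu\epsilon^2$ this quantity lies in $[d-2\nu\epsilon^2,\,d]$, so its modulus is at most $\max\{d,2\nu\epsilon^2\}$. Therefore
\[
\mathrm{dist}(x_{n+1},\Gamma)\le\bigl|x_{n+1}-\gamma(s_0\pm\sqrt2\epsilon)\bigr|\le\sqrt{\max\{d,2\nu\epsilon^2\}^2+O(\nu^4\epsilon^6)}<\delta
\]
once $\epsilon$ is small relative to $\delta$ (using $d<\delta$ and $2\nu\epsilon^2<\delta$); in the marginal regime where $d$ is comparable to $\delta$ one uses instead the strict bound $|g(\sqrt2\epsilon)|<\nu\epsilon^2$ to conclude $\mathrm{dist}(x_{n+1},\Gamma)<d<\delta$, and the regime $d\le 2\nu\epsilon^2$ is immediate from $|x_{n+1}-x_n|\le\sqrt2\epsilon+\nu\epsilon^2$.

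The crux of the argument is the single observation that the inward push $\nu\epsilon^2$ of the concentric strategy about $z$ exactly absorbs the worst‑case curvature displacement $\tfrac{\nu}{2}\tau^2$ of $\Gamma$ at $\tau=\sqrt2\epsilon$, so that "moving tangentially around $z$" does not let the trajectory drift out of the tube; the endpoint hypothesis is precisely what guarantees the foot point stays in the interior of $\gamma$, so that this local Frenet comparison is valid. Everything else — the passage from the recursion $T_{\epsilon^2}$ to coordinates, and the control of the $O(\epsilon^3)$ tangential mismatch — is routine bookkeeping, and I expect the only genuinely delicate point to be the local comparison between $\Gamma$ and the concentric circles about $z$, handled exactly by the sharp estimates $|g(\tau)|\le\tfrac{\nu}{2}\tau^2$ and $|P(\tau)-\tau|\le\tfrac{\nu^2}{6}|\tau|^3$ above.
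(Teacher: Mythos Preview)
Your setup and the computation of $x_{n+1}=\hat{x}\pm\sqrt2\epsilon\,T_0-(d-\nu\epsilon^2)N_0$ are exactly right, and you have correctly identified the heart of the matter: the inward push $\nu\epsilon^2$ is calibrated to cancel the worst‑case normal displacement $\tfrac{\nu}{2}(\sqrt2\epsilon)^2$ of $\Gamma$. But your endgame does not close when $d$ is near $\delta$. Your displayed bound $\sqrt{\max\{d,2\nu\epsilon^2\}^2+O(\epsilon^6)}<\delta$ fails precisely there (nothing prevents $\delta-d$ from being smaller than $\epsilon^3$), and the fix you propose—use the \emph{strict} inequality $|g(\sqrt2\epsilon)|<\nu\epsilon^2$ to get $|d-\nu\epsilon^2+g|<d$—only controls the $N_0$‑component; the tangential defect $|P(\sqrt2\epsilon)-\sqrt2\epsilon|=O(\epsilon^3)$ is still there, so you cannot conclude $\bigl|x_{n+1}-\gamma(s_0\pm\sqrt2\epsilon)\bigr|<d$. (A Frenet computation shows the normal gap $\nu\epsilon^2-g(\sqrt2\epsilon)$ is only of order $\epsilon^4$ in the worst case, the circle of radius $\nu^{-1}$, so you would need a quantitative version of the strictness to beat $\epsilon^6$ in the squared norm; this can be done but is not what you wrote.)

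The paper sidesteps this entirely by comparing with the full circle rather than a single point of $\Gamma$. After reducing to $d\ge\delta/2$ (the regime $d<\delta/2$ is trivial since $|x_{n+1}-x_n|<\delta/2$), it writes $\Gamma$ locally as a graph $q=f(p)$ over the tangent line at $\hat x$ and sets $C=\partial B_{\nu^{-1}}(z)$. Your own parenthetical already contains the first step: $|x_{n+1}-z|=T_{\epsilon^2}(d+\nu^{-1})<d+\nu^{-1}$ by Lemma~\ref{lem:series2}, so $\operatorname{dist}(x_{n+1},C)<d$. The curvature bound then gives the graph inequality $f(p)\ge -\nu^{-1}+\sqrt{\nu^{-2}-p^2}$, i.e.\ $\hat\Gamma$ lies between $x_{n+1}$ and $C$; hence the radial segment from $x_{n+1}$ to its nearest point on $C$ already meets $\hat\Gamma$, giving $\operatorname{dist}(x_{n+1},\Gamma)\le\operatorname{dist}(x_{n+1},C)<d<\delta$. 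This is the ``local comparison between $\Gamma$ and the concentric circles about $z$'' you flag at the end, but carried out as a one‑line graph comparison rather than a pointwise Taylor estimate—so no tangential error ever appears.
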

\begin{proof}
It is clear that the statement holds if $x_n\in B_{\delta/2}(\Gamma)$. Thus, there is no loss of generality to assume that $\hat{x}_n=(0,0)$ and $x_n\in \{(0,q) \mid  \delta/2\le q<\delta\}$, where $\hat{x}_n$ is a minimizer taken in Definition \ref{def:ghose}. There are a graph $f:\mathbb{R}\to\mathbb{R}$ and $\delta_0>0$ such that $\hat{\Gamma}:=\{(p,q) \mid q=f(p), -\delta_0<p<\delta_0\}\subset\Gamma$. Let $C=\partial B_{\nu^{-1}}((0,-\nu^{-1}))$. If \sente takes a $\Gamma$ tube strategy, we see that $dist(x_{n+1},C)<dist(x_n,C)<\delta$. Since $f(p)\ge -\nu^{-1}+\sqrt{\nu^{-2}-p^2}$ for $-\delta_0<p<\delta_0$, we have
\[dist(x_{n+1},\Gamma)\le dist(x_{n+1},\hat{\Gamma})\le dist(x_{n+1},C)<dist(x_n,C)<\delta,\]
which is the conclusion.
\end{proof}
\subsection{Examples}

We first observe the behavior of the solution to \eqref{eq:bobstacle} by considering two examples of $O_{-}$. In both of the problems we assume
\begin{itemize}
\item[(A1)] $D_0\subset B_{R}(z)$ for some $z\in\mathbb{R}^2$ and $R<\nu^{-1}$.
\end{itemize}
Set 
\[A:=\bigcap \left\{\overline{B_{\nu^{-1}}(z)} \mid O_{-}\subset \overline{B_{\nu^{-1}}(z)},  \ z\in\mathbb{R}^2\right\}\]
for $O_{-}\subset \mathbb{R}^2$. Under the assumption (A1), this $A$ is a major candidate of the asymptotic shape. Indeed we can show at least that the asymptotic shape is bounded by $A$ from above for general $O_{-}$ satisfying (A1).

\begin{lem}
\label{lem:0924}
Let $O\subset \mathbb{R}^d$ and $r>0$. Then the set $\{y\in\mathbb{R}^d \mid O\subset \overline{B_{r}(y)}\}$ is convex.
\end{lem}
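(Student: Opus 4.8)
The plan is to show that the set $S:=\{y\in\mathbb{R}^d \mid O\subset \overline{B_r(y)}\}$ is an intersection of closed half-spaces (in fact of closed balls), hence convex. First I would rewrite the membership condition pointwise: $y\in S$ if and only if $\lvert x-y\rvert\le r$ for every $x\in O$, i.e. $y\in\bigcap_{x\in O}\overline{B_r(x)}$. Thus $S=\bigcap_{x\in O}\overline{B_r(x)}$, an intersection of closed balls. Since each $\overline{B_r(x)}$ is convex and an arbitrary intersection of convex sets is convex, $S$ is convex.

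The one line that deserves a word is the rewriting $S=\bigcap_{x\in O}\overline{B_r(x)}$: the statement ``$O\subset\overline{B_r(y)}$'' means ``for all $x\in O$, $\lvert x-y\rvert\le r$'', and by symmetry of the Euclidean distance this is ``for all $x\in O$, $y\in\overline{B_r(x)}$'', which is exactly ``$y\in\bigcap_{x\in O}\overline{B_r(x)}$''. I should also note the degenerate case $O=\emptyset$, where the intersection is all of $\mathbb{R}^d$, which is trivially convex; so no assumption on $O$ is needed.

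There is essentially no obstacle here — the only thing to be careful about is not to confuse ``$O$ contained in a ball centered at $y$'' with ``$y$ contained in a ball'', which the symmetry of $\lvert\cdot\rvert$ resolves. I would keep the proof to two or three sentences. In the paper this lemma is used (via the set $A$) to argue that the candidate asymptotic shape under assumption (A1) is well-defined and convex, so the statement is exactly what is needed downstream; no strengthening is required.

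\begin{proof}
If $O=\emptyset$ the set in question is all of $\mathbb{R}^d$, which is convex, so assume $O\neq\emptyset$. For $y\in\mathbb{R}^d$ we have $O\subset\overline{B_r(y)}$ if and only if $\lvert x-y\rvert\le r$ for every $x\in O$, which by the symmetry of the Euclidean norm is equivalent to $y\in\overline{B_r(x)}$ for every $x\in O$. Hence
\[\{y\in\mathbb{R}^d \mid O\subset \overline{B_r(y)}\}=\bigcap_{x\in O}\overline{B_r(x)}.\]
Each ball $\overline{B_r(x)}$ is convex, and an arbitrary intersection of convex sets is convex. Therefore the set is convex.
\end{proof}
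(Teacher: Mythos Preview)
Your proof is correct. The paper takes a slightly different route: it fixes $y_1,y_2$ in the set and checks directly, via the triangle inequality, that $\left\lvert\frac{y_1+y_2}{2}-x\right\rvert\le r$ for every $x\in O$ (in fact it only writes out the midpoint case, though the same line works for any convex combination). Your argument instead rewrites the set as $\bigcap_{x\in O}\overline{B_r(x)}$ and invokes the stability of convexity under intersection. Both rest on the convexity of Euclidean balls, so the content is the same; your formulation is a bit more structural and also makes closedness of the set immediate, while the paper's version is a one-line hands-on computation.
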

\begin{proof}
Let $y_1,y_2\in\mathbb{R}^d$ satisfy $O\subset \overline{B_{r}(y_1)}$ and $O\subset \overline{B_{r}(y_2)}$. Then, for $x\in O$, we have $\lvert y_i-x\rvert\le r$ ($i=1,2$). This implies that 
\[\left\lvert\frac{y_1+y_2}{2}-x\right\rvert=\frac{\lvert y_1-x+y_2-x\rvert}{2}\le r\]
for all $x\in O$ and the lemma follows.
\end{proof}
We hereafter take $u_0$ and $\Psi_{-}$ as \eqref{good_ini} and \eqref{good_obs} respectively. We set $O_{\delta_1}:=\{x\in\mathbb{R}^2 \mid \Psi_{-}(x)>-\delta_1\}$ and $C_{\delta_1,\delta_2}:=\{z\in\mathbb{R}^2 \mid O_{\delta_1}\subset \overline{B(z,\nu^{-1}-\delta_2)}\}$ for $\delta_1,\delta_2>0$.
\begin{lem}
\label{lem:1011}
Assume $\mathrm{(A1)}$. For $x\in A^c$, there exist $\delta_1, \delta_2>0$ and $\hat{z}\in\mathbb{R}^2$ such that $\hat{z}\in C_{\delta_1,\delta_2}$ and $x\in \overline{B_{\nu^{-1}}(\hat{z})}^c$.
\end{lem}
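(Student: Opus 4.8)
The plan is to exploit the one genuinely useful consequence of hypothesis (A1): the obstacle $O_-$ is not merely enclosed by some closed ball of radius $\nu^{-1}$, but sits inside an \emph{open} ball $B_R(z_1)$ whose radius $R$ is \emph{strictly} smaller than $\nu^{-1}$ (take $z_1=z$ from (A1), since $O_-\subset D_0\subset B_R(z)$). This strict slack $\nu^{-1}-R>0$ is exactly what will let us shrink the enclosing ball (producing $\delta_2$) while still leaving room to fatten the obstacle to $O_{\delta_1}$. First I would unwind the definition of $A$: since $A=\bigcap\{\overline{B_{\nu^{-1}}(z)}\mid O_-\subset\overline{B_{\nu^{-1}}(z)}\}$, the assumption $x\in A^c$ means there is at least one center $z_0$ with $O_-\subset\overline{B_{\nu^{-1}}(z_0)}$ and $|x-z_0|>\nu^{-1}$. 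Put $\eta:=|x-z_0|-\nu^{-1}>0$.

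Next I would interpolate between the two enclosing balls. For $t\in(0,1]$ set $z_t:=(1-t)z_0+tz_1$ and $\rho_t:=(1-t)\nu^{-1}+tR$; a one-line triangle-inequality estimate ($|y-z_t|\le(1-t)|y-z_0|+t|y-z_1|$ for $y\in\overline{O_-}$) gives $\overline{O_-}\subset\overline{B_{\rho_t}(z_t)}$, and since $R<\nu^{-1}$ we have $\rho_t=\nu^{-1}-t(\nu^{-1}-R)<\nu^{-1}$. Now I would set $\delta_1=\delta_2:=\tfrac12\,t(\nu^{-1}-R)$ and $\hat z:=z_t$, and choose $t>0$ small enough that $t\,|z_1-z_0|<\eta$ and $\delta_1\le|a|$ (the latter so that the sublevel set $O_{\delta_1}=\{\Psi_->-\delta_1\}$ lies in the open $\delta_1$-neighborhood of $\overline{O_-}$; recall $a<0$ from the form \eqref{good_obs} of $\Psi_-$).

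With these choices the verification is routine. On one hand $O_{\delta_1}\subset B_{\delta_1}(\overline{O_-})\subset B_{\rho_t+\delta_1}(z_t)=B_{\nu^{-1}-\delta_2}(z_t)\subset\overline{B(z_t,\nu^{-1}-\delta_2)}$, using $\rho_t+\delta_1=\nu^{-1}-\tfrac12 t(\nu^{-1}-R)=\nu^{-1}-\delta_2$, so $\hat z\in C_{\delta_1,\delta_2}$. On the other hand $|x-\hat z|\ge|x-z_0|-t|z_1-z_0|=\nu^{-1}+\eta-t|z_1-z_0|>\nu^{-1}$, so $x\notin\overline{B_{\nu^{-1}}(\hat z)}$. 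I do not expect a real obstacle here; the only point needing a moment's care is the inclusion $O_{\delta_1}\subset B_{\delta_1}(\overline{O_-})$, which follows from \eqref{good_obs} because for $y\notin O_-$ one has $\Psi_-(y)=\max\{a,-\mathrm{dist}(y,\partial O_-)\}$, so $\Psi_-(y)>-\delta_1$ with $\delta_1\le|a|$ forces $\mathrm{dist}(y,\partial O_-)<\delta_1$ and hence $\mathrm{dist}(y,\overline{O_-})<\delta_1$. Everything else is triangle inequality — morally the same convexity already recorded in Lemma \ref{lem:0924}, but with the radius allowed to vary along the segment from $z_0$ to $z_1$.
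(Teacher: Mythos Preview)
Your proof is correct and takes a more direct route than the paper's. The paper argues in two steps: first it shows that whenever $B_{\delta_1+\delta_2}(z)\subset C_{0,0}$ one has $z\in C_{\delta_1,\delta_2}$; second it shows that $C^x_{0,0}:=C_{0,0}\setminus\overline{B_{\nu^{-1}}(x)}$ has nonempty interior, by forming the convex hull of a point $\tilde z\in C^x_{0,0}$ with the ball $\overline{B(z,\nu^{-1}-R)}\subset C_{0,0}$ coming from (A1), invoking Lemma~\ref{lem:0924}, and then splitting into cases according to whether $\overline{B(z,\nu^{-1}-R)}$ is already disjoint from $\overline{B_{\nu^{-1}}(x)}$. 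You instead interpolate \emph{both} the center and the radius along the segment from $(z_0,\nu^{-1})$ to $(z_1,R)$; the triangle inequality $|y-z_t|\le(1-t)|y-z_0|+t|y-z_1|$ immediately gives an enclosing ball of radius $\rho_t<\nu^{-1}$, and the slack $\nu^{-1}-\rho_t$ is split in half to furnish explicit $\delta_1,\delta_2$. This bypasses the case analysis entirely and produces the constants constructively. Both arguments ultimately rest on the same convexity phenomenon, but the paper confines itself to the fixed-radius set $C_{0,0}$ and then extracts the $\delta$'s from an interior point, whereas your variable-radius interpolation reaches $C_{\delta_1,\delta_2}$ in one move.
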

\begin{proof}
{\bf Step 1.}
We first show that $B_{\delta_1+\delta_2}(z)\subset C_{0,0}$ implies $z\in C_{\delta_1,\delta_2}$. Indeed $B_{\delta_1+\delta_2}(z)\subset C_{0,0}$ implies
\[O_{-}\subset \bigcap\{\overline{B_{\nu^{-1}}(\tilde{z})} \mid \tilde{z}\in B_{\delta_1+\delta_2}(z)\}=\overline{B(z,\nu^{-1}-(\delta_1+\delta_2))}.\]
By taking $\delta_1$ neighborhood of both sides, we have
\[O_{\delta_1}\subset \overline{B(z,\nu^{-1}-\delta_2)},\]
which means $z\in C_{\delta_1,\delta_2}$.

{\bf Step 2.}
Let $C^x_{0,0}:=\{z\in\mathbb{R} \mid O_{-}\subset\overline{B_{\nu^{-1}}(z)}, x\in \overline{B_{\nu^{-1}}(z)}^c\}$. We next show $(C_{0,0}^x)^{int}\neq \emptyset$. By the assumption (A1), we see that $\overline{B(z,\nu^{-1}-R)}\subset C_{0,0}$ for some $z\in\mathbb{R}^2$. We also notice that $C^x_{0,0}=C_{0,0}\setminus \overline{B_{\nu^{-1}}(x)}$. If $\overline{B(z,\nu^{-1}-R)}\setminus \overline{B_{\nu^{-1}}(x)}\neq \emptyset$, then it has the interior and so does $C^x_{0,0}$. Since $x\in A^c$, we have $C^x_{0,0}\neq\emptyset$ and let $\tilde{z}\in C^x_{0,0}$. Since $C_{0,0}$ is convex (Lemma \ref{lem:0924}), we have $Co\left(\{\tilde{z}\}\cup \overline{B(z,\nu^{-1}-R)}\right)\subset C_{0,0}$. Hence, even if $\overline{B(z,\nu^{-1}-R)}\subset \overline{B_{\nu^{-1}}(x)}$, the set $Co\left(\{\tilde{z}\}\cup \overline{B(z,\nu^{-1}-R)}\right)\setminus \overline{B_{\nu^{-1}}(x)}$ has the interior and so does $C^x_{0,0}$.

Therefore, for small $\delta_1>0$ and $\delta_2>0$, there exists $\hat{z}\in C_{\delta_1,\delta_2}$ and $x\in \overline{B_{\nu^{-1}}(\hat{z})}^c$.
\end{proof}
\begin{lem}
\label{lem:gote1011}
Assume $\mathrm{(A1)}$. Then
\[\varlimsup_{t\to\infty}E_t\subset A.\]
\end{lem}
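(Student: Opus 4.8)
The goal is to transfer the inclusion to the game and to win as \gote. Since we have fixed $u_0$ and $\Psi_{-}$ as in \eqref{good_ini} and \eqref{good_obs}, one has $E_t=\{x\in\mathbb{R}^2\mid u(x,t)\ge0\}$, so the assertion $\varlimsup_{t\to\infty}E_t\subset A$ is equivalent to: for every $x\in A^c$ there is $\tau>0$ such that $u(x,t)<0$ for all $t>\tau$. By the locally uniform convergence $u^{\epsilon}\to u$ (Appendix \ref{app_ginp}; compare the proof of Lemma \ref{lem:elgeom}), it suffices to exhibit a (not necessarily optimal) strategy for \gote that forces the game cost below a negative constant independent of $\epsilon$, provided $t$ exceeds some threshold $\tau$ that is also independent of $\epsilon$.

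Fix $x\in A^c$. By Lemma \ref{lem:1011} we may pick $\delta_1,\delta_2>0$ and $\hat z$ with $\hat z\in C_{\delta_1,\delta_2}$, i.e. $O_{\delta_1}\subset\overline{B(\hat z,\nu^{-1}-\delta_2)}$, and $\lvert x-\hat z\rvert>\nu^{-1}$; shrinking $\delta_1,\delta_2$ we may in addition arrange $\delta_1+\delta_2<\nu^{-1}-R$, where $R$ and $z_{\ast}$ denote the radius and the center from $\mathrm{(A1)}$. Then also $z_{\ast}\in C_{\delta_1,\delta_2}$, since $O_{\delta_1}\subset B_{R+\delta_1}(z_{\ast})\subset\overline{B(z_{\ast},\nu^{-1}-\delta_2)}$, and because $C_{\delta_1,\delta_2}=\{z\mid O_{\delta_1}\subset\overline{B(z,\nu^{-1}-\delta_2)}\}$ is convex by Lemma \ref{lem:0924}, the segment $l_{\hat z,z_{\ast}}$ lies in $C_{\delta_1,\delta_2}$. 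The plan is to let \gote play the push by moving circle strategy along a sequence $\{z_n\}$ that moves from $z_0=\hat z$ towards $z_{\ast}$ along $l_{\hat z,z_{\ast}}$ with step size at most $\frac{\delta_2}{2}\nu^2\epsilon^2$, reaching $z_{\ast}$ at a round $N_1$ and staying there afterwards; note $N_1\epsilon^2$ is bounded by a constant depending only on $\nu,\delta_2$ and $\lvert\hat z-z_{\ast}\rvert$, hence independent of $\epsilon$.

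Since $\lvert x-\hat z\rvert>\nu^{-1}>\nu^{-1}-\delta_2$, Lemma \ref{lem:pmds}(1) gives $x_n\notin\overline{B(z_n,\nu^{-1}-\delta_2)}$ for every round $n$ once $\epsilon$ is small. As $z_n\in C_{\delta_1,\delta_2}$, this yields $x_n\notin O_{\delta_1}=\{\Psi_{-}>-\delta_1\}$, so if \sente quits at any round $n$ the stopping cost satisfies $\Psi_{-}(x_n)\le-\delta_1$. For $n\ge N_1$ we have $z_n=z_{\ast}$ and $\overline{D_0}\subset\overline{B_R(z_{\ast})}\subset\overline{B(z_{\ast},\nu^{-1}-\delta_2)}$, whence $dist(x_n,\overline{D_0})\ge(\nu^{-1}-\delta_2)-R=:\rho_0>0$; therefore, if \sente never quits, the terminal cost is $u_0(x_N)=\max\{a,-dist(x_N,\partial D_0)\}\le\max\{a,-\rho_0\}<0$ as soon as $N\ge N_1$, i.e. for $t$ larger than the above $\epsilon$-independent bound on $N_1\epsilon^2$. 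In all cases $u^{\epsilon}(x,t)\le\max\{-\delta_1,a,-\rho_0\}<0$ for all sufficiently small $\epsilon$ and all such $t$; letting $\epsilon\searrow0$ gives $u(x,t)<0$, so $x\notin E_t$ for $t$ large, i.e. $x\notin\varlimsup_{t\to\infty}E_t$. Since $x\in A^c$ was arbitrary, $\varlimsup_{t\to\infty}E_t\subset A$.

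The delicate point — and the reason the push by moving circle strategy and the convexity of $C_{\delta_1,\delta_2}$ were prepared beforehand — is the control of the terminal cost: a plain \zcenp{$\hat z$} would only confine the trajectory to the complement of $\overline{B(\hat z,\nu^{-1}-\delta_2)}$, and $\overline{D_0}$ need not lie inside $\overline{B_{\nu^{-1}}(\hat z)}$, so $u_0(x_N)$ could fail to be negative. Sliding the center to a center $z_{\ast}$ coming from $\mathrm{(A1)}$, for which $\overline{D_0}$ sits well inside the pertinent disk, repairs this, while Lemma \ref{lem:pmds}(1) guarantees the trajectory never re-enters the moving disk during the slide and the convexity keeps $z_n$ in $C_{\delta_1,\delta_2}$ so that the stopping cost stays $\le-\delta_1$ throughout. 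The remaining verifications (that the slide occupies $\epsilon$-independent time, that both $\hat z$ and $z_{\ast}$ lie in $C_{\delta_1,\delta_2}$, and the elementary distance estimates) are routine given the lemmas already in place.
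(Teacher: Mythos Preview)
Your argument is correct and follows essentially the same route as the paper: invoke Lemma \ref{lem:1011} to get $\hat z\in C_{\delta_1,\delta_2}$ with $x\notin\overline{B_{\nu^{-1}}(\hat z)}$, slide the center along the segment to the $\mathrm{(A1)}$-center via a push by moving circle strategy with step $\frac{\delta_2}{2}\nu^2\epsilon^2$, use Lemma \ref{lem:pmds}(1) together with the convexity of $C_{\delta_1,\delta_2}$ to bound the stopping cost by $-\delta_1$, and then bound the terminal cost once the center has arrived. Your write-up is in fact slightly more explicit than the paper's, since you spell out why $z_\ast\in C_{\delta_1,\delta_2}$ (via $\delta_1+\delta_2<\nu^{-1}-R$) and why the sliding time is $\epsilon$-independent.
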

\begin{proof}
We give appropriate strategies of \gote as in the proof of Lemma \ref{lem:elgeom}. For $x\in A^c$, there exist $\delta_1, \delta_2>0$ and $z_0\in\mathbb{R}^2$ such that $R<\nu^{-1}-\delta_2$, $z_0\in C_{\delta_1,\delta_2}$ and $x\notin \overline{B(z_0,\nu^{-1}-\delta_2)}$ (Lemma \ref{lem:1011}). We define the sequence $\{z_n\}$ by
\[z_n:=z_0+\min\left\{\frac{\delta_2}{2}n\nu^2\epsilon^2, \lvert z-z_0\rvert\right\}\frac{z-z_0}{\lvert z-z_0\rvert},\]
where $z$ is a point taken in the assumption (A1). \gote's strategy is to take a push by moving circle strategy with this $\{z_n\}$. 

If she does so, then $z_n\in C_{\delta_1,\delta_2}$ for all $n$ because $z_0, z\in C_{\delta_1,\delta_2}$ and $C_{\delta_1,\delta_2}$ is convex (Lemma \ref{lem:0924}). By 1 in Lemma \ref{lem:pmds} we see that if \sente quits the game at round $i$, the game position $x_i$ is not in $O_{\delta_1}$. Thus the stopping cost is at most $-\delta_1$ regardless of $\epsilon$. If \sente does not quit the game, the last position $x_N$ of the game is not in $\overline{B(z,\nu^{-1}-\delta_2)}$ for sufficiently large $t$. Thus the terminal cost is at most $R-\nu^{-1}+\delta_2<0$ for large $t$ regardless of $\epsilon$. Therefore we conclude that for $x\in A^c$, there exists $\tau>0$ such that $u(x,t)<0$ for $t>\tau$.
\end{proof}
The first example of $O_{-}$ is the following:
\[O_{-}:=B_{R^{\prime}}((0,0))\setminus \{(p,q) \mid q\ge \lvert p\rvert\},\]
where $R^{\prime}<\nu^{-1}$ (Figure \ref{fig:pacman}). Notice that \\ $A^{int}=B_{R^{\prime}}((0,0))\setminus \left\{(p,q) \mid q\ge \sqrt{\nu^{-2}-p^2}+R^{\prime}-\sqrt{\nu^{-2}-R^{\prime 2}}\right\}$ for this obstacle, where we denote the interior of $A$ by $A^{int}$.

\begin{figure}[h]
\begin{center}
\input{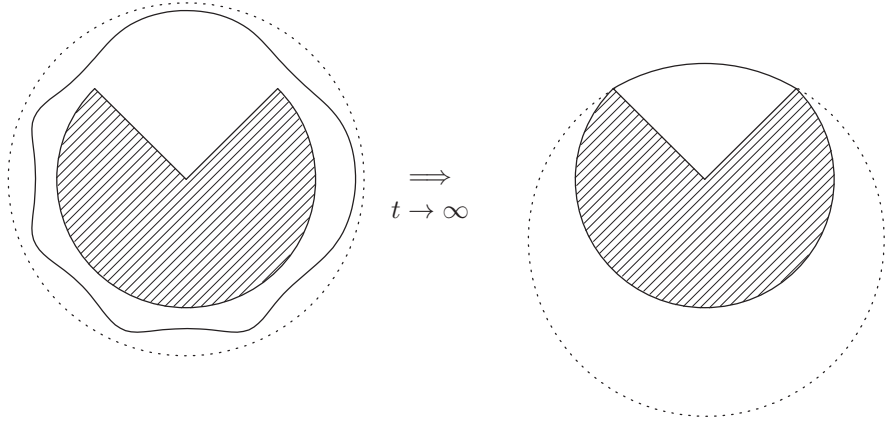}
\end{center}
\caption{Small Pac-Man}
\label{fig:pacman}
\end{figure}

\begin{prop}[Small Pac-Man]
\label{prop:pacman}
Assume $\mathrm{(A1)}$. Then 
\[A^{int}\subset \varliminf_{t\to\infty}D_t \subset \varlimsup_{t\to\infty}D_t\subset A\]
and
\[A^{int}\subset \varliminf_{t\to\infty}E_t \subset \varlimsup_{t\to\infty}E_t\subset A.\]
\end{prop}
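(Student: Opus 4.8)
Proof proposal for Proposition \ref{prop:pacman} (Small Pac-Man).

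\textbf{Overall approach.} The upper bound $\varlimsup_{t\to\infty}E_t\subset A$ is already supplied by Lemma \ref{lem:gote1011}, and since $D_t\subset E_t$ it also gives $\varlimsup_{t\to\infty}D_t\subset A$. So the only thing to prove is $A^{int}\subset\varliminf_{t\to\infty}D_t$, i.e. for every $x\in A^{int}$ there is $\tau>0$ with $u(x,t)>0$ for $t>\tau$. As in the proof of Theorem \ref{thm:conv_general} and Lemma \ref{lem:elgeom}, it suffices to exhibit a (not necessarily optimal) strategy for \sente\ that forces the game cost to be bounded below by a positive constant independent of $\epsilon$, for all sufficiently small $\epsilon$, once the number of rounds $N\sim t\epsilon^{-2}$ is large enough; the local uniform convergence $u^\epsilon\to u$ (the convergence results of Appendix \ref{app_ginp} together with the comparison principle of \cite{koike}) then yields the claim. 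The plan is to use the push by moving circle strategies prepared in Lemma \ref{lem:pmds}, whose stability properties are exactly tailored to this kind of ``drag the obstacle-circle toward \sente'' argument.

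\textbf{Key steps.} First I would fix $x\in A^{int}$. By the definition of $A$ as an intersection of closed disks of radius $\nu^{-1}$ containing $O_{-}$, having $x$ in the \emph{interior} of $A$ means that $x$ lies strictly inside every such disk, with a uniform gap: there is $\delta_0>0$ so that $x\in B(z,\nu^{-1}-\delta_0)$ for every $z$ with $O_{-}\subset\overline{B_{\nu^{-1}}(z)}$; this is the point where $x\in A^{int}$ rather than $x\in A$ is used, and it should be made rigorous using compactness of the set $\{z\mid O_{-}\subset\overline{B_{\nu^{-1}}(z)}\}$ (which is convex and bounded, cf.\ Lemma \ref{lem:0924}) and the fact that $x$ has positive distance to the complement of $A$. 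Second, pick one admissible center $z_0$, say $z_0$ close to the ``best'' disk, and by the same argument choose $\delta_1,\delta_2>0$ and $z_0$ with $z_0\in C_{\delta_1,\delta_2}$ and $x\in B(z_0,\nu^{-1}-\delta_2)$ (this is the \sente\ analogue of Lemma \ref{lem:1011}; note we now want $x$ \emph{inside} the shrunken disk, whereas Lemma \ref{lem:1011} put it outside). Third, define a moving center sequence $\{z_n\}$ that drifts $z_0$ toward $x$ at speed $C=\frac{\nu^2\delta_2}{2(1+\nu\delta_2)}\epsilon^2$ per round, exactly the constant in part 2 of Lemma \ref{lem:pmds}, and stopping once $z_n=x$:
\[
z_n:=z_0+\min\!\Big\{\tfrac{\nu^2\delta_2}{2(1+\nu\delta_2)}n\epsilon^2,\ |x-z_0|\Big\}\frac{x-z_0}{|x-z_0|}.
\]
Since $z_0\in C_{\delta_1,\delta_2}$ and one can arrange $x\in C_{\delta_1,\delta_2}$ as well (here I would shrink $\delta_1,\delta_2$ if necessary so that a whole neighborhood of $x$ lies in $C_{0,0}$, using $x\in A^{int}$), convexity of $C_{\delta_1,\delta_2}$ (Lemma \ref{lem:0924}) gives $z_n\in C_{\delta_1,\delta_2}$ for all $n$, hence $O_{\delta_1}\subset\overline{B(z_n,\nu^{-1}-\delta_2)}$ throughout. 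Fourth, let \sente\ take the push by moving circle strategy associated with $\{z_n\}$; by part 2 of Lemma \ref{lem:pmds}, starting from $x_0=x\in\overline{B(z_0,\nu^{-1}+\delta_2)}$ we get $x_n\in\overline{B(z_n,\nu^{-1}+\delta_2)}$ for all $n$ — but more is needed. The point is that once $z_n$ has reached $x$ (which happens after $O(|x-z_0|\epsilon^{-2})$ rounds, i.e.\ at a game time bounded independently of $\epsilon$), the static concentric strategy around the fixed center $z=x$ takes over, and Lemma \ref{lem:series2}(2) says $|x_n-x|\to\nu^{-1}+\frac{\nu}{2}\epsilon^2$; combined with the monotone convergence, for large $n$ the game position is trapped in the annulus near the circle $\partial B(x,\nu^{-1})$, which is contained in $O_{\delta_1}$ for $n$ large because $O_{\delta_1}\subset\overline{B(x,\nu^{-1}-\delta_2)}$ wait — I would instead argue directly that \sente\ reaches $O_{-}$: since $\overline{B(z_n,\nu^{-1}-\delta_2)}\supset O_{\delta_1}$ and the position is being pushed \emph{toward} $z_n$ while staying in $\overline{B(z_n,\nu^{-1}+\delta_2)}$, a Lemma \ref{lem:0815}/\ref{lem:1109}-type estimate on the exit time shows that in finite game time $\tau$ (depending on $x$, not on $\epsilon$) the trajectory enters $O_{\delta_1}\subset O_{-}$, at which point \sente\ quits and collects stopping cost $\Psi_-(x_i)>-\delta_1$; choosing $\delta_1$ from the start small enough and noting $\Psi_-\ge \delta_1>0$... actually one arranges that the entry is into $\{\Psi_-\ge c\}$ for a fixed $c>0$. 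Thus \sente\ gains at least a fixed positive amount regardless of small $\epsilon$, giving $u(x,t)>0$ for $t>\tau$. Finally, combining with $\varlimsup_{t\to\infty}E_t\subset A$ and $D_t\subset E_t$ closes both chains of inclusions.

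\textbf{Main obstacle.} The delicate point is Step one/two: extracting from $x\in A^{int}$ the \emph{quantitative} room $\delta_1,\delta_2>0$ and a center $z_0$ (or a small family of centers) such that simultaneously $z_0\in C_{\delta_1,\delta_2}$ and $x$ lies comfortably inside $\overline{B(z_0,\nu^{-1}-\delta_2)}$, and moreover such that the straight drift from $z_0$ to $x$ stays inside $C_{\delta_1,\delta_2}$. This is the mirror image of Lemma \ref{lem:1011} but with the roles of ``inside'' and ``outside'' of the disk swapped, and it genuinely uses that $x$ is in the interior of $A$; I expect to prove it by the same two-step scheme (first show a neighborhood of $z_0$ lies in $C_{0,0}$, then take $\delta_1$-neighborhoods), invoking Lemma \ref{lem:0924} for convexity and a compactness argument for the uniform gap. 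The second, more technical, obstacle is bookkeeping the two phases of the strategy (moving center, then fixed center) and the exit-time estimate that guarantees \sente\ actually \emph{reaches} $O_{\delta_1}$ rather than merely hovering near $\partial B(x,\nu^{-1})$ — here I would lean on Lemma \ref{lem:1109} (bounded passage time through $B_{\nu^{-1}-\delta}(z)\setminus B_a(z)$) applied with $z=z_n$, being careful that the moving-center correction $C\epsilon^2$ is absorbed, exactly as in the proof of Lemma \ref{lem:pmds}. Everything else is routine: the convergence $u^\epsilon\to u$ and the reduction to a strategy argument are identical to Lemma \ref{lem:elgeom}.
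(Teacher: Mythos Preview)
Your reduction to Lemma \ref{lem:gote1011} for the upper bounds is correct, and the overall shape of the argument (exhibit a \sente\ strategy yielding a positive cost independent of $\epsilon$) is right. The problem is in the strategy itself.

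You arrange $z_n\in C_{\delta_1,\delta_2}$ for all $n$, which means $O_{\delta_1}\subset\overline{B(z_n,\nu^{-1}-\delta_2)}$; in particular the whole obstacle $O_{-}$ sits \emph{strictly inside} the disk of radius $\nu^{-1}-\delta_2$ around every $z_n$. But the $z_n$-concentric strategy does not ``push toward $z_n$'': by Lemma \ref{lem:series2}, the radial distance $|x_n-z_n|$ converges (monotonically) to $\nu^{-1}+\tfrac{\nu}{2}\epsilon^2$, so from any starting point with $|x-z_0|<\nu^{-1}-\delta_2$ the trajectory moves \emph{outward} and settles near $\partial B(z_n,\nu^{-1})$, which is disjoint from $O_{-}$ by your own choice of $z_n$. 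Moreover, as the paper stresses in the remark after Definition \ref{dfn:const0}, \gote\ controls the angular component, so even if the circle $\partial B(x,\nu^{-1})$ happened to graze $O_{-}$ she can keep \sente\ on the complementary arc indefinitely. Thus your strategy never forces entry into $O_{-}$, and no positive lower bound on the stopping cost follows. (Lemma \ref{lem:1109} does not help here: it bounds the time for $|x_n-z|$ to \emph{increase} from $a$ to $\nu^{-1}-\delta$, which is exactly the wrong direction for reaching $O_{-}\subset B(z,\nu^{-1}-\delta_2)$.) A minor symptom of the confusion is the claim ``$O_{\delta_1}\subset O_{-}$'': $O_{\delta_1}$ is the $\delta_1$-thickening, so the inclusion is reversed.

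The paper's proof is not a general ``mirror of Lemma \ref{lem:1011}'' but uses the specific Pac-Man geometry. For $x\in A^{int}\setminus O_{-}$ (so $x$ lies in the mouth wedge $\{q\ge|p|\}$) one picks $z_0$ on the negative $y$-axis below $x$ with $x\in\overline{B(z_0,\nu^{-1}+\delta_2)}$ and then drifts $z_n$ straight \emph{downward} at speed $\frac{\nu^2\delta_2}{2(1+\nu\delta_2)}\epsilon^2$. Part 2 of Lemma \ref{lem:pmds} keeps $x_n\in\overline{B(z_n,\nu^{-1}+\delta_2)}$. The region $\Omega_n:=B_{\delta_1}(\{q\ge|p|\})\cap B(z_n,\nu^{-1}+\delta_2)$ is where \sente\ can be without having reached $O^{\delta_1}$; as $z_n\to(0,-\infty)$ the ball slides off the wedge-neighborhood and $\Omega_n$ becomes empty in finite game time. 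Hence \sente\ is squeezed out of the mouth into $O^{\delta_1}$, quits, and collects $\Psi_{-}\ge\delta_1>0$. The direction of the drift and the wedge shape are what make this a genuine trap; no amount of bookkeeping with $C_{\delta_1,\delta_2}$ can replace that geometric input.
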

\begin{proof}
By Lemma \ref{lem:gote1011}, it suffices to show $A^{int}\subset \varliminf_{t\to\infty}D_t$. As in the proof of Theorem \ref{thm:conv_general}, we do case analysis for the initial game position $x\in A^{int}$ and give an appropriate strategy of \sente. 

\bm{$1)~x\in O_{-}.$}
In this case, it suffices for \sente to quit the game at the first round and gain the stopping cost $\Psi_{-}(x)>0$ as in the proof of Theorem \ref{thm:conv_general}.

\begin{figure}[h]
\begin{center}
{\unitlength 0.1in%
\begin{picture}(27.4000,26.3000)(10.3000,-32.0000)%
%
\special{pn 8}%
\special{ar 2400 1807 1370 1370 5.4977871 3.9269908}%
%
\special{pn 8}%
\special{pa 2400 1807}%
\special{pa 1431 838}%
\special{fp}%
\special{pa 2400 1807}%
\special{pa 3369 838}%
\special{fp}%
%
\special{pn 8}%
\special{pn 8}%
\special{pa 1431 838}%
\special{pa 1438 834}%
\special{fp}%
\special{pa 1470 816}%
\special{pa 1477 812}%
\special{fp}%
\special{pa 1510 794}%
\special{pa 1516 790}%
\special{fp}%
\special{pa 1549 773}%
\special{pa 1557 769}%
\special{fp}%
\special{pa 1590 753}%
\special{pa 1597 750}%
\special{fp}%
\special{pa 1631 734}%
\special{pa 1638 731}%
\special{fp}%
\special{pa 1672 716}%
\special{pa 1680 713}%
\special{fp}%
\special{pa 1714 700}%
\special{pa 1721 696}%
\special{fp}%
\special{pa 1756 683}%
\special{pa 1764 681}%
\special{fp}%
\special{pa 1799 669}%
\special{pa 1806 666}%
\special{fp}%
\special{pa 1841 655}%
\special{pa 1849 653}%
\special{fp}%
\special{pa 1885 642}%
\special{pa 1892 640}%
\special{fp}%
\special{pa 1928 630}%
\special{pa 1936 628}%
\special{fp}%
\special{pa 1972 619}%
\special{pa 1980 617}%
\special{fp}%
\special{pa 2016 610}%
\special{pa 2024 608}%
\special{fp}%
\special{pa 2060 601}%
\special{pa 2068 600}%
\special{fp}%
\special{pa 2104 593}%
\special{pa 2112 592}%
\special{fp}%
\special{pa 2149 587}%
\special{pa 2157 586}%
\special{fp}%
\special{pa 2194 581}%
\special{pa 2202 580}%
\special{fp}%
\special{pa 2239 577}%
\special{pa 2247 576}%
\special{fp}%
\special{pa 2284 574}%
\special{pa 2292 573}%
\special{fp}%
\special{pa 2328 571}%
\special{pa 2337 571}%
\special{fp}%
\special{pa 2374 570}%
\special{pa 2382 570}%
\special{fp}%
\special{pa 2419 570}%
\special{pa 2427 570}%
\special{fp}%
\special{pa 2464 571}%
\special{pa 2472 571}%
\special{fp}%
\special{pa 2509 573}%
\special{pa 2517 574}%
\special{fp}%
\special{pa 2554 577}%
\special{pa 2562 577}%
\special{fp}%
\special{pa 2598 580}%
\special{pa 2606 581}%
\special{fp}%
\special{pa 2643 585}%
\special{pa 2651 586}%
\special{fp}%
\special{pa 2688 592}%
\special{pa 2696 593}%
\special{fp}%
\special{pa 2732 599}%
\special{pa 2740 601}%
\special{fp}%
\special{pa 2776 608}%
\special{pa 2784 610}%
\special{fp}%
\special{pa 2820 618}%
\special{pa 2828 620}%
\special{fp}%
\special{pa 2864 628}%
\special{pa 2872 630}%
\special{fp}%
\special{pa 2908 640}%
\special{pa 2915 642}%
\special{fp}%
\special{pa 2951 653}%
\special{pa 2959 655}%
\special{fp}%
\special{pa 2994 666}%
\special{pa 3001 669}%
\special{fp}%
\special{pa 3037 681}%
\special{pa 3044 683}%
\special{fp}%
\special{pa 3079 697}%
\special{pa 3086 699}%
\special{fp}%
\special{pa 3120 714}%
\special{pa 3128 717}%
\special{fp}%
\special{pa 3162 731}%
\special{pa 3169 734}%
\special{fp}%
\special{pa 3203 750}%
\special{pa 3210 753}%
\special{fp}%
\special{pa 3244 770}%
\special{pa 3251 773}%
\special{fp}%
\special{pa 3284 790}%
\special{pa 3291 794}%
\special{fp}%
\special{pa 3323 812}%
\special{pa 3330 816}%
\special{fp}%
\special{pa 3362 835}%
\special{pa 3369 838}%
\special{fp}%
%
\special{pn 8}%
\special{pn 8}%
\special{pa 1371 1085}%
\special{pa 1378 1081}%
\special{fp}%
\special{pa 1410 1062}%
\special{pa 1417 1058}%
\special{fp}%
\special{pa 1449 1041}%
\special{pa 1456 1037}%
\special{fp}%
\special{pa 1489 1020}%
\special{pa 1496 1016}%
\special{fp}%
\special{pa 1529 1000}%
\special{pa 1536 996}%
\special{fp}%
\special{pa 1569 981}%
\special{pa 1577 977}%
\special{fp}%
\special{pa 1611 963}%
\special{pa 1618 960}%
\special{fp}%
\special{pa 1652 945}%
\special{pa 1659 942}%
\special{fp}%
\special{pa 1694 929}%
\special{pa 1701 926}%
\special{fp}%
\special{pa 1736 913}%
\special{pa 1743 911}%
\special{fp}%
\special{pa 1778 899}%
\special{pa 1786 896}%
\special{fp}%
\special{pa 1821 886}%
\special{pa 1829 883}%
\special{fp}%
\special{pa 1864 873}%
\special{pa 1872 871}%
\special{fp}%
\special{pa 1908 862}%
\special{pa 1915 859}%
\special{fp}%
\special{pa 1951 851}%
\special{pa 1959 849}%
\special{fp}%
\special{pa 1995 841}%
\special{pa 2003 840}%
\special{fp}%
\special{pa 2039 833}%
\special{pa 2047 831}%
\special{fp}%
\special{pa 2083 825}%
\special{pa 2091 824}%
\special{fp}%
\special{pa 2128 819}%
\special{pa 2136 818}%
\special{fp}%
\special{pa 2172 813}%
\special{pa 2180 812}%
\special{fp}%
\special{pa 2217 808}%
\special{pa 2225 808}%
\special{fp}%
\special{pa 2262 805}%
\special{pa 2269 804}%
\special{fp}%
\special{pa 2306 802}%
\special{pa 2314 802}%
\special{fp}%
\special{pa 2351 801}%
\special{pa 2359 800}%
\special{fp}%
\special{pa 2396 800}%
\special{pa 2404 800}%
\special{fp}%
\special{pa 2441 800}%
\special{pa 2449 801}%
\special{fp}%
\special{pa 2486 802}%
\special{pa 2494 802}%
\special{fp}%
\special{pa 2531 804}%
\special{pa 2539 805}%
\special{fp}%
\special{pa 2575 808}%
\special{pa 2583 808}%
\special{fp}%
\special{pa 2620 812}%
\special{pa 2628 813}%
\special{fp}%
\special{pa 2664 817}%
\special{pa 2672 819}%
\special{fp}%
\special{pa 2709 824}%
\special{pa 2717 825}%
\special{fp}%
\special{pa 2753 831}%
\special{pa 2761 833}%
\special{fp}%
\special{pa 2797 840}%
\special{pa 2805 841}%
\special{fp}%
\special{pa 2841 849}%
\special{pa 2849 851}%
\special{fp}%
\special{pa 2885 860}%
\special{pa 2892 862}%
\special{fp}%
\special{pa 2928 871}%
\special{pa 2936 874}%
\special{fp}%
\special{pa 2971 883}%
\special{pa 2979 886}%
\special{fp}%
\special{pa 3014 896}%
\special{pa 3022 899}%
\special{fp}%
\special{pa 3057 911}%
\special{pa 3064 914}%
\special{fp}%
\special{pa 3099 926}%
\special{pa 3106 929}%
\special{fp}%
\special{pa 3141 942}%
\special{pa 3148 945}%
\special{fp}%
\special{pa 3182 960}%
\special{pa 3189 963}%
\special{fp}%
\special{pa 3223 977}%
\special{pa 3231 981}%
\special{fp}%
\special{pa 3264 996}%
\special{pa 3271 1000}%
\special{fp}%
\special{pa 3304 1016}%
\special{pa 3312 1019}%
\special{fp}%
\special{pa 3344 1037}%
\special{pa 3351 1040}%
\special{fp}%
\special{pa 3383 1059}%
\special{pa 3390 1062}%
\special{fp}%
\special{pa 3422 1081}%
\special{pa 3429 1085}%
\special{fp}%
%
\special{pn 4}%
\special{sh 1}%
\special{ar 2400 2800 8 8 0 6.2831853}%
\special{sh 1}%
\special{ar 2400 3000 8 8 0 6.2831853}%
\special{sh 1}%
\special{ar 2400 3200 8 8 0 6.2831853}%
\special{sh 1}%
\special{ar 2400 3200 8 8 0 6.2831853}%
\put(24.2000,-27.8000){\makebox(0,0)[lb]{$z_0$}}%
\put(24.2000,-29.8000){\makebox(0,0)[lb]{$z_1$}}%
\put(24.2000,-31.8000){\makebox(0,0)[lb]{$z_2$}}%
%
\special{pn 4}%
\special{sh 1}%
\special{ar 2400 1000 8 8 0 6.2831853}%
\special{sh 1}%
\special{ar 2400 1000 8 8 0 6.2831853}%
\put(24.2000,-9.8000){\makebox(0,0)[lb]{$x$}}%
%
\special{pn 8}%
\special{pa 3000 890}%
\special{pa 3000 1490}%
\special{fp}%
\put(26.0000,-14.9000){\makebox(0,0)[lt]{$\partial B(z_0,\nu^{-1}+\delta_2)$}}%
\end{picture}}%
\end{center}
\caption{A strategy of \sente}
\label{fig:pacman_st}
\end{figure}
\bm{$2)~x\in A^{int}\setminus O_{-}.$} We set $O^{\delta_1}:=\{x\in\mathbb{R}^2 \mid \Psi_{-}(x)>\delta_1\}$. We see that for $x\in A^{int}\setminus O_{-}$, there exist $\delta_1,\delta_2>0$ and $z_0\in \{(0,q) \mid q\le x\cdot(0,1)\}$ such that $x\in \overline{B(z_0,\nu^{-1}+\delta_2)}$ and $O^{\delta_1}\cap \partial B(z_0,\nu^{-1}+\delta_2)\cap \{(p,q) \mid q>0\}\neq \emptyset$ (Figure \ref{fig:pacman_st}). Define the sequence $\{z_n\}$ so that 
\[z_{n+1}=z_n+\left(0,-\frac{\nu^2\delta_2}{2(1+\nu\delta_2)}\epsilon^2\right).\] 
\sente's strategy is to keep taking a push by moving circle strategy with this $\{z_n\}$ until he reaches $O^{\delta_1}$, where he quits the game. 

By doing this strategy, \sente actually gains positive game cost. Set
\[\Omega_n=B_{\delta_1}(\{(p,q) \mid q\ge \lvert p\rvert\})\cap B(z_n,\nu^{-1}+\delta_2)\]
for $n=0,1,\cdots$. By 2 in Lemma \ref{lem:pmds} we see that if $x_n\in \Omega_n$, then $x_{n+1}\in\Omega_{n+1}$ or \sente quits the game. If $n\epsilon^2$ is sufficiently large, then $\Omega_n=\emptyset$. That means \sente definitely quits the game in finite time and the stopping cost is at least
\[\inf_{y\in O^{\delta_1}} \Psi_{-}(y)=\delta_1>0\]
regardless of $\epsilon$. Hence, together with the case 1), it turns out that for $x\in A^{int}$, there exists $\tau>0$ such that $u(x,t)>0$ for $t>\tau$.
\end{proof}

\begin{figure}[h]
\begin{center}
\input{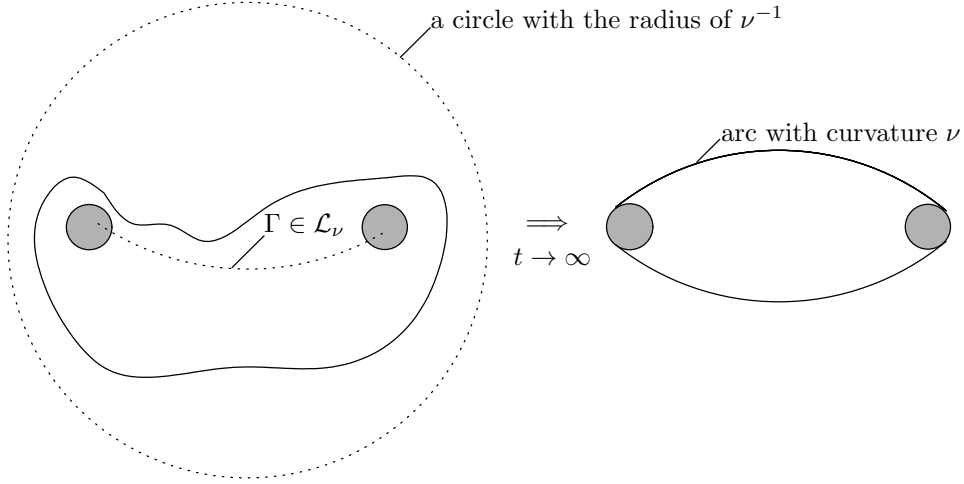}
\end{center}
\caption{Two small balls}
\label{fig:twoball_limitshape}
\end{figure}

We next consider \eqref{eq:bobstacle} with $O_{-}=B_r((d,0))\cup B_r((-d,0))$. In this problem we further assume


\begin{itemize}
\item[(A2)] There exists a function $f:[a,b]\to\mathbb{R}$ such that $\Gamma:=\{(p,q) \mid q=f(p), a\le p\le b\}\in \mathcal{L}_{\nu}$, $(a,f(a))\in B_r((-d,0))$, $(b,f(b))\in B_r((d,0))$ and $\Gamma\subset D_0$.
\end{itemize}
\begin{prop}[Two small balls]
Assume $\mathrm{(A1)}$ and $\mathrm{(A2)}$. Then
\[A^{int}\subset \varliminf_{t\to\infty}D_t \subset \varlimsup_{t\to\infty}D_t\subset A\]
and
\[A^{int}\subset \varliminf_{t\to\infty}E_t \subset \varlimsup_{t\to\infty}E_t\subset A.\]
\end{prop}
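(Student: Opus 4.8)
The plan is to follow the architecture of the proof of Proposition~\ref{prop:pacman}. The inclusions $\varlimsup_{t\to\infty}E_t\subset A$ and $\varlimsup_{t\to\infty}D_t\subset A$ are immediate: the first is Lemma~\ref{lem:gote1011}, which uses only $\mathrm{(A1)}$, and the second follows from $D_t\subset E_t$. So the entire content is the lower bound $A^{int}\subset\varliminf_{t\to\infty}D_t$, i.e. that for each $x\in A^{int}$ there is $\tau>0$ with $u(x,t)>0$ for $t>\tau$; since $u^\epsilon\to u$ locally uniformly (as recalled in the proof of Lemma~\ref{lem:elgeom}), it suffices to give \sente\ a strategy, not necessarily optimal, keeping the game cost positive and bounded below independently of $\epsilon$. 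We take $u_0,\Psi_-$ as in \eqref{good_ini}, \eqref{good_obs}, fix $f$ and $\Gamma=\gamma([0,1])=\{(p,f(p)):a\le p\le b\}\in\mathcal{L}_\nu$ as in $\mathrm{(A2)}$, and choose $\delta>0$ so small that $B_{2\delta}(\Gamma)\subset D_0$ while $B_{2\delta}(\gamma(0)),B_{2\delta}(\gamma(1))\subset O_-$ (possible since $\gamma(0)\in B_r((-d,0))$ and $\gamma(1)\in B_r((d,0))$, both open). We also fix, for the rest of the argument, the shorthand $O^{\delta_1}:=\{x\in\mathbb{R}^2 \mid \Psi_-(x)>\delta_1\}$ for small $\delta_1>0$.

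We split $A^{int}$ into three cases. \emph{Case 1: $x\in O_-$.} \sente\ quits at round $1$ and collects $\Psi_-(x)>0$, exactly as in Proposition~\ref{prop:pacman}. \emph{Case 2: $x\in B_\delta(\Gamma)\setminus O_-$.} \sente\ plays the $\Gamma$ tube strategy of Definition~\ref{def:ghose} at every round, quitting if the trajectory enters $O^{\delta_1}$. By Lemma~\ref{lem:ghose} the trajectory stays in $B_\delta(\Gamma)$ until it possibly enters $B_\delta(\gamma(0))\cup B_\delta(\gamma(1))\subset O_-$. Hence either \sente\ quits inside $O^{\delta_1}$ and gains at least $\delta_1$, or the trajectory stays in $B_\delta(\Gamma)\subset D_0$ up to the terminal round $N\sim t\epsilon^{-2}$ and \sente\ collects $u_0(x_N)\ge\inf_{B_{2\delta}(\Gamma)}u_0>0$; both bounds are $\epsilon$-independent, so $u(x,t)>0$ for all $t>0$.

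\emph{Case 3: $x\in A^{int}\setminus(O_-\cup B_\delta(\Gamma))$.} This is the decisive case, and it combines a push by moving circle strategy with the $\Gamma$ tube strategy. \sente\ plays a push by moving circle strategy with radius $\nu^{-1}+\delta_2$ whose centers $\{z_n\}$ move at the admissible speed $C=\frac{\nu^2\delta_2}{2(1+\nu\delta_2)}\epsilon^2$ along a path engineered so that the moving ball $\overline{B(z_n,\nu^{-1}+\delta_2)}$, which by Part~2 of Lemma~\ref{lem:pmds} always contains $x_n$, is progressively slid off the region it currently occupies and onto $B_\delta(\Gamma)$; as soon as the trajectory lands in $B_\delta(\Gamma)$ \sente\ switches to the $\Gamma$ tube strategy and concludes as in Case~2, and if along the way the trajectory enters $O^{\delta_1}$, \sente\ quits with stopping cost $\ge\delta_1$. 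The geometric fact that makes such a center path exist, for each fixed $x$, is the analogue of Lemma~\ref{lem:1011}: because $A$ is the intersection of the radius-$\nu^{-1}$ balls containing $O_-$, and $\Gamma\subset D_0$ bridges the two components of $O_-$, every point of $A^{int}$ can be swept by a radius-$(\nu^{-1}+\delta_2)$ ball (with $\delta_1,\delta_2$ small enough depending on $x$) into $B_\delta(\Gamma)$ or into $O^{\delta_1}$ in bounded time, the finiteness of the sweeping time coming from the fact that a ball of fixed radius leaves the bounded region $A$ once its center is pushed far enough. The cost collected is then at least $\min\{\delta_1,\ \inf_{B_{2\delta}(\Gamma)}u_0\}>0$ independently of $\epsilon$, so $u(x,t)>0$ for $t$ past some $\tau=\tau(x)$.

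Combining the three cases gives $A^{int}\subset\varliminf_{t\to\infty}D_t$, and with the two upper bounds and $D_t\subset E_t$ we obtain all four stated inclusions. The main obstacle is precisely the geometric lemma used in Case~3: one must build a path of radius-$(\nu^{-1}+\delta_2)$ centers that initially captures $x$, keeps $x_n$ inside the moving ball via Lemma~\ref{lem:pmds}, and whose sweep funnels the game trajectory into $B_\delta(\Gamma)\cup O^{\delta_1}$ in finite time. This is the two-component analogue of the spike argument in Proposition~\ref{prop:pacman}, but now the funnel is supplied by the bridge $\Gamma$ rather than by a single convex cavity; checking that the construction can be carried out at every point of $A^{int}$, and keeping track of the positions from which a round of a concentric strategy cannot escape $B_\delta(\Gamma)\cup O^{\delta_1}$ once the ball has been slid into place, is the delicate point.
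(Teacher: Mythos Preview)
Your architecture matches the paper's proof exactly: the upper bound comes from Lemma~\ref{lem:gote1011}, Case~1 is the trivial stopping, Case~2 is the $\Gamma$ tube strategy of Definition~\ref{def:ghose} and Lemma~\ref{lem:ghose}, and Case~3 combines a push by moving circle with a handoff to Case~2. Cases~1 and~2 are essentially complete as you wrote them.

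The gap is in Case~3, and you correctly flag it yourself: you assert the existence of an ``engineered'' path of centers that funnels the trajectory into $B_\delta(\Gamma)\cup O^{\delta_1}$, but you do not construct it, and the appeal to an ``analogue of Lemma~\ref{lem:1011}'' is misplaced (that lemma is about \gote's side, producing balls that \emph{contain} $O_-$ and \emph{exclude} $x$; here you need the opposite configuration). The paper fills this gap concretely and simply. First, it extends $f$ to a function $\tilde f$ on $[-d-r+\delta_1,\,d+r-\delta_1]$ by capping with the arcs $\sqrt{(r-\delta_1)^2-(p\pm d)^2}$ over the obstacle balls. Since $O_-$ and $A^{int}$ are symmetric about the $x$-axis, one may assume $x$ lies in the epigraph region $\{(p,q):q\ge\tilde f(p)\}$. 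Second, the center path is not some abstract sweep: one takes $z_0$ on the vertical axis with $x\in\overline{B(z_0,\nu^{-1}+\delta_2)}$ and $O^{\delta_1}\cap\partial B(z_0,\nu^{-1}+\delta_2)\cap\{q>0\}\ne\emptyset$, exactly as in Case~2 of Proposition~\ref{prop:pacman}, and moves $z_n$ straight \emph{down}, $z_{n+1}=z_n+(0,-\frac{\nu^2\delta_2}{2(1+\nu\delta_2)}\epsilon^2)$. Third, the trapping argument is made precise via
\[
\Omega_n=\{(p,q):q\ge\tilde f(p),\ -d-r+\delta_1\le p\le d+r-\delta_1\}\cap B(z_n,\nu^{-1}+\delta_2)\setminus B_\delta(\Gamma),
\]
for which Lemma~\ref{lem:pmds}(2) gives $x_n\in\Omega_n\Rightarrow x_{n+1}\in\Omega_{n+1}$ unless the trajectory has already entered $O^{\delta_1}\cup B_\delta(\Gamma)$; since $\Omega_n=\emptyset$ once $n\epsilon^2$ is large, the handoff occurs in finite time. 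This is the content you left unspecified.
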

\begin{proof}
We take $\delta>0$ small enough to satisfy $B_{2\delta}(\Gamma)\subset D_0$ and $B_{2\delta}((a,f(a)))\cup B_{2\delta}((b,f(b)))\subset O_{-}$. 

\bm{$1)~x\in O_{-}.$}
The proof for this case is the same as before.

\begin{figure}[h]
\begin{center}
\input{two_ball.tex}
\end{center}
\caption{A strategy of \sente}
\label{fig:twoball}
\end{figure}

\bm{$2)~x\in B_{\delta}(\Gamma)\setminus O_{-}.$} \sente keeps taking a $\Gamma$ tube strategy until he reaches $B_{\delta}((a,f(a)))\cup B_{\delta}((b,f(b)))$. Once he reaches $B_{\delta}((a,f(a)))\cup B_{\delta}((b,f(b)))$, he quits the game.

By his doing this strategy, the game trajectory $\{x_n\}$ is restricted to $B_{\delta}(\Gamma)$ as shown in Lemma \ref{lem:ghose}. Thus, whether he quits the game or not, he gains at least $\delta>0$.

\bm{$3)~x\in A^{int}\setminus (B_{\delta}(\Gamma)\cup O_{-}).$} We extend $f$ as follows:
\begin{equation}
\tilde{f}(p):=\begin{cases}
\max\{f(p), \sqrt{(r-\delta_1)^2-(p+d)^2}\}, &-d-r+\delta_1\le p\le -d+r-\delta_1 \\
\max\{f(p), \sqrt{(r-\delta_1)^2-(p-d)^2}\}, &d-r+\delta_1\le p\le d+r-\delta_1 \\
f(p), &-d+r-\delta_1<p<d-r+\delta_1.
\end{cases}
\nonumber
\end{equation}
Without loss of generality we can assume $x\in\{(p,q) \mid q\ge \tilde{f}(p), -d-r+\delta_1\le p\le d+r-\delta_1\}$. We take $\delta_1,\delta_2>0$ and $\{z_n\}$ as in the case 2) in the proof of Proposition \ref{prop:pacman}. See also Figure \ref{fig:twoball}.

\sente's strategy is to keep taking a push by moving circle strategy with this $\{z_n\}$ until he reaches $O^{\delta_1}\cup B_{\delta}(\Gamma)$. Once he reaches $O^{\delta_1}$, he quits the game. Once he reaches $B_{\delta}(\Gamma)$, he takes the same strategy as in the case 2).

By adopting this strategy, \sente actually gains positive game cost. 
Set
\[\Omega_n=\{(p,q) \mid q\ge \tilde{f}(p), -d-r+\delta_1\le p\le d+r-\delta_1\}\cap B(z_n,\nu^{-1}+\delta_2)\setminus B_{\delta}(\Gamma)\]
for $n=0,1,\cdots$. By 2 in Lemma \ref{lem:pmds} we see that if $x_n\in \Omega_n$, then $x_{n+1}\in\Omega_{n+1}$ or \sente reaches $O^{\delta_1}\cup B_{\delta}(\Gamma)$ (Figure \ref{fig:twoball}). If $n\epsilon^2$ is sufficiently large, then $\Omega_n=\emptyset$. That means \sente definitely reaches $O^{\delta_1}\cup B_{\delta}(\Gamma)$ in finite time. Since the stopping cost is at least $\delta_1>0$ and the terminal cost is at least $\delta>0$, it turns out that for $x\in A^{int}$, there exists $\tau>0$ such that $u(x,t)>0$ for $t>\tau$. 
\end{proof}
\begin{rem}
We expect that the same conclusion holds for more general $O_{-}$.
As an analogue of Theorem \ref{thm:conv_general}, we define the graph $G=(V,E)$ as follows: 
\[V:=\{O\subset \mathbb{R}^2 \mid O \ \mbox{is a connected component of} \ O_{-}\},\]
\[E:=\{\langle O,P\rangle \mid \mbox{There exist $x\in O$, $y\in P$ and $\Gamma_{x,y}\in\mathscr{L}_{\nu}$ such that $\Gamma_{x,y}\subset D_0$}\},\]
where we denote a curve $\Gamma=\gamma([0,1])\in\mathscr{L}_{\nu}$ satisfying $\gamma(0)=x$ and $\gamma(1)=y$ by $\Gamma_{x,y}$. It seems that if $O_{-}$ satisfies (A1) and the graph $G$ is connected, then the same conclusion holds.
\end{rem}

\begin{figure}[h]
\begin{center}
\input{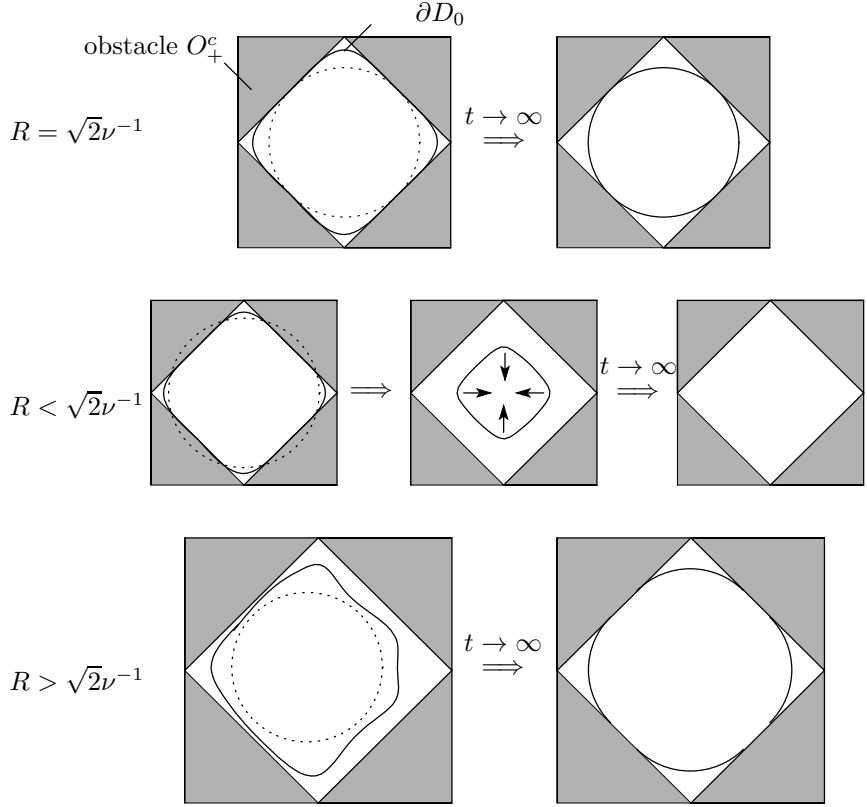}
\end{center}
\caption{Square boxes}
\label{fig:shikaku_waku}
\end{figure}

Finally we give an example of computation of the asymptotic shape of solutions to \eqref{eq:tobstacle}. We can deal with the problem that remains in \cite[Section 6]{GMT.16}, where $O_{+}=\mathbb{R}^2 \setminus \{ (p,q)\in\mathbb{R}^2 \mid \lvert p\rvert+\lvert q\rvert\le R\}$ with $R=\sqrt{2}\nu^{-1}$. We also give a game theoritic proof for the case $R\neq\sqrt{2}\nu^{-1}$, which is considered in a different way in \cite[Section 6]{GMT.16}. 

We set 
\[A:=\bigcup \left\{B_{\nu^{-1}}(z) \mid B_{\nu^{-1}}(z)\subset O_{+}, \ z\in\mathbb{R}^2\right\}.\]
We notice that $A=B_{\nu^{-1}}((0,0))$ if $R=\sqrt{2}\nu^{-1}$ and $A=\emptyset$ if $R<\sqrt{2}\nu^{-1}$. Figure \ref{fig:shikaku_waku} shows the result of the asymptotic shapes. In Figure \ref{fig:shikaku_waku}, dotted circles are circles with the radius of $\nu^{-1}$.

\begin{prop}
Assume either of the following:
\begin{enumerate}
\item $R=\sqrt{2}\nu^{-1}$ and $B_{\nu^{-1}}((0,0))\subset D_0$,
\item $R<\sqrt{2}\nu^{-1}$,
\item $R>\sqrt{2}\nu^{-1}$ and $B(\hat{z}, \nu^{-1}+\delta)\subset D_0$ for some $\hat{z}\in\mathbb{R}^2$ and $\delta>0$.
\end{enumerate}
Then
\[A\subset \varliminf_{t\to\infty}D_t \subset \varlimsup_{t\to\infty}D_t\subset \overline{A}\]
and
\[A\subset \varliminf_{t\to\infty}E_t \subset \varlimsup_{t\to\infty}E_t\subset \overline{A}.\]
\end{prop}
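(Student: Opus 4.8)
The plan is to prove the three cases separately, always combining a \gote-side argument (giving the upper inclusion $\varlimsup_{t\to\infty}E_t\subset\overline{A}$) with a \sente-side argument (giving $A\subset\varliminf_{t\to\infty}D_t$), and then using $D_t\subset E_t$ together with the openness of $\varliminf_{t\to\infty}D_t$ and closedness of the relevant closures to pin down the inclusions for both the open and the closed evolutions. As in Lemma~\ref{lem:elgeom} and Proposition~\ref{prop:pacman}, it suffices to show that for $x$ in the interior of the claimed limit there is $\tau>0$ with $u(x,t)>0$ for $t>\tau$, and for $x$ outside $\overline{A}$ there is $\tau>0$ with $u(x,t)<0$ for $t>\tau$; the convergence $u^\epsilon\to u$ locally uniformly (Appendix~\ref{app_ginp} and \cite{koike,bardi}) then lets us argue purely at the level of game strategies, with all cost bounds taken uniform in $\epsilon$.

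For the \gote-side (upper) bound in Case~2, note $A=\emptyset$ so $\overline{A}=\emptyset$ and we must show every starting point eventually yields negative cost. Since $R<\sqrt{2}\nu^{-1}$, for any $x$ we can enclose $D_0$ in a ball $B_\rho(\zeta)$ with $\rho<\sqrt2\nu^{-1}$; \gote takes a push by moving circle strategy steering $z_n$ from the relevant initial center toward a far center so that $x$ is ejected, using Lemma~\ref{lem:pmds}(1), and since the game for \eqref{eq:tobstacle} gives the stopping right to \gote, her stopping cost $\Psi_+(x_i)$ is also driven negative (there is no $O_-$ obstacle to protect \sente). For Cases~1 and~3 the \gote-side argument mirrors Lemma~\ref{lem:gote1011}: given $x\notin\overline A$, pick a ball $B_{\nu^{-1}}(\zeta_0)\subset O_+$ (Case~1: $\zeta_0=(0,0)$) or $B(\hat z,\nu^{-1}+\delta)\subset O_+$ (Case~3) with $x$ outside it, and have \gote push the center along a convex path of admissible centers (convexity of such center sets follows from Lemma~\ref{lem:0924}) keeping $x_n$ outside by Lemma~\ref{lem:pmds}; since $D_0\subset O_+$ in the hypotheses or is controlled, the terminal cost $u_0(x_N)$ is negative for large $t$ and the stopping cost $\Psi_+$ is negative whenever the game position leaves a suitable neighborhood of $O_+^c$.

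For the \sente-side (lower) bound, in Case~2 there is nothing to prove since $A=\emptyset$. In Cases~1 and~3 we do case analysis on the starting position $x\in A$ as in Proposition~\ref{prop:pacman}: if $x\in O_+^c$ is not allowed (it is outside $D_t$ by definition), so the relevant $x$ lie in $A$ which is an open ball; for $x\in A$ we choose a center $z_0$ and a descending sequence $\{z_n\}$ (as in the Pac-Man and two-balls proofs, with $z_{n+1}=z_n+(0,-\tfrac{\nu^2\delta_2}{2(1+\nu\delta_2)}\epsilon^2)$ up to rotation) and have \sente take a push by moving circle strategy; Lemma~\ref{lem:pmds}(2) shows the game position stays inside a shrinking region $\Omega_n\subset B(z_n,\nu^{-1}+\delta_2)\cap O_+$, which empties in finite time, forcing the game to terminate with the position deep inside $O_+$, hence the terminal cost $u_0(x_N)$ is bounded below by a positive constant independent of $\epsilon$ (using $A\subset D_0$ or $B(\hat z,\nu^{-1}+\delta)\subset D_0$). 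In Case~1 the extra structure $A=B_{\nu^{-1}}((0,0))$ and $R=\sqrt2\nu^{-1}$ means the box corners exactly touch this ball, so the shrinking circle sweeps out all of $A$.

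The main obstacle I expect is the borderline Case~1 with $R=\sqrt2\nu^{-1}$, where $A=B_{\nu^{-1}}((0,0))$ is \emph{tangent} internally to $O_+^c=\{|p|+|q|\ge R\}$ at the four corners: the usual strict-inclusion room that makes the push-by-moving-circle estimates of Lemma~\ref{lem:pmds} work is lost near those tangency points, so one must be careful that the sequence of centers $\{z_n\}$ and the parameters $\delta_1,\delta_2$ can be chosen so that \sente still reaches a set where the cost is uniformly positive, and symmetrically that \gote's strategy works for $x$ just outside the ball. The resolution is to work with the dilated obstacle $O_+^{\delta_1}=\{x:\Psi_+(x)>-\delta_1\}$ and centers ranging over the convex set $\{z:B(z,\nu^{-1}-\delta_2)\subset O_+^{\delta_1}\}$, exactly as in Lemma~\ref{lem:1011}, letting $\delta_1,\delta_2\to0$ only after fixing $x$; the monotonicity Lemma~\ref{lem:monotonicity} and the exit-time estimates Lemma~\ref{lem:1109} guarantee the relevant passage times stay bounded, so the argument closes.
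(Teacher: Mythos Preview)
Your outline for Cases~2 and~3, and for Carol's side in Case~1, is essentially the paper's approach: the push-by-moving-circle strategy together with Lemma~\ref{lem:pmds}(1) handles Carol's arguments, and Lemma~\ref{lem:pmds}(2) handles Paul's argument in Case~3 where there is genuine $\delta$-slack. But your treatment of \emph{Paul's side in Case~1} has a real gap, and your proposed fix does not close it.

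The problem is this. When $R=\sqrt{2}\nu^{-1}$ the inradius of $O_+$ is exactly $\nu^{-1}$, so the \emph{only} ball of radius $\nu^{-1}$ contained in $O_+$ is $B_{\nu^{-1}}((0,0))$; no ball of radius $\nu^{-1}+\delta_2$ with $\delta_2>0$ fits. Lemma~\ref{lem:pmds}(2), which is the engine of your Paul-side argument, confines the trajectory only to balls of radius $\nu^{-1}+\delta$; it gives Paul no control at radius $\le\nu^{-1}$. So any push-by-moving-circle strategy will allow $x_n$ to exit $O_+$, whereupon Carol quits with $\Psi_+(x_n)<0$. Your suggested fix --- working with $O_+^{\delta_1}=\{\Psi_+>-\delta_1\}$ and centers $\{z:B(z,\nu^{-1}-\delta_2)\subset O_+^{\delta_1}\}$ --- is the construction behind Lemma~\ref{lem:1011}, and it is the right tool for \emph{Carol's} side (the paper uses exactly this in sub-case~3) of Case~1), but it is useless for Paul: it produces balls of radius $\nu^{-1}-\delta_2<\nu^{-1}$, which Paul cannot confine the trajectory to, since the concentric recursion drives $|x_n-z|$ \emph{toward} $\nu^{-1}$ from below.

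The paper's argument for Paul in Case~1 is genuinely different and you are missing its key ingredient, Lemma~\ref{lem:0815}. Paul does not move the center at all; he takes the \emph{fixed} $(0,0)$-concentric strategy. Then $|x_n|=T_{\epsilon^2}^n(|x|)$ exactly, independent of Carol's choices $\{b_n\}$, and (after reducing by comparison to a radially symmetric $u_0$) Carol's optimal response is not to quit. Lemma~\ref{lem:0815} shows $t_\epsilon(\nu^{-1}-\epsilon,\nu^{-1})\ge C\log_2(\epsilon^{-1})\to\infty$, so for fixed $t$ and small $\epsilon$ the terminal position stays strictly inside $B_{\nu^{-1}}((0,0))\subset D_0$, giving $V^\epsilon(x,t)>0$. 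The positivity of the limit $u(x,t)$ then comes from the monotonicity of $T_h$ in $h$ (Lemma~\ref{lem:monotonicity}(2)), which makes $V^{\epsilon_n}(x,t)$ monotone along dyadic subsequences $\epsilon_n=2^{-n}\epsilon_0$. None of Lemma~\ref{lem:pmds}, Lemma~\ref{lem:1011}, or Lemma~\ref{lem:1109} play a role here.
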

\begin{proof}
It suffices to take $u_0$ as \eqref{good_ini}. Similarly it suffices to let
\begin{equation}
\Psi_{+}(x)=\begin{cases}
dist(x, \partial O_{+}), \ x\in O_{+} \\
\max\{a, -dist(x, \partial O_{+})\}, \ x\in O_{+}^c.
\end{cases}
\nonumber
\end{equation}
\begin{enumerate}
\item \bm{$1)~x\in B_{\nu^{-1}}((0,0)).$} \sente's strategy is to keep taking a \zcenp{(0,0)}. We denote by $V^{\epsilon}(x,t)$ the total cost when \sente takes this strategy with the game variables $(x,t,\epsilon)$ and \gote does not quit the game on the way. Notice that $V^{\epsilon}(x,t)$ does not depend on \gote's choices $\{b_n\}$ because $u_0$ is radially symmetric. Since $u_0(x_n)$ is monotonically decreasing with respect to $n$ and $u_0(x_n)\le \Psi_{+}(x_n)$, \gote's optimal strategy for the strategy of \sente is not to quit the game on the way. Thus the inequality $u^{\epsilon}(x,t)\ge V^{\epsilon}(x,t)$ is satisfied. Fix $x\in B_{\nu^{-1}}((0,0))$ and $t>0$. By Lemma \ref{lem:0815} we have $V^{\epsilon}(x,t)> 0$ for sufficiently small $\epsilon>0$. Lemma \ref{lem:monotonicity} 2 implies that for any subsequence $\{\epsilon_n\}$ satisfying $\epsilon_n=2^{-n}\epsilon_0$, $V^{\epsilon_n}(x,t)$ is monotonically increasing with respect to $n$. Therefore we obtain $u(x,t)>0$.

\begin{figure}[h]
\begin{center}
{\unitlength 0.1in%
\begin{picture}(31.8000,23.7500)(8.0000,-31.7500)%
%
\special{pn 8}%
\special{pn 8}%
\special{pa 1266 2390}%
\special{pa 1266 2382}%
\special{fp}%
\special{pa 1267 2345}%
\special{pa 1267 2337}%
\special{fp}%
\special{pa 1270 2300}%
\special{pa 1270 2292}%
\special{fp}%
\special{pa 1274 2255}%
\special{pa 1275 2247}%
\special{fp}%
\special{pa 1280 2210}%
\special{pa 1282 2202}%
\special{fp}%
\special{pa 1289 2166}%
\special{pa 1291 2158}%
\special{fp}%
\special{pa 1299 2122}%
\special{pa 1300 2114}%
\special{fp}%
\special{pa 1310 2078}%
\special{pa 1313 2071}%
\special{fp}%
\special{pa 1324 2035}%
\special{pa 1326 2027}%
\special{fp}%
\special{pa 1339 1992}%
\special{pa 1341 1985}%
\special{fp}%
\special{pa 1355 1950}%
\special{pa 1358 1943}%
\special{fp}%
\special{pa 1374 1909}%
\special{pa 1378 1902}%
\special{fp}%
\special{pa 1395 1869}%
\special{pa 1398 1862}%
\special{fp}%
\special{pa 1416 1829}%
\special{pa 1420 1822}%
\special{fp}%
\special{pa 1439 1790}%
\special{pa 1443 1784}%
\special{fp}%
\special{pa 1464 1753}%
\special{pa 1468 1746}%
\special{fp}%
\special{pa 1490 1716}%
\special{pa 1495 1710}%
\special{fp}%
\special{pa 1518 1680}%
\special{pa 1523 1674}%
\special{fp}%
\special{pa 1548 1646}%
\special{pa 1553 1640}%
\special{fp}%
\special{pa 1578 1613}%
\special{pa 1584 1607}%
\special{fp}%
\special{pa 1610 1581}%
\special{pa 1616 1575}%
\special{fp}%
\special{pa 1643 1550}%
\special{pa 1649 1545}%
\special{fp}%
\special{pa 1677 1521}%
\special{pa 1684 1516}%
\special{fp}%
\special{pa 1713 1493}%
\special{pa 1719 1488}%
\special{fp}%
\special{pa 1749 1467}%
\special{pa 1756 1462}%
\special{fp}%
\special{pa 1787 1442}%
\special{pa 1794 1438}%
\special{fp}%
\special{pa 1826 1418}%
\special{pa 1833 1414}%
\special{fp}%
\special{pa 1865 1396}%
\special{pa 1872 1392}%
\special{fp}%
\special{pa 1906 1376}%
\special{pa 1913 1373}%
\special{fp}%
\special{pa 1947 1357}%
\special{pa 1954 1354}%
\special{fp}%
\special{pa 1989 1340}%
\special{pa 1996 1337}%
\special{fp}%
\special{pa 2031 1325}%
\special{pa 2039 1323}%
\special{fp}%
\special{pa 2074 1311}%
\special{pa 2082 1309}%
\special{fp}%
\special{pa 2118 1299}%
\special{pa 2125 1297}%
\special{fp}%
\special{pa 2162 1289}%
\special{pa 2170 1288}%
\special{fp}%
\special{pa 2206 1281}%
\special{pa 2214 1280}%
\special{fp}%
\special{pa 2251 1275}%
\special{pa 2259 1274}%
\special{fp}%
\special{pa 2296 1270}%
\special{pa 2304 1269}%
\special{fp}%
\special{pa 2341 1267}%
\special{pa 2349 1267}%
\special{fp}%
\special{pa 2386 1266}%
\special{pa 2394 1266}%
\special{fp}%
\special{pa 2431 1267}%
\special{pa 2439 1267}%
\special{fp}%
\special{pa 2476 1269}%
\special{pa 2484 1270}%
\special{fp}%
\special{pa 2521 1273}%
\special{pa 2529 1275}%
\special{fp}%
\special{pa 2566 1280}%
\special{pa 2574 1281}%
\special{fp}%
\special{pa 2610 1288}%
\special{pa 2618 1289}%
\special{fp}%
\special{pa 2655 1297}%
\special{pa 2662 1300}%
\special{fp}%
\special{pa 2698 1309}%
\special{pa 2706 1312}%
\special{fp}%
\special{pa 2742 1322}%
\special{pa 2749 1325}%
\special{fp}%
\special{pa 2784 1337}%
\special{pa 2791 1340}%
\special{fp}%
\special{pa 2826 1354}%
\special{pa 2833 1357}%
\special{fp}%
\special{pa 2867 1372}%
\special{pa 2875 1376}%
\special{fp}%
\special{pa 2908 1393}%
\special{pa 2915 1396}%
\special{fp}%
\special{pa 2948 1414}%
\special{pa 2954 1418}%
\special{fp}%
\special{pa 2986 1437}%
\special{pa 2993 1442}%
\special{fp}%
\special{pa 3024 1462}%
\special{pa 3031 1466}%
\special{fp}%
\special{pa 3061 1488}%
\special{pa 3067 1493}%
\special{fp}%
\special{pa 3097 1516}%
\special{pa 3103 1521}%
\special{fp}%
\special{pa 3131 1545}%
\special{pa 3137 1551}%
\special{fp}%
\special{pa 3165 1575}%
\special{pa 3170 1581}%
\special{fp}%
\special{pa 3197 1607}%
\special{pa 3202 1613}%
\special{fp}%
\special{pa 3227 1640}%
\special{pa 3233 1646}%
\special{fp}%
\special{pa 3257 1675}%
\special{pa 3262 1681}%
\special{fp}%
\special{pa 3285 1710}%
\special{pa 3290 1716}%
\special{fp}%
\special{pa 3311 1746}%
\special{pa 3316 1753}%
\special{fp}%
\special{pa 3336 1784}%
\special{pa 3341 1791}%
\special{fp}%
\special{pa 3360 1822}%
\special{pa 3364 1829}%
\special{fp}%
\special{pa 3382 1862}%
\special{pa 3386 1869}%
\special{fp}%
\special{pa 3403 1902}%
\special{pa 3406 1909}%
\special{fp}%
\special{pa 3421 1943}%
\special{pa 3425 1951}%
\special{fp}%
\special{pa 3439 1985}%
\special{pa 3441 1993}%
\special{fp}%
\special{pa 3454 2028}%
\special{pa 3456 2035}%
\special{fp}%
\special{pa 3468 2070}%
\special{pa 3470 2078}%
\special{fp}%
\special{pa 3479 2114}%
\special{pa 3481 2122}%
\special{fp}%
\special{pa 3490 2158}%
\special{pa 3492 2166}%
\special{fp}%
\special{pa 3498 2202}%
\special{pa 3499 2210}%
\special{fp}%
\special{pa 3505 2247}%
\special{pa 3506 2255}%
\special{fp}%
\special{pa 3510 2292}%
\special{pa 3511 2300}%
\special{fp}%
\special{pa 3513 2337}%
\special{pa 3513 2345}%
\special{fp}%
\special{pa 3514 2382}%
\special{pa 3514 2390}%
\special{fp}%
%
\special{pn 8}%
\special{pa 800 800}%
\special{pa 2390 800}%
\special{pa 800 2390}%
\special{pa 800 800}%
\special{pa 2390 800}%
\special{fp}%
%
\special{pn 8}%
\special{pa 3980 2390}%
\special{pa 3980 800}%
\special{pa 2390 800}%
\special{pa 3980 2390}%
\special{pa 3980 800}%
\special{fp}%
%
\special{pn 8}%
\special{ar 2400 2100 1075 1075 0.0000000 6.2831853}%
%
\special{pn 4}%
\special{sh 1}%
\special{ar 2400 2100 8 8 0 6.2831853}%
\special{sh 1}%
\special{ar 2400 2100 8 8 0 6.2831853}%
\put(24.0000,-21.0000){\makebox(0,0)[lb]{$z_0$}}%
%
\special{pn 8}%
\special{pa 2400 2100}%
\special{pa 3160 2860}%
\special{dt 0.045}%
\put(27.6000,-24.6000){\makebox(0,0)[lb]{$\nu^{-1}-\delta_1$}}%
%
\special{pn 4}%
\special{sh 1}%
\special{ar 2400 1030 8 8 0 6.2831853}%
\special{sh 1}%
\special{ar 2400 1030 8 8 0 6.2831853}%
\put(24.0000,-9.9000){\makebox(0,0)[lb]{$x$}}%
%
\special{pn 8}%
\special{pa 800 2320}%
\special{pa 2320 800}%
\special{dt 0.045}%
%
\special{pn 4}%
\special{sh 1}%
\special{ar 2400 1900 8 8 0 6.2831853}%
\special{sh 1}%
\special{ar 2400 1700 8 8 0 6.2831853}%
\special{sh 1}%
\special{ar 2400 1700 8 8 0 6.2831853}%
\put(24.0000,-19.0000){\makebox(0,0)[lb]{$z_1$}}%
\put(24.0000,-17.0000){\makebox(0,0)[lb]{$z_2$}}%
\end{picture}}%
\end{center}
\caption{\gote's strategy}
\label{fig:shikaku_waku_st1}
\end{figure}

\bm{$2)~x\in \overline{O_{+}}^c.$} \gote's strategy is to quit the game at the first round. 

\bm{$3)~x=(p,q)\in \overline{A}^c \setminus\overline{O_{+}}^c.$} We may assume $q\ge \lvert p\rvert$. We can take $z_0\in\mathbb{R}^2$, $\delta_1>0$ and $\delta_2>0$ so that $\lvert x-z_0\rvert\ge \nu^{-1}-\delta_1$, $z_0\in \{(0,y) \mid y>0\}$ and $\partial B(z_0,\nu^{-1}-\delta_1)\cap \overline{B_{\delta_2}(O_{+})}^c\neq \emptyset$. \gote's strategy is to take a push by moving circle strategy with $\{z_n\}\subset \{(0,q) \mid q>0\}$ satisfying $z_{n+1}=z_n+(0,\frac{\delta_1}{2}\nu^2\epsilon^2)$ for all $n$ until \sente reaches $\overline{B_{\delta_2}(O_{+})}^c$. Once he reaches there, she quits the game. See Figure \ref{fig:shikaku_waku_st1}.

By doing above strategy, \gote actually pays nagative game cost. To show it, set
\[\Omega_n=\{(p,q) \mid q\ge \lvert p\rvert\}\cap\left(\overline{B_{\delta_2}(O_{+})}\setminus B(z_n,\nu^{-1}-\delta_1)\right)\]
for $n=0,1,\cdots$. We see that if $x_n\in \Omega_n$, then $x_{n+1}\in \Omega_{n+1}$ or \gote quits the game at round $n+1$. If $n\epsilon^2$ is sufficiently large, then $\Omega_n=\emptyset$. That means \gote definitely quits the game in finite time and the stopping cost is at most
\[\sup_{y\in \overline{B_{\delta_2}(O_{+})}} \Psi_{+}(y)=-\delta_2<0\]
regardless of $\epsilon$. Therefore we obtain $u(x,t)<0$ for $x\in \overline{B_{\nu^{-1}}((0,0))}^c$ and sufficiently large $t>0$.
\item We take $\delta_1>0$ to satisfy $B((0,0),\nu^{-1}-2\delta_1)\cap \overline{O_{+}}^c\neq \emptyset$.

\bm{$1)~x\in \overline{O_{+}}^c.$} \gote's strategy is to quit the game at the first round.

\bm{$2)~x\in \overline{O_{+}}\setminus B((0,0),\nu^{-1}-\delta_1).$} \gote's strategy is similar to that in the case 3) in $1$.

\bm{$3)~x\in \overline{O_{+}}\cap B((0,0),\nu^{-1}-\delta_1).$} \gote keeps taking a \zcenc{$(0,0)$} until \sente is forced to reach $B((0,0),\nu^{-1}-\delta_1)^c\cap B_{\delta_1}(O_{+})^c$. By Lemma \ref{lem:1109} it takes at most finite time for \sente to reach there. Once \sente reaches $B_{\delta_1}(O_{+})^c$, \gote quits the game. Once \sente reaches $\overline{O_{+}}\setminus B((0,0),\nu^{-1}-\delta_1)$, \gote's strategy is as in the case 2).

Therefore, for $x\in\mathbb{R}^2$, we obtain $u(x,t)<0$ for sufficiently large $t>0$.

\item \bm{$1)~x\in \overline{O_{+}}^c.$} \gote's strategy is to quit the game at the first round.

\bm{$2)~x\in \overline{O_{+}}\setminus A.$} \gote's strategy is similar to that in the case 3) in $1$.

\bm{$3)~x\in A.$} We set $O^{\delta_1}:=\{x\in\mathbb{R}^2 \mid \Psi_{+}(x)>\delta_1\}$ and $C_{\delta_1,\delta_2}:=\{z\in\mathbb{R}^2 \mid B(z,\nu^{-1}+\delta_2)\subset O^{\delta_1}\}$ for $\delta_1,\delta_2>0$. For $x\in A$, there exist $\delta_1,\delta_2>0$ such that 
\[x\in\bigcup \left\{B(z,\nu^{-1}+\delta_2) \mid z\in C_{\delta_1,\delta_2}\right\}\]
and $C_{0,\delta}\subset C_{\delta_1,\delta_2}$.
Let $z_0\in\mathbb{R}^2$ be a point satisfying $x\in B(z_0,\nu^{-1}+\delta_2)$ and $z_0\in C_{\delta_1,\delta_2}$. We define the sequence $\{z_n\}$ by
\[z_n:=z_0+\min\left\{\frac{\delta_2}{2}n\nu^2\epsilon^2, \lvert\hat{z}-z_0\rvert\right\}\frac{\hat{z}-z_0}{\lvert\hat{z}-z_0\rvert}.\]
Since $C_{\delta_1,\delta_2}$ is now convex, we have $l_{z_0,\hat{z}}\subset C_{\delta_1,\delta_2}$.

\sente's strategy is to take a push by moving circle strategy with this $\{z_n\}$. Indeed if \gote quits the game on the way, the stopping cost is at least $\delta_1>0$ because the game trajectory $\{x_n\}$ is contained in $O^{\delta_1}$. If \gote does not quit the game, the terminal cost is at least $\delta-\delta_1$, which is positive. That is because $x_N\in B(\hat{z},\nu^{-1}+\delta_1)\subset B(\hat{z},\nu^{-1}+\delta)\subset D_0$ for any last position $x_N$ of the game.
\end{enumerate}
\end{proof}

\begin{appendices}

\section{Game interpretation and convergence of value functions}
\label{app_ginp}
In this appendix we give a game whose value functions converge to the viscosity solution to \eqref{eq:obstacle}. We introduce the rule of the game corresponding to \eqref{eq:obstacle} with $\nu\ge 0$ and $d=2$ and give the proof of the convergence result for the case. We also remark on the other $\nu\in\mathbb{R}$ and $d$.

The game is almost the same as explained in Section \ref{subsec:gi}. We define the total number $N$ of rounds by $N=\lceil t\epsilon^{-2}\rceil$, where $\lceil r \rceil$ stands for the minimum integer that is no less than $r$. The actions of both players in each round $i$ ($i=1,2,\cdots,N$) are modified as follows:

\begin{enumerate}
\item \sente decides whether to quit the game. 
\item \gote decides whether to quit the game.  
\item \sente chooses $v_i, w_i\in S^1$. ($S^1$ is the set of unit vectors in $\mathbb{R}^2$.)
\item \gote chooses $b_i\in\{\pm 1\}$ after \sente's choice.
\item Determine the next states as follows.
\begin{align}
x_{i}=x_{i-1}+\sqrt{2} \epsilon b_i v_i+\nu \epsilon^2 w_i.  \label{trajectory3}
\end{align}
\end{enumerate}

The total cost is also modified as follows. If \sente quits the game at round $i$, the total cost is given by $\Psi_{-}(x_i)$. If \gote quits the game at round $i$, it is given by $\Psi_{+}(x_i)$. If both players go throughout $N$ rounds of the game, it is given by $u_0(x_N)+\sum_{i=0}^{N-1}\epsilon^2 f(x_{i})$.
The value function $u^{\epsilon}(x,t)$ is defined inductively based on the following {\it Dynamic Programming Principle} and the initial condition:
\begin{equation}
\label{eq:DPP_ob}
\begin{split}
&u^{\epsilon}(x,t)= \\
&\max\{\Psi_{-}(x), \min\{\Psi_{+}(x), \sup_{v,w\in S^1}\min_{b=\pm1}[u^{\epsilon}(x+\sqrt{2}\epsilon b v+\nu w\epsilon^2, t-\epsilon^2)+\epsilon^2 f(x)]\}\}
\end{split}
\end{equation}
for $t>0$. 

\begin{equation}
u^{\epsilon}(x,t)=u_0(x)
\end{equation}
for $t\le 0$.

These value functions mean the total cost optimized by both players. 
\begin{rem}
As explained in Section \ref{subsec:gi}, we can generalize our game to the case $d\ge 3$. In the game corresponding to \eqref{eq:obstacle} with $\nu\ge 0$, \sente chooses a unit vector $w_i$ and $d-1$ orthogonal unit vectors $v_i^{j} (j=1,2,\cdots d-1)$. \gote chooses $d-1$ values $b_i^j\in \{\pm1\} (j=1,2,\cdots d-1)$. The state equation is $x_{i}=x_{i-1}+\nu w_i \epsilon^2+\sqrt{2} \epsilon \sum_{j=1}^{d-1}b_i^j v_i^j$ instead of \eqref{trajectory3}.
\end{rem}
\begin{rem}
The Dynamic Programming Principle corresponding to \eqref{eq:obstacle} with $\nu<0$ is given by
\begin{equation}
\nonumber
u^{\epsilon}(x,t)=\max\{\Psi_{-}(x), \min\{\Psi_{+}(x), \sup_{v\in S^1}\inf_{\substack{w\in S^1 \\ b=\pm1}}[u^{\epsilon}(x+\sqrt{2}\epsilon b v+\nu w\epsilon^2, t-\epsilon^2)+\epsilon^2 f(x)]\}\}.
\end{equation}
Namely, not \sente but \gote has the right to choose $w_i \in S^1$.
\end{rem}
For these value functions, the same type of result as Proposition \ref{prop:gconv_cor} holds.
\begin{prop}
\label{prop:gconv_gene}
the functions $\overline{u}$ and $\underline{u}$ are respectively viscosity sub- and supersolution of \eqref{eq:obstacle}. Moreover $\overline{u}(x,0)=\underline{u}(x,0)=u_0(x)$ for $x\in\mathbb{R}^d$. 
\end{prop}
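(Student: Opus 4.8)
Proof proposal. The plan is to run the Barles--Perthame half-relaxed limit method, adapting the scheme of \cite{KS.06} to incorporate the obstacle clamps $\Psi_{\pm}$ and the terms $\nu,f$. Four things must be checked: local uniform boundedness of $\{u^{\epsilon}\}$, the confinement $\Psi_{-}\le\underline{u}\le\overline{u}\le\Psi_{+}$ (condition (a) of Definition~\ref{def:vis}), the initial condition $\overline{u}(\cdot,0)=\underline{u}(\cdot,0)=u_0$, and the two viscosity inequalities. The first two are read off directly from \eqref{eq:DPP_ob}: since $\Psi_{-}\le\Psi_{+}$, the outer $\max\{\Psi_{-},\cdot\}$ and inner $\min\{\Psi_{+},\cdot\}$ force $\Psi_{-}(x)\le u^{\epsilon}(x,t)\le\Psi_{+}(x)$ for every $t>0$, so on any compact set $\{u^{\epsilon}\}$ lies between $\inf\Psi_{-}$ and $\sup\Psi_{+}$; passing to $\overline{u}=\limsup^{*}u^{\epsilon}$, $\underline{u}=\liminf_{*}u^{\epsilon}$ and using continuity of $\Psi_{\pm}$ yields $\Psi_{-}\le\underline{u}\le\overline{u}\le\Psi_{+}$.

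For the initial layer I would use radial barriers. Fix $x_0$ and $\eta>0$; by \eqref{assum:ini} the datum $u_0$ is bounded and uniformly continuous, so pick $\rho>0$ with $|u_0(y)-u_0(x_0)|<\eta$ on $B_{\rho}(x_0)$, and set $\psi^{\pm}(y,t)=u_0(x_0)\pm\eta\pm(C\eta/\rho^{2})|y-x_0|^{2}\pm\Lambda t$. For $C,\Lambda$ large and $\epsilon$ small one checks by induction on the number of rounds that $\psi^{-}\le u^{\epsilon}\le\psi^{+}$ on $B_{\rho}(x_0)\times(0,T]$: the quadratic term dominates the $O(\epsilon)$ range of the one-step increment in \eqref{trajectory3}, the $+\Lambda t$ absorbs the linear-in-$t$ drift, and the $\pm\eta$ together with the Lipschitz bounds on $\Psi_{\pm}$ (and the compatibility $\Psi_{-}\le u_0\le\Psi_{+}$) handle the obstacle clamps. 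Letting $\epsilon\to0$ and $(y,s)\to(x_0,0)$ gives $u_0(x_0)-\eta\le\underline{u}(x_0,0)\le\overline{u}(x_0,0)\le u_0(x_0)+\eta$; as $\eta$ is arbitrary and $\underline{u}\le\overline{u}$, both equal $u_0(x_0)$.

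The core is the two viscosity inequalities, where I follow the Kohn--Serfaty computation. For the subsolution property of $\overline{u}$: let $\phi$ be smooth, $\overline{u}-\phi$ have a strict local maximum at $(x_0,t_0)$ with $t_0>0$ and $\overline{u}(x_0,t_0)>\Psi_{-}(x_0)$. Standard lemmas give $\epsilon_k\to0$ and $(y_k,s_k)\to(x_0,t_0)$ at which $u^{\epsilon_k}-\phi$ has a local maximum and $u^{\epsilon_k}(y_k,s_k)\to\overline{u}(x_0,t_0)$; since $\Psi_{-}$ is continuous and $\overline{u}(x_0,t_0)>\Psi_{-}(x_0)$, for large $k$ the outer $\max$ in \eqref{eq:DPP_ob} is resolved by its second argument, so $u^{\epsilon_k}(y_k,s_k)\le\sup_{v,w}\min_{b}[u^{\epsilon_k}(y_k+\sqrt2\epsilon_k bv+\nu w\epsilon_k^{2},s_k-\epsilon_k^{2})+\epsilon_k^{2}f(y_k)]$. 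Replacing $u^{\epsilon_k}$ by $\phi$ via the local maximum property, cancelling $u^{\epsilon_k}(y_k,s_k)$, and Taylor expanding $\phi$ to second order at $(y_k,s_k)$ gives
\[0\le\sup_{v,w}\Big[-\epsilon_k^{2}\phi_t-\sqrt2\,\epsilon_k|\langle D\phi,v\rangle|+\nu\epsilon_k^{2}\langle D\phi,w\rangle+\epsilon_k^{2}\langle D^{2}\phi\,v,v\rangle\Big]+\epsilon_k^{2}f(y_k)+o(\epsilon_k^{2}).\]
Here $\sup_{w}\nu\langle D\phi,w\rangle=\nu|D\phi|$ because $\nu\ge0$, and $\sup_{v}\big[\epsilon_k^{2}\langle D^{2}\phi\,v,v\rangle-\sqrt2\,\epsilon_k|\langle D\phi,v\rangle|\big]$ is, when $D\phi(y_k,s_k)\ne0$, forced by the $O(\epsilon_k)$ tangential penalty to be attained essentially at $v\perp D\phi$, giving (in $d=2$) $\epsilon_k^{2}\,\mathrm{Tr}\big((I-\hat p\otimes\hat p)D^{2}\phi\big)=-\epsilon_k^{2}F(D\phi,D^{2}\phi)$ up to lower order, while when $D\phi(y_k,s_k)=0$ it is at most $\epsilon_k^{2}\lambda_{\max}(D^{2}\phi)$. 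Dividing by $\epsilon_k^{2}$, letting $k\to\infty$, and noting that the tangential-Hessian functional $g(p,X)=-F(p,X)$ has upper semicontinuous envelope $-F_{*}$ and that $f\mapsto f^{*}$ is the relevant envelope, we obtain $\phi_t-\nu|D\phi|+F_{*}(D\phi,D^{2}\phi)\le f^{*}(x_0)$ at $(x_0,t_0)$. The supersolution property of $\underline{u}$ is symmetric: test where $\underline{u}-\phi$ has a strict local minimum with $\underline{u}(x_0,t_0)<\Psi_{+}(x_0)$ (so the inner $\min$ is resolved by the sup-term), take near-minima, and now \emph{lower}-bound $\sup_{v,w}\min_{b}[\cdots]$ by inserting a frame $v_k\perp D\phi(y_k,s_k)$ maximizing $\langle D^{2}\phi\,v,v\rangle$ there and the optimal $w_k$; since the lower semicontinuous envelope of $g$ is $-F^{*}$, this produces $\phi_t-\nu|D\phi|+F^{*}(D\phi,D^{2}\phi)\ge f_{*}(x_0)$. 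For $d\ge3$ one repeats the computation with the $d$-dimensional game from the Remark, choosing the orthonormal frame $\{v^{j}\}$ tangent to $D\phi$ and diagonalizing $D^{2}\phi|_{D\phi^{\perp}}$ so that $\sum_{j}\langle D^{2}\phi\,v^{j},v^{j}\rangle=-F(D\phi,D^{2}\phi)$.

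The step I expect to be the main obstacle is precisely the tangential optimization over $v$ combined with the half-relaxed limit at points where $D\phi$ vanishes, or nearly vanishes along the sequence $(y_k,s_k)$: this is where the two different semicontinuous envelopes $F_{*}$ and $F^{*}$ of the singular operator $F$ must appear, and one has to verify that the one-sided estimates degrade in the correct direction for the sub- versus super-solution test. The remaining items — the barrier construction, the bookkeeping of the $o(\epsilon_k^{2})$ remainder coming from the piecewise-constant-in-time structure of $u^{\epsilon}$, and the obstacle clamps — are routine and parallel \cite{KS.06} and the elementary estimates already used in Sections~\ref{sec:main} and \ref{sec:dri}.
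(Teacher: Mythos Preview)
Your approach is essentially the one--step Barles--Perthame scheme: extract near--extrema $(y_k,s_k)$ of $u^{\epsilon_k}-\phi$, feed them once through the DPP \eqref{eq:DPP_ob}, Taylor expand, and pass to the limit. The paper instead follows the original multi--step argument of \cite{KS.06}: it fixes a sequence $(x^0_\epsilon,t^0_\epsilon)$ realizing the relaxed limit, builds an entire game trajectory $\{X_k\}$ by choosing strategies tied to $D\phi(X_k)$, and sums the one--step Taylor estimates over $k\le k_\epsilon$ until the trajectory exits a $\delta$--neighbourhood, contradicting strictness of the extremum. Both routes lead to the same tangential--Hessian identification (your acknowledged hard step is exactly the content of Lemma~\ref{lem:kohn}), but the multi--step version never needs $(y_k,s_k)$ to be an actual local extremum of $u^{\epsilon_k}-\phi$, so it sidesteps any regularity question for $u^\epsilon$; conversely your one--step version is shorter once that point is granted. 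The paper also notes explicitly that the obstacle must be tracked along the whole trajectory (since $\{\Psi_-<\overline u\}$ need not be open), which is why it carries the inductive estimate $u^\epsilon(X_{k+1})>\Psi_-(x_{k+1})$ using the Lipschitz constant of $\Psi_-$; in your approach this collapses to the single observation $u^{\epsilon_k}(y_k,s_k)>\Psi_-(y_k)$ for large $k$.

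For the initial condition the paper takes a different, purely game--theoretic route (Proposition~\ref{lem:inigame}): Paul plays a \zcenp{$x$} and Carol a \zcencr{$x$}, and Lemmas~\ref{lem:seqconti}--\ref{lem:4} give uniform lower bounds on the time needed for the trajectory to escape $B_\delta(x)$, forcing $V^\pm(y,s,\epsilon)\to u_0(x)$ as $(y,s,\epsilon)\to(x,0,0)$. Your radial barrier sketch is a legitimate alternative in principle, but as written it is the weakest part of the proposal: you need $\psi^\pm$ to dominate (resp.\ be dominated by) $u^\epsilon$ not just on $B_\rho(x_0)\times(0,T]$ but wherever the one--step game can reach, and the obstacle clamps must be checked there too; the specific coefficient $C\eta/\rho^2$ and the phrase ``handle the obstacle clamps'' do not yet do this. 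The paper's strategy argument avoids these boundary issues entirely and also dispenses with uniform continuity of $u_0$ (it uses only continuity at the point $x$), which is why the authors emphasize that their initial--layer result is slightly sharper than the barrier--based ones in \cite{KS.10,HL20}.
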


\begin{rem}
As explained before, \eqref{eq:obstacle} with $f=0$ is a level set equation. By choosing $\Psi_{+}$ so that $\Psi_{+}>\|u_0\|_{\infty}$ for $O_{+}=\mathbb{R}^d$, we can ignore \gote's stopping cost $\Psi_{+}$ when we consider obstacle problems that have an obstacle on one side such as \eqref{eq:main}. Similarly, by choosing $\Psi_{-}$ so that $\Psi_{-}<-\|u_0\|_{\infty}$ for $O_{-}=\emptyset$, we can ignore \sente's stopping cost $\Psi_{-}$.
\end{rem}


We especially show the proof of Proposition \ref{prop:gconv_gene} with $d=2$ and $\nu\ge 0$ because the other case is similar. Our proof is based directly on the game as in \cite{KS.06}, whereas those in \cite{HL20,KS.10} are based on the properties of the operator whose fixed point is the solution of the Dynamic Programming Principle. Also since the proof in \cite{KS.06} is local argument, roughly speaking, all we have to do is to do the local argument in $\{(x,t) \mid \Psi_{-}(x)<\overline{u}(x,t)\}$ or in $\{(x,t) \mid \Psi_{+}(x)>\underline{u}(x,t)\}$. However we need to care about the point that $\{(x,t) \mid \Psi_{-}(x)<\overline{u}(x,t)\}$ or $\{(x,t) \mid \Psi_{+}(x)>\underline{u}(x,t)\}$ may not be open.

The proof consists of three steps. We show that the limits of the value functions satisfy the conditions (a) and (c) in Definition \ref{def:vis} in the first two propositions, (Proposition \ref{thm:ob_conv} and \ref{prop:subsol}) and they satisfy the initial condition (b) in the last one. (Proposition \ref{lem:inigame}) We mention that the initial data $u_0$ is assumed to be just continuous, not to be Lipschitz continuous as in \cite{HL20} or bounded uniformly continuous as in \cite{KS.10}. Regarding the last proposition, the idea of the proof is similar to that of \cite[Proposition 3.1]{GL09} though the situation is different. 


To visualize choices of players of the game, we give another description of the level-set mean curvature flow operator $F$:
\[F(Du,D^2 u)=-\left\langle D^2 u \frac{D^{\perp}u}{\lvert Du\rvert},\frac{D^{\perp}u}{\lvert Du\rvert} \right\rangle\]
for $Du\neq 0$. Here we denote by $D^{\perp}u$ a vector field satisfying $Du\cdot D^{\perp}u=0$ and $\lvert Du\rvert=\lvert D^{\perp}u\rvert$ in $\mathbb{R}^2$.

  \begin{prop}
 The function $\underline{u}$ is a viscosity supersolution of \eqref{eq:obstacle} in $\mathbb{R}^2\times (0,\infty)$.
  \label{thm:ob_conv}
  \end{prop}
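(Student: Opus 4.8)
The plan is to adapt the standard Kohn--Serfaty local-in-time argument to the obstacle setting, being careful that the relevant region $\{(x,t) \mid \Psi_{+}(x) > \underline{u}(x,t)\}$ need not be open. First I would recall that $\underline{u}$ automatically satisfies condition (a) of Definition \ref{def:vis}: the lower bound $\underline{u}_{\ast} \ge \Psi_{-}$ follows from the Dynamic Programming Principle \eqref{eq:DPP_ob} together with Lipschitz continuity of $\Psi_{-}$, since $u^{\epsilon}(x,t) \ge \Psi_{-}(x)$ for every $\epsilon$; and the upper bound $\underline{u}_{\ast} \le \Psi_{+}$ likewise follows since $u^{\epsilon}(x,t) \le \Psi_{+}(x)$. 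So the substance is condition (c): whenever $\phi$ is smooth, $\underline{u}_{\ast} - \phi$ has a local minimum at $(x_0,t_0) \in \mathbb{R}^2 \times (0,\infty)$ and $\underline{u}_{\ast}(x_0,t_0) < \Psi_{+}(x_0)$, then $\phi_t - \nu|D\phi| + F^{\ast}(D\phi, D^2\phi) \ge f_{\ast}(x_0)$ at $(x_0,t_0)$.

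Next I would set up the local argument. By standard half-relaxed-limit reductions, choose $(x_\epsilon, t_\epsilon)$ realizing the infimum defining $\underline{u}_{\ast}(x_0,t_0)$ along a sequence $\epsilon \searrow 0$, with $u^{\epsilon}(x_\epsilon,t_\epsilon) \to \underline{u}_{\ast}(x_0,t_0)$ and $(x_\epsilon,t_\epsilon) \to (x_0,t_0)$, and we may assume $(x_\epsilon,t_\epsilon)$ is a strict local minimum of $u^{\epsilon} - \phi$ after the usual perturbation of $\phi$ by a quadratic. The key point enabled by the hypothesis $\underline{u}_{\ast}(x_0,t_0) < \Psi_{+}(x_0)$: since $\Psi_{+}$ is continuous and $u^{\epsilon}(x_\epsilon,t_\epsilon) \to \underline{u}_{\ast}(x_0,t_0)$, for $\epsilon$ small enough we have $u^{\epsilon}(x_\epsilon,t_\epsilon) < \Psi_{+}(x_\epsilon)$, so the outer $\min\{\Psi_{+}(x), \cdot\}$ in \eqref{eq:DPP_ob} is not active at $(x_\epsilon,t_\epsilon)$, i.e.
\[
u^{\epsilon}(x_\epsilon,t_\epsilon) = \max\Bigl\{\Psi_{-}(x_\epsilon),\ \sup_{v,w\in S^1}\min_{b=\pm1}\bigl[u^{\epsilon}(x_\epsilon+\sqrt{2}\epsilon bv+\nu w\epsilon^2, t_\epsilon-\epsilon^2)+\epsilon^2 f(x_\epsilon)\bigr]\Bigr\}.
\]
Here the inner $\max$ with $\Psi_{-}$ only helps us (it can only raise the value), so plugging the local-min inequality $u^{\epsilon}(y,s) \ge u^{\epsilon}(x_\epsilon,t_\epsilon) + \phi(y,s) - \phi(x_\epsilon,t_\epsilon)$ for $(y,s)$ near $(x_\epsilon,t_\epsilon)$ gives
\[
0 \ge \sup_{v,w\in S^1}\min_{b=\pm1}\bigl[\phi(x_\epsilon+\sqrt{2}\epsilon bv+\nu w\epsilon^2, t_\epsilon-\epsilon^2) - \phi(x_\epsilon,t_\epsilon) + \epsilon^2 f(x_\epsilon)\bigr].
\]
Now I would Taylor-expand $\phi$ to second order. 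Writing the increment as $\sqrt{2}\epsilon b v + \nu w \epsilon^2$, the first-order term in $b$ is $\pm\sqrt{2}\epsilon\,\langle D\phi, v\rangle$; taking $\min_{b=\pm 1}$ forces the optimal $v$ to be (nearly) orthogonal to $D\phi$, which is precisely how the curvature operator $F$ emerges, while the optimal $w$ is $-D\phi/|D\phi|$ so as to produce $-\nu\epsilon^2|D\phi|$. Collecting the $\epsilon^2$-order terms, dividing by $\epsilon^2$, and letting $\epsilon \searrow 0$ (using $f_{\ast}(x_0) \le \liminf f(x_\epsilon)$ and lower semicontinuity of $F^{\ast}$ in the frozen-coefficient limit) yields $\phi_t(x_0,t_0) - \nu|D\phi(x_0,t_0)| + F^{\ast}(D\phi(x_0,t_0), D^2\phi(x_0,t_0)) \ge f_{\ast}(x_0)$, which is condition (c). The degenerate case $D\phi(x_0,t_0) = 0$ needs the usual separate treatment via $F^{\ast}$, exactly as in \cite{KS.06}, and carries over unchanged since it is purely local.

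The main obstacle I anticipate is not the Taylor expansion, which is routine and identical to the no-obstacle case, but rather the bookkeeping around the fact that $\{(x,t): \Psi_{+}(x) > \underline{u}(x,t)\}$ may fail to be open, so one cannot simply restrict the whole analysis to an open neighborhood from the outset. The resolution is the one indicated above: work at the level of the discrete DPP \eqref{eq:DPP_ob}, where continuity of $\Psi_{+}$ plus convergence $u^{\epsilon}(x_\epsilon,t_\epsilon) \to \underline{u}_{\ast}(x_0,t_0) < \Psi_{+}(x_0)$ already guarantees the $\Psi_{+}$-clip is inactive for all small $\epsilon$, so the PDE inequality is extracted purely from the discrete dynamics with no need for an open set in the limit. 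I would also double-check that the comparison/stability inputs from \cite{KS.06} used here are genuinely local and do not secretly require an obstacle-free equation; a brief remark to that effect, as the authors already foreshadow in the text preceding the statement, suffices.
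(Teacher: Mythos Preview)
Your approach is correct in outline but differs from the paper's in a substantive way. The paper follows the multi-step ``trajectory'' method of Kohn--Serfaty: starting from a sequence $(x^0_\epsilon,t^0_\epsilon)$ realizing the lower relaxed limit (not assumed to be minimizers of $u^\epsilon-\phi$), it has Paul play the explicit strategy $v=\eta_k^\perp$, $w=\eta_k:=D\phi(X_k)/|D\phi(X_k)|$ and walks the discrete trajectory $\{X_k\}$ until it exits a carefully sized neighborhood $N_\delta$. The crux is an \emph{induction along the trajectory} showing $\Psi_+(x_k)>u^\epsilon(X_k)$ for every $k<k_\epsilon$, which uses the Lipschitz constant $L$ of $\Psi_+$ and the choice $\delta\le a/(2\max\{L,M\})$ with $a=\Psi_+(\hat x)-\underline u(\hat x,\hat t)$; this is exactly how the paper handles the non-openness of $\{\Psi_+>\underline u\}$. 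The contradiction then comes from comparing $u^\epsilon-\phi$ at the exit point with its value at $(\hat x,\hat t)$.

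Your argument is instead the single-step Barles--Souganidis route: take $(x_\epsilon,t_\epsilon)$ as (approximate) minimizers of $u^\epsilon-\phi$, apply the DPP once, and pass to the limit. This is shorter, and your key observation---that continuity of $\Psi_+$ and $u^\epsilon(x_\epsilon,t_\epsilon)\to\underline u(x_0,t_0)<\Psi_+(x_0)$ deactivate the $\Psi_+$-clip at the \emph{single} point $(x_\epsilon,t_\epsilon)$---is exactly right and neatly sidesteps the openness issue without any trajectory induction. Two minor slips to fix: first, you conflate ``sequence realizing $\underline u$'' with ``local minimizer of $u^\epsilon-\phi$''; these must be produced by the standard perturbation/compactness lemma (and since $f$ is only locally bounded, $u^\epsilon$ need not be lsc, so you should phrase it with $\epsilon^3$-approximate minimizers). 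Second, Paul maximizes over $w$, so the optimal choice is $w=+D\phi/|D\phi|$ (producing $+\nu\epsilon^2|D\phi|$ in the Taylor expansion, hence $-\nu|D\phi|$ after rearranging), not $-D\phi/|D\phi|$ as you wrote. What the paper's approach buys is that it never needs minimizers of $u^\epsilon-\phi$ and stays closer to the explicit game-strategy viewpoint used throughout the paper; what yours buys is brevity and a cleaner handling of the obstacle, since you only need the clip inactive at one point rather than along an entire trajectory.
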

\begin{proof}
As for Definition \ref{def:vis}-2(a), we directly have $\Psi_{-}(x)\le u^{\epsilon}(x,t)\le \Psi_{+}(x)$ by the Dynamic Programing Principle \eqref{eq:DPP_ob}. Thus we obtain $\Psi_{-}(x)\le \underline{u}(x,t)\le \Psi_{+}(x)$ since $\Psi_{+}$ and $\Psi_{-}$ are continuous. To prove the viscosity inequality, we argue by contradiction. For a smooth function $\phi:\mathbb{R}^2\times (0,\infty)\to\mathbb{R}$, a positive constant $\theta_0>0$ and $(x,t)\in\mathbb{R}^2\times (0,\infty)$, we consider the following condition (C): 
\begin{equation}
\partial_{t}\phi(x,t)-\nu\lvert D\phi(x,t)\rvert-\left\langle D^2\phi(x,t) \frac{D^{\perp}\phi(x,t)}{\lvert D\phi(x,t)\rvert},\frac{D^{\perp}\phi(x,t)}{\lvert D\phi(x,t)\rvert} \right\rangle-f_{\ast}(x)\le -\theta_0 <0
\label{ineq1}
\end{equation}
if $D\phi(x,t)\neq0$, and
\begin{equation}
\partial_{t}\phi(x,t)-\nu\lvert D\phi(x,t)\rvert-\inf_{\lvert \zeta\rvert=1} \left\langle D^2\phi(x,t)\zeta, \zeta\right\rangle-f_{\ast}(x)\le -\theta_0 <0
\label{ineq2}
\end{equation}
if $D\phi(x,t)=0$. 

We assume that there exist a smooth function $\phi$ and $(\hat{x}, \hat{t})$ such that
$(\hat{x}, \hat{t})$ is a strict local minimum of $\underline{u}-\phi$, $\underline{u}<\Psi_{+}$ at $(\hat{x}, \hat{t})$ and the condition (C) is satisfied at $(\hat{x}, \hat{t})$ with $\phi$ and some $\theta_0>0$. 
\
Then we can take a $\delta$ neighborhood of $(\hat{x},\hat{t})$ where $\underline{u}-\phi$ attains its unique minimum at $(\hat{x},\hat{t})$ and the condition (C) holds, retaking smaller $\theta_0>0$ if necessary. For technical reasons, we take such $\delta$ neighborhood as $N_{\delta}((\hat{x},\hat{t})):=\{(x,t)\in\mathbb{R}^2\times[0,\infty) \mid \lvert x-\hat{x}\rvert+\lvert t-\hat{t}\rvert< \delta\}$ and $\delta>0$ small enough to satisfy $\delta\le\frac{a}{2\max\{L,M\}}$, where $a:=\Psi_{+}(\hat{x})-\underline{u}(\hat{x},\hat{t})$, $L$ is the Lipschitz constant of $\Psi_{+}$ and $M=\sup_{y\in B_1(\hat{x})}\lvert f(y)\rvert$. 

From the definition of $\underline{u}$, there are sequence $\{\epsilon_n\}$, $\{x_{\epsilon_n}^0\}$, and $\{t_{\epsilon_n}^0\}$ satisfying
\[\epsilon_n \searrow 0,~~(x_{\epsilon_n}^0,t_{\epsilon_n}^0)\to (\hat{x},\hat{t}),~~u^{\epsilon_n}(x_{\epsilon_n}^0,t_{\epsilon_n}^0)\to \underline{u}(\hat{x},\hat{t}).\]
We may substitute $\epsilon$ for $\epsilon_n$ hereafter.
We take $\epsilon$ small enough to satisfy $(x_{\epsilon}^0, t_{\epsilon}^0)\in N_{\delta/2}((\hat{x},\hat{t}))$ and $\Psi_{+}(x_{\epsilon}^0)-u^{\epsilon}(x_{\epsilon}^0,t_{\epsilon}^0)\ge a/2$. For any $\epsilon$, we construct the sequence $\{X_k\}$ satisfying
\[X_0=(x_{\epsilon}^0,t_{\epsilon}^0),\]
\begin{equation}
X_{k+1}=X_k+(\sqrt{2}\epsilon b_k \eta_k^{\perp}+\nu\eta_k\epsilon^2, -\epsilon^2),
\label{seq1}
\end{equation}
where $\eta_k=\frac{D\phi(X_k)}{\lvert D\phi(X_k)\rvert}$ if $D\phi(X_k)\neq 0$, and is an arbitrary unit vector if $D\phi(X_k)=0$. We will determine $b_k$ later. Let $x_k$ and $t_k$ denote the spatial and time component of $X_k$ respectively hereafter. Let $k_{\epsilon}$ be the maximal $k$ satisfying 
\begin{equation}
X_j \in N_{\delta/2}((\hat{x},\hat{t})) \ {\rm for \ any}  \ j=0,1,\ldots ,k-1.
\nonumber
\end{equation}
Indeed such $k_{\epsilon}$ exists because of the definition of the sequence $\{X_k\}$. We prove by induction that $\Psi_{+}(x_k)>u^{\epsilon}(X_k)$ and
\begin{equation} 
u^{\epsilon}(X_0)-u^{\epsilon}(X_k) \ge \epsilon^2\sum_{m=0}^{k-1} f(x_m)
\label{ineq:1205}
\end{equation}
for all $k<k_{\epsilon}$. These inequalities hold for $k=0$. We assume that these hold for some $k<k_{\epsilon}$.
Then the Dynamic Programming Principle \eqref{eq:DPP_ob} and $\Psi_{+}(x_k)>u^{\epsilon}(X_k)$ imply
\begin{equation}
u^{\epsilon}(X_k)=\max\{\Psi_{-}(x_k), \sup_{v,w\in S^1}\min_{b=\pm1}u^{\epsilon}(x_k+\sqrt{2}\epsilon b v+\nu w\epsilon^2, t_k-\epsilon^2)+\epsilon^2 f(x_k)\}.
\nonumber
\end{equation}
Thus we have
\begin{equation}
u^{\epsilon}(X_k)\ge \min_{b=\pm1} u^{\epsilon}(x_k+\sqrt{2}\epsilon b \eta_{0}^{\perp}+\nu\eta_0 \epsilon^2, t_k-\epsilon^2)+\epsilon^2 f(x_k). \label{ineq11}
\end{equation}
We determine $b_k$ in $(\ref{seq1})$ as a minimizer $b$ in (\ref{ineq11}). We then get 
\[u^{\epsilon}(X_k)-u^{\epsilon}(X_{k+1}) \ge \epsilon^2 f(x_k).\]
Adding \eqref{ineq:1205}, we have
\[u^{\epsilon}(X_0)-u^{\epsilon}(X_{k+1}) \ge \epsilon^2\sum_{m=0}^{k} f(x_m)\]
and consequently 
\begin{equation}
\label{eq:val1208}
u^{\epsilon}(X_0)+M(k+1)\epsilon^2\ge u^{\epsilon}(X_{k+1}).
\end{equation}
If $k+1<k_{\epsilon}$, we see from the definition of $k_{\epsilon}$ that $X_{k+1}\in N_{\delta/2}((\hat{x},\hat{t}))$ and thus
\begin{equation}
\lvert x_{k+1}-x_{0}\rvert+\lvert t_{k+1}-t_0\rvert\le \lvert x_{k+1}-\hat{x}\rvert+\lvert x_0-\hat{x}\rvert+\lvert t_{k+1}-\hat{t}\rvert+\lvert t_0-\hat{t}\rvert\le \delta.
\label{ineq0323}
\end{equation}
From the Lipschitz continuity of $\Psi_{+}$, we have
\begin{equation}
\lvert\Psi_{+}(x_{k+1})-\Psi_{+}(x_0)\rvert \le L\lvert x_{k+1}-x_0\rvert\le L(\delta-\lvert t_{k+1}-t_0\rvert)=L(\delta-(k+1)\epsilon^2).
\label{eq:Lip1208}
\end{equation}
Combining \eqref{eq:val1208} and \eqref{eq:Lip1208}, we obtain
\begin{align*}
\Psi_{+}(x_{k+1})-u^{\epsilon}(X_{k+1})\ge \Psi_{+}(x_0)-L(\delta-(k+1)\epsilon^2)-u^{\epsilon}(X_0)-M(k+1)\epsilon^2
\end{align*}
\begin{align*}
&\ge \Psi_{+}(x_0)-\max\{L,M\}(\delta-(k+1)\epsilon^2)-u^{\epsilon}(X_0)-\max\{L,M\}(k+1)\epsilon^2 \\
&\ge \Psi_{+}(x_0)-u^{\epsilon}(X_0)-\max\{L,M\}\delta \\
&> \frac{a}{2}-\frac{a}{2}=0
\end{align*}
and conclude the induction.

Next we take the continuous path that affinely interpolates among $\{X_k\}$, i.e.,
$X(s)=X_k+\left(s\epsilon^{-2}-k\right)(X_{k+1}-X_k)$
for $k\epsilon^2 \le s \le (k+1)\epsilon^2$, and we write $X(s)=(x(s),t_{\epsilon}^0-s)$.
Using Taylor's theorem for $\phi(X(t))$ at $t=k\epsilon^2$, we get
\begin{equation}
\phi(X_{k+1})-\phi(X_k)=\epsilon^2\{-\partial_{t}\phi(X_k)+\nu\lvert D\phi(X_k)\rvert+\langle D^2\phi(X_k) \eta_{k}^{\perp}, \eta_{k}^{\perp}\rangle\}+\Psi_k(\epsilon),
\label{Taylor}
\end{equation}
where $\Psi_k(\epsilon)=o(\epsilon^2).$
Moreover, from the assumption (\ref{ineq1}), we have
\begin{equation}
\phi(X_{k+1})-\phi(X_k)\ge\epsilon^2(\theta_0-f_{\ast}(x_k))+\Psi_k(\epsilon).
\nonumber
\end{equation}
This inequality is also obtained in the case $D\phi(X_k)=0$, using \eqref{Taylor} and \eqref{ineq2}.
Summing up both sides, we have
\begin{equation}
\phi(X_k)-\phi(X_0) \ge k\epsilon^2\theta_0-\epsilon^2\sum_{m=0}^{k-1} f_{\ast}(x_m)+\sum_{m=0}^{k-1}\Psi_m(\epsilon).
\label{ineqphi}
\end{equation}
Provided $k<k_{\epsilon}$, we have
\[\lvert\Psi_k(\epsilon)\rvert \le C\epsilon^3,\]
where $C$ depends on $\phi$ in $\delta$ neighborhood around $(\hat{x},\hat{t})$, and does not depend on $k$. This estimation is derived from the Taylor expansion (\ref{Taylor}). Hence (\ref{ineqphi}) becomes
\[\phi(X_k)-\phi(X_0) \ge k\epsilon^2\theta_0-\epsilon^2\sum_{m=0}^{k-1} f_{\ast}(x_m)+k C \epsilon^3,~~~~~k \le k_{\epsilon}.\]
Adding this relation to (\ref{ineq:1205}), we have
\begin{align}
u^{\epsilon}(X_0)-\phi(X_0)&\ge u^{\epsilon}(X_k)-\phi(X_k)+\epsilon^2\sum_{m=0}^{k-1}(f(x_m)-f_{\ast}(x_m))+k\epsilon^2\theta_0+kC\epsilon^3 \nonumber \\ &\ge u^{\epsilon}(X_k)-\phi(X_k)+k\epsilon^2\theta_0+kC\epsilon^3. \nonumber
\end{align}
For sufficiently small $\epsilon$, we get
\begin{equation}
-\frac{k}{2}\epsilon^2\theta_0 \ge u^{\epsilon}(X_k)-u^{\epsilon}(X_0)-\phi(X_k)+\phi(X_0),~~~~~k \le k_{\epsilon}.
\label{ineq21}
\end{equation}

By the definition of $k_{\epsilon}$, we see that $Y_{\epsilon} \in \overline{N_{3\delta/4}((\hat{x},\hat{t}))\setminus N_{\delta/2}((\hat{x},\hat{t}))}$, where we substitute $Y_{\epsilon}$ for $X_{k_{\epsilon}}$. So there is a subsequence $\{Y_{\epsilon_n}\}_n$ such that 
\[\lim_{n \to \infty} Y_{\epsilon_n}=(x',t'),\]
where $(x',t') \in B_{\delta}((\hat{x},\hat{t}))$ and $(x',t')\neq (\hat{x},\hat{t})$.
From (\ref{ineq21}), we have
\[u^{\epsilon_n}(x_{\epsilon_n}^0,t_{\epsilon_n}^0)-\phi(x_{\epsilon_n}^0,t_{\epsilon_n}^0)\ge u^{\epsilon_n}(Y_{\epsilon_n})-\phi(Y_{\epsilon_n}).\]
Letting $n$ go to $\infty$, we obtain
\[\underline{u}(\hat{x},\hat{t})-\phi(\hat{x},\hat{t})\ge \underline{u}(x',t')-\phi(x',t').\]
This is a contradiction since $\underline{u}-\phi$ attains its unique minimum at $(\hat{x},\hat{t})$.
\end{proof}

The following lemma can be found in \cite[Lemma 2.3]{KS.06}. 
\begin{lem}
\label{lem:kohn}
Let $\phi$ be a $C^3$ function on a compact subset $K$ of $\mathbb{R}^2$. Let $x\in K$ and $\epsilon\in[0,\infty)$. If $D\phi(x)\neq 0$, there exists a constant $C_1$ (depending only on the $C^2$ norm of $\phi$) with the following two properties for all unit vectors $v\in\mathbb{R}^2$.

\
\begin{enumerate}
\item If $\sqrt{2}\lvert\langle D\phi(x),v \rangle\rvert\ge C_1 \epsilon$,
\[\sqrt{2}\epsilon\lvert\langle D\phi,v \rangle\rvert+\epsilon^2 \langle D^2\phi v, v\rangle \ge \epsilon^2 \left\langle D^2\phi \frac{D^{\perp}\phi}{\lvert D\phi\rvert},\frac{D^{\perp}\phi}{\lvert D\phi\rvert} \right\rangle\]
at $x$.
\item If $\sqrt{2}\lvert\langle D\phi(x),v \rangle\rvert\le C_1 \epsilon$, there exists a constant $C_2$ (depending only on the $C^2$ norm of $\phi$) such that
\[\sqrt{2}\epsilon\lvert\langle D\phi,v \rangle\rvert+\epsilon^2 \langle D^2\phi v, v\rangle \ge \epsilon^2 \left\langle D^2\phi \frac{D^{\perp}\phi}{\lvert D\phi\rvert},\frac{D^{\perp}\phi}{\lvert D\phi\rvert} \right\rangle-\frac{C_2\epsilon^3}{\lvert D\phi\rvert}\]
at $x$.
\end{enumerate}

\end{lem}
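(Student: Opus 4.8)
The final statement, Lemma~\ref{lem:kohn}, is the standard comparison lemma of \cite[Lemma 2.3]{KS.06}; here is how I would organize its proof. The plan is to reduce the two inequalities to a single one–variable estimate by decomposing the unit vector $v$ along the gradient direction. Fix $x$ with $p:=D\phi(x)\neq 0$, write $X:=D^{2}\phi(x)$, $e:=p/\lvert p\rvert$, and let $e^{\perp}$ be a unit vector orthogonal to $e$. Any unit vector $v$ is $v=\alpha e+\beta e^{\perp}$ with $\alpha^{2}+\beta^{2}=1$, and then $\langle p,v\rangle=\alpha\lvert p\rvert$. Expanding $\langle Xv,v\rangle$, using the symmetry of $X$, and substituting $\beta^{2}-1=-\alpha^{2}$ gives the key identity
\[
\langle Xv,v\rangle-\langle Xe^{\perp},e^{\perp}\rangle
=\alpha^{2}\bigl(\langle Xe,e\rangle-\langle Xe^{\perp},e^{\perp}\rangle\bigr)+2\alpha\beta\langle Xe,e^{\perp}\rangle.
\]
The decisive point is that the right-hand side is bounded in absolute value by $C\lvert\alpha\rvert$ with $C:=2\sqrt{2}\,\sup_{y\in K}\lVert D^{2}\phi(y)\rVert$: each of $\lvert\langle Xe,e\rangle\rvert,\lvert\langle Xe^{\perp},e^{\perp}\rangle\rvert,\lvert\langle Xe,e^{\perp}\rangle\rvert$ is $\le\lVert X\rVert$, and $\lvert\alpha\rvert^{2}+\lvert\alpha\rvert\lvert\beta\rvert=\lvert\alpha\rvert(\lvert\alpha\rvert+\lvert\beta\rvert)\le\sqrt{2}\,\lvert\alpha\rvert$. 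This linear-in-$\lvert\alpha\rvert$ scaling is exactly what lets the error be absorbed either into the first term $\sqrt{2}\epsilon\lvert\langle p,v\rangle\rvert$ or, when that term is small, into $\epsilon^{3}/\lvert p\rvert$; note $C$ depends only on the $C^{2}$-norm of $\phi$ on $K$.

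With $C_{1}:=C$, I then split into the two cases of the statement. In Case 1, $\sqrt{2}\,\lvert\langle p,v\rangle\rvert=\sqrt{2}\,\lvert\alpha\rvert\lvert p\rvert\ge C_{1}\epsilon$ gives $\sqrt{2}\,\epsilon\lvert\langle p,v\rangle\rvert\ge C\epsilon^{2}\ge C\lvert\alpha\rvert\epsilon^{2}\ge-\epsilon^{2}\bigl(\langle Xv,v\rangle-\langle Xe^{\perp},e^{\perp}\rangle\bigr)$, and adding $\epsilon^{2}\langle Xv,v\rangle$ to both sides rearranges this to $\sqrt{2}\,\epsilon\lvert\langle D\phi,v\rangle\rvert+\epsilon^{2}\langle D^{2}\phi\,v,v\rangle\ge\epsilon^{2}\langle D^{2}\phi\,e^{\perp},e^{\perp}\rangle$. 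In Case 2, $\sqrt{2}\,\lvert\alpha\rvert\lvert p\rvert\le C_{1}\epsilon$ forces $\lvert\alpha\rvert\le C_{1}\epsilon/(\sqrt{2}\,\lvert p\rvert)$, so $\epsilon^{2}\bigl\lvert\langle Xv,v\rangle-\langle Xe^{\perp},e^{\perp}\rangle\bigr\rvert\le C\lvert\alpha\rvert\epsilon^{2}\le C_{2}\epsilon^{3}/\lvert p\rvert$ with $C_{2}:=CC_{1}/\sqrt{2}$; discarding the nonnegative term $\sqrt{2}\,\epsilon\lvert\langle p,v\rangle\rvert$ and using this bound gives $\sqrt{2}\,\epsilon\lvert\langle D\phi,v\rangle\rvert+\epsilon^{2}\langle D^{2}\phi\,v,v\rangle\ge\epsilon^{2}\langle D^{2}\phi\,e^{\perp},e^{\perp}\rangle-C_{2}\epsilon^{3}/\lvert p\rvert$. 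The boundary value $\epsilon=0$ is trivial since both sides vanish, and no smallness of $\epsilon$ is required anywhere.

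There is essentially no deep obstacle here; the only things requiring care are the bookkeeping of constants—checking that $C_{1}$ and $C_{2}$ depend only on $\sup_{K}\lVert D^{2}\phi\rVert$ and not on $x$, $v$, or $\epsilon$—and the slightly subtle step of pulling a factor $\lvert\alpha\rvert$ out of the off-diagonal term $2\alpha\beta\langle Xe,e^{\perp}\rangle$ rather than merely bounding it by a constant, since it is precisely that factor which ties the size of the discarded curvature error to the size of $\lvert\langle D\phi,v\rangle\rvert$.
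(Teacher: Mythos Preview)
Your proof is correct; the paper does not actually prove this lemma but merely cites \cite[Lemma 2.3]{KS.06}, and your argument---decomposing $v=\alpha e+\beta e^{\perp}$ and extracting the factor $\lvert\alpha\rvert$ from the difference $\langle Xv,v\rangle-\langle Xe^{\perp},e^{\perp}\rangle$---is the standard one from that reference. The constants and the two-case split are handled cleanly, and nothing is missing.
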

  \begin{prop}
 The function $\overline{u}$ is a viscosity subsolution of \eqref{eq:obstacle} in $\mathbb{R}^2\times (0,\infty)$.
 \label{prop:subsol}
  \end{prop}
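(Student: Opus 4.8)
The plan is to mirror the supersolution argument in Proposition \ref{thm:ob_conv}, replacing minima over \gote's choices by the adversarial analysis needed for \sente. I would begin by recording the easy part: the Dynamic Programming Principle \eqref{eq:DPP_ob} gives $\Psi_{-}(x)\le u^{\epsilon}(x,t)\le \Psi_{+}(x)$ pointwise, so passing to the half-relaxed limit and using continuity of $\Psi_{\pm}$ yields condition (a) of Definition \ref{def:vis}-1 for $\overline{u}$. The substance is the viscosity inequality, which I would prove by contradiction: assume a smooth $\phi$ and a point $(\hat{x},\hat{t})$ where $\overline{u}-\phi$ has a strict local maximum, $\overline{u}(\hat{x},\hat{t})-\Psi_{-}(\hat{x})>0$, and yet the reverse of the PDE inequality holds, i.e.\ there is $\theta_0>0$ with
\[
\partial_t\phi-\nu|D\phi|-\left\langle D^2\phi\frac{D^{\perp}\phi}{|D\phi|},\frac{D^{\perp}\phi}{|D\phi|}\right\rangle-f^{\ast}\ge\theta_0>0
\]
at $(\hat{x},\hat{t})$ when $D\phi\neq 0$ (and the analogous condition with $\sup_{|\zeta|=1}\langle D^2\phi\,\zeta,\zeta\rangle$ when $D\phi=0$). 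Shrinking to a neighborhood $N_{\delta}((\hat{x},\hat{t}))$ where this holds with a smaller $\theta_0$ and where $\overline{u}-\phi$ attains its unique maximum, and choosing $\delta$ small relative to $a:=\overline{u}(\hat{x},\hat{t})-\Psi_{-}(\hat{x})$ against the Lipschitz constant of $\Psi_{-}$ and the sup of $|f|$.

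Next I would construct the game trajectory. Take sequences $\epsilon_n\searrow 0$, $(x^0_{\epsilon_n},t^0_{\epsilon_n})\to(\hat{x},\hat{t})$ with $u^{\epsilon_n}(x^0_{\epsilon_n},t^0_{\epsilon_n})\to\overline{u}(\hat{x},\hat{t})$. Now \sente plays to \emph{increase} $\phi$: at each step he picks $w_k=\eta_k:=D\phi(X_k)/|D\phi(X_k)|$ (arbitrary unit vector if $D\phi=0$) and $v_k=\eta_k^{\perp}$; \gote responds with $b_k=\pm1$, and we set $X_{k+1}=X_k+(\sqrt 2\,\epsilon b_k\eta_k^{\perp}+\nu\eta_k\epsilon^2,-\epsilon^2)$. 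The key inductive claim is that $\overline{u}(X_k)-\Psi_{-}(x_k)>0$ and $u^{\epsilon}(X_k)-u^{\epsilon}(X_0)\ge -\epsilon^2\sum_{m=0}^{k-1}f(x_m)$ for all $k<k_{\epsilon}$, where $k_{\epsilon}$ is the first exit time from $N_{\delta/2}$. Here the DPP is used in the other direction: since $u^{\epsilon}(X_k)>\Psi_{-}(x_k)$ the outer max is not active, so $u^{\epsilon}(X_k)\le \max_{b}u^{\epsilon}(x_k+\sqrt2\epsilon b\eta_k^{\perp}+\nu\eta_k\epsilon^2,t_k-\epsilon^2)+\epsilon^2 f(x_k)$ — \emph{but} \gote controls $b$, so this only bounds $u^{\epsilon}(X_k)$ by the max, not by the value at the actual $b_k$. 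This is exactly where the proof differs from the supersolution case and where I expect the main obstacle: \gote minimizes, so to get a useful lower bound on $u^{\epsilon}(X_{k+1})$ I must invoke Lemma \ref{lem:kohn}, which guarantees that for \emph{either} choice of $b_k$ the quantity $\sqrt2\epsilon|\langle D\phi,\eta_k^{\perp}\rangle|+\epsilon^2\langle D^2\phi\,\eta_k^{\perp},\eta_k^{\perp}\rangle$ dominates $\epsilon^2\langle D^2\phi\,D^{\perp}\phi/|D\phi|,D^{\perp}\phi/|D\phi|\rangle$ up to an $O(\epsilon^3/|D\phi|)$ error; combined with Taylor expansion this shows $\phi(X_{k+1})-\phi(X_k)\le\epsilon^2(-\theta_0+f^{\ast}(x_k))+o(\epsilon^2)$ regardless of \gote's response, and the DPP lower bound $u^{\epsilon}(X_{k+1})\ge u^{\epsilon}(X_k)-\epsilon^2 f(x_k)-(\text{something controllable})$ propagates the induction.

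From there the argument closes as in Proposition \ref{thm:ob_conv}: summing the $\phi$-increments gives $\phi(X_{k_{\epsilon}})-\phi(X_0)\le -k_{\epsilon}\epsilon^2\theta_0/2$ for $\epsilon$ small, adding the telescoped cost inequality yields $u^{\epsilon}(X_0)-\phi(X_0)\le u^{\epsilon}(X_{k_{\epsilon}})-\phi(X_{k_{\epsilon}})-\tfrac{k_{\epsilon}}{2}\epsilon^2\theta_0$, and passing to a subsequential limit of the exit points $Y_{\epsilon_n}=X_{k_{\epsilon_n}}\to(x',t')\neq(\hat{x},\hat{t})$ in $\overline{N_{3\delta/4}\setminus N_{\delta/2}}$ gives $\overline{u}(\hat{x},\hat{t})-\phi(\hat{x},\hat{t})\le\overline{u}(x',t')-\phi(x',t')$, contradicting strict maximality. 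The one extra subtlety compared with the supersolution case is verifying that the induction's first clause $\overline{u}(X_k)>\Psi_{-}(x_k)$ persists: this uses the Lipschitz bound on $\Psi_{-}$ and the choice of $\delta$ exactly as the $\Psi_{+}$ estimate did before, noting $|x_{k}-x_0|\le\delta-k\epsilon^2$ from the trajectory bound. The hard part overall is bookkeeping the $O(\epsilon^3/|D\phi|)$ term from Lemma \ref{lem:kohn} when $D\phi$ is small but nonzero — one handles the genuinely-degenerate case $D\phi=0$ separately via the second form of condition (C), as in the supersolution proof.
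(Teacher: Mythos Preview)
There is a genuine gap in your choice of \sente's strategy. You have \sente\ play the $\phi$-based move $(v_k,w_k)=(\eta_k^{\perp},\eta_k)$, mirroring the supersolution proof. But this does not deliver the cost inequality $u^{\epsilon}(X_k)\le u^{\epsilon}(X_{k+1})+\epsilon^2 f(x_k)+o(\epsilon^2)$ that your induction needs. From the DPP (with the outer $\max$ inactive) you only get $u^{\epsilon}(X_k)\le \sup_{v,w}\min_b[\ldots]$; plugging in a \emph{specific} and generally suboptimal $(v,w)$ gives no upper bound on that supremum. The inequality you wrote, $u^{\epsilon}(X_k)\le \max_{b}u^{\epsilon}(x_k+\sqrt2\epsilon b\eta_k^{\perp}+\nu\eta_k\epsilon^2,t_k-\epsilon^2)+\epsilon^2 f(x_k)$, simply does not follow. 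In the supersolution proof the direction of the sup was favorable ($\sup_{v,w}\ge$ any specific choice); here it is reversed. Lemma~\ref{lem:kohn} cannot rescue this step: it is a statement about the test function $\phi$, not about $u^{\epsilon}$, so it cannot produce the ``DPP lower bound'' you allude to.

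The paper handles this by having \sente\ play a \emph{near-optimal} pair $(v_k,w_k)$, i.e.\ one achieving $\sup_{v,w}\min_b[\ldots]$ up to an $\epsilon^3$ error. Then
\[
u^{\epsilon}(X_k)\le \min_{b}u^{\epsilon}(x_k+\sqrt2\epsilon b v_k+\nu w_k\epsilon^2,\,t_k-\epsilon^2)+\epsilon^2 f(x_k)+\epsilon^3,
\]
and since $\min_b$ is below the value at any $b$, the cost inequality holds for \emph{every} choice of $b_k$. The price is that $v_k$ is now an arbitrary unit vector, so the Taylor expansion of $\phi$ carries a first-order term $\sqrt2\epsilon b_k\langle D\phi(X_k),v_k\rangle$. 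This is where Lemma~\ref{lem:kohn} actually enters: after the proof author picks $b_k$ so that this term becomes $-\sqrt2\epsilon|\langle D\phi,v_k\rangle|$, the lemma (applied to $-\phi$) bounds $-\sqrt2\epsilon|\langle D\phi,v_k\rangle|+\epsilon^2\langle D^2\phi\,v_k,v_k\rangle$ by the curvature term plus $O(\epsilon^3/|D\phi|)$. With your $v_k=\eta_k^{\perp}$ the first-order term vanishes and the lemma is vacuous; it is needed precisely because $v_k$ is \emph{not} $\eta_k^{\perp}$. (Minor point: your inductive hypothesis should read $u^{\epsilon}(X_k)>\Psi_{-}(x_k)$, not $\overline{u}(X_k)>\Psi_{-}(x_k)$.)
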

\begin{proof}
As in Proposition \ref{thm:ob_conv}, we obtain $\Psi_{-}(x)\le \overline{u}(x,t)\le \Psi_{+}(x)$ and argue by contradiction to prove the viscosity inequality. We prepare the following conditions: 
\begin{equation}
\partial_{t}\phi(x,t)-\nu\lvert D\phi(x,t)\rvert-\left\langle D^2\phi(x,t) \frac{D^{\perp}\phi(x,t)}{\lvert D\phi(x,t)\rvert},\frac{D^{\perp}\phi(x,t)}{\lvert D\phi(x,t)\rvert} \right\rangle-f^{\ast}(x)\ge \theta_0 >0,
\label{ineq31}
\end{equation}
\begin{equation}
\partial_{t}\phi(x,t)-\nu\lvert D\phi(x,t)\rvert-\sup_{\lvert\zeta\rvert=1} \left\langle D^2\phi(x,t)\zeta, \zeta\right\rangle-f^{\ast}(x)\ge \theta_0 >0.
\label{ineq32}
\end{equation}

We assume that there exist a smooth function $\phi$ and $(\hat{x},\hat{t})$ such that
$(\hat{x},\hat{t})$ is a strict local maximum of $\overline{u}-\phi$, $\overline{u}>\Psi_{-}$ at $(\hat{x},\hat{t})$ and, for some $\theta_0>0$, \eqref{ineq31} is satisfied at $(\hat{x}, \hat{t})$ provided $D\phi(\hat{x},\hat{t})\neq0$, and \eqref{ineq32} is satisfied at $(\hat{x}, \hat{t})$ provided $D\phi(\hat{x},\hat{t})=0$. If $D\phi(\hat{x},\hat{t})\neq 0$, we take a $\delta$ neighborhood of $(\hat{x},\hat{t})$ where $\overline{u}-\phi$ attains its unique maximum at $(\hat{x},\hat{t})$, $\lvert D\phi\rvert>\theta_1$ for some $\theta_1>0$, and \eqref{ineq31} holds, retaking smaller $\theta_0>0$ if necessary. If $D\phi(\hat{x},\hat{t})=0$, we take a $\delta$ neighborhood of $(\hat{x},\hat{t})$ where $\overline{u}-\phi$ attains its unique maximum at $(\hat{x},\hat{t})$ and \eqref{ineq32} holds, retaking smaller $\theta_0>0$ if necessary. We take $\delta>0$ small enough to satisfy $\delta\le\frac{b}{3\max\{L,M\}}$, where $b:=\overline{u}(\hat{x},\hat{t})-\Psi_{-}(\hat{x})$.
From the definition of $\overline{u}$, there are some sequences $\{\epsilon_n\}$, $\{x_{\epsilon_n}^0\}$, and $\{t_{\epsilon_n}^0\}$ satisfying
\[\epsilon_n \searrow 0,~~(x_{\epsilon_n}^0,t_{\epsilon_n}^0)\to (\hat{x},\hat{t}),~~u^{\epsilon_n}(x_{\epsilon_n}^0,t_{\epsilon_n}^0)\to \overline{u}(\hat{x},\hat{t}).\]
We may substitute $\epsilon$ for $\epsilon_n$ hereafter. We take $\epsilon$ small enough to satisfy $(x_{\epsilon}^0, t_{\epsilon}^0)\in N_{\delta/2}((\hat{x},\hat{t}))$ and $u^{\epsilon}(x_{\epsilon}^0,t_{\epsilon}^0)-\Psi_{-}(x_{\epsilon}^0)\ge b/2$.

We construct the sequence $\{X_k\}$ and the functions $\Psi_k :S^1\times S^1\to\mathbb{R}$ inductively as follows. We first let
\[X_0:=(x_{\epsilon}^0,t_{\epsilon}^0),\]
and
\[\Psi_0(v,w):=\min_{b=\pm 1} u^{\epsilon}(x_{\epsilon}^0+\sqrt{2}\epsilon b v+\nu \epsilon^2 w, t_{\epsilon}^0-\epsilon^2)+\epsilon^2 f(x_{\epsilon}^0).\]
Then let $(v_0,w_0)$ satisfying
\[\Psi_0(v_0,w_0)\ge\sup_{(v,w)\in S^1\times S^1}\Psi_0(v,w)-\epsilon^3,\]
and we determine
\[X_{1}=X_0+(\sqrt{2}\epsilon b_0 v_0+\nu \epsilon^2 w_0, -\epsilon^2),\]
where we will decide $b_0$ later.
For any $k\in \mathbb{N}$, we similarly define
\[\Psi_k(v,w):=\min_{b=\pm 1} u^{\epsilon}(x_k+\sqrt{2}\epsilon b v+\nu \epsilon^2 w, t_k-\epsilon^2)+\epsilon^2 f(x_k),\]
and
\begin{equation}
X_{k+1}:=X_k+(\sqrt{2}\epsilon b_k v_k+\nu \epsilon^2 w_k, -\epsilon^2),
\label{seq11}
\end{equation}
where $(v_k,w_k)\in S^1 \times S^1$ satisfies
\[\Psi_k(v_k,w_k)\ge\sup_{(v,w)\in S^1\times S^1}\Psi_k(v,w)-\epsilon^3.\]

Define $k_{\epsilon}$ as in the proof of Proposition \ref{thm:ob_conv}. We prove by induction that $u^{\epsilon}(X_k)>\Psi_{-}(x_k)$ and
\begin{equation} 
u^{\epsilon}(X_0)-u^{\epsilon}(X_k) \le \epsilon^2\sum_{m=0}^{k-1} [f(x_m)]+k\epsilon^3
\label{ineq03233}
\end{equation}
for all $k<k_{\epsilon}$. These inequalities hold for $k=0$. We assume that these hold for some $k<k_{\epsilon}$.
Then the Dynamic Programming Principle \eqref{eq:DPP_ob} and $u^{\epsilon}(X_k)>\Psi_{-}(x_k)$ imply
\begin{equation}
u^{\epsilon}(X_k)=\min\{\Psi_{+}(x_k), \sup_{v,w\in S^1}\min_{b=\pm1}u^{\epsilon}(x_k+\sqrt{2}\epsilon b v+\nu w\epsilon^2, t_k-\epsilon^2)+\epsilon^2 f(x_k)\}.
\nonumber
\end{equation}
Thus we have
\begin{align}
u^{\epsilon}(X_k)&\le \sup_{v,w\in S^1} \Psi_{k}(v,w) \nonumber \\
&\le \Psi_{k}(v_k,w_k)+\epsilon^3 \nonumber \\
&=u^{\epsilon}(x_k+\sqrt{2}\epsilon b_k v_k+\nu w_k\epsilon^2, t_k-\epsilon^2)+\epsilon^2 f(x_k)+\epsilon^3 \nonumber
\end{align}
and hence
\begin{equation}
u^{\epsilon}(X_k)-u^{\epsilon}(X_{k+1}) \le \epsilon^2 f(x_k)+\epsilon^3,
\label{ineq51}
\end{equation}
which means \eqref{ineq03233} holds for $k+1$.
From the Lipschitz continuity of $\Psi_{-}$ and \eqref{ineq0323}, we have
\begin{equation}
-\Psi_{-}(x_{k+1})\ge -\Psi_{-}(x_0)-L(\delta-(k+1)\epsilon^2)
\label{ineq03232}
\end{equation}
provided $k+1<k_{\epsilon}$. Combining \eqref{ineq51} and \eqref{ineq03232}, we obtain
\begin{align*}
&u^{\epsilon}(X_{k+1})-\Psi_{-}(x_{k+1}) \\
&\ge u^{\epsilon}(X_0)-\Psi_{-}(x_0)-M(k+1)\epsilon^2-L(\delta-(k+1)\epsilon^2)-(k+1)\epsilon^3 \\
&\ge u^{\epsilon}(X_0)-\Psi_{-}(x_0)-\max\{M,L\}\delta-(k+1)\epsilon^3.
\end{align*}
We notice that $(k+1)\epsilon^3\le L\delta\epsilon$ from \eqref{eq:Lip1208}. Therefore $u^{\epsilon}(X_{k+1})>\Psi_{-}(x_{k+1})$ holds for sufficiently small $\epsilon$ and we conclude the induction.

Next we take the continuous path $X(s)$ and use the Taylor's theorem in the same way as in Proposition \ref{thm:ob_conv}. Then we have
\begin{align*}
&\phi(X_{k+1})-\phi(X_k) \\
&=\sqrt{2}\epsilon b_k \langle D\phi(X_k),v_k\rangle+\epsilon^2\{-\partial_{t}\phi(X_k)+\nu \langle D\phi(X_k),w_k\rangle+\langle D^2\phi(X_k) v_k, v_k\rangle\}+\Phi_k(\epsilon),
\end{align*}
where $\Phi_k(\epsilon)=o(\epsilon^2).$
By taking $b_k$ in (\ref{seq11}) properly, we get
\begin{equation}
\begin{split}
&\phi(X_{k+1})-\phi(X_k) \\
&\le-\sqrt{2}\epsilon \lvert\langle D\phi(X_k),v_k \rangle\rvert+\epsilon^2\{-\partial_{t}\phi(X_k)+\nu \lvert D\phi(X_k)\rvert+\langle D^2\phi(X_k) v_k, v_k\rangle\}+\Phi_k(\epsilon).
\end{split}
\label{Taylor2}
\end{equation}
We first consider the case $D\phi(\hat{x},\hat{t})\neq 0$. If $k<k_{\epsilon}$, we just consider $\phi$ in $N_{\delta}((\hat{x},\hat{t}))$. We now use the assumption \eqref{ineq31} and Lemma \ref{lem:kohn} replacing $\phi$ by $-\phi$ to get
\begin{align}
&\phi(X_{k+1})-\phi(X_k) \nonumber \\
&\le\epsilon^2\left\{-\partial_{t}\phi(X_k)+\nu \lvert D\phi(X_k)\rvert+\left\langle D^2\phi(X_k) \frac{D^{\perp}\phi(X_k)}{\lvert D\phi(X_k)\rvert}, \frac{D^{\perp}\phi(X_k)}{\lvert D\phi(X_k)\rvert}\right\rangle\right\} \nonumber \\
&+\frac{C_2 \epsilon^3}{\lvert D\phi(X_k)\rvert}+\Phi_k(\epsilon) \nonumber \\ &\le\epsilon^2\left(-\frac{\theta_0}{2}-f^{\ast}(x_k)\right),
\label{ineq52}
\end{align}
for sufficiently small $\epsilon$. This inequality is also obtained in the case $D\phi(\hat{x},\hat{t})=0$, using (\ref{Taylor2}) and the assumption \eqref{ineq32}.
From (\ref{ineq51}) and (\ref{ineq52}), we have
\begin{equation}
u^{\epsilon}(X_0)-u^{\epsilon}(X_k)-\phi(X_0)+\phi(X_k)\le-\frac{k}{4}\epsilon^2\theta_0,~~~~~k \le k_{\epsilon}.
\label{ineq53}
\end{equation}

By the same argument as in Proposition \ref{thm:ob_conv}, we obtain
\[\overline{u}(\hat{x},\hat{t})-\phi(\hat{x},\hat{t})\le \overline{u}(x',t')-\phi(x',t'),\]
where $(x',t')\neq(\hat{x},\hat{t})$.
This is a contradiction since $\overline{u}-\phi$ attains its unique maximum at $(\hat{x},\hat{t})$.
\end{proof}
The proof of Proposition \ref{prop:gconv_gene} is completed by checking that $\overline{u}$ and $\underline{u}$ satisfy the initial condition. To prove the last proposition, we need additional property of the solution to \eqref{seq:2} and strategies of the game.
\begin{lem}
\label{lem:seqconti}
Let $\delta\in (0,\nu^{-1}]$. For sufficiently small $\epsilon>0$, we have
\[\frac{r_2-\delta}{\delta^{-1}-\frac{\nu}{2}}\le t_{\epsilon}(r_1,r_2),\] 
for any $r_1$ and $r_2$ satisfying $0\le r_1\le\delta\le r_2\le \nu^{-1}$.
\end{lem}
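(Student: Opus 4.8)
The quantity $t_{\epsilon}(a,b)$ is the rescaled number of rounds a concentric‑strategy trajectory $R_n=T_{\epsilon^2}^n(a)$ needs to reach the level $b$, so the statement is a quantitative \emph{lower} bound on this exit time. The plan is: first reduce to the case where the trajectory starts exactly at $\delta$, so that the crude bound $\epsilon^2/R_n\le \epsilon^2/\delta$ becomes usable; then count the rounds with the per‑round increment estimate \eqref{ineq:shortcut}.

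First I would note that, since $r_1\le\delta\le r_2\le\nu^{-1}\le\nu^{-1}+\tfrac{\nu}{2}\epsilon^2$, the monotonicity of $t_{\epsilon}$ in its first argument established in the proof of Lemma~\ref{lem:0815} (a consequence of Lemma~\ref{lem:monotonicity}(1)) gives
\[
t_{\epsilon}(\delta,r_2)\le t_{\epsilon}(r_1,r_2),
\]
the borderline $r_1=0$ being covered by the observation that a single round sends $0$ to $\sqrt{2}\epsilon>0\le\delta$ for small $\epsilon$. Hence it suffices to bound $t_{\epsilon}(\delta,r_2)$ from below. Put $R_0=\delta$, $R_{n+1}=T_{\epsilon^2}(R_n)$. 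By Lemma~\ref{lem:series2}(1), for $\epsilon$ small enough that $\nu\epsilon^2\le\delta$ the sequence $\{R_n\}$ is increasing, and by Lemma~\ref{lem:series2}(2) it converges to $\nu^{-1}+\tfrac{\nu}{2}\epsilon^2>\nu^{-1}\ge r_2$; so there is a finite least index $n^{\ast}$ with $R_{n^{\ast}}\ge r_2$, and by monotonicity $\{n:R_n<r_2\}=\{0,\dots,n^{\ast}-1\}$, i.e.\ $t_{\epsilon}(\delta,r_2)=n^{\ast}\epsilon^2$.

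Next I would estimate $R_{n^{\ast}}-\delta$ from above. For each $k\le n^{\ast}-1$ one has $\delta=R_0\le R_k<r_2\le\nu^{-1}$, so \eqref{ineq:shortcut} with $R_k\ge\delta$ yields
\[
R_{k+1}-R_k\le \frac{\epsilon^2}{R_k}-\nu\epsilon^2+\frac{\nu^2\epsilon^4}{2R_k}\le \epsilon^2\Bigl(\delta^{-1}-\nu+\frac{\nu^2\epsilon^2}{2\delta}\Bigr),
\]
and for $\epsilon$ small enough that $\tfrac{\nu^2\epsilon^2}{2\delta}\le\tfrac{\nu}{2}$ this is at most $\epsilon^2\bigl(\delta^{-1}-\tfrac{\nu}{2}\bigr)$, a positive quantity since $\delta\le\nu^{-1}$. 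Summing over $k=0,\dots,n^{\ast}-1$ and using $R_{n^{\ast}}\ge r_2$,
\[
r_2-\delta\le R_{n^{\ast}}-R_0=\sum_{k=0}^{n^{\ast}-1}(R_{k+1}-R_k)\le n^{\ast}\epsilon^2\Bigl(\delta^{-1}-\tfrac{\nu}{2}\Bigr)=t_{\epsilon}(\delta,r_2)\Bigl(\delta^{-1}-\tfrac{\nu}{2}\Bigr),
\]
and dividing by $\delta^{-1}-\tfrac{\nu}{2}>0$ and combining with the reduction step gives the claim.

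There is no real obstacle here; the only points needing care are (i) that the reduction to starting point exactly $\delta$ is essential — without it the increment near a tiny $r_1$ could be large and $\epsilon^2/R_k$ uncontrolled — and (ii) the bookkeeping of ``for sufficiently small $\epsilon$'', which is used both to make $\{R_n\}$ monotone (via $\nu\epsilon^2\le\delta$) and to absorb the $O(\epsilon^4)$ correction into $\tfrac{\nu}{2}$. One should also dispatch the degenerate case $\delta=\nu^{-1}$, in which necessarily $r_2=\delta$ and both sides vanish.
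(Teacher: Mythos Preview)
Your argument is correct and follows essentially the same route as the paper: reduce to starting point $\delta$ via the monotonicity of $t_\epsilon$ in its first argument, bound each increment by \eqref{ineq:shortcut} using $R_k\ge\delta$, and absorb the $O(\epsilon^4)$ correction through the condition $\nu\epsilon^2\le\delta$. The paper presents the same chain of inequalities in two compressed display lines, but the ideas coincide.
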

\begin{proof}
Let $\delta\le r\le \nu^{-1}$. We take $\epsilon>0$ small enough to 
satisfy $T_{\epsilon^2}(r)\ge r$. Concretely we assume $\nu\epsilon^2\le \delta$. Then the inequality \eqref{ineq:shortcut} implies
\[T_{\epsilon^2}(r)-r\le \left(\delta^{-1}-\nu\right)\epsilon^2+\frac{\nu^2\epsilon^4}{2\delta}.\]
Hence we obtain
\[t_{\epsilon}(r_1,r_2)\ge t_{\epsilon}(\delta,r_2)\ge \frac{r_2-\delta}{\left(\delta^{-1}-\nu\right)\epsilon^2+\frac{\nu^2\epsilon^4}{2\delta}}\epsilon^2 \ge \frac{r_2-\delta}{\delta^{-1}-\nu+\frac{\nu}{2}}=\frac{r_2-\delta}{\delta^{-1}-\frac{\nu}{2}}.\] 
\end{proof}
\begin{dfn}[Reversed concentric strategy]
Let $\nu>0$, $\epsilon>0$ and $z\in\mathbb{R}^2$. Let $x\in\mathbb{R}^2$ be the current position of the game. Let $(v,w)\in S^1\times S^1$ be a choice by \sente in the same round. A choice $b\in \{\pm1\}$ by \gote is called a {\it \zcencr{$z$}} if
\[\langle b v, x+\nu\epsilon^2 w-z\rangle \le0.\]
\end{dfn}
If Carol takes \zcencr{$z$} through the game, we get $\lvert x_n-z\rvert\le P_n$, where $P_n$ satisfies
\begin{equation}
P_{n+1}=\sqrt{(P_n+\nu \epsilon^2)^2+2\epsilon^2}
\label{seq:3}
\end{equation}
with $P_0=\lvert x_0-z\rvert$. We define $t_{\epsilon}(a,b)$ in the same way as \eqref{eqdef:exittime}, replacing the operator $T_{h}$ as follows:
\[T_{h}(R):=\sqrt{(R+\nu h)^2+2h}.\]
\begin{lem}
Let $\delta>0$. For sufficiently small $\epsilon>0$, we have 
\[\frac{r_2-\delta}{\delta^{-1}+\nu+1}\le t_{\epsilon}(r_1,r_2)\]
for any $r_1$ and $r_2$ satisfying $0\le r_1\le\delta\le r_2$.
\label{lem:4}
\end{lem}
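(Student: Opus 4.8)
The plan is to follow the pattern of the proof of Lemma~\ref{lem:seqconti} verbatim, since the two situations are parallel: both ask for a lower bound on an exit time $t_\epsilon(r_1,r_2)$, and in both cases this is obtained from an \emph{upper} bound on the per-step increment of the relevant monotone sequence, followed by telescoping. The only change is the operator, which for the reversed concentric strategy is $T_h(R)=\sqrt{(R+\nu h)^2+2h}$ governing \eqref{seq:3}. Two elementary facts about this $T_h$ are needed and are immediate: first, $T_h(R)>R$ for every $R\ge 0$ (because $T_h(R)^2-R^2=2\nu hR+\nu^2h^2+2h>0$), so the sequence $\{T_{\epsilon^2}^n(r)\}_n$ is strictly increasing; second, $T_h$ is strictly increasing in $R$ on $[0,\infty)$, which is the analogue of Lemma~\ref{lem:monotonicity}~1 for this operator.

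First I would record the increment estimate. Using $T_h(R)+R\ge 2R$,
\[
T_h(R)-R=\frac{T_h(R)^2-R^2}{T_h(R)+R}\le \nu h+\frac{\nu^2 h^2}{2R}+\frac{h}{R}
\]
for $R>0$. Specialising to $R\ge\delta$ and $h=\epsilon^2$, the right-hand side equals $\epsilon^2\bigl(\nu+\delta^{-1}+\tfrac{\nu^2\epsilon^2}{2\delta}\bigr)$, which is $\le \epsilon^2(\delta^{-1}+\nu+1)$ as soon as $\epsilon$ is small enough that $\tfrac{\nu^2\epsilon^2}{2\delta}\le 1$, i.e.\ $\epsilon\le\sqrt{2\delta}/\nu$. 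This is the only place the ``for sufficiently small $\epsilon$'' hypothesis is used, and the threshold depends only on $\nu$ and $\delta$, as the statement permits.

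Next, since $r_1\le\delta$ and $T_h$ is monotone in the starting point, starting from a smaller value can only lengthen the exit time, exactly as in the remark used in the proof of Lemma~\ref{lem:0815}; hence $t_\epsilon(r_1,r_2)\ge t_\epsilon(\delta,r_2)$. Finally I would telescope from $\delta$: letting $n^\ast$ be the least $n$ with $T_{\epsilon^2}^n(\delta)\ge r_2$, one has $t_\epsilon(\delta,r_2)=n^\ast\epsilon^2$ by the definition \eqref{eqdef:exittime} (the sequence is strictly increasing, so $\{n:T_{\epsilon^2}^n(\delta)<r_2\}=\{0,1,\dots,n^\ast-1\}$), and since every term $T_{\epsilon^2}^n(\delta)$ for $n\le n^\ast$ is $\ge\delta$, the increment estimate gives
\[
r_2-\delta\le T_{\epsilon^2}^{n^\ast}(\delta)-\delta=\sum_{n=0}^{n^\ast-1}\bigl(T_{\epsilon^2}^{n+1}(\delta)-T_{\epsilon^2}^{n}(\delta)\bigr)\le n^\ast\epsilon^2(\delta^{-1}+\nu+1),
\]
so $t_\epsilon(\delta,r_2)\ge (r_2-\delta)/(\delta^{-1}+\nu+1)$, and chaining with the previous inequality yields the claim.

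I do not anticipate a real obstacle: the argument is short and self-contained once the increment bound is in hand. The only points requiring a little care are (i) invoking the increment bound only where $R\ge\delta$, which is precisely why one first reduces to the sequence started at $\delta$ rather than at $r_1$, and (ii) stating the smallness condition on $\epsilon$ cleanly in terms of $\nu$ and $\delta$ alone.
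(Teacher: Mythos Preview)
Your proof is correct and follows exactly the approach the paper indicates: the paper omits the proof of this lemma, stating only that it is similar to that of Lemma~\ref{lem:seqconti}, and your argument is precisely that parallel, with the increment bound $T_{\epsilon^2}(R)-R\le \epsilon^2(\nu+\delta^{-1}+\tfrac{\nu^2\epsilon^2}{2\delta})$ replacing \eqref{ineq:shortcut} and the smallness condition $\tfrac{\nu^2\epsilon^2}{2\delta}\le 1$ yielding the stated constant $\delta^{-1}+\nu+1$.
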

\begin{proof}
The proof is similar to that of Lemma \ref{lem:seqconti}, so is omitted.
\end{proof}
\begin{prop}
\label{lem:inigame}
Let $u_0$ be a continuous function. Then $\overline{u}(x,0)=\underline{u}(x,0)=u_0(x)$ for all $x\in\mathbb{R}^2$.
\end{prop}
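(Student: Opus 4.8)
The plan is to show the two inequalities $\overline{u}(x,0)\le u_0(x)$ and $\underline{u}(x,0)\ge u_0(x)$, which together with $\underline{u}\le\overline{u}$ give the claim. By symmetry of the roles of \sente and \gote (one upper-bounds the value, the other lower-bounds it, and the stopping costs $\Psi_{\pm}$ enter symmetrically), I will concentrate on $\overline{u}(x,0)\le u_0(x)$, the estimate coming from a strategy of \gote; the reversed bound is analogous, using a strategy of \sente and Lemma \ref{lem:seqconti} in place of Lemma \ref{lem:4}. Fix $\hat{x}\in\mathbb{R}^2$ and $\eta>0$. Since $u_0$ is continuous, choose $\rho>0$ so that $|u_0(y)-u_0(\hat{x})|<\eta$ for $y\in \overline{B_\rho(\hat{x})}$. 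The key point is that for game data $(x,t,\epsilon)$ with $x$ close to $\hat{x}$ and $t$ small, one player can force the final position $x_N$ to stay inside $B_\rho(\hat{x})$, so the terminal cost is within $\eta$ of $u_0(\hat{x})$; the difficulty is that the opponent may instead choose to quit, which must also be controlled.

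First I would have \gote adopt the \zcencr{$\hat{x}$} at every round (Definition preceding Lemma \ref{lem:4}). Then, regardless of \sente's choices, $|x_n-\hat{x}|\le P_n$ where $\{P_n\}$ solves \eqref{seq:3} with $P_0=|x-\hat{x}|$; since $P_n\le \sqrt{(|x-\hat{x}|+\nu n\epsilon^2)^2+2n\epsilon^2}$ (or more simply $P_N^2\le (|x-\hat{x}|+\nu t+\nu\epsilon^2)^2+2(t+\epsilon^2)$ for $N=\lceil t\epsilon^{-2}\rceil$), we see that if $|x-\hat{x}|<\rho/3$ and $t<\tau$ for $\tau=\tau(\rho,\nu)>0$ small enough and $\epsilon<\epsilon_0$, then $x_n\in B_\rho(\hat{x})$ for all $n\le N$. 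Consequently every terminal position satisfies $u_0(x_N)\le u_0(\hat{x})+\eta$, and every position at which \sente could quit satisfies $\Psi_{-}(x_n)\le \sup_{B_\rho(\hat{x})}\Psi_{-}$; using the standing reduction (choosing $\Psi_{-}<-\|u_0\|_\infty$ when $O_{-}=\emptyset$, and in general using $\Psi_{-}\le u_0$ near $\hat{x}$ after shrinking $\rho$) we get that \sente's quitting cannot help him beyond $u_0(\hat{x})+\eta$. If instead \gote were to quit, her cost would be $\Psi_{+}(x_n)$, which she controls and will not do unless it is favorable; in any case $u^\epsilon(x,t)\le \max\{\Psi_{-}(x),\,u_0(\hat{x})+\eta\}\le u_0(\hat{x})+\eta$ by the Dynamic Programming Principle \eqref{eq:DPP_ob} applied iteratively along \gote's strategy. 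Passing to the half-relaxed limit as $(x,t)\to(\hat{x},0)$ and $\epsilon\searrow 0$ yields $\overline{u}(\hat{x},0)\le u_0(\hat{x})+\eta$, and letting $\eta\searrow 0$ gives $\overline{u}(\hat{x},0)\le u_0(\hat{x})$.

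For the reverse inequality $\underline{u}(\hat{x},0)\ge u_0(\hat{x})$, \sente adopts the \zcenp{$\hat{x}$}, i.e. a push-by-moving-circle/concentric type strategy centered at $\hat{x}$; then $|x_n-\hat{x}|\le P_n$ again solves a recursion of the same shape, and the identical small-$t$ argument confines $x_N$ to $B_\rho(\hat{x})$, forcing the terminal cost to be at least $u_0(\hat{x})-\eta$, while \gote's option to quit gives $\Psi_{+}(x_n)\ge \inf_{B_\rho(\hat{x})}\Psi_{+}\ge u_0(\hat{x})-\eta$ after the analogous reduction ($\Psi_{+}>\|u_0\|_\infty$ when $O_{+}=\mathbb{R}^2$, or $\Psi_{+}\ge u_0$ near $\hat{x}$). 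Here Lemma \ref{lem:seqconti} (or just the explicit bound on $\{R_n\}$) guarantees that over the short time horizon the trajectory cannot escape $B_\rho(\hat{x})$. The main obstacle I anticipate is bookkeeping the stopping costs cleanly: one must argue that neither player's quitting option can push the value outside the $\eta$-window around $u_0(\hat{x})$, which forces a preliminary shrinking of $\rho$ so that $\Psi_{-}\le u_0\le\Psi_{+}$ holds near $\hat{x}$ (legitimate by the compatibility $\Psi_-\le u_0\le\Psi_+$ at $t=0$, or by the explicit choices of $\Psi_\pm$ in the one-obstacle reductions), after which the DPP \eqref{eq:DPP_ob} propagates the bound. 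Everything else is the elementary recursion estimate for \eqref{seq:2}--\eqref{seq:3} over a horizon of length $O(\tau)$.
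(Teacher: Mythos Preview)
Your approach is essentially the same as the paper's: both sides use the $\hat{x}$ concentric strategy for \sente (lower bound on $u^\epsilon$) and the reversed $\hat{x}$ concentric strategy for \gote (upper bound), confine the whole trajectory to a small ball around $\hat{x}$ when $t$ is small, and dispose of the stopping options via $\Psi_{-}\le u_0\le\Psi_{+}$. Two small gaps are worth flagging. First, you drop the running cost entirely; the total cost in \eqref{eq:DPP_ob} includes $\epsilon^2\sum_{i} f(x_i)$, and the paper bounds this by $(s+\epsilon^2)M$ with $M=\sup_{B(\hat{x},2\nu^{-1})}|f|$, which goes to $0$ along the limit. Second, your explicit inequality $P_N^2\le (|x-\hat{x}|+\nu t+\nu\epsilon^2)^2+2(t+\epsilon^2)$ is not correct: iterating $P_{n+1}^2=(P_n+\nu\epsilon^2)^2+2\epsilon^2$ produces cross terms $2\nu\epsilon^2 P_n$ that accumulate, and for $\nu>0$ the true $P_N$ exceeds your bound. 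The conclusion you need (that $P_N\to 0$ as $(|x-\hat{x}|,s,\epsilon)\to(0,0,0)$) is still true and is exactly what Lemma~\ref{lem:4} delivers; the paper cites it for this purpose, just as you correctly cite Lemma~\ref{lem:seqconti} on the \sente side.
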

\begin{proof}
Let $x\in\mathbb{R}^2$. For the initial position $y\in\mathbb{R}^2$, the terminal time $s>0$ and the step size $\epsilon>0$, we define $V^{-}(y,s,\epsilon)$ as the minimum total cost when \sente takes a \zcenp{$x$} through the game. Similarly we define $V^{+}(y,s,\epsilon)$ as the supremum total cost when \gote takes \zcencr{$x$} through the game. 
It is clear by the property of the value functions that
\[V^{-}(y,s,\epsilon)\le u^{\epsilon}(y,s)\le V^{+}(y,s,\epsilon).\]
It is sufficient to show
\begin{equation}
\varliminf_{\substack{(y,s) \to (x,0) \\ \epsilon \searrow 0}} V^{-}(y,s,\epsilon)\ge u_0(x)
\label{ineq:1117c}
\end{equation}
and
\begin{equation}
\varlimsup_{\substack{(y,s) \to (x,0) \\ \epsilon \searrow 0}} V^{+}(y,s,\epsilon)\le u_0(x).
\label{ineq:1116P}
\end{equation}

We first analyze $V^{-}$. We denote by $V^{-}_{quit}(y,s,\epsilon)$ (resp. $V^{-}_{end}(y,s,\epsilon)$) the minimum total cost when \sente takes \zcenp{$x$} through the game and \gote quits (resp. does not quit) the game on the way. Then we write
\[V^{-}(y,s,\epsilon)=\min\{V^{-}_{end}(y,s,\epsilon), V^{-}_{quit}(y,s,\epsilon)\}.\]
Furthermore we analyze $V^{-}_{end}$. We denote by $V^{-}_{run}(y,s,\epsilon)$ (resp. $V^{-}_{ter}(y,s,\epsilon)$) the minimum running cost (resp. terminal cost) in the same situation as $V^{-}_{end}(y,s,\epsilon)$. Obviously we have
\[V^{-}_{end}(y,s,\epsilon)\ge V^{-}_{run}(y,s,\epsilon)+V^{-}_{ter}(y,s,\epsilon).\]

Since Paul takes a \zcenp{$x$}, he can stay in $B(x,2\nu^{-1})$ by Lemma \ref{lem:series2}. So the running cost is at most $M:=\sup_{z\in B(x,2\nu^{-1})}\lvert f(z)\rvert$, and at least $-M$ per round. Hence we have
\begin{align}
\lvert V^{-}_{run}(y,s,\epsilon)\rvert&\le \epsilon^2 NM=\epsilon^2 \left\lceil s\epsilon^{-2}\right\rceil M \le \epsilon^2\left(s\epsilon^{-2}+1\right)M=(s+\epsilon^2)M.\label{eq:1117}
\end{align}
We denote by $Ter(y,s,\epsilon)$ a terminal point $x_N$ in the situation of $V^{-}_{end}(y,s,\epsilon)$. Since $u_0$ is continuous, what we have to prove about the terminal cost is
\begin{equation}
\lim_{\substack{(y,s) \to (x,0) \\ \epsilon \searrow 0}} Ter(y,s,\epsilon)=x
\nonumber
\end{equation}
for any choices of \gote. Let $\{(y_n,s_n,\epsilon_n)\}\subset \mathbb{R}^2\times(0,\infty)\times(0,\infty)$ be any sequence satisfying
\[\epsilon_n \searrow 0,~~y_n\rightarrow x,~~s_n\rightarrow 0.\]
Let $\delta>0$. Then we shall show that $\lvert Ter(y_n,s_n,\epsilon_n)-x\rvert<\delta$ for sufficiently large $n$.
Indeed, from Lemma \ref{lem:seqconti}, there exists $\tilde{\epsilon}>0$ such that
\begin{equation}
\frac{\delta/3}{3\delta^{-1}-\nu+1}\le t_{\epsilon}(r,2\delta/3)
\label{eq:1117a}
\end{equation}
for all $r\in [0,\delta/3)$ and all $\epsilon\in (0,\tilde{\epsilon})$.
We take $n$ large enough so that
\[\lvert y_n-x\rvert<\delta/3,~\lvert s_n\rvert<\frac{\delta/3}{3\delta^{-1}-\nu+1},~\epsilon_n<\tilde{\epsilon}.\]
Hence, together with \eqref{eq:1117}, we obtain
\begin{equation}
\label{eq:rute}
\lim_{\substack{(y,s) \to (x,0) \\ \epsilon \searrow 0}}V^{-}_{run}(y,s,\epsilon)+V^{-}_{ter}(y,s,\epsilon)=u_0(x).
\end{equation}

If \gote quits the game on the way, the game positions $\{x_n\}$ are in $B(x, \lvert Ter(y,s,\epsilon)-x\rvert)$. Thus we have
\begin{equation}
\nonumber
V^{-}_{quit}(y,s,\epsilon)\ge \inf\{\Psi_{+}(z) \mid z\in B(x, \lvert Ter(y,s,\epsilon)-x\rvert)\}
\end{equation}
and hence
\begin{align*}
\varliminf_{\substack{(y,s) \to (x,0) \\ \epsilon \searrow 0}} V^{-}_{quit}(y,s,\epsilon)&\ge \lim_{\substack{(y,s) \to (x,0) \\ \epsilon \searrow 0}} \inf\{\Psi_{+}(z) \mid z\in B(x, \lvert Ter(y,s,\epsilon)-x\rvert)\} \\ 
&=\Psi_{+}(x)\ge u_0(x).
\end{align*}
Together with \eqref{eq:rute}, we obtain \eqref{ineq:1117c}.

We next estimate $V^{+}$. We denote by $V^{+}_{quit}(y,s,\epsilon)$ (resp. $V^{+}_{end}(y,s,\epsilon)$) the supremum total cost when \gote takes a \zcencr{$x$} through the game and \sente quits (resp. does not quit) the game on the way. Then we write
\[V^{+}(y,s,\epsilon)=\max\{V^{+}_{end}(y,s,\epsilon), V^{+}_{quit}(y,s,\epsilon)\}.\]
We further denote by $V^{+}_{run}(y,s,\epsilon)$ (resp. $V^{+}_{ter}(y,s,\epsilon)$) the supremum running cost (resp. terminal cost) in the same situation as $V^{+}_{end}(y,s,\epsilon)$. Obviously we have
\[V^{+}_{end}(y,s,\epsilon)\le V^{+}_{run}(y,s,\epsilon)+V^{+}_{ter}(y,s,\epsilon).\]

From Lemma \ref{lem:4}, Paul is forced to stay in a compact set for sufficiently small $s$. Thus as in \eqref{eq:1117}, we have
\begin{equation}
\lvert V^{+}_{run}(y,s,\epsilon)\rvert\le \epsilon^2 NM\le(s+\epsilon^2)M. \nonumber
\end{equation}
The values $V^{+}_{ter}$ and $V^{+}_{quit}$ are also estimated in the same way as $V^{-}_{ter}$ and $V^{-}_{quit}$ respectively. The only difference is
\begin{equation}
\frac{\delta/3}{3\delta^{-1}+\nu+1}\le t_{\epsilon}(r,2\delta/3)\nonumber
\end{equation}
instead of \eqref{eq:1117a}. Thus \eqref{ineq:1116P} is also obtained.
\end{proof}

\section{Set theory}
\label{app:set}
The following are supplementary propositions related to general topology and convex sets.

\begin{lem}
\label{lem:topolo}
Let $A\subset\mathbb{R}^d$ be an open set. Let $K\subset A$ be a compact set. Then, for sufficiently small $\delta>0$,
\[B_{\delta}(K)\subset A.\]
\end{lem}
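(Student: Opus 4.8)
The plan is to use a compactness argument. Define the function $g:K\to(0,\infty)$ by $g(x):=\operatorname{dist}(x,A^c)=\operatorname{dist}(x,\mathbb{R}^d\setminus A)$. Since $A$ is open, for every $x\in K\subset A$ we have $x\notin A^c$, and $A^c$ is closed (possibly empty; if $A=\mathbb{R}^d$ the statement is trivial since then any $\delta$ works, so assume $A^c\neq\emptyset$), hence $\operatorname{dist}(x,A^c)>0$. So $g$ is a strictly positive function on $K$.

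Next I would observe that $g$ is continuous on $K$ — indeed $x\mapsto\operatorname{dist}(x,S)$ is $1$-Lipschitz for any nonempty set $S$. Since $K$ is compact and $g$ is continuous and positive, $g$ attains a positive minimum: there exists $x_0\in K$ with $g(x_0)=\min_{x\in K}g(x)=:m>0$. Now choose any $\delta$ with $0<\delta<m$ (for instance $\delta=m/2$). I claim $B_\delta(K)\subset A$. Take $y\in B_\delta(K)$, so $\operatorname{dist}(y,K)<\delta$; pick $x\in K$ with $|y-x|<\delta$. If $y\notin A$, then $y\in A^c$, so $g(x)=\operatorname{dist}(x,A^c)\le|x-y|<\delta<m\le g(x)$, a contradiction. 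Hence $y\in A$, proving the inclusion.

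There is essentially no obstacle here; the only point requiring the slightest care is the degenerate case $A=\mathbb{R}^d$ (where $A^c=\emptyset$ and $\operatorname{dist}(\cdot,\emptyset)=+\infty$ by convention), which should be dispatched in one line at the start. The heart of the argument is the standard fact that a positive continuous function on a compact set is bounded below by a positive constant, combined with the Lipschitz continuity of the distance function. I would present it in that order: reduce away the trivial case, define $g$, note positivity and continuity, extract the minimum $m$, and finally verify the inclusion for $\delta<m$.
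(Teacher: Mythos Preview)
Your proof is correct and follows essentially the same approach as the paper: both minimize a positive continuous (or lower semicontinuous) function over the compact set $K$ to extract a uniform $\delta$. The only cosmetic difference is that the paper defines $f(x)=\sup\{\delta>0\mid B_\delta(x)\subset A\}$ and verifies lower semicontinuity by hand, whereas you recognize this function as $\operatorname{dist}(x,A^c)$ and invoke its $1$-Lipschitz continuity---but these are the same function, and the arguments are equivalent.
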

\begin{proof}
We can assume without loss of generality that $A$ is bounded. We define $f(x):=\sup\{\delta>0 \mid B_{\delta}(x)\subset A\}$ for $x\in A$ and check that it is a lower semicontinuous function. Let $\epsilon>0$. For $x,y\in A$ satisfying $\lvert x-y\rvert<\epsilon$, it is clear that $B_{f(x)-\epsilon}(y)\in A$ and then $f(y)\ge f(x)-\epsilon$.

Since $f$ is lower semicontinuous, it has a minimizer $\hat{x}$ in $K$ by the extreme value theorem. Letting $\delta=f(\hat{x})$, we obtain the conclusion.
\end{proof}

\begin{lem}
\label{lem:convCara}
Let $A\subset \mathbb{R}^{2}$ be a connected open set. Then $Co(A)=\{x\in l_{a,b} \mid a,b\in A\}$.
\end{lem}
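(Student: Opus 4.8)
\textbf{Proof proposal for Lemma \ref{lem:convCara}.}

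The plan is to prove the two inclusions separately. The inclusion $\{x\in l_{a,b} \mid a,b\in A\}\subset Co(A)$ is immediate: for $a,b\in A$, the segment $l_{a,b}$ is contained in $Co(A)$ by definition of the convex hull, so any $x$ lying on such a segment belongs to $Co(A)$. The substantive direction is $Co(A)\subset\{x\in l_{a,b} \mid a,b\in A\}$, i.e.\ every point of the convex hull of $A$ already lies on a single chord of $A$. By Carath\'eodory's theorem in $\mathbb{R}^2$, any $x\in Co(A)$ is a convex combination of at most three points of $A$, say $x=\lambda_1 p_1+\lambda_2 p_2+\lambda_3 p_3$ with $p_i\in A$, $\lambda_i\ge 0$, $\sum\lambda_i=1$. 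So it suffices to reduce from three points to two, and here is where connectedness (equivalently, path-connectedness, since $A$ is an open subset of $\mathbb{R}^2$) enters.

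First I would dispose of the degenerate cases: if some $\lambda_i=0$, then $x\in l_{p_j,p_k}$ already and we are done; if $x$ coincides with one of the $p_i$, likewise. So assume $x$ lies in the (relative) interior of the triangle $T$ with vertices $p_1,p_2,p_3$, all of which lie in $A$. The idea is: the line through $x$ and $p_1$ hits the opposite side $l_{p_2,p_3}$ at some point $q$, and then $x\in l_{p_1,q}$. The point $p_1$ is in $A$, but $q$ need not be. However, $q\in l_{p_2,p_3}$, and since $p_2,p_3\in A$ with $A$ path-connected and open, I would produce a path $\gamma$ in $A$ from $p_2$ to $p_3$; then move $q$ slightly off the segment $l_{p_2,p_3}$ toward this path. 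More precisely, the honest way to run this: consider the segment $l_{p_2,p_3}$ and the path $\gamma\subset A$ joining its endpoints. Take the line $\ell$ through $x$ and $p_1$; it separates the plane, and because $x$ is in the interior of $T$, the ray from $p_1$ through $x$ exits $T$ through the open side $(p_2,p_3)$. Now perturb: choose $a=p_1\in A$ and look for $b\in A$ on the ray such that $x\in l_{a,b}$. Since a neighborhood of $q$ intersects both sides of the line $l_{p_2,p_3}$, and since $\gamma\cup l_{p_2,p_3}$ together bound (or at least connect across) that neighborhood, one can find a point $b\in A$ arbitrarily close to $q$ with $b$ and $p_1$ on opposite sides of $x$ along a common line through $x$; then $x\in l_{p_1,b}$.

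The main obstacle is making that last perturbation rigorous: one must guarantee that the chord of $A$ can be made to pass \emph{exactly} through $x$, not merely near it. The clean fix is a parametrized/continuity argument. Fix $a=p_1$. For each point $c$ on the path $\gamma$ from $p_2$ to $p_3$, consider whether $x\in l_{a,c}$; this holds iff $c$ lies on the ray $R$ from $p_1$ opposite to $x$ (i.e.\ with $x$ between $p_1$ and $c$), which is a closed half-line. Define a continuous function on $\gamma$ measuring the signed angle (or the signed distance to the line $\ell=\overline{p_1 x}$); at $c=p_2$ and $c=p_3$ this signed quantity has opposite signs because $q\in(p_2,p_3)$ lies strictly between them on $\ell$ and $x$ is interior to $T$. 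By the intermediate value theorem applied along $\gamma$, there is $c^\ast\in\gamma\subset A$ with $c^\ast\in\ell$; a short additional check (orientation: $c^\ast$ is on the far side of $x$ from $p_1$, which holds because $\gamma$ can be taken close to $l_{p_2,p_3}$ or, failing that, by choosing the first crossing on the correct side) gives $x\in l_{p_1,c^\ast}$ with both endpoints in $A$. I would write this up by first reducing to the interior-of-triangle case, then stating the IVT-along-$\gamma$ argument, and keeping the orientation bookkeeping explicit but brief.
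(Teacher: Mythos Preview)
Your overall strategy matches the paper's: reduce via Carath\'eodory to three points $p_1,p_2,p_3\in A$ with $x$ in the interior of the triangle, use that an open connected subset of $\mathbb{R}^2$ is path-connected to get a path $\gamma\subset A$ from $p_2$ to $p_3$, and apply the intermediate value theorem along $\gamma$ to the signed distance from the line $\ell$ through $p_1$ and $x$. The paper sets this up in coordinates with $x=(0,0)$, $p_1=(0,1)$, $p_2$ in the open left half-plane and $p_3$ in the open right half-plane, so that $\ell$ is the $y$-axis; your $c^\ast$ is exactly the paper's $x_4$.

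The genuine gap is the orientation check, and neither of your proposed fixes is valid. You cannot in general take $\gamma$ close to $l_{p_2,p_3}$: $A$ may stay far from that segment except near the endpoints. And there may be \emph{no} crossing of $\ell$ on the correct side of $x$. Concretely, take $p_1=(0,2)$, $p_2=(-1,0)$, $p_3=(1,0)$, $x=(0,1)$, and let $A$ be a thin open neighborhood of the polygonal arc $p_2\to(0,3)\to p_3$ together with the segment $p_1\to(0,3)$. Then $A$ is open and connected, $p_1,p_2,p_3\in A$, $x$ is in the interior of the triangle, but every path in $A$ from $p_2$ to $p_3$ meets the $y$-axis only at points $(0,q)$ with $q\ge 2$; for any such $c^\ast$ one has $p_1\in l_{x,c^\ast}$ rather than $x\in l_{p_1,c^\ast}$.

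The paper closes this gap by a two-case split. If $\gamma$ meets the negative $y$-axis at some $x_4=(0,q)$ with $q<0$, then $x\in l_{p_1,x_4}$ and you are done. Otherwise every $y$-axis crossing of $\gamma$ has $q>0$ (if $q=0$ then $x\in\gamma\subset A$), and one switches to the line $l$ through $x$ parallel to $l_{p_2,p_3}$. Since $x$ is interior to the triangle, $l$ strictly separates $p_1$ from $p_2,p_3$; the positive $y$-axis lies on the $p_1$-side of $l$. Following $\gamma$ from $p_2$ up to its first $y$-axis crossing, it passes from the $p_2$-side of $l$ to the $p_1$-side while remaining in $\{p<0\}$, so it hits $l$ at some $x_4$ with first coordinate negative; symmetrically one gets $x_5\in l\cap\{p>0\}$ on $\gamma$. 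Then $x\in l_{x_4,x_5}$ with $x_4,x_5\in A$. This second line is the missing ingredient in your argument.
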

\begin{proof}
It is clear that $Co(A)\supset\{x\in l_{a,b} \mid a,b\in A\}$.
By Carath\'{e}odory's theorem, we have
\[Co(A)=\left\{\sum_{i=1}^{3}\lambda_{i}x_i; \ x_i\in A, \lambda_{i}\in[0,1], \sum_{i=1}^3 \lambda_i=1\right\}.\]
Fix any element $x\in Co(A)$. Then we can write $x=\sum_{i=1}^{3}\lambda_{i}x_i$ for some $x_i\in A$ and $\lambda_{i}\in[0,1]$. It suffices to consider the case $x_1, x_2, x_3$ are different and $\lambda_{i}\in(0,1)$. We can assume $x=(0,0)$, $x_1=(0,1)$, $x_2\in\{(p,q)\in\mathbb{R}^2 \mid p<0\}$, and $x_3\in\{(p,q)\in\mathbb{R}^2 \mid p>0\}$. Since $A$ is a a connected open set, it is also path-connected. Thus there is a continuous path $\Gamma\subset A$ that connects $x_2$ and $x_3$. By the intermediate value theorem, the path $\Gamma$ crosses $y$-axis. If $\Gamma$ crosses $x_4\in\{(0,q)\in\mathbb{R}^2 \mid q<0\}$, then $x_1,x_4\in A$ and $x\in l_{x_1,x_4}$. Otherwise let $l$ be the line satisfying $x\in l$ and $l_{x_2,x_3} \parallel l$. The path $\Gamma$ crosses points $x_4\in l\cap\{(p,q)\in\mathbb{R}^2 \mid p<0\}$ and $x_5\in l\cap\{(p,q)\in\mathbb{R}^2 \mid p>0\}$. Thus we have $x_4,x_5\in A$ and $x\in l_{x_4,x_5}$. Therefore we conclude that $x\in l_{a,b}$ for some $a,b\in A$.
\end{proof}

\begin{prop}
\label{prop:conv}
If $A\subset \mathbb{R}^d$ is open, then $Co(A)$ is open.
\end{prop}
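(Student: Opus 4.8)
If $A\subset\mathbb{R}^d$ is open, then $Co(A)$ is open.

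The plan is to show that every point of $Co(A)$ has a neighborhood contained in $Co(A)$, using Carath\'eodory's theorem to reduce to a finite convex combination and then exploiting openness of $A$ coordinatewise. First I would fix $x\in Co(A)$ and apply Carath\'eodory's theorem in $\mathbb{R}^d$ to write $x=\sum_{i=0}^{d}\lambda_i x_i$ with $x_i\in A$, $\lambda_i\in[0,1]$ and $\sum_{i=0}^{d}\lambda_i=1$. Since $A$ is open, for each $i$ there is $r_i>0$ with $B_{r_i}(x_i)\subset A$; set $r=\min_i r_i>0$. The key observation is that for any $y\in B_r(x)$, writing $y=x+v$ with $|v|<r$, we have $y=\sum_{i=0}^{d}\lambda_i(x_i+v)$, and each $x_i+v\in B_r(x_i)\subset A$ because $|v|<r\le r_i$. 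Hence $y\in Co(A)$, and since the $\lambda_i$ are unchanged this is a genuine convex combination of points of $A$. Therefore $B_r(x)\subset Co(A)$, which shows $Co(A)$ is open.

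I should double-check one subtlety: some of the $\lambda_i$ may be zero, in which case the corresponding $x_i$ is irrelevant, but this causes no problem since the argument only uses that the points appearing with positive weight lie in $A$ after translation, and in fact translating all $d+1$ chosen points by $v$ keeps them in $A$ regardless. Also one must be careful that Carath\'eodory provides at most $d+1$ points; this is exactly what makes $r=\min_i r_i$ a positive number rather than an infimum over an infinite family that could vanish. There is no real obstacle here—the only thing to watch is keeping the translation uniform across all the chosen vertices, which is precisely why the argument works.

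This is essentially the same translation trick used implicitly in the proof of Lemma~\ref{lem:convCara} and is the $d$-dimensional analogue of the planar reasoning there; the proof is short and self-contained once Carath\'eodory's theorem is invoked.
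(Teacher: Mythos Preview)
Your proof is correct and follows essentially the same approach as the paper: both invoke Carath\'eodory's theorem to express $x$ as a convex combination of $d+1$ points of $A$, choose a common radius $r>0$ so that each $B_r(x_i)\subset A$, and observe that translating all vertices by any vector of norm less than $r$ yields a convex combination of points still in $A$. The only cosmetic difference is that the paper picks a single $r_0$ at once rather than taking $\min_i r_i$.
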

\begin{proof}
Fix $x\in Co(A)$. By Carath\'{e}odory's theorem, we have $x=\sum_{i=1}^{d+1}\lambda_{i}x_i$ for some $x_i\in A$ and $\lambda_{i}\in[0,1]$ satisfying $\sum_{i=1}^{d+1} \lambda_i=1$.
Since $A$ is open, we see that $\cup_{i=1}^{d+1} B_{r_0}(x_i)\subset A$ for some $r_0>0$. Therefore, for any unit vector $v\in S^1$, we have $x+rv\in Co(A)$ for $0\le r< r_0$ since $x+rv=\sum_{i=1}^{d+1}\lambda_{i}(x_i +rv)$.
\end{proof}
\begin{prop}
\label{prop:0707}
Let $O_{-}\subset \mathbb{R}^d$. If $O_{-}$ satisfies \eqref{cond:inball}, then $\overline{O_{-}}$ is strictly convex.
\end{prop}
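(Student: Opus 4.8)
The plan is to prove the two ingredients separately — that $\overline{O_{-}}$ is convex, and that $\partial\overline{O_{-}}$ contains no line segment — using nothing beyond the convexity and strict convexity of closed balls. First I would dispose of trivialities: if $O_{-}=\emptyset$ the claim is vacuous, and otherwise $\partial O_{-}\neq\emptyset$ (in the setting at hand $O_{-}$ is a nonempty bounded open set), so \eqref{cond:inball} already forces $\overline{O_{-}}$ to be bounded, since $\overline{O_{-}}\subseteq\overline{B_{|z-w|}(z)}$ for the ball attached to any $w\in\partial O_{-}$; hence $\overline{O_{-}}$ is compact with nonempty interior (it contains the nonempty open set $O_{-}$). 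I would also record that $\partial\overline{O_{-}}\subseteq\partial O_{-}$: as $O_{-}$ is open, $\mathrm{int}(\overline{O_{-}})\supseteq O_{-}$, so $\partial\overline{O_{-}}=\overline{O_{-}}\setminus\mathrm{int}(\overline{O_{-}})\subseteq\overline{O_{-}}\setminus O_{-}=\partial O_{-}$.

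For convexity: given $w\in\partial O_{-}$, take $z=z_w\in B_r(w)$ with $O_{-}\subseteq B_{|z_w-w|}(z_w)$; taking closures, $\overline{O_{-}}\subseteq\overline{B_{|z_w-w|}(z_w)}$ with $w$ on the bounding sphere, and the hyperplane tangent to that sphere at $w$ supports the ball, hence supports $\overline{O_{-}}$, at $w$. By the previous paragraph every boundary point of $\overline{O_{-}}$ belongs to $\partial O_{-}$, so $\overline{O_{-}}$ is a closed set with nonempty interior admitting a supporting hyperplane at each of its boundary points; such a set is convex. If one prefers not to quote this, it can be obtained directly: writing $\widehat{C}$ for the intersection of all closed half-spaces containing $C:=\overline{O_{-}}$, and supposing $p\in\widehat{C}\setminus C$, pick an interior point $q$ of $C$; the last point $q'$ of $C$ on the segment $[q,p]$ lies in $\partial C$, its supporting half-space contains $\widehat{C}$ hence $p$, forces $q$ strictly interior to it, and tracking the defining linear functional along $[q,p]$ then puts $p$ strictly outside it — a contradiction. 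So $\overline{O_{-}}$ is convex.

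For strict convexity it suffices that $\partial\overline{O_{-}}$ contain no nondegenerate segment. Suppose $a,b\in\overline{O_{-}}$ with $a\neq b$ and midpoint $m=(a+b)/2\in\partial\overline{O_{-}}\subseteq\partial O_{-}$. Applying \eqref{cond:inball} at $m$ gives a closed ball $\overline{B_{\rho}(z)}\supseteq\overline{O_{-}}$ with $m$ on its bounding sphere; but $a,b\in\overline{B_{\rho}(z)}$ and the parallelogram identity yield $|m-z|^2=\tfrac12|a-z|^2+\tfrac12|b-z|^2-\tfrac14|a-b|^2\le\rho^2-\tfrac14|a-b|^2<\rho^2$, so $m$ is an interior point of that ball — contradicting $m\in\partial B_{\rho}(z)$. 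Hence no midpoint of two distinct points of $\overline{O_{-}}$ lies on $\partial\overline{O_{-}}$; together with convexity this gives that every point of the open segment joining two distinct points of $\overline{O_{-}}$ is interior, i.e.\ $\overline{O_{-}}$ is strictly convex.

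The only step carrying real content is the convexity step, specifically the implication ``supporting hyperplane at every boundary point $\Rightarrow$ convex''; if that is spelled out rather than cited, the place to be careful is the exit-point argument on $[q,p]$ together with the sign bookkeeping for the separating functional. Everything else — boundedness, the inclusion $\partial\overline{O_{-}}\subseteq\partial O_{-}$, and the deduction of strict convexity from strict convexity of balls — is routine, so I expect no further difficulty.
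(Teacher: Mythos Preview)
Your proof is correct and cleanly organized; the paper's proof is also correct but takes a somewhat different route. The paper argues directly by contradiction without first isolating convexity: assuming some interior point of a segment $\lambda x+(1-\lambda)y$ falls outside $O_{-}$, it splits into two cases --- either some such point lies in $\partial O_{-}$, in which case the enclosing ball from \eqref{cond:inball} immediately forces one endpoint outside (this is your strict-convexity step, phrased without the parallelogram identity), or the entire open segment lies in $(\overline{O_{-}})^c$, in which case a small perturbation of one endpoint into $O_{-}$ produces a segment that must cross $\partial O_{-}$, reducing to the first case. Your decomposition into ``convex via supporting hyperplanes'' plus ``no boundary segment via the parallelogram identity'' is more systematic and makes the role of each hypothesis transparent; the paper's approach avoids invoking the supporting-hyperplane characterization of convexity but at the cost of an ad hoc perturbation argument and a slightly implicit case analysis (one has to check that the two cases are exhaustive). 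Both approaches ultimately rest on the same geometric fact --- strict convexity of Euclidean balls --- so the difference is organizational rather than conceptual.
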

\begin{proof}
Assume that $\overline{O_{-}}$ is not strictly convex, i.e., there exist $x,y\in\overline{O_{-}}$ such that $\lambda x+(1-\lambda)y\in (O_{-}^{int})^c$ for some $\lambda\in (0,1)$.

{\boldmath $1)~\lambda x+(1-\lambda)y=:z\in \partial O_{-}$ {\bf for some} $\lambda\in (0,1)$.} By \eqref{cond:inball} we can take an open ball $B$ that satisfies $z\in \partial B$ and $\overline{O_{-}}\subset \overline{B}$. Since $x\in \overline{B}$ and $z\in \partial B$, we have $y\in (\overline{B})^c$, which is a contradiction.

{\boldmath $2)~\lambda x+(1-\lambda)y\in (\overline{O_{-}})^c$ {\bf for any} $\lambda\in (0,1)$.} Let $z=\frac{x+y}{2}$. Let $\delta>0$ satisfy $B_{\delta}(z)\subset \left(\overline{O_{-}}\right)^c$. Since $x\in \partial O_{-}$, there exists $w\in O_{-}$ such that $w\in B_{2\delta}(x)$. Since $\frac{w+y}{2}\in\left(\overline{O_{-}}\right)^c$, we see that $\lambda w+(1-\lambda)y\in \partial O_{-}$ for some $\lambda\in(0,1)$ and hence deduce a contradiction.
\end{proof}

\section{Graph theory}
\label{sec:graphtheory}
We present the notion of graph and some related notions in the graph theory.
\begin{dfn}
For a non empty set $V$ and a set $E$ of unordered pairs in $V$, the pair of the sets $(V,E)$ is called a {\it graph}. A graph $H=(V^{\prime},E^{\prime})$ is called a {\it subgraph} of $G$ if $V^{\prime}\subset V$ and $E^{\prime}\subset E$. A subgraph $H=(V^{\prime},E^{\prime})\subset G$ is called a {\it path} of $G$ if $V^{\prime}$ is a finite set $\{x_0,x_1,\cdots,x_n\}$(duplication is permitted.) and $E^{\prime}=\{\langle x_i,x_{i+1} \rangle \mid i=0,1,\cdots,n-1\}$, where we denote unordered pairs by $\langle , \rangle$. A graph $G=(V,E)$ is \it{connected} if for any $v_1,v_2\in V$, there is a path of $G$ whose endpoints are $v_1$ and $v_2$.
\end{dfn}
To precisely indicate the path of graph introduced in the proof of Theorem \ref{thm:conv_general} , we present the following proposition, though the assertion seems to be obvious.
\begin{prop}
\label{prop:graph}
Let $G=(V,E)$ be a connected graph. Let $\langle a,b\rangle,\langle c,d\rangle\in E$. Then there is a path $P=(V^{\prime},E^{\prime})$ such that $a,b,c,d\in V^{\prime}$ and $\langle a,b\rangle, \langle c,d\rangle\in E^{\prime}$.
\end{prop}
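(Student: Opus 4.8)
The plan is to build the required path by concatenation: traverse the edge $\langle a,b\rangle$ from $a$ to $b$, then follow a connecting path from $b$ to $c$ supplied by connectedness of $G$, and finally traverse the edge $\langle c,d\rangle$ from $c$ to $d$. Since the notion of path used here permits repeated vertices, gluing these three pieces end to end is again a path, and by construction its edge set contains both $\langle a,b\rangle$ and $\langle c,d\rangle$ while its vertex set contains $a,b,c,d$.

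Concretely, first I would use connectedness of $G$ to obtain a path $Q=(V_Q,E_Q)$ whose endpoints are $b$ and $c$; write $V_Q=\{b=y_0,y_1,\dots,y_m=c\}$ and $E_Q=\{\langle y_j,y_{j+1}\rangle \mid j=0,\dots,m-1\}$. Then I would set $x_0:=a$, $x_{i+1}:=y_i$ for $i=0,\dots,m$, and $x_{m+2}:=d$, so that the sequence reads $a,\,b=y_0,\,y_1,\,\dots,\,y_m=c,\,d$. Defining $V':=\{x_0,\dots,x_{m+2}\}$ and $E':=\{\langle x_i,x_{i+1}\rangle \mid i=0,\dots,m+1\}$, the key verification is $E'\subset E$: the first pair is $\langle a,b\rangle\in E$, the last is $\langle c,d\rangle\in E$, and each intermediate pair $\langle x_i,x_{i+1}\rangle=\langle y_{i-1},y_i\rangle$ lies in $E_Q\subset E$. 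Hence $P:=(V',E')$ is a path of $G$ in the sense of the definition, and clearly $a,b,c,d\in V'$ while $\langle a,b\rangle=\langle x_0,x_1\rangle\in E'$ and $\langle c,d\rangle=\langle x_{m+1},x_{m+2}\rangle\in E'$, which is exactly what is claimed.

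There is no genuine analytic difficulty here; the only care needed is with degenerate configurations — for instance when the two edges coincide or share an endpoint, so that $b=c$ or $\{a,b\}=\{c,d\}$. In those cases the connecting path $Q$ may be trivial (a single vertex) or a single edge, but the concatenation above still yields a legitimate path precisely because duplication of vertices is allowed, and the required membership statements $a,b,c,d\in V'$ and $\langle a,b\rangle,\langle c,d\rangle\in E'$ persist. So the main obstacle, such as it is, amounts to setting up the indexing of the glued sequence cleanly and recording the inclusion $E'\subset E$.
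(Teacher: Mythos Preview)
Your proof is correct and in fact more direct than the paper's. You exploit that the paper's definition of a path explicitly allows repeated vertices, so the concatenation $a,\,b=y_0,\,y_1,\dots,y_m=c,\,d$ is immediately a path once you check each consecutive pair lies in $E$; you do this cleanly. The paper instead takes a path $P_0$ from $a$ to $c$ and then performs a case analysis on whether $b$ and $d$ already occur along $P_0$: if neither does, it prepends $b$ and appends $d$; if one or both do, it shortcuts the path so that the edges $\langle a,b\rangle$ and $\langle c,d\rangle$ appear without having to revisit $b$ or $d$. This yields paths with fewer repetitions, which may be convenient for the downstream geometric construction in the proof of Theorem~\ref{thm:conv_general} (where the path is turned into a curve $\Gamma$ in the plane), but for the proposition as stated your three-piece concatenation suffices and avoids the case split entirely.
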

\begin{proof}
If $a=c$, $(\{b,a,d\},\{\langle b,a\rangle, \langle a,d\rangle\})$ is a required path. Hereafter we consider the case neither $a=c$, $b=c$, $a=d$ nor $b=d$. Let $P_0=(V_0,E_0)$ be a path with endpoints $a$ and $c$. 

If $b,d\notin V_0$, define $V_2:=V_0\cup \{b,d\}$ and $E_2:=E_0\cup \{\langle b,a\rangle, \langle c,d\rangle\}$. 

If $b\notin V_0$ and $d\in V_0$, define $V_1:=V_0\cup \{b\}$ and $E_1:=E_0\cup \{\langle b,a\rangle\}$. Writing
\begin{equation}
\label{not:graph}
\begin{split}
V_1&=\{x_0,x_1,\cdots,x_n\}, \\
E_1&=\{\langle x_i,x_{i+1} \rangle \mid i=0,1,\cdots,n-1\},
\end{split}
\end{equation}
we see that $x_0=b$, $x_1=a$, $x_n=c$ and $x_j=d$ for some $j\in\{2,3,\cdots,n\}$. Then we further define $V_2:=\{x_0,x_1,\cdots,x_j,x_n\}$ and $E_2:=\{\langle x_0,x_1 \rangle, \langle x_1,x_2 \rangle, \cdots \langle x_{j-1},x_j \rangle, \langle x_j,x_n \rangle\}$. 

We finally consider the case $b,d\in V_0$. When we follow the path $P_0$ from $a$ to $c$, there are two cases: whether we find $b$ earlier than $d$ or not. In the former case, writing $V_0$ and $E_0$ as \eqref{not:graph}, we see $x_j=b$, $x_m=d$ for some $0\le j<m\le n$. We then define $V_2:=\{x_0, x_j, x_{j+1}, \cdots, x_m, x_n\}$ and $E_2:=\{\langle x_0,x_j \rangle, \langle x_j,x_{j+1} \rangle, \cdots \langle x_{m-1},x_m \rangle, \langle x_m,x_n \rangle\}$. In the latter case, we see $x_j=d$, $x_m=b$ for some $0\le j<m\le n$. We define $V_2:=\{x_l, x_0, x_1, \cdots, x_j, x_n\}$ and $E_2:=\{\langle x_l,x_0 \rangle, \langle x_0,x_1 \rangle, \cdots \langle x_{j-1},x_j \rangle, \langle x_j,x_n \rangle\}$.

In any case, $P_2:=(V_2, E_2)$ is a required path.
\end{proof}

\section{Curve theory}
\label{sec:curve}
To complement the proof of Thorem \ref{thm:conv_general}, we will mathematically describe the construction of a Jordan curve $\hat{C}$ that is included in a given closed curve $C$ and includes a given point $x\in C$. We begin with a general property of connected sets. In what follows we especially notice that two points in an open and connected subset of $\mathbb{R}^d$ can be connected by a polygonal line. 
\begin{dfn}[polygonal line connected]
A path is called a {\it polygonal line} if it consists of finite line segments. Let $A\subset\mathbb{R}^d$. The set $A$ is called {\it polygonal line connected} if for any two points $x,y\in A$, there exists a polygonal line in $A$ that connects $x$ and $y$.
\end{dfn}
\begin{prop}
Let $A\subset\mathbb{R}^d$ be an open set. Then the following statements are equivalent.
\begin{enumerate}
\item $A$ is connected.
\item $A$ is path-connected.
\item $A$ is polygonal line connected.
\end{enumerate}
\end{prop}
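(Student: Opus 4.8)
The plan is to prove the cyclic chain of implications $(3)\Rightarrow(2)\Rightarrow(1)\Rightarrow(3)$, which makes all three statements equivalent. The first two implications do not use openness of $A$ and are essentially immediate; the last one is where the hypothesis that $A$ is open genuinely enters, through the observation that every point of $A$ has a convex (hence polygonal line connected) ball around it contained in $A$.

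First I would dispatch $(3)\Rightarrow(2)$: a polygonal line is a finite concatenation of segments $l_{x_i,x_{i+1}}$, each the image of a continuous map on a closed interval, so gluing these maps along their shared endpoints yields a continuous path in $A$ joining the two given points; thus polygonal line connectedness implies path-connectedness. For $(2)\Rightarrow(1)$ I would argue by contradiction: if $A=U\sqcup V$ with $U,V$ nonempty, relatively open in $A$, and disjoint, pick $x\in U$, $y\in V$ and a continuous path $\gamma\colon[0,1]\to A$ with $\gamma(0)=x$, $\gamma(1)=y$; then $\gamma^{-1}(U)$ and $\gamma^{-1}(V)$ separate the interval $[0,1]$, contradicting its connectedness.

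The substantive step is $(1)\Rightarrow(3)$. Fix $x_0\in A$ and set
\[B:=\{\,y\in A \mid \text{there is a polygonal line in } A \text{ joining } x_0 \text{ and } y\,\}.\]
I would show $B$ is both open and closed in $A$. For openness: if $y\in B$, choose $r>0$ with $B_r(y)\subset A$ (possible since $A$ is open); since $B_r(y)$ is convex, any $z\in B_r(y)$ is joined to $y$ by the segment $l_{y,z}\subset A$, and concatenating this with a polygonal line from $x_0$ to $y$ gives one from $x_0$ to $z$, so $B_r(y)\subset B$. For $A\setminus B$ open: if $y\in A\setminus B$, again take $B_r(y)\subset A$; if some $z\in B_r(y)$ lay in $B$, the segment $l_{z,y}$ would extend a polygonal line from $x_0$ to $z$ into one reaching $y$, contradicting $y\notin B$; hence $B_r(y)\subset A\setminus B$. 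Since $B$ is nonempty (it contains $x_0$) and clopen in the connected set $A$, we get $B=A$, so every point of $A$ is joined to $x_0$ by a polygonal line, and then any two points of $A$ are joined through $x_0$. This is $(3)$.

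I do not expect any serious obstacle: the only delicate point is the standard clopen argument in $(1)\Rightarrow(3)$, and the one place the hypothesis is really used is the extraction of a ball $B_r(y)\subset A$ around each point, which relies precisely on $A$ being open — without openness even $(1)\Rightarrow(2)$ fails. I would also note that the equivalence $(1)\Leftrightarrow(2)$ recovers the fact, used freely in the manuscript (for instance in Lemma \ref{lem:convCara}), that a connected open subset of $\mathbb{R}^d$ is path-connected.
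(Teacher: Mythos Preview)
Your proof is correct and follows essentially the same approach as the paper: the same cyclic chain $(3)\Rightarrow(2)\Rightarrow(1)\Rightarrow(3)$, with the substantive step $(1)\Rightarrow(3)$ handled by the identical clopen argument on the set of points reachable from a fixed $x_0$ by a polygonal line in $A$. The only cosmetic difference is in $(2)\Rightarrow(1)$, where the paper argues directly via connected components while you argue by contradiction through a separation of $[0,1]$; both are standard and equivalent.
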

\begin{proof}
Without loss of generality, we can assume $A\neq \emptyset$ since otherwise all the statements are obviously true.
 
3$\Rightarrow$2.

This is clear because a polygonal line is a path.

2$\Rightarrow$1.

Fix $x\in A$. Since a path is a connected set, all elements $y\in A$ are in the connected component including $x$. Therefore $A$ is connected.

1$\Rightarrow$3.

Fix $x\in A$. We define \\ $O:=\{y\in A \mid \ \mbox{there exists a polygonal line in $A$ that connects $x$ and $y$}\}$. We first show that $O$ is an open set. Let $y\in O$. Since $y\in A$, there is an open ball $B_{\delta}(y)$ such that $B_{\delta}(y)\subset A$. For any $z\in B_{\delta}(y)$, we can make a polygonal line in $A$ that connects $x$ and $z$, combining the line segment between $y$ and $z$ with a polygonal line between $x$ and $y$. Therefore we have $B_{\delta}(y)\subset O$, which means $O$ is an open set.

We show that $A\setminus O$ is also an open set. Let $y\in A\setminus O$. As before, there is an open ball $B_{\delta}(y)$ such that $B_{\delta}(y)\subset A$. If a point $z$ in $B_{\delta}(y)$ is in $O$, we can make a polygonal line in $A$ that connects $x$ and $y$. This is a contradiction. Hence we have $B_{\delta}(y)\subset A\setminus O$.

Since $A$ is connected, $O$ must be $\emptyset$ or $A$. Since $x\in O$, we have $O=A$ and conclude that $A$ is polygonal line connected.
\end{proof}

We state the condition on components of the closed curve $C$. In what follows we call a map $f \vert_{A}$ injective at $t\in A$ if $s\in A$ and $f(s)=f(t)$ imply $s=t$. Also we call a map $f \vert_{A}$ injective in $B(\subset A)$ if $s\in A$, $t\in B$ and $f(s)=f(t)$ imply $s=t$. We set a class $\mathscr{C}$ of curves in $\mathbb{R}^2$. We make the assumption on $\mathscr{C}$:
\begin{itemize}
\item[(A1)] There exists a map $\mathscr{C}\ni C \mapsto \gamma_C\in C([0,1];\mathbb{R}^2)$ such that
\begin{enumerate}
\item $\gamma_{C}([0,1])=C$ and the set $\{t\in(0,1) \mid \gamma_{C} \ \mbox{is not injective at $t$}\}$ is at most finite.\label{cond:0515}
\item For any $C,D\in\mathscr{C}$, $\{t\in(0,1) \mid \gamma_{C}(t)\notin D\}$ is at most a finite union of open intervals.
\end{enumerate}
\end{itemize}
We now state the assumption on the closed curve $C$:
\begin{itemize}
\item[(A2)] For some $C_1,C_2,\cdots,C_N\in\mathscr{C}$, $C=\cup_{i=1}^N C_i$, $\gamma_{C_i}(1)=\gamma_{C_{i+1}}(0)$ for $i\in\{1,2,\cdots,N-1\}$ and $\gamma_{C_N}(1)=\gamma_{C_1}(0)$.
\end{itemize}
\begin{rem}
The set of line segments and arcs in $\mathbb{R}^2$ satisfies $\mathrm{(A1)}$. Hence the closed curve in the proof of Theorem \ref{thm:conv_general} satisfies $\mathrm{(A2)}$.
\end{rem}

Set $\gamma:[0,N]\to\mathbb{R}^2$ as
\[\gamma(t):=\gamma_{C_i}(t-[t]),\]
if $i-1\le t\le i$. Here we denote by $[t]$ the maximal integer that is no more than $t$. For a point $x\in C$, there is no loss of generality to assume $\gamma(0)=x$. The reason is the following: 
\begin{prop}
Let $\mathscr{C}$ satisfy $\mathrm{(A1)}$. Let $C^{\prime}\in\mathscr{C}$. Then 
$\mathscr{C}^{\prime}:=\mathscr{C}\cup\{C^{\prime}_1, C^{\prime}_2\}$ also satisfies $\mathrm{(A1)}$, where we define
\[C^{\prime}_1:=\gamma_{C^{\prime}}([0,a]) \ \mbox{and} \ C^{\prime}_2:=\gamma_{C^{\prime}}([a,1])\]
for $a\in(0,1)$.
\end{prop}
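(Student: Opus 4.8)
The plan is to construct a parametrization map on $\mathscr{C}':=\mathscr{C}\cup\{C'_1,C'_2\}$ and verify both conditions of $\mathrm{(A1)}$ for it. For $C\in\mathscr{C}$ I keep $\gamma_C$, and for the two new arcs I use the affine reparametrizations $\gamma_{C'_1}(t):=\gamma_{C'}(at)$ and $\gamma_{C'_2}(t):=\gamma_{C'}(a+(1-a)t)$; these are continuous on $[0,1]$ with images exactly $C'_1$ and $C'_2$. The first condition of $\mathrm{(A1)}$ for $C'_1$ is immediate: if $\gamma_{C'_1}$ fails to be injective at $t\in(0,1)$ then $\gamma_{C'}$ fails to be injective at $at\in(0,a)$, so $at$ lies in the finite set where $\gamma_{C'}$ is not injective; pulling back by the affine homeomorphism keeps the bad set of $\gamma_{C'_1}$ finite, and likewise for $C'_2$.

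For the second condition I would run a case analysis on pairs $(C,D)\in\mathscr{C}'\times\mathscr{C}'$. When $C,D\in\mathscr{C}$ it is the hypothesis. When $C=C'_i$ and $D\in\mathscr{C}$, the set $\{t\in(0,1):\gamma_{C'_i}(t)\notin D\}$ is the image under an affine homeomorphism of $\{s\in(0,1):\gamma_{C'}(s)\notin D\}$ intersected with the appropriate half-interval, and "finite union of open intervals" is preserved under intersecting with an interval and under an affine substitution. When $C=C'_i$, $D=C'_j$ with $i\neq j$, one observes that $\gamma_{C'_1}(t)\in C'_2=\gamma_{C'}([a,1])$ forces $\gamma_{C'}(at)=\gamma_{C'}(v)$ for some $v\in[a,1]$ with $v\neq at$, so $at$ is a non-injectivity point of $\gamma_{C'}$; hence this bad set is $(0,1)$ minus a finite set, again a finite union of open intervals (and the case $i=j$ gives the empty set).

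The substantial case is $C\in\mathscr{C}$, $D=C'_i$. Since $C'_i$ is compact, $\{t\in(0,1):\gamma_C(t)\notin C'_i\}$ is open, so it suffices to prove $\gamma_C^{-1}(C'_i)$ has finitely many components in $(0,1)$. I would first record two finiteness facts, both forced by the first condition of $\mathrm{(A1)}$: (a) $P:=C'_1\cap C'_2$ is finite, because a point of $P$ is $\gamma_{C'}(u)=\gamma_{C'}(v)$ with $u\in[0,a]$, $v\in[a,1]$, and then either $u=v=a$, or one of $u,v$ equals $0$ or $1$, or $u\in(0,a)$, $v\in(a,1)$ with $u\neq v$ so that $u$ is a non-injectivity point of $\gamma_{C'}$; (b) $\gamma_C^{-1}(P)\cap(0,1)$ is finite, because if $\gamma_C^{-1}(p)\cap(0,1)$ had two distinct points for some $p\in P$ they would both be non-injectivity points of $\gamma_C$, so $\gamma_C^{-1}(p)\cap(0,1)$ is a singleton or a subset of the finite set where $\gamma_C$ is not injective. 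I would then use that $C'_1\setminus P$ and $C'_2\setminus P$ are disjoint, each open in $C'$ (the complement of $C'_1\setminus P$ in $C'$ is exactly $C'_2$), and together cover $C'\setminus P$. Now $Y:=\gamma_C^{-1}(C')\cap(0,1)$ has finitely many components by the second condition applied to $(C,C')$; deleting from $Y$ the finite set $\gamma_C^{-1}(P)\cap(0,1)$ leaves finitely many subintervals, and on each of them the image of $\gamma_C$ is connected and contained in the open disjoint decomposition $(C'_1\setminus P)\sqcup(C'_2\setminus P)$ of $C'\setminus P$, hence lies entirely in one piece. Therefore $\gamma_C^{-1}(C'_1)\cap(0,1)$ is a union of a finite subset of $\gamma_C^{-1}(P)$ with finitely many of these subintervals, so it has finitely many components; the argument for $C'_2$ is identical. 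Assembling all cases gives $\mathrm{(A1)}$ for $\mathscr{C}'$.

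I expect the main obstacle to be this last case, specifically the passage from the combinatorial statement — $\gamma_C$ crosses between $C'_1$ and $C'_2$ only through the finite set $P$, which it meets only finitely often — to the topological conclusion about $\gamma_C^{-1}(C'_i)$. A priori that preimage is only known to be closed and could be badly disconnected; it is exactly the two finiteness facts (a), (b), both consequences of the first condition of $\mathrm{(A1)}$, together with the open partition of $C'\setminus P$ by $C'_1\setminus P$ and $C'_2\setminus P$, that prevent this.
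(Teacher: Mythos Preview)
Your proof is correct and follows exactly the approach the paper indicates: you define the same affine reparametrizations $\gamma_{C'_1}(t)=\gamma_{C'}(at)$, $\gamma_{C'_2}(t)=\gamma_{C'}(a+(1-a)t)$ and then verify $\mathrm{(A1)}$ case by case, which is precisely what the paper's one-line proof (``check the assumption $\mathrm{(A1)}$ directly'') asks the reader to do. Your treatment goes well beyond the paper in that you correctly isolate the only substantive case ($C\in\mathscr{C}$, $D=C'_i$) and supply the argument the paper omits---the finiteness of $P=C'_1\cap C'_2$ and of $\gamma_C^{-1}(P)$ via condition~1, together with the open partition $C'\setminus P=(C'_1\setminus P)\sqcup(C'_2\setminus P)$---which is exactly what is needed to conclude.
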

\begin{proof}
Let $\gamma_{C^{\prime}_1}(t)=\gamma_{C^{\prime}}(t/a)$ and $\gamma_{C^{\prime}_2}(t)=\gamma_{C^{\prime}}\left(a+(1-a)t\right)$. The proof is done by checking the assumption $\mathrm{(A1)}$ directly. 
\end{proof}
We assume that $\gamma \vert_{[0,N)}$ is injective in a neighborhood of $0$. i.e., 
\begin{itemize}
\item[(A3)] There exists $\delta>0$ such that $\gamma \vert_{[0,N)}$ is injective in $[0,\delta)$.
\end{itemize}
\begin{rem}
The closed curve $C\cup \hat{\Gamma}$ in the proof of Theorem \ref{thm:conv_general} satisfies $\mathrm{(A3)}$ because $B_{3\delta}(\hat{\Gamma})\subset L$ and $x\notin L$.
\end{rem}
We inductively define
\[t_1:=0,\]
\[s_i:=\sup\{ \tau \mid \gamma \vert_{(t_i,N]} \ \mbox{is injective in} \ (t_i,\tau)\},\]
\[t_{i+1}:=\sup\{ \tau \mid \gamma(s_i)=\gamma(\tau)\}\]
for $i=1,2,\cdots$.
\begin{prop}
For some $m\in\mathbb{N}$, $s_m=N$ or $t_m=N$.
\end{prop}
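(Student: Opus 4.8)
The plan is to show that the inductive construction terminates: after finitely many steps either the procedure exhausts the curve ($s_m=N$) or the final jump lands on the endpoint ($t_m=N$). I would argue by contradiction, assuming $s_i<N$ and $t_i<N$ for every index $i$.

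First I would set up the finite combinatorial data. From the defining relations one always has $t_i\le s_i\le t_{i+1}$, so the sequence $(t_i)$ is non-decreasing and bounded by $N$. By (A1)(1) the set of breakpoint times
\[\mathcal{T}:=\{0,1,\dots,N\}\cup\{\,t\in(0,N)\setminus\mathbb{Z}\ \mid\ \gamma_{C_{\lceil t\rceil}}\text{ is not injective at }t-\lfloor t\rfloor\,\}\]
is finite; consequently between two consecutive elements of $\mathcal{T}$ the map $\gamma$ is a \emph{simple arc} lying inside a single $C_j$ (the non-injectivity points of $C_j$ cannot accumulate, so $\gamma_{C_j}$ restricted to a closed subinterval free of such points is injective). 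I would then use the supremum definition of $s_i$ together with a compactness argument to show that, as long as the procedure has not terminated, it genuinely advances: if $\gamma\vert_{(t_i,N]}$ fails to be injective in $(t_i,\tau)$ for every $\tau>t_i$, then passing to a limit produces $a^{*}>t_i$ with $\gamma(a^{*})=\gamma(t_i)$ (the case $a^{*}=t_i$ is impossible because $\gamma$ is a simple arc just to the right of $t_i$), whence $t_{i+1}=\sup\{\tau\mid\gamma(\tau)=\gamma(s_i)\}\ge a^{*}>t_i$; if instead $s_i>t_i$ then already $t_{i+1}\ge s_i>t_i$. Hence $t_1<t_2<\cdots$ strictly. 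The same compactness argument applied to $\gamma\vert_{(t_i,N]}$ at the point $s_i$ (using once more that $\gamma$ is simple off $\mathcal{T}$, and that (A3) controls the very first step) shows that whenever $s_i<N$ the value $\gamma(s_i)$ is a genuine self-intersection point of $\gamma$, i.e.\ $\gamma(s_i)=\gamma(\sigma_i)$ for some $\sigma_i\ne s_i$; note $\sigma_i\le t_{i+1}$.

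Next comes the key observation. Since $t_{i+1}$ is the supremum of the times at which $\gamma$ equals $\gamma(s_i)$, the curve never returns to the point $\gamma(s_i)$ after time $t_{i+1}$; and for $j>i$ strict monotonicity gives $s_j\ge t_j>t_{i+1}$ (the borderline case $s_{i+1}=t_{i+1}$ is excluded, since it would force $t_{i+2}=t_{i+1}$). Therefore $\gamma(s_1),\gamma(s_2),\dots$ are pairwise distinct, and by the previous paragraph each of them is a self-intersection point of $\gamma$. Finally I would invoke finiteness to reach a contradiction. Using (A1)(2), the locus where one component $C_k$ meets another $C_\ell$ is a finite union of closed subarcs and isolated points; combining this with the structural facts that $\gamma\vert_{(t_i,s_i)}$ is a simple arc and that $\gamma((t_i,s_i))$ is disjoint from $\gamma((s_i,N])$, one sees that $\gamma(s_i)$ must lie in a fixed finite set — a breakpoint image, or an endpoint of one of the finitely many overlap arcs — because a point in the relative interior of an overlap arc cannot occur as $\gamma(s_i)$, as that would force the simple arc $\gamma((t_i,s_i))$ to meet $\gamma((s_i,N])$. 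This contradicts the pairwise distinctness of the $\gamma(s_i)$, so the procedure must in fact terminate.

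The main obstacle is exactly this last finiteness step: because (A1)(2) allows two components to coincide along an entire arc, $\gamma$ may a priori possess infinitely many self-intersection points, so one cannot simply quote ``finitely many double points''; one has to exploit the extremal nature of $s_i$ — it is where a maximal simple subarc based at $t_i$ first closes up or first meets the future of the curve — to confine $\gamma(s_i)$ to a finite configuration. Closely related and also delicate are the compactness arguments certifying $\gamma(s_i)$ as a genuine double point and re-establishing local simplicity of the tail at each new starting time; these are what rule out the degenerate stalls $s_i=t_i$ (with no forward progress) or $t_{i+1}=t_i$, and they rest on the finiteness of $\mathcal{T}$ furnished by (A1)(1) together with the hypothesis (A3) pinning down the start of the construction.
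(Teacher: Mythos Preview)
Your outline is correct and rests on the same combinatorial input as the paper's proof: the finitely many non-injectivity points of each $\gamma_{C_j}$ from (A1)(1), together with the finitely many endpoints of the overlap intervals $\{t:\gamma_{C_j}(t)\in C_k\}$ from (A1)(2). The paper organizes this component-by-component rather than globally: setting $A_i=\bigcap_{k>i}\{t\in(i-1,i):\gamma(t)\notin C_k\}$ and $B_i=\{t\in(i-1,i):\gamma_{C_i}\text{ not injective at }t-(i-1)\}$, it shows directly that $t_j<s_j$, and that whenever $s_j<i$ one has $s_j\in\partial A_i\cup B_i$; if $s_j\in\partial A_i$ the next jump satisfies $t_{j+1}\ge i$, while if $s_j\in B_i$ then also $t_{j+1}\in B_i$, and since the $t_j$ are strictly increasing and $B_i$ is finite, the procedure crosses each integer $i$ after finitely many steps. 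This interval-by-interval bookkeeping is exactly the clean resolution of what you call the ``main obstacle'': it sidesteps the need to argue about images $\gamma(s_i)$ in $\mathbb{R}^2$ and works entirely in parameter space, where the finite set $\partial A_i\cup B_i$ is immediately visible. Your global version---showing the $\gamma(s_i)$ are pairwise distinct and then confining the parameters $s_i$ themselves to $\mathcal{T}\cup\{\text{overlap endpoints}\}$---reaches the same conclusion, but you should be explicit that the ``future'' condition on the overlapping component $C_k$ (namely $k>j$ when $s_i\in(j-1,j)$) is what makes $\gamma((s_i-\delta,s_i))\subset\gamma((s_i,N])$ and hence forces $s_i$ to an endpoint; this is the one place your sketch is genuinely hand-wavy. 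Note also that your two-step route to $s_j>t_j$ (first $t_{j+1}>t_j$ via compactness, then exclude $s_j=t_j$ a posteriori) is a slight detour compared with the paper, which obtains $t_j<s_j$ directly from the local picture at $t_j$.
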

\begin{proof}
We first prove $t_j<s_j$ for all $j$. The assumption that $\gamma \vert_{[0,N)}$ is injective in a neighborhood of $0$ implies $t_1<s_1$. If $s_1<N$, then fix $j\in\{2,3,\cdots\}$. Let $i=[t_j]+1$. Set 
\[A_i:=\bigcap_{i+1\le k\le N}\{t\in (i-1,i) \mid \gamma(t)\notin C_k\}\]
for $1\le i\le N-1$ and $A_N:=(N-1,N)$. We also set 
\[B_i:=\{t\in (i-1,i) \mid \gamma \vert_{(i-1,i)} \ \mbox{is not injective at} \ t\}.\] 
From the assumption $\mathrm{(A1)}$, $A_i$ is a finite union of open intervals and $B_i$ is at most finite. By the definition of $t_j$ we see that if $i-1<t_j<i$, then $t_j\in A_i$. Hence, if $i-1<t_j<i$, we have $(t_j,t_j+\delta)\subset A_i\cap B_i^c$ for some $\delta>0$. Also this assertion holds for $t_j=i-1$. To show it, we prove by contradiction that $\inf A_i=i-1$. We assume that $\inf A_i>i-1$. Then we have $\inf \{t\in (i-1,i) \mid \gamma(t)\notin C_k\}>i-1$ for $i+1\le k\le N$. From the continuity of $\gamma$, we obtain $\gamma(t_j)\in C_k$, which is a contradiction with the definition of $t_j$. 

Now it turns out that there exists $l\ge 0$ such that $i-1\le t_j<i$ implies $i\le s_{j+l}$ or $i\le t_{j+l+1}$. Indeed, if $s_j<i$, then $s_j\in\partial A_i\cup B_i$. If $s_j<i$ and $s_j\in\partial A_i$, then $i\le t_{j+1}$. If $s_j<i$ and $s_j\in B_i$, then $t_{j+1}\in B_i$. Thus the proof is complete.
\end{proof}
If $s_m=N$ or $t_{m+1}=N$, then let $T=\sum_{i=1}^m (s_i-t_i)$. Define $\hat{\gamma}:[0,T]\to\mathbb{R}^2$ as follows:
\[\hat{\gamma}(t):=\gamma(t)\]
if $t\le s_1$ and
\[\hat{\gamma}(t):=\gamma\left(t+t_{k+1}-\sum_{i=1}^{k}(s_i-t_i)\right) \]
if $\sum_{i=1}^{k}(s_i-t_i)\le t \le\sum_{i=1}^{k+1}(s_i-t_i)$.
\begin{prop}
$\hat{C}:=\hat{\gamma}([0,T])$ is a Jordan closed curve.
\end{prop}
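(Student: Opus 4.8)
The statement to prove is that $\hat{C} := \hat{\gamma}([0,T])$ is a Jordan closed curve, where $\hat{\gamma}$ is constructed from the original parametrization $\gamma$ of the closed curve $C$ by excising the "overlapping" pieces $[s_i, t_{i+1}]$ and gluing what remains. The plan is to verify the three defining properties of a Jordan closed curve: that $\hat{\gamma}$ is continuous on $[0,T]$, that it is closed (i.e. $\hat{\gamma}(0) = \hat{\gamma}(T)$), and that it is injective on $[0,T)$ (simple).

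First I would check continuity. On each piece $\left[\sum_{i=1}^{k}(s_i-t_i), \sum_{i=1}^{k+1}(s_i-t_i)\right]$ the map $\hat{\gamma}$ is a translate-restriction of the continuous map $\gamma$, hence continuous there; so continuity can only fail at the gluing points $\tau_k := \sum_{i=1}^{k}(s_i-t_i)$. At such a point, the left-hand limit is $\gamma(s_k)$ (the endpoint of the $k$-th retained arc) and the right-hand limit is $\gamma(t_{k+1})$ (the start of the next retained arc), and these agree by the very definition $t_{k+1} = \sup\{\tau \mid \gamma(s_k) = \gamma(\tau)\}$, which in particular forces $\gamma(t_{k+1}) = \gamma(s_k)$ by continuity of $\gamma$. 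For closedness, $\hat{\gamma}(0) = \gamma(0) = x$; and $\hat{\gamma}(T)$ equals $\gamma(s_m) = \gamma(N) = \gamma(0)$ in the case $s_m = N$, or $\gamma(s_m)$ which, since $t_{m+1}=N$ and $\gamma(t_{m+1}) = \gamma(s_m)$, again coincides with $\gamma(N) = \gamma(0)$. So $\hat{\gamma}(0) = \hat{\gamma}(T)$.

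The main work, and the main obstacle, is injectivity of $\hat{\gamma}$ on $[0,T)$. Here is where the construction of the $t_i, s_i$ must be used essentially. By definition, $s_i$ is the supremal $\tau$ for which $\gamma|_{(t_i,N]}$ is injective on $(t_i,\tau)$, so each retained arc $\gamma((t_i, s_i))$ is itself injective, and moreover $\gamma$ restricted to $(t_i, s_i)$ does not revisit any value it took on $[0, t_i]$ — because if it did, the "sup" defining the previous stages would have stopped earlier, or equivalently such a revisited value would have been absorbed into an earlier $t_j$. The clean way to organize this is: (1) show the retained parameter intervals $(t_i, s_i)$, $i = 1, \dots, m$, are pairwise disjoint as subsets of $[0,N]$ and each $\gamma|_{(t_i,s_i)}$ is injective; (2) show that for $i < j$, $\gamma((t_i, s_i)) \cap \gamma((t_j, s_j)) = \emptyset$, arguing that a common point $\gamma(a) = \gamma(b)$ with $a \in (t_i,s_i)$, $b \in (t_j, s_j)$ would, via $t_{i+1} = \sup\{\tau : \gamma(s_i) = \gamma(\tau)\}$ and the maximality of the $s$'s, contradict either the choice of $s_i$ or the choice of $s_j$; (3) handle the boundary parameter values $t_i$ (which are the gluing points, all mapping to the single "junction" values $\gamma(s_{i-1})$), checking that the junction point appearing at $\tau_k$ is not hit again in the interior of any other retained arc nor at another junction, again using maximality of $t_{k+1}$. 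Points (2) and (3) are genuinely where one must be careful; the geometry is intuitively obvious (we have literally cut out all the backtracking), but writing it so that every case closes requires tracking the ordering $t_1 \le s_1 \le t_2 \le s_2 \le \cdots$ and invoking the defining suprema at the right moments.

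Finally I would assemble: a continuous, injective-on-$[0,T)$ map from $[0,T]$ to $\mathbb{R}^2$ with matching endpoints factors through an embedding of the circle $S^1$, hence its image $\hat{C}$ is a Jordan closed curve, which is exactly the claim; and I would remark that, by construction, $x = \gamma(0) = \hat{\gamma}(0) \in \hat{C}$ and $\hat{C} = \hat{\gamma}([0,T]) \subset \gamma([0,N]) = C$, so this $\hat{C}$ is the Jordan subcurve through $x$ promised in the proof of Theorem \ref{thm:conv_general}. I expect the continuity and closedness parts to be two or three lines each, and injectivity to be the substantive paragraph of the proof.
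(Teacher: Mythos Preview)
Your proposal is correct and is in fact considerably more detailed than the paper's own proof, which reads in its entirety: ``This assertion is obvious from the definitions of $t_i$ and $s_i$.'' The paper treats the statement as an immediate consequence of how the $t_i$ and $s_i$ were set up and offers no further argument.

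Your outline is exactly the natural way to unpack that one-line claim. The continuity and closedness checks are routine as you say. Your injectivity argument is the right one: the key observations are that the retained parameter intervals satisfy $t_1 < s_1 \le t_2 < s_2 \le \cdots$ (strictness of $t_j < s_j$ is established in the paper's preceding proposition), that the defining property of $s_i$ --- injectivity of $\gamma\vert_{(t_i,N]}$ on $(t_i,s_i)$ in the paper's sense --- rules out any collision between a point of $(t_i,s_i)$ and any later parameter in $(t_i,N]$, and that the maximality of $t_{k+1}$ prevents the junction value $\gamma(s_k)$ from reappearing after $t_{k+1}$. Together with assumption (A3), which you correctly identify as what handles the base point $x=\gamma(0)$, these close every case. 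One minor wording slip: the junction values $\gamma(s_k)$ are in general distinct for different $k$, not a ``single'' value, but your case analysis treats them correctly.

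So there is no divergence in approach --- you have simply written out what the paper declares obvious.
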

\begin{proof}
This assertion is obvious from the definitions of $t_i$ and $s_i$.
\end{proof}

\begin{figure}[htbp]
 \begin{minipage}{0.5\hsize}
  \begin{center}
{\unitlength 0.1in%
\begin{picture}(12.0000,15.7000)(6.0000,-22.0000)%
%
\special{pn 8}%
\special{pa 1200 800}%
\special{pa 1200 1400}%
\special{fp}%
\special{pa 600 1400}%
\special{pa 1200 1400}%
\special{fp}%
\special{pa 600 1400}%
\special{pa 600 2200}%
\special{fp}%
\special{pa 600 2200}%
\special{pa 1800 2200}%
\special{fp}%
\special{pa 1800 2200}%
\special{pa 1800 1400}%
\special{fp}%
\special{pa 1800 1400}%
\special{pa 1200 1400}%
\special{fp}%
%
\special{pn 4}%
\special{sh 1}%
\special{ar 1200 800 8 8 0 6.2831853}%
\special{sh 1}%
\special{ar 1200 800 8 8 0 6.2831853}%
\put(12.0000,-7.6000){\makebox(0,0)[lb]{$x$}}%
\end{picture}}%
  \end{center}
  \caption{An example of $C$ and $x$ that we avoid}
  \label{fig:curve_ex1}
 \end{minipage}
 \begin{minipage}{0.5\hsize}
  \begin{center}
{\unitlength 0.1in%
\begin{picture}(22.0000,9.1400)(10.0000,-22.7400)%
%
\special{pn 8}%
\special{pa 1000 1360}%
\special{pa 1542 1360}%
\special{pa 1270 1902}%
\special{pa 1000 1360}%
\special{pa 1542 1360}%
\special{fp}%
%
\special{pn 8}%
\special{pa 1542 1630}%
\special{pa 1812 1630}%
\special{pa 1676 1902}%
\special{pa 1542 1630}%
\special{pa 1812 1630}%
\special{fp}%
%
\special{pn 8}%
\special{pa 1812 1766}%
\special{pa 1948 1766}%
\special{pa 1880 1902}%
\special{pa 1812 1766}%
\special{pa 1948 1766}%
\special{fp}%
%
\special{pn 8}%
\special{pa 1948 1834}%
\special{pa 2016 1834}%
\special{pa 1982 1902}%
\special{pa 1948 1834}%
\special{pa 2016 1834}%
\special{fp}%
%
\special{pn 8}%
\special{pa 2016 1868}%
\special{pa 2050 1868}%
\special{pa 2032 1902}%
\special{pa 2016 1868}%
\special{pa 2050 1868}%
\special{fp}%
%
\special{pn 8}%
\special{pa 2083 1630}%
\special{pa 2354 1630}%
\special{pa 2218 1902}%
\special{pa 2083 1630}%
\special{pa 2354 1630}%
\special{fp}%
%
\special{pn 8}%
\special{pa 2354 1766}%
\special{pa 2490 1766}%
\special{pa 2422 1902}%
\special{pa 2354 1766}%
\special{pa 2490 1766}%
\special{fp}%
%
\special{pn 8}%
\special{pa 2490 1834}%
\special{pa 2556 1834}%
\special{pa 2523 1902}%
\special{pa 2490 1834}%
\special{pa 2556 1834}%
\special{fp}%
%
\special{pn 8}%
\special{pa 2556 1868}%
\special{pa 2590 1868}%
\special{pa 2574 1902}%
\special{pa 2556 1868}%
\special{pa 2590 1868}%
\special{fp}%
%
\special{pn 8}%
\special{pa 2624 1766}%
\special{pa 2760 1766}%
\special{pa 2692 1902}%
\special{pa 2624 1766}%
\special{pa 2760 1766}%
\special{fp}%
%
\special{pn 8}%
\special{pa 2760 1834}%
\special{pa 2828 1834}%
\special{pa 2794 1902}%
\special{pa 2760 1834}%
\special{pa 2828 1834}%
\special{fp}%
%
\special{pn 8}%
\special{pa 2828 1868}%
\special{pa 2862 1868}%
\special{pa 2844 1902}%
\special{pa 2828 1868}%
\special{pa 2862 1868}%
\special{fp}%
%
\special{pn 8}%
\special{pa 2896 1834}%
\special{pa 2963 1834}%
\special{pa 2930 1902}%
\special{pa 2896 1834}%
\special{pa 2963 1834}%
\special{fp}%
%
\special{pn 8}%
\special{pa 2963 1868}%
\special{pa 2996 1868}%
\special{pa 2980 1902}%
\special{pa 2963 1868}%
\special{pa 2996 1868}%
\special{fp}%
%
\special{pn 8}%
\special{pa 3030 1868}%
\special{pa 3064 1868}%
\special{pa 3048 1902}%
\special{pa 3030 1868}%
\special{pa 3064 1868}%
\special{fp}%
%
\special{pn 8}%
\special{pa 1000 1902}%
\special{pa 2050 1902}%
\special{fp}%
\special{pa 2083 1902}%
\special{pa 2590 1902}%
\special{fp}%
\special{pa 2862 1902}%
\special{pa 2624 1902}%
\special{fp}%
\special{pa 2896 1902}%
\special{pa 2996 1902}%
\special{fp}%
\special{pa 3030 1902}%
\special{pa 3064 1902}%
\special{fp}%
%
\special{pn 13}%
\special{pa 2050 1902}%
\special{pa 2083 1902}%
\special{fp}%
\special{pa 2590 1902}%
\special{pa 2624 1902}%
\special{fp}%
\special{pa 2862 1902}%
\special{pa 2896 1902}%
\special{fp}%
\special{pa 2996 1902}%
\special{pa 3030 1902}%
\special{fp}%
\special{pa 3064 1902}%
\special{pa 3098 1902}%
\special{fp}%
%
\special{pn 20}%
\special{pa 3098 1902}%
\special{pa 3132 1902}%
\special{fp}%
%
\special{pn 8}%
\special{pa 1000 1902}%
\special{pa 1000 2240}%
\special{fp}%
\special{pa 1000 2240}%
\special{pa 3200 2240}%
\special{fp}%
\special{pa 3200 2240}%
\special{pa 3200 1902}%
\special{fp}%
\special{pa 3200 1902}%
\special{pa 3132 1902}%
\special{fp}%
%
\special{pn 4}%
\special{sh 1}%
\special{ar 2016 2240 8 8 0 6.2831853}%
\special{sh 1}%
\special{ar 2016 2240 8 8 0 6.2831853}%
\put(20.1600,-22.7400){\makebox(0,0)[lt]{$x$}}%
\end{picture}}%
  \end{center}
  \caption{An example of $C$ that includes an infinite number of loops}
  \label{fig:self_sim_ex}
 \end{minipage}
\end{figure}

\begin{rem}
By assuming $\rm{(A3)}$, we can avoid a closed curve $C$ and a point $x$ in it such as Figure \ref{fig:curve_ex1}. We also avoid a closed curve $C$ that includes an infinite number of loops such as Figure \ref{fig:self_sim_ex} by the assumption $\rm{(A2)}$.
\end{rem}

\end{appendices}

\bibliographystyle{plain}
\bibliography{bibdata}
\end{document}